\definecolor {darkblue}  {rgb} {0.09,0.20,0.43}
\definecolor {darkgray}  {rgb} {0.39,0.39,0.40}
\definecolor {darkgreen} {rgb} {0,0.5,0}
\definecolor {purple}    {rgb} {0.55,.16,0.44}
\definecolor {linkred}   {rgb} {0.545,0.275,0.29}
\definecolor {accent}{rgb}{0.25,0.25,0.25}
\titleformat {\section} {\color{accent}\normalfont\bfseries\large}{\thesection}{1em}{}
\titleformat {\subsection} {\color{accent}\normalfont\bfseries}{\thesubsection}{1em}{}
\let\oldtitle\title
\def\title #1 {\oldtitle{\bfseries\color{accent}#1}}
\newenvironment{equivenum}%
	{\begin{enumerate}[label=(\roman{*})]}{\end{enumerate}}
	{\begin{enumerate}[label={\rm (\roman{*})}]}{\end{enumerate}}
\newcommand{\accentheaderfont}%
	{\normalfont\normalsize\bfseries\color{accent}}
\newcommand{\prooffont}%
	{\normalfont\normalsize\itshape}
\newtheoremstyle{accentheader}%
	{\item[\hskip\labelsep \accentheaderfont{##1 ##2}]}%
	{\item[\hskip\labelsep \accentheaderfont{##1 ##2 (##3)}]}%
\newtheoremstyle{nonumberaccentheader}%
	{\item[\hskip\labelsep \accentheaderfont{##1}]}%
	{\item[\hskip\labelsep \accentheaderfont{##1 (##2)}]}%
\newtheoremstyle{blackheader}
 	{\item[\hskip\labelsep \prooffont{##1}]}%
 	{\item[\hskip\labelsep \prooffont{##3}]}
\newtheoremstyle{blackheadernumbered}
 	{\item[\hskip\labelsep \prooffont{##1 ##2.}]}%
 	{\item[\hskip\labelsep \prooffont{##1 ##2 (##3).}]}
\newtheoremstyle{nonumberblackheader}%
	{\item[\hskip\labelsep \prooffont{##1}]}%
	{\item[\hskip\labelsep \prooffont{##1 (##2)}]}%
\numberwithin{dummy}{section}
\theoremstyle{accentheader}
\newtheorem{thm}{Theorem}[section]
\newtheorem{prop}[thm]{Proposition}
\newtheorem{lem}[thm]{Lemma}
\newtheorem{cor}[thm]{Corollary}
\theoremstyle{blackheader}
\theoremstyle{blackheadernumbered}
\newtheorem{rem}[thm]{Remark}
\theoremstyle{accentheader}
\theoremstyle{blackheader}
\let\existsorig\exists 
\renewcommand{\exists}{\ \existsorig\ } 
\let\forallorig\forall 
\renewcommand{\forall}{\ \forallorig\ } 
\newcommand{\overbar}[1]{\mkern 1.5mu\overline{\mkern-1.5mu#1\mkern-1.5mu}\mkern 1.5mu}
\newcommand{\ob}[1]{\overbar{#1}}
\renewcommand{\atop}[2]{\genfrac{}{}{0pt}{}{#1}{#2}}
 \newcommand{\defl}{\coloneqq}%
 \newcommand{\defr}{\eqqcolon}%
 \newcommand{\nequiv}{\not\equiv}
\renewcommand{\le}{\leqslant}
\renewcommand{\ge}{\geqslant}
\newcommand{\lex}{\preccurlyeq}
\renewcommand{\iff}{\Leftrightarrow}
\renewcommand{\qed}{{\quad~\color{accent}\text{\small\RectangleBold}}}
\newcommand{\fish}%
	{\quad\ensuremath{\color{accent}\rtimes}}
\newcommand{\map}%
	{\quad\ensuremath{\multimap}}
\DeclarePairedDelimiter{\abs}{\lvert}{\rvert}
\DeclarePairedDelimiter{\n}{\lVert}{\rVert}
\DeclarePairedDelimiter{\lin}{\langle}{\rangle}
\DeclarePairedDelimiter{\pbr}{\lbrace}{\rbrace}
\DeclarePairedDelimiterX{\setdef}[2]{\{}{\}}{#1\,:\,#2}
\DeclarePairedDelimiter{\setd}{\{}{\}}
\DeclarePairedDelimiter{\p}{(}{)}
\newcommand{\e}{\mathrm{e}}
\newcommand{\ii}{\mathrm{i}}
\newcommand{\opento}{\hookrightarrow}%
\DeclareMathOperator{\dist}{dist}
\newcommand{\Id}{I}
\renewcommand{\C}{\mathbb{C}}%
\newcommand{\N}{\mathbb{N}}%
\newcommand{\R}{\mathbb{R}}%
\newcommand{\Z}{\mathbb{Z}}%
\newcommand{\T}{\mathbb{T}}%
\newcommand{\Nc}{\mathcal{N}}
\newcommand{\Oc}{\mathcal{O}}
\newcommand{\Pc}{\mathcal{P}}
\newcommand{\Sc}{\mathcal{S}}
\newcommand{\Es}{\mathscr{E}}
\newcommand{\Fs}{\mathscr{F}}
\newcommand{\Hs}{\mathscr{H}}
\newcommand{\Ms}{\mathscr{M}}
\newcommand{\Ss}{\mathscr{S}}
\newcommand{\Vs}{\mathscr{V}}
\newcommand{\Ws}{\mathscr{W}}
\newcommand{\al}{\alpha}
\newcommand{\bt}{\beta}
\newcommand{\gm}{\gamma}	 \newcommand{\Gm}{\Gamma}
\newcommand{\dl}{\delta}     \newcommand{\Dl}{\Delta}
\newcommand{\ep}{\varepsilon} 
\newcommand{\lm}{\lambda}    \newcommand{\Lm}{\Lambda}
\newcommand{\sg}{\sigma}     
\newcommand{\vs}{\varsigma}
\newcommand{\om}{\omega}     \newcommand{\Om}{\Omega}
\newcommand{\ph}{\varphi}    \newcommand{\Ph}{\Phi}
\newcommand{\ps}{\psi}       %\newcommand{\Ps}{\Phi}
\renewcommand{\th}{\theta}
\newcommand{\zt}{\zeta}
\newcommand{\m}{\mathfrak{m}}
\newcommand{\upd}{\mathinner{\mathrm{d}\kern.04em\!}}
\newcommand\ddd{\mathrm{d}\mkern1mu}
\newcommand{\dlm}{\upd{\lm}}
\newcommand{\dr}{\upd{r}}
\newcommand{\ds}{\upd{s}}
\newcommand{\dt}{\upd{t}}
\newcommand{\dx}{\upd{x}}
\newcommand{\dz}{\upd{z}}
\def\XXint#1#2#3{{\setbox0=\hbox{$#1{#2#3}{\int}$ }
\vcenter{\hbox{$#2#3$ }}\kern-.6\wd0}}
\renewcommand{\thesection}{\arabic{section}}
\def\@biblabel#1{#1.}
\DeclareMathAlphabet{\mathpzc}{OT1}{pzc}{m}{it}
\renewcommand{\o}{\mathrm{o}}
\newcommand{\ld}{\text{\tiny\ensuremath{\bullet}}}
\newcommand{\dDl}{\Dl^{\ld}}
\newcommand{\trho}{\tilde\rho}
\newcommand{\tsg}{\tilde\sg}
\renewcommand{\Id}{\mathrm{Id}}
\DeclareMathOperator{\dir}{dir}
\DeclareMathOperator{\diag}{diag}
\newcommand{\Hm}{H}
\renewcommand{\P}{\mathbb{P}}
\newcommand{\Wp}{\Ws}
\newcommand{\K}{\mathbb{K}}
\renewcommand{\vs}{w}
\newcommand\newsubcommand[3]{\def#1{#2\sc@sub{#3}}}
\def\sc@sub#1{\def\sc@thesub{#1}\@ifnextchar_{\sc@mergesubs}{_{\sc@thesub}}}
\def\sc@mergesubs_#1{_{\sc@thesub#1}}
\newcommand{\FL}{\Fs\ell}
\title{On the extension of the frequency maps of the KdV and the KdV2 equations}
\author{Thomas Kappeler\footnote{Partially supported by the Swiss National Science Foundation
}, Jan-Cornelius Molnar\footnote{Partially supported by the Swiss National Science Foundation
}}
\date{\today}
\begin{document}

\maketitle

\begin{abstract}
In form of a case study for the KdV and the KdV2 equations, we present a novel approach of representing the frequencies of integrable PDEs which allows to extend them analytically to spaces of low regularity and to study their asymptotics. Applications include convexity properties of the Hamiltonians and wellposedness results in spaces of low regularity. In particular, it is proved that on $\Hs^{s}$ the KdV2 equation is $C^{0}$-wellposed if $s\ge 0$ and illposed (in a strong sense) if $s < 0$.

\paragraph{Keywords.}
KdV equation, KdV2 equation, frequency map, well-posedness, ill-posedness,
convexity properties of Hamiltonians on integrable PDEs

\paragraph{2000 AMS Subject Classification.} 37K10 (primary) 35Q53, 35D05 (secondary)
\end{abstract}

%\tableofcontents

\section{Introduction}

The goal of this paper is to discuss, in form of a case study for the KdV and the KdV2 equations on $\T = \R/\Z$, novel formulas for the frequencies of integrable PDEs, allowing to extend the frequencies analytically to spaces of functions of low regularity or distributions and to study their asymptotics. These results are used to derive properties of the frequency map relevant for perturbation theory, the Hamiltonian, and the solution map of such equations. First we state our results on the solutions maps for the KdV equation
\begin{equation}
  \label{kdv}
  \partial_{t}u = -\partial_{x}^{3}u + 6u\partial_{x}u,
\end{equation}
and the second equation in the KdV hierarchy (KdV2)
\begin{equation}
  \label{kdv2}
    \partial_t u = \partial_x^5 u - 10 u \partial_{x}^{3}u - 20 \partial_{x}u\partial_{x}^{2}u + 30 u^{2}\partial_{x}u,
\end{equation}
and outline the derivation of the novel formulas for the frequencies of~\eqref{kdv} and~\eqref{kdv2} at the end of the introduction. To state our results, we need to introduce some more notation. By $\Hs^{s} \equiv \Hs^{s}(\T,\R)$, $s\ge -1$, we denote the standard Sobolev spaces, endowed with the Gardner bracket
\begin{equation}
  \label{pbr}
  \pbr{F, G} \defl \int_0^1 \partial_u F \, \partial_x \partial_u G \, \dx.
\end{equation}
Here, $\partial_{u}F$ and $\partial_{u}G$ are the $L^{2}$-gradients of functionals $F$ and $G$ on $\Hs^{s}$ assumed to be sufficiently regular, so that the integral~\eqref{pbr} is well defined.
Then equations~\eqref{kdv} and~\eqref{kdv2} take the form
\[
  \partial_{t}u = \partial_{x}\partial_{u}\Hm_{1},\qquad
  \partial_{t}u = \partial_{x}\partial_{u}\Hm_{2},
\]
where $\Hm_{1}$ and $\Hm_{2}$ denote the KdV and, respectively, KdV2 Hamiltonian
\begin{align*}
  \Hm_{1}(u) &= \frac{1}{2}\int_{0}^{1} (u_{x}^{2} + 2 u^{3})\,\dx,\\
  \Hm_{2}(u) &= \frac{1}{2}\int_{0}^{1} (u_{xx}^{2} + 10 u u_{x}^{2} + 5 u^{4})\,\dx.
\end{align*}
Note that $[u] \defl \int_{0}^{1} u(x)\,\dx$ is a Casimir of the bracket~\eqref{pbr} and that the level sets
\[
  \Hs_{c}^{s} \defl \setdef{u\in \Hs^{s}}{[u] = c},\qquad c\in \R,
\]
are symplectic leaves. We concentrate on the leaf $\Hs_{0}^{s}$ only, since our results can be easily extended to any other leaf. Furthermore, for any $s\in\R$ and $1 \le p < \infty$, we introduce the sequence space
\[
  \ell^{s,p}_{0,\C} \equiv \ell_{0}^{s,p}(\Z,\C) \defl \setdef{z = (z_{n})_{n\in\Z}\subset\C}{z_{0} = 0,\quad \n{z}_{s,p} < \infty},
\]
where
\[
  \n{z}_{s,p} \defl \p*{ \sum_{n\in\Z} \lin{n}^{sp}\abs{z_{n}}^{p} }^{1/p},\qquad
  \lin{n} \defl 1+\abs{n}.
\]
In addition, we denote by $\ell_{0}^{s,p}$ the real subspace $\setdef{(z_{n})\in\ell_{0,\C}^{s,p}}{z_{-n} = \ob{z_{n}}}$ of $\ell_{0,\C}^{s,p}$. The spaces $\ell_{\C}^{s,p} \equiv \ell^{s,p}(\N,\C)$ and $\ell^{s,p} = \ell^{s,p}(\N,\R)$ are defined in an analogous way. To further simplify notation, we also define
\[
  h_{0}^{s} \defl \ell_{0}^{s,2},\qquad
  h_{0,\C}^{s} \defl \ell_{0,\C}^{s,2}.
\]
Note that the Sobolev spaces $\Hs_{0}^{s}$ and more generally the Fourier Lebesgue spaces $\FL_{0}^{s,p}$ can then be described by
\begin{equation}
  \label{Hs-FLsp-def}
  \Hs_{0}^{s} = \setdef{u\in S'_{\C}}{(u_{n})_{n\in\Z} \in h_{0}^{s}},\quad
  \FL_{0}^{s,p} = \setdef{u\in S'_{\C}}{(u_{n})_{n\in\Z} \in \ell_{0}^{s,p}}
\end{equation}
with $u_{n} = \lin{u,\e^{\ii 2n\pi x}}$, $n\in\Z$. Here, $\lin{\cdot,\cdot}$ denotes the $L^{2}$-inner product on $L^{2}(\T,\C)$, $\lin{f,g} = \int_{\T} f(x)\ob{g(x)}\,\dx$, extended by duality to a pairing of the Schwartz space $\Sc_{\C}$ of 1-periodic functions $f\in C^{\infty}(\R,\C)$ and its dual $\Sc_{\C}'$.
The following result says that the KdV equation and its hierarchy are integrable PDEs in the strongest possible sense.

\begin{thm}[\cite{Kappeler:2003up,Kappeler:2005fb,Kappeler:2008fl}]
\label{bhf}
There exists a complex neighborhood $\Ws\subset\Hs_{0,\C}^{-1}$ of $\Hs_{0}^{-1}$ and an analytic map
\[
  \Ph\colon \Ws\to h_{0,\C}^{-1+1/2},\qquad u\mapsto (z_{n}(u))_{n\in\Z},
\]
with $\Ph(0) = 0$ so that the following holds:

\begin{equivenum}
\item
For any $s\ge -1$, the restriction $\Ph\big|_{\Hs_{0}^{s}}$ is a real analytic diffeomorphism $\Hs_{0}^{s}\to h_{0}^{s+1/2}$.

\item $\Ph$ is canonical in the sense that $\pbr{z_{n},z_{-n}} = \ii$ for any $n\ge 1$, whereas all other brackets between coordinate functions vanish.

\item $\Hm_{1}\circ\Ph^{-1}$, defined on $h_{0}^{1+1/2}$, and $\Hm_{2}\circ\Ph^{-1}$, defined on $h_{0}^{2+1/2}$, are real analytic functions of the actions $I_{n} \defl z_{n}z_{-n}$, $n\ge 1$, alone.
Corresponding results hold for any of the Hamiltonians in the KdV hierarchy.

\item The differential $\ddd_{0}\Ph$ of $\Ph$ at $u=0$ is the weighted Fourier transform,
\[
  \ddd_{0}\Ph\colon\Hs_{0}^{s}\to h_{0}^{s+1/2},\qquad
  u\mapsto \p*{ \p*{2\abs{n}\pi}^{-1/2} u_{n} }_{n\neq 0}.~\fish
\]
\end{equivenum}
\end{thm}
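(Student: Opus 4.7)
Since the theorem collects three distinct results from the cited monograph and papers, I would approach the proof in three stages, each corresponding to one of the references, and treat the four claims (i)--(iv) in parallel along the way.

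The plan is to build $\Phi$ from the Floquet spectrum of the Schr\"odinger (Hill) operator $L_u = -\partial_x^2 + u$, which is the second factor of the Lax pair for \eqref{kdv}. First, for $u$ in a complex neighborhood of $0$ in a sufficiently smooth Sobolev class, one studies the periodic/antiperiodic Dirichlet data and the discriminant $\Delta(\lambda,u) = \mathrm{tr}\,M(1,\lambda,u)$ coming from the monodromy matrix. The double eigenvalues $\lambda_{2n-1}(u) \le \lambda_{2n}(u)$ of $L_u$ on $[0,2]$, together with auxiliary Dirichlet eigenvalues $\mu_n(u)$, form the spectral data. One then defines, following McKean--Trubowitz and Flaschka--McLaughlin, actions by gap integrals
\[
  I_n(u) \defl \frac{2}{\pi}\int_{\lambda_{2n-1}(u)}^{\lambda_{2n}(u)} \frac{\lambda\,\dot\Delta(\lambda,u)}{\sqrt[c]{\Delta(\lambda,u)^{2}-4}}\,\dlm,
\]
and conjugate angles $\theta_n(u)$ via the Abel map of the (generically infinite-genus) spectral curve, using a normalized basis of holomorphic differentials. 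Setting $z_n = \sqrt{I_n}\,\e^{-\ii\theta_n}$ and $z_{-n} = \sqrt{I_n}\,\e^{\ii\theta_n}$ yields a candidate $\Phi$.

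The second stage is to verify (ii)--(iv) on a smooth scale, say $\Hs_0^s$ with $s \ge 0$. The canonicity $\pbr{z_n,z_{-n}} = \ii$ reduces to the standard gap/angle Poisson relations, which are established by explicit contour-integration identities involving $\dot\Delta$ and the product representation of $\Delta^2 - 4$. The identity $\pbr{I_n,I_m} = 0 = \pbr{\theta_n,\theta_m}$ for $n \ne m$ follows from the same calculus once the residues at the band edges are matched. Since $\Hm_1$ and $\Hm_2$ are spectral invariants (trace formulas expressing them via $\Delta$), they automatically depend on the $I_n$'s only, which gives (iii); the action-angle structure implies the KdV hierarchy linearizes in these coordinates. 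For (iv), I would linearize $L_u$ at $u = 0$ where $\lambda_{2n-1} = \lambda_{2n} = (n\pi)^{2}$, compute $\partial_u \lm_{2n}(0)$ and $\partial_u\lm_{2n-1}(0)$ via first-order perturbation theory in $u$, and read off that $\ddd_0 \Ph$ is the claimed weighted Fourier transform, which at the same time proves local injectivity and tangential bijectivity at the origin.

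The third and genuinely hard stage is the extension to $s = -1$ and to the complex neighborhood $\Ws \subset \Hs_{0,\C}^{-1}$. The point is that the gap integrals above are \emph{a priori} defined only for $u$ smooth enough for the Floquet discriminant to be globally meromorphic with a controllable distribution of band edges. Following the strategy of the second and third references, one proves sharp asymptotics
\[
  \lm_{2n}(u) - \lm_{2n-1}(u) = \o(\lin{n}^{-2s-1}),\qquad
  \mu_n(u) = (n\pi)^{2} + \o(\lin{n}^{-2s}),
\]
uniformly on bounded sets of $\Hs^{s}_{0,\C}$, by resolvent methods and Counting Lemma type arguments. This yields the correct sequence-space target $h_{0,\C}^{s+1/2}$. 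Combined with a Kato-type analytic continuation of the spectral data and a uniform lower bound on the derivative $\ddd_u\Ph$, one extends $\Ph$ analytically from the smooth scale down to $\Ws$, preserves canonicity by density, and promotes the local diffeomorphism at $0$ to a global one on each symplectic leaf $\Hs^{s}_0$ via a topological degree / Fredholm-alternative argument paired with properness of the action map. The central obstacle is precisely this last point: controlling the spectral asymptotics and the invertibility of $\ddd_u\Ph$ uniformly down to $s = -1$, where the standard trace-class estimates on $L_u - L_0$ fail and one must work with renormalized Fredholm determinants.
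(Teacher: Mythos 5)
This theorem is not proved in the paper at all: it is quoted from the cited references \cite{Kappeler:2003up,Kappeler:2005fb,Kappeler:2008fl}, so your proposal can only be measured against the strategy of those works. At the level of architecture your outline is faithful to it: spectral data of the Hill operator, actions as contour integrals of $\lm\dDl/\sqrt[c]{\Dl^{2}-4}$, angles built from the Dirichlet divisor and the normalized differentials, verification of (ii)--(iv) on a smooth scale, then extension to $\Hs_{0}^{-1}$ and to a complex neighborhood.

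Two concrete points would, however, sink the sketch as written. First, the spectral asymptotics you propose are wrong in both directions: the correct statement (cf.\ \eqref{per-ev-est-sp} in the paper) is the weighted square-summability $\gm_{n}=\lin{n}^{-s}\ell_{n}^{2}$, not a pointwise rate $\o(\lin{n}^{-2s-1})$. At $s=0$ your rate $\o(\lin{n}^{-1})$ is false for general $L^{2}$ potentials, whose gap lengths are merely $\ell^{2}$; at $s=-1$ your rate $\o(\lin{n})$ is too weak to place $\Ph(u)$ in $h_{0}^{-1/2}$. Second, and more fundamentally, you define $z_{\pm n}=\sqrt{I_{n}}\,\e^{\mp\ii\th_{n}}$ and then propose to carry properties over ``by density'', but the central difficulty of the construction is that $\th_{n}$ is undefined and $\sqrt{I_{n}}$ is not differentiable on the subvariety $Z_{n}=\setdef{u}{\gm_{n}(u)=0}$, which contains $u=0$. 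Analyticity of $z_{\pm n}$ across $Z_{n}$ --- the heart of \cite{Kappeler:2005fb}, and a prerequisite even for stating (iv), since $\ddd_{0}\Ph$ is taken at a point lying in every $Z_{n}$ --- cannot be obtained by density or continuity; one must show that the particular combination $\sqrt{I_{n}}\,\e^{\mp\ii\th_{n}}$ extends analytically although neither factor does. Relatedly, ``first-order perturbation theory for $\lm_{2n}(0)$, $\lm_{2n-1}(0)$'' is not literally available: the individual band edges are not differentiable in $u$ at a collapsed gap (only symmetric functions such as $\tau_{n}$ and $\gm_{n}^{2}$ are analytic there), so item (iv) has to be extracted from those analytic quantities rather than from the eigenvalues themselves.
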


The equations of motion of~\eqref{kdv} and~\eqref{kdv2}, expressed in Birkhoff coordinates are
\[
  \partial_{t} z_{n} = -\ii \om_{n}^{(j)}z_{n},\qquad
  \partial_{t} z_{-n} = \ii \om_{n}^{(j)}z_{-n},\qquad \forall n\ge 1,
\]
where $\om_{n}^{(j)}$, $n\ge 1$, are the frequencies corresponding to $\Hm_{j}$, $j=1,2$,
\[
  \om_{n}^{(j)} = \partial_{I_{n}}\Hm_{j}.
\]
Let $\ell_{+}^{s,p}(\N)$ denote the positive quadrant of $\ell^{s,p}(\N)$ given by
\begin{equation}
  \label{ellp-pos-quad}
  \ell_{+}^{s,p}(\N) = \setdef{I=(I_{n})_{n\ge 1}\in \ell^{s,p}(\N)}{I_{n}\ge 0}.
\end{equation}
The KdV frequencies $(\om_{n}^{(1)})_{n\in\Z}$ are defined on $\ell_{+}^{3,1}(\N)$, whereas the KdV2 frequencies $(\om_{n}^{(2)})_{n\in\Z}$ are defined on $\ell_{+}^{5,1}(\N)$.  They are real analytic and admit the following expansions at $I=0$, reviewed in Appendix~\eqref{app:bnf-kdv2},
\begin{align}
  \label{exp-om-kdv1}
  \om_{n}^{(1)} &= (2n\pi)^{3} - 6I_{n} + \dotsb,\\
  \label{exp-om-kdv2}
  \om_{n}^{(2)} &= (2n\pi)^{5} + 20(2n\pi)\Hm_{0} - 20(2n\pi)^{2}I_{n} + \dotsb,
\end{align}
where the dots stand for higher order terms in $I$ and
\[
  \Hm_{0} = \frac{1}{2}\int_{0}^{1} u^{2}\,\dx = \sum_{n\ge 1} (2n\pi) I_{n}.
\]
In order to state our results on the analytic extensions of $\om_{n}^{(j)}$, $n\ge 1$, we need to normalize the frequencies as follows
\begin{align}
  \label{exp-om-star-kdv1}
  \om_{n}^{(1)\star} &= \om_{n}^{(1)} - (2n\pi)^{3} = - 6I_{n} + \dotsb,\\
  \label{exp-om-star-kdv2}
  \om_{n}^{(2)\star} &= \om_{n}^{(2)} - (2n\pi)^{5} - 20(2n\pi)\Hm_{0} = - 20(2n\pi)^{2}I_{n} + \dotsb.
\end{align}
Note that $\om_{n}^{(j)\star} = \partial_{I_{n}} \Hm_{j}^{\star}$, with $\Hm_{j}^{\star}$, $j=1,2$, denoting the renormalized Hamiltonians given by
\begin{equation}
  \label{Hm-renormalized}
  \Hm_{1}^{\star} = \Hm_{1} - \sum_{n\ge 1} (2n\pi)^{3}I_{n},\qquad
  \Hm_{2}^{\star} = \Hm_{2} - \sum_{n\ge 1} (2n\pi)^{5}I_{n} - 10\Hm_{0}^{2}.
\end{equation}
The results for the frequency maps $\om^{(1)\star} = (\om_{n}^{(1)\star})_{n\ge 1}$ and $\om^{(2)\star} = (\om_{n}^{(2)\star})_{n\ge 1}$ are now stated separately: It has been shown in \cite{Kappeler:2005fb} that each $\om_{n}^{(1)\star}$, $n\in\Z$, extends to a real analytic map on a complex neighborhood $\Vs$ of $\ell^{-1,1}$.
%\note{jm: it was not shown in [13] that $\om^{(1)}$ is in $\ell^{\infty}$. In fact, the best possible $\ell^{\infty}$ space would be $\ell^{-1,\infty}$ anyway!}

Our novel approach allows to obtain sharp asymptotics of $\om_{n}^{(1)\star}$ as $n\to \infty$ and at the same time yields a direct proof of their analytic extensions.

\begin{thm}[\boldmath Extension \& asymptotics of $\om^{(1)\star}$]
\label{thm:kdv-freq}
\begin{equivenum}
\item
The map $\om^{(1)\star}$ extends to a map on $\ell_{+}^{-1,1}$ with values in $\bigcap_{r > 1}\ell^{-1,r}$ and its restrictions
\[
  \om^{(1)\star}\colon \ell_{+}^{2s+1,1}\to
  \begin{cases}
  \ell^{-1,r}, & s = -1,\phantom{/2} \quad r > 1,\\
  \ell^{2s+1,1}, & -1 < s < -1/2,\\
  \ell^{r}, & s\ge -1/2, \quad r > 1,
  \end{cases}
\]
are real analytic.

\item
For any $-1 < s < -1/2$ and any $I\in\ell_{+}^{2s+1,1}$, the linear operator  $\ddd_{I}\om^{(1)\star} + 6\Id\colon \ell^{2s+1,1}\to \ell^{2s+1,1}$ is compact.

\item On $\ell_{+}^{2s+1,1}$, the frequencies $\om_{n}^{(1)\star}$, $n\ge 1$, have the following asymptotics
\[
  \om_{n}^{(1)\star} + 6I_{n} =
  \begin{cases}
  o(n^{3\abs{s}-2}), & -1 \le s < -1/3,\\
  O(n^{-1}),         & \phantom{ -1 \le }\; s\ge -1/3,
  \end{cases}
\]
which hold locally uniformly in a complex neighborhood of $\ell_{+}^{2s+1,1}$.

\item
Furthermore, the restriction of $\om^{(1)\star}$ to $\ell_{+}^{2}$ takes values in $\ell^{2}$, the map $\om^{(1)\star}\colon \ell_{+}^{2}\to \ell^{2}$ is real analytic, and $\ddd_{I}\om^{(1)\star} + 6\Id\colon \ell^{2}\to \ell^{2}$ is compact for any $I\in\ell_{+}^{2}$.~\fish
\end{equivenum}
\end{thm}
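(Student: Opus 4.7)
\emph{Proof strategy.} The plan is to exploit the novel representation formula for $\om_n^{(1)\star}$ announced at the end of the introduction, which rewrites each normalized frequency in the form
\[
  \om_n^{(1)\star}(I) = -6I_n + R_n(I),
\]
where $R_n(I)$ is given by an absolutely convergent series (or contour integral) whose natural domain of convergence is $\ell_{+}^{-1,1}$, rather than the classical $\ell_{+}^{3,1}$. All four parts of the theorem then reduce to an analysis of $R_n$ and its first differential.

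To establish part~(i) I would verify that for each $s\ge -1$ the series defining $R_n$ converges absolutely and locally uniformly on an open complex neighborhood $\Vs\subset \ell_{0,\C}^{2s+1,1}$ of $\ell_{+}^{2s+1,1}$, with sum holomorphic in $I$. Real analyticity on the real slice then follows from the standard criterion that a locally bounded, weakly analytic map between complex Banach spaces is analytic. The three target regimes reflect both the behavior of the linear part $I\mapsto -6I$ and the finer decay of $R_n$ from part~(iii): for $s\in(-1,-1/2)$ the linear map preserves $\ell^{2s+1,1}$; at $s=-1$ a summability loss forces the weaker target $\ell^{-1,r}$, $r>1$; and for $s\ge -1/2$ the much stronger pointwise bound of part~(iii) places the remainder in every $\ell^{r}$, $r>1$.

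Next I would prove the sharp asymptotic of part~(iii) by splitting the series for $R_n$ into a low-frequency piece ($k\ll n$) and a high-frequency piece ($k\gtrsim n$). In the low-frequency regime, an oscillation/cancellation present in the novel representation produces a factor of size $n^{-1}$, yielding $O(n^{-1})$ for every $s\ge -1/3$. In the high-frequency regime the estimate is linear in the tail
\[
  T_n \defl \sum_{k\ge n/2} \lin{k}^{2s+1}\abs{I_k},
\]
which tends to $0$ for any $I\in\ell_{+}^{2s+1,1}$; this is what upgrades $O(\cdot)$ to $o(\cdot)$ in the range $-1\le s<-1/3$. Because the same splitting is valid on the complex extension $\Vs$, the asymptotics are locally uniform there.

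Finally, parts~(ii) and~(iv) follow from a compactness argument applied to $\ddd_{I}R \defl \ddd_{I}\om^{(1)\star} + 6\,\Id$. Differentiating the representation formula term by term (justified by absolute convergence) and transferring the asymptotic of~(iii) to the differential by Cauchy-type estimates, one shows that the frequency-$N$ truncations of $\ddd_{I}R$ converge to $\ddd_{I}R$ in operator norm as $N\to\infty$; since each truncation has finite rank, $\ddd_{I}R$ is compact. The $\ell^{2}$ case of part~(iv) is identical up to a Cauchy--Schwarz step adapting the target norm. The principal obstacle will be the sharp asymptotic of step~3: securing the little-$o$ (as opposed to the routine big-$O$) in the regime $-1\le s<-1/3$, uniformly on bounded sets of $\ell_{+}^{2s+1,1}$ and on a fixed complex neighborhood thereof. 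The coefficients in the novel representation originate from resolvent kernels of the Schr\"odinger operator behind KdV and carry a subtle cancellation structure that must be exploited with care to beat the naive bound and produce genuine decay of the normalized tail.
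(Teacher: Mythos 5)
Your outline reproduces the paper's actual route in all essentials: the paper's representation is $\om_{n}^{(1)\star}=-12\sum_{k\ge 1}k\,\Om_{nk}^{(2)}$ with $\Om_{nk}^{(2)}=\int_{\Gm_{k}}F_{k}^{2}\psi_{n}/\sqrt[c]{\Dl^{2}-4}\,\dlm$, each term analytic on a complex neighborhood of $\Hs_{0}^{-1}$; the asymptotics come from separating the diagonal term $k=n$ from the off-diagonal sum, the latter controlled by discrete Hilbert transform estimates exactly along your low/high frequency split; and the little-$o$ at the endpoint is obtained as you predict. One structural caveat: the split $\om_{n}^{(1)\star}=-6I_{n}+R_{n}(I)$ is not available a priori. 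In the paper the $-6I_{n}$ emerges only a posteriori from the diagonal moment $n\Om_{nn}^{(2)}=\frac{\gm_{n}^{2}}{16n\pi}(1+\cdots)$ combined with the spectral asymptotics $8n\pi I_{n}/\gm_{n}^{2}=1+\cdots$, and establishing these requires the refined estimates on $\lm_{n}^{\ld}$, on the roots $\sg_{k}^{n}$ of the $\psi_{n}$-functions, and on $F_{n}$ near $G_{n}$ that occupy most of Section~2. That is where the bulk of the work you defer to ``subtle cancellation structure'' actually lives, but as a strategy this is the same strategy.

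There is, however, one concrete gap: your finite-rank truncation argument for compactness closes part (ii) but not part (iv). For $-1<s<-1/2$ the remainder gains a power of the weight -- $\Lm_{I}=\ddd_{I}\om^{(1)\star}+6\Id$ maps $\ell^{2s+1,1}$ boundedly into $\ell^{3s+2,1}$, or into $\ell^{r}$ for $r>1$ small, and both embed compactly into $\ell^{2s+1,1}$, so $\n{\pi_{N}^{\bot}\Lm_{I}}\to 0$ and truncations do converge in operator norm. On $\ell^{2}$, by contrast, the asymptotics of (iii) only give that $\Lm_{I}$ is bounded from $\ell^{2}$ into $\ell^{r}$ for some $1<r<2$, with no gain in the weight $n$; the inclusion $\ell^{r}\opento\ell^{2}$ is not compact, and boundedness into $\ell^{r}$ does not force $\n{\pi_{N}^{\bot}\Lm_{I}}_{\ell^{2}\to\ell^{2}}\to 0$ (uniform smallness of tails over the image of the unit ball is precisely what is at stake). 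The paper closes this case by invoking Pitt's theorem -- every bounded operator $\ell^{p}\to\ell^{q}$ with $q<p$ is compact -- and your ``Cauchy--Schwarz step adapting the target norm'' is not a substitute for it. You either need Pitt's theorem here or a genuinely stronger decay estimate on the remainder than the one (iii) provides.
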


An extended version of Theorem~\ref{thm:kdv-freq} can be found in Section~\ref{s:kdv1} Theorem~\ref{kdv-freq-as}. Theorem~\ref{thm:kdv-freq} has several applications.
One application concerns convexity properties of the KdV Hamiltonian. Recall that in \cite[Theorem~1]{Kappeler:CNzeErmy} we proved a conjecture of \citet{Korotyaev:2011tw} saying that the Hamiltonian $\Hm_{1}^{\star}$ admits a real analytic extension to $\ell_{+}^{2}$ and that $\ddd_{I}^{2}\Hm_{1}^{\star}\big|_{I=0} = -6\Id$. It implies that $\Hm_{1}^{\star}$ is strictly concave near $I=0$. Since $\Hm_{1}^{\star}$ is known to be concave on the positive quadrant $\ell_{+}^{2}$ (cf \cite{Korotyaev:2011tw}), the question arose whether $\Hm_{1}^{\star}$ is strictly concave on all of $\ell_{+}^{2}$. Theorem~\ref{thm:kdv-freq} implies that by and large, this indeed holds.

\begin{thm}
The renormalized KdV Hamiltonian $\Hm_{1}^{\star}\colon \ell_{+}^{2}\to \R$ is strictly concave on an open and dense subset $\Oc$ of $\ell_{+}^{2}$ containing $I=0$. It means that for any $I\in \Oc$,
\[
  \ddd_{I}^{2}\Hm_{1}^{\star}(J,J) \le -c\n{J}_{\ell^{2}}^{2},\qquad \forall J\in \ell^{2},
\]
where the constant $c>0$ can be chosen locally uniformly in $I$.~\fish
\end{thm}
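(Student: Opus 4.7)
\emph{Proof proposal.} The plan is to rewrite the Hessian of $\Hm_1^\star$ as a compact perturbation of $-6\Id$ on $\ell^2$, so that strict concavity becomes the spectral condition that $6$ is not an eigenvalue of the compact part, and then to establish openness by continuity and density via the analytic Fredholm theorem.

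Identifying the Hessian with a bounded self-adjoint operator on $\ell^2$ through $(\ddd_I^2\Hm_1^\star)_{mn} = \partial_{I_m}\partial_{I_n}\Hm_1^\star = \partial_{I_m}\om_n^{(1)\star}$, one has $\ddd_I^2\Hm_1^\star = \ddd_I\om^{(1)\star}$, and Theorem~\ref{thm:kdv-freq}(iv) yields
\[
  \ddd_I^2\Hm_1^\star = -6\Id + K_I,\qquad K_I \defl \ddd_I\om^{(1)\star} + 6\Id,
\]
with $K_I$ compact and self-adjoint on $\ell^2$ and $I \mapsto K_I$ real analytic into $\mathcal{K}(\ell^2)$ on $\ell_+^2$. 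Concavity of $\Hm_1^\star$ gives $\ddd_I^2\Hm_1^\star \le 0$, whence $\sup\spec K_I \le 6$. Because the essential spectrum of the compact operator $K_I$ is $\{0\}$, the inequality $\ddd_I^2\Hm_1^\star \le -c\Id$ with $c > 0$ is equivalent to $\mu_{\max}(I) \defl \sup\spec K_I < 6$, in which case one may take $c = 6 - \mu_{\max}(I)$; continuity of $I \mapsto K_I$ in operator norm makes $c$ locally uniform in $I$. Set $\Oc \defl \setdef{I \in \ell_+^2}{\mu_{\max}(I) < 6}$. Then $\Oc$ is open by continuity of $\mu_{\max}$, and $0 \in \Oc$ since $\om_n^{(1)\star} = -6I_n + \dotsb$ implies $K_0 = 0$.

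For density, fix $I_0 \in \ell_+^2$ and consider the family $A(z) \defl K_{(1-z)I_0}$. Because $\om^{(1)\star}$ extends analytically to a complex neighborhood of $\ell_+^2$ and $\mathcal{K}(\ell^2)$ is closed in $\Bc(\ell^2)$, $A$ is holomorphic on a connected open set $D \subset \C$ containing $[0,1]$ with values in $\mathcal{K}(\ell^2)$. Since $A(1) = K_0 = 0$, the operator $\Id - A(1)/6 = \Id$ is invertible, so the analytic Fredholm theorem applied to $z \mapsto \Id - A(z)/6$ asserts that $\Sigma \defl \setdef{z \in D}{6 \in \spec A(z)}$ is discrete in $D$, hence finite on $[0,1]$. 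In particular, for every $\epsilon > 0$ there exists $z \in (0,\epsilon) \setminus \Sigma$, giving $(1-z)I_0 \in \Oc$ arbitrarily close to $I_0$.

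The principal technical point is verifying that $I \mapsto K_I$ extends to a holomorphic $\mathcal{K}(\ell^2)$-valued family in a complex tubular neighborhood of each real segment of $\ell_+^2$; this follows from the real analyticity and complex extension of $\om^{(1)\star}$ guaranteed by Theorem~\ref{thm:kdv-freq} together with the closedness of $\mathcal{K}(\ell^2)$ in $\Bc(\ell^2)$ (which forces the Taylor coefficients of $K$ at real points, and hence its local complex continuations, to remain compact). With this structural fact in hand, the classical analytic Fredholm theorem closes the argument without any further estimates.
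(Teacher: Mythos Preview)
Your proof is correct and follows essentially the same logical skeleton as the paper: identify $\ddd_I^2\Hm_1^\star = \ddd_I\om^{(1)\star} = -6\Id + K_I$ with $K_I$ compact, use the known concavity $\ddd_I^2\Hm_1^\star \le 0$ to reduce strict concavity to invertibility of $\ddd_I\om^{(1)\star}$, and then show invertibility holds on an open dense set by an analytic Fredholm-type argument anchored at $I=0$ where $K_0 = 0$.

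The one genuine difference is in how the density step is executed. You invoke the classical analytic Fredholm theorem directly on the one-parameter family $z\mapsto \Id - K_{(1-z)I_0}/6$, which is clean and uses the Hilbert space structure of $\ell^2$. The paper instead routes through its Theorem~\ref{om1-diffeo} (with $s=-1/2$, $p=4$) and the general Proposition~\ref{diffeo-prop}, where the Fredholm alternative is proved by hand via a Schur complement: one chooses $N$ so that $\n{\pi_N^\perp K_I}<1/2$ uniformly on a neighborhood of the segment $[0,I_0]$, reducing invertibility to the nonvanishing of a finite-dimensional analytic determinant. Your route is shorter for the present $\ell^2$ statement; the paper's Schur complement argument has the advantage of working verbatim in $\ell^{s,p}$ for $p\neq 2$, which is why it is set up that way for reuse in Corollary~\ref{cor:kdv-freq-localdiffeo} and the KdV2 analogue. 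Both approaches rely on exactly the same structural inputs, namely compactness of $K_I$ from Theorem~\ref{thm:kdv-freq}(iv), analyticity of $\om^{(1)\star}$ on a complex neighborhood of $\ell_+^2$, and the cited concavity of $\Hm_1^\star$.
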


The compactness of $\ddd\om^{(1)\star}+6\Id$ together with the analyticity of the frequencies $\om_{n}^{(1)\star}$ and their asymptotics, obtained in Theorem~\ref{thm:kdv-freq}, lead to the following result on the frequency map.

\begin{cor}
\label{cor:kdv-freq-localdiffeo}
For any $-1 < s < -1/2$, the map $\om^{(1)\star}\colon \ell_{+}^{2s+1,1}\to \ell^{2s+1,1}$ is a local diffeomorphism on an open and dense subset of $\ell_{+}^{2s+1,1}$ containing $I=0$.~\fish
\end{cor}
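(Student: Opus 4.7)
The plan is to combine part~(ii) of Theorem~\ref{thm:kdv-freq} with the analytic Fredholm theorem and the real-analytic inverse function theorem on Banach spaces. By (ii), for every $I \in \ell_{+}^{2s+1,1}$ we may write $\ddd_{I}\om^{(1)\star} = -6\,\Id + K(I)$ with $K(I) \in \Lc(\ell^{2s+1,1})$ compact, so that $\ddd_{I}\om^{(1)\star}$ is Fredholm of index zero and invertibility is equivalent to injectivity. Set
\[
  \Oc \defl \setdef{I \in \ell_{+}^{2s+1,1}}{\ddd_{I}\om^{(1)\star} \text{ is invertible}}.
\]
The expansion~\eqref{exp-om-star-kdv1} gives $\ddd_{0}\om^{(1)\star} = -6\,\Id$, hence $0 \in \Oc$. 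Openness of $\Oc$ follows from the continuity (in fact analyticity, by (i)) of $I \mapsto \ddd_{I}\om^{(1)\star}$ together with the openness of the invertible operators in $\Lc(\ell^{2s+1,1})$. At any $I \in \Oc$, the real-analytic inverse function theorem on Banach spaces then provides the required local diffeomorphism.

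To establish density of $\Oc$, I fix $I^{*} \in \ell_{+}^{2s+1,1}$ and consider the real analytic family
\[
  A(t) \defl \Id + \tfrac{1}{6} K(tI^{*}), \qquad t \in [0,1].
\]
Using the holomorphic extension of $\om^{(1)\star}$ to a complex neighborhood of $\ell_{+}^{2s+1,1}$ (available by (i) and already exploited for the locally uniform estimates in (iii)), the family $A$ extends to a holomorphic $\Lc(\ell^{2s+1,1})$-valued family on a connected complex neighborhood $U$ of $[0,1]$. Since $A(0) = \Id$ is invertible, the analytic Fredholm theorem forces the set of $t \in U$ at which $A(t)$ fails to be invertible to be discrete in $U$. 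Consequently there exist $t \in [0,1]$ arbitrarily close to $1$ with $tI^{*} \in \Oc$, establishing density.

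The principal obstacle is to justify that the compactness of $K(I)$ persists along the holomorphic extension to complex arguments, a prerequisite for the analytic Fredholm theorem on a connected complex domain. I expect to extract this from the sharp asymptotics of (iii), which realize $K(I)$ as an operator-norm limit of finite-rank operators analytic in $I$; since those asymptotics hold locally uniformly in a complex neighborhood of $\ell_{+}^{2s+1,1}$, finite-rank approximability, and hence compactness, propagates to $K(I(t))$ for complex $t \in U$. An alternative would be to appeal to a version of the analytic Fredholm theorem requiring only compactness along the real trace combined with holomorphy of the family in operator norm.
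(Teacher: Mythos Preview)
Your approach is correct and reaches the same conclusion as the paper, but the density argument follows a different route. The paper packages the density step into a general criterion (Proposition~\ref{diffeo-prop} in Appendix~\ref{app:diffeo}): given a real analytic $f$ with $\ddd_{z}f-\Id$ compact everywhere and $\ddd_{z_{0}}f$ invertible at one point, cover the segment $[z_{0},z_{1}]$ by finitely many balls, use compactness (via Lemma~\ref{rel-comp-ellp}) to choose $N$ so that the infinite-dimensional block $D_{z}$ of $T_{z}=\ddd_{z}f-\Id$ satisfies $\n{D_{z}}\le 1/2$ uniformly on a complex neighbourhood $U$ of the segment, and then reduce invertibility of $\ddd_{z}f$ to the nonvanishing of the finite-dimensional Schur determinant $\lm(z)=\det S_{z}^{N}$, a scalar analytic function on $U$ nonzero at $z_{0}$. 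You replace this Schur reduction by a direct application of the analytic Fredholm theorem to the one-parameter family $A(t)$.

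Both arguments consume the same inputs (analyticity on a complex neighbourhood, compactness of $K(I)$, invertibility at $I=0$). The paper's route is more elementary and, notably, sidesteps the obstacle you flag: the Schur argument only needs the \emph{norm bound} $\n{D_{z}}\le 1/2$ to persist on the complex neighbourhood, which follows by continuity, and never requires compactness of $K$ at complex $I$. Your obstacle is nonetheless removable: the asymptotics in Theorem~\ref{thm:kdv-freq}~(iii) are stated to hold locally uniformly on a complex neighbourhood of $\ell_{+}^{2s+1,1}$, so Cauchy's estimate gives the same factorisation of $K(I)$ through a compact embedding for complex $I$; alternatively, since $t\mapsto K(tI^{*})$ is operator-norm analytic into $\Lc(\ell^{2s+1,1})$ and takes values in the closed subspace of compact operators on the real interval $[0,1]$, composing with the quotient map to the Calkin algebra and invoking the identity theorem for Banach-space-valued analytic functions forces compactness throughout the complex domain.
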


The asymptotics of $\om_{n}^{(1)\star}$ of Theorem~\ref{thm:kdv-freq} (iii) lead to the following

\begin{cor}
\label{cor:kdv-L2-freq-O}
\begin{equivenum}
\item
On $\Hs_{0}^{s}$, $-1\le s\le 0$, one has
\begin{equation}
  \label{omn-1-as}
  \om_{n}^{(1)}
  =
  8n^{3}\pi^{3} +
  \begin{cases}
  o(n^{-2s-1}), & -1 \le s < 0,\\
  O(n^{-1}), & s = 0,
  \end{cases}
\end{equation}
where the implicit constant can be chosen locally uniformly in $q\in\Hs_{0}^{s}$ and uniformly in $n\ge 1$.

\item
The estimate is sharp for $-1 < s < 0$ in the sense that $\om_{n}^{(1)} = 8n^{3}\pi^{3} + O(n^{\al})$ does not hold for any $\al < -2s-1$.
In particular, $\om_{n}^{(1)} = 8n^{3}\pi^{3} + O(1)$ does \emph{not} hold for $-1 < s < -1/2$.
\item
For $s=0$, the statement~\eqref{omn-1-as} holds uniformly on bounded subsets of $L_{0}^{2}$.~\fish
\end{equivenum}
\end{cor}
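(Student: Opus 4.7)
The plan is to transport the action-space asymptotics of Theorem~\ref{thm:kdv-freq}(iii) to $\Hs_{0}^{s}$ via the Birkhoff map $\Ph$ of Theorem~\ref{bhf}. For $u\in\Hs_{0}^{s}$, write $(z_{n})=\Ph(u)\in h_{0}^{s+1/2}$ and $I_{n}=z_{n}z_{-n}=\abs{z_{n}}^{2}$. Then
\[
  \n{I}_{\ell^{2s+1,1}} = \sum_{n\ge 1}\lin{n}^{2s+1}\abs{z_{n}}^{2} \le \n{z}_{h^{s+1/2}}^{2},
\]
so $u\mapsto I$ is locally bounded from $\Hs_{0}^{s}$ into $\ell_{+}^{2s+1,1}$, and summability forces the pointwise decay $I_{n}=o(n^{-(2s+1)})$. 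At the critical exponent $s=0$ the Casimir-type bound $nI_{n}\le \sum_{m\ge 1}mI_{m}=(4\pi)^{-1}\n{u}_{L^{2}}^{2}$ gives the stronger $I_{n}=O(n^{-1})$ uniformly on bounded subsets of $L_{0}^{2}$.

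\textbf{Proof of (i) and (iii).} I decompose $\om_{n}^{(1)}-8n^{3}\pi^{3} = -6I_{n} + (\om_{n}^{(1)\star}+6I_{n})$ and add the two rates. For $-1\le s<-1/3$, Theorem~\ref{thm:kdv-freq}(iii) supplies $\om_{n}^{(1)\star}+6I_{n}=o(n^{-3s-2})$; since $-3s-2\le-2s-1$ whenever $s\ge-1$, this is in particular $o(n^{-2s-1})$, and together with $-6I_{n}=o(n^{-2s-1})$ it yields the desired estimate. For $-1/3\le s<0$ the remainder is $O(n^{-1})=o(n^{-2s-1})$, because $-1<-2s-1$. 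At $s=0$ both contributions are $O(n^{-1})$, producing the $O(n^{-1})$ bound, which is uniform on bounded subsets of $L_{0}^{2}$ by the Casimir estimate; this gives (iii). Local uniformity in $q\in\Hs_{0}^{s}$ descends from the corresponding property in Theorem~\ref{thm:kdv-freq}(iii) together with the continuity of $\Ph$.

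\textbf{Proof of (ii) (sharpness).} The idea is to realize a lacunary action sequence via $\Ph^{-1}$. Given $-1<s<0$ and $\al<-(2s+1)$, set $n_{k}=2^{k}$ and
\[
  I_{n_{k}}=\frac{n_{k}^{-(2s+1)}}{k^{2}},\qquad I_{n}=0\ \text{otherwise},
\]
so that $\sum_{n}\lin{n}^{2s+1}I_{n}=\sum_{k\ge 1}k^{-2}<\infty$, i.e.\ $I\in\ell_{+}^{2s+1,1}$. Lifting to $z\in h_{0}^{s+1/2}$ with $\abs{z_{n}}^{2}=I_{n}$ and setting $u=\Ph^{-1}(z)\in\Hs_{0}^{s}$ produces the candidate. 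Since $-(2s+1)>-3s-2$ and $-(2s+1)>-1$ throughout $(-1,0)$, the main term $6I_{n_{k}}\sim n_{k}^{-(2s+1)}/k^{2}$ dominates the corresponding remainder from Theorem~\ref{thm:kdv-freq}(iii), so
\[
  \abs{\om_{n_{k}}^{(1)}-8n_{k}^{3}\pi^{3}} \asymp \frac{n_{k}^{-(2s+1)}}{k^{2}};
\]
since $n_{k}^{-(2s+1)-\al}/k^{2}\to\infty$ for any $\al<-(2s+1)$, this is not $O(n_{k}^{\al})$, and the stated sharpness follows.

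\textbf{Main obstacle.} The most delicate point is interpreting the local uniformity of $I_{n}=o(n^{-(2s+1)})$ when $s<0$: pointwise little-$o$ on summable sequences does not come with a uniform rate on bounded sets, so the little-$o$ in (i) must be read as a $q$-dependent rate $\ep_{n}(q)\to 0$ that varies continuously in $q$ through $\Ph$. For $s=0$ the linear Casimir bound upgrades this to a genuine uniform $O(n^{-1})$ estimate on bounded subsets of $L_{0}^{2}$. Beyond this, the proof is a clean composition of Theorems~\ref{bhf} and~\ref{thm:kdv-freq}.
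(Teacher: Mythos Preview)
Your argument for (i) is essentially the paper's: both decompose $\om_{n}^{(1)\star}=-6I_{n}+(\om_{n}^{(1)\star}+6I_{n})$, apply Theorem~\ref{thm:kdv-freq}(iii) to the second term, and use $I\in\ell_{+}^{2s+1,1}$ (via continuity of $\Ph$) to get $I_{n}=o(n^{-2s-1})$ locally uniformly.

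For (ii) you take a different route. The paper simply observes that the leading term of $\om_{n}^{(1)\star}$ is $-6\gm_{n}^{2}/(8n\pi)$ (cf.~\eqref{omn-1-star-est-admn}) and invokes the fact that the gap lengths $\gm_{n}=n^{-s}\ell_{n}^{2}$ can saturate this rate, so no improvement $O(n^{-2s-1-\ep})$ can hold locally uniformly. Your lacunary construction in action space lifted through $\Ph^{-1}$ is more explicit and gives a single potential at which the bound fails; the paper's spectral argument is shorter because it never leaves the gap lengths.

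For (iii) your Casimir bound $nI_{n}\le \Hm_{0}/(2\pi)=\n{u}_{L^{2}}^{2}/(4\pi)$ is a nicer way to get $I_{n}=O(n^{-1})$ uniformly on bounded subsets than the paper's, which instead cites the estimate $\gm_{n}^{2}/(8n\pi)=I_{n}+n^{-1}\ell_{n}^{2}$ uniformly on bounded subsets of $L_{0}^{2}$ from~\cite{Kappeler:2003up}. Note however one point you elide: your Casimir bound controls only the $-6I_{n}$ piece, while Theorem~\ref{thm:kdv-freq}(iii) gives $\om_{n}^{(1)\star}+6I_{n}=O(n^{-1})$ only \emph{locally} uniformly. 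Upgrading this remainder to uniformity on bounded subsets requires knowing that the underlying spectral estimates hold uniformly on $L^{2}$-bounded sets---which is true, but is precisely why the paper reaches for an external reference rather than relying on Theorem~\ref{thm:kdv-freq} alone.
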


\begin{rem}
The estimate $\om_{n}^{(1)} = 8n^{3}\pi^{3} + O(n^{-1})$ (uniformly on bounded subsets of $L_{0}^{2}$) improves on~\cite[Proposition~8.1]{Kappeler:2013bt} where by other techniques, the estimate was proved for $q$ in $\Hs_{0}^{1}$.\map
\end{rem}

Corollary~\ref{cor:kdv-L2-freq-O} leads to an improvement of the one-smoothing property of solutions of the KdV equation on the circle of~\cite{Erdogan:2013ve} and~\cite{Kappeler:2013bt}.
We again state our result only for the zero leaf $\Hs_{0}^{s}$.
Denote by $L$ the operator $L = -\partial_{x}^{3}$ of the Airy equation $\partial_{t}v = Lv$.

\begin{cor}
\label{cor:Airy-Kdv}
For any initial datum $q\in L_{0}^{2}$, denote by $u(t)$ the unique solution of the KdV equation with $u(0) = q$ and by $\e^{tL}q$ the solution of $\partial_{t}v = Lv$ with $v(0)=q$. Then
\[
  \n{u(t)-\e^{t L}q}_{\Hs^{1}} \le C(1+\abs{t}),\qquad \forall t\in\R.
\]
The constant $C >0$ can be chosen locally uniformly for $q\in L_{0}^{2}$.\fish
\end{cor}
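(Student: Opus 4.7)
The plan is to pass to Birkhoff coordinates and bracket the $\kdv$ flow by the diagonal pseudo-flow rotating at the linear Airy frequencies. Denote by $\Ph$ the Birkhoff map of Theorem~\ref{bhf}, by $F\defl \ddd_{0}\Ph$ its linearization at the origin, and by $D(t)$ the diagonal rotation $D(t)z_n \defl \e^{-\ii\Om_n t}z_n$ with $\Om_n \defl (2n\pi)^{3}$. Since the linearized $\kdv$ flow in Birkhoff coordinates reads $z_n(t) = \e^{-\ii\Om_n t}z_n(0)$ and $F$ intertwines it with the Airy evolution, one has $\e^{tL}q = F^{-1}(D(t)F(q))$, whereas the full $\kdv$ flow yields $z_n(t) = \e^{-\ii\om_n^{(1)}t}z_n(0)$ with $\om_n^{(1)}$ depending only on the conserved actions of $q$. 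Introducing the auxiliary curve $\tilde z(t) \defl D(t)\Ph(q)$ and $\tilde u(t) \defl \Ph^{-1}(\tilde z(t))$, I would split
\[
  u(t) - \e^{tL}q = \bigl(u(t) - \tilde u(t)\bigr) + \bigl(\tilde u(t) - \e^{tL}q\bigr)
\]
and estimate each piece in $\Hs^{1}$ separately.

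For the first piece, Corollary~\ref{cor:kdv-L2-freq-O}(i) gives $\abs{\om_n^{(1)}-\Om_n}\le Cn^{-1}$, locally uniformly in $q\in L_0^{2}$, hence
\[
  \abs{z_n(t)-\tilde z_n(t)} \le \abs{t}\,\abs{\om_n^{(1)}-\Om_n}\,\abs{z_n(0)} \le C\abs{t}n^{-1}\abs{z_n(0)}.
\]
Summing against the weight $\lin{n}^{3}$ yields $\n{z(t)-\tilde z(t)}_{h_0^{3/2}}\le C\abs{t}\n{z(0)}_{h_0^{1/2}}\le C'\abs{t}\n{q}_{L^{2}}$. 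To transfer this $h_0^{3/2}$-estimate into an $\Hs^{1}$-estimate on $u(t)-\tilde u(t)$, I would invoke the one-smoothing of the Birkhoff map, namely that $R \defl \Ph - F$ is real analytic as a map $L_0^{2}\to h_0^{3/2}$; this implies that $G \defl \Ph^{-1}-F^{-1}$ is real analytic and locally Lipschitz from $h_0^{1/2}$ to $\Hs_0^{1}$. Combined with the decomposition
\[
  \Ph^{-1}(z)-\Ph^{-1}(\tilde z) = F^{-1}(z-\tilde z) + G(z)-G(\tilde z)
\]
and the fact that $F^{-1}\colon h_0^{3/2}\to \Hs_0^{1}$ is, up to constants, an isometry, this gives $\n{u(t)-\tilde u(t)}_{\Hs^{1}}\le C\abs{t}\n{q}_{L^{2}}$.

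For the second piece, the decomposition $\Ph = F+R$ yields $\tilde z(t) = F(\e^{tL}q) + D(t)R(q)$, whence
\[
  \tilde u(t) - \e^{tL}q = F^{-1}\bigl(D(t)R(q)\bigr) + G\bigl(\tilde z(t)\bigr).
\]
Since $D(t)$ preserves each $h_0^{s}$ and $\n{R(q)}_{h_0^{3/2}}\le C\n{q}_{L^{2}}$, the first summand is $\Hs^{1}$-bounded by $C\n{q}_{L^{2}}$. For the second summand, local Lipschitz continuity of $G$ with $G(0)=0$ together with $\n{\tilde z(t)}_{h_0^{1/2}} = \n{z(0)}_{h_0^{1/2}} \le C\n{q}_{L^{2}}$ (the actions are preserved by $D(t)$) produces the same bound. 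Both estimates are uniform in $t$, so combining the two pieces yields $\n{u(t)-\e^{tL}q}_{\Hs^{1}}\le C(1+\abs{t})$ with $C$ locally uniform in $q\in L_0^{2}$.

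The hard part is not the frequency argument itself---which is a short and direct consequence of Corollary~\ref{cor:kdv-L2-freq-O}---but the use of the one-smoothing of the Birkhoff map $\Ph-\ddd_{0}\Ph\colon L_0^{2}\to h_0^{3/2}$, which is the ingredient that actually gains the Sobolev derivative needed in both pieces. This is a separate, by now classical result in the theory of $\kdv$ Birkhoff coordinates that I would cite from the prior literature rather than reprove; granted it, the corollary follows from the improved asymptotics of Corollary~\ref{cor:kdv-L2-freq-O}.
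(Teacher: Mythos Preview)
Your proof is correct and follows essentially the same approach as the paper: the paper simply refers to the argument of \cite[Theorem~8.2]{Kappeler:2013bt}, invoking Corollary~\ref{cor:kdv-L2-freq-O} for the frequency asymptotics and the one-smoothing property~\eqref{Phi-1-smoothing} of the Birkhoff map, and you have spelled out precisely that argument. The only minor imprecision is that bounds like $\n{R(q)}_{h_0^{3/2}}\le C\n{q}_{L^2}$ should read ``$\le C$ locally uniformly in $q$'' since $R$ and $G$ are nonlinear, but this does not affect the final estimate.
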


\begin{rem}
For initial data $q\in L_{0}^{2}$, the results in~\cite{Erdogan:2013ve} imply that for any $s < 1$,
 $\n{u(t)-\e^{tL}q}_{\Hs^{s}}$ is bounded by $C(s,\n{q}_{L^{2}})(1+\abs{t})$. In~\cite[Theorem~8.2]{Kappeler:2013bt} it is shown that for any $q\in \Hs_{0}^{N}$ with $N\in\Z_{\ge 1}$, $\n{u(t)-\e^{tL}q}_{\Hs^{1}} \le C(1+\abs{t})$ -- see~\cite[Appendix~B]{Kappeler:2013bt} for a detailed discussion.\map
\end{rem}

Finally, Theorem~\ref{thm:kdv-freq} can be used to answer the question left open for quite some time whether the KdV equation is $C^{k}$ wellposed for $k=1,2$ in $\Hs_{0}^{s}$ for any $-1 < s < -1/2$. We show in Theorem~\ref{thm:kdv-wp} that the answer is negative.

Let us now turn to the frequencies of the KdV2 equation. Recall from \cite{Battig:1997ek} that $\om^{(2)\star}$ admits a real analytic extension to a map $\ell_{+}^{3,1}\to \ell^{\infty}$. In fact, it is shown in \cite{Battig:1997ek} that $\om^{(2)\star}$ takes values in $c_{0}$. We improve this result as follows.

\begin{thm}[\boldmath Extension \& asymptotics of $\om^{(2)\star}$]
\label{thm:kdv2-freq}
The map $\om^{(2)\star}$ can be extended as a real analytic
\[
  \om^{(2)\star}\colon \ell_{+}^{2s+1,1}\to
  \begin{cases}
   \ell^{2s-1,1}, & -1 < s < 1/2,\\
  \ell^{r}, & s \ge 1/2,\quad r > 1,
  \end{cases}
\]
with asymptotics
\[
  \om_{n}^{(2)\star} + 20(2n\pi)^{2}I_{n}
  =
  \begin{cases}
  n^{-3s}\ell_{n}^{1}, & -1< s < 0,\\
  \ell_{n}^{1+}, & s \ge 0,
  \end{cases}
\]
which hold locally uniformly on a complex neighborhood of $\ell_{+}^{2s+1,1}$.
Here $\ell_{n}^{p}$, $1\le p \le \infty$, denotes a sequence of complex numbers which is $\ell^{p}$-summable and $\ell_{n}^{1+}$ one which is $\ell^{r}$ summable for any $r > 1$.~\fish
\end{thm}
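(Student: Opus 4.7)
The approach follows the pattern of Theorem~\ref{thm:kdv-freq}, adapted to the fifth-order scaling of the KdV2 Hamiltonian. The starting point is the novel representation of $\om_{n}^{(2)\star}$ as a residue/contour formula involving the Floquet discriminant of the Lax operator, analogous to the one used for $\om_{n}^{(1)\star}$ but carrying a higher-degree spectral weight reflecting that $\Hm_{2}$ is of order five. Inspecting this formula and extracting the principal term $-20(2n\pi)^{2}I_{n}$ predicted by~\eqref{exp-om-star-kdv2} reduces the problem to analytically extending and estimating the remainder
\[
  R_{n}(I) \defl \om_{n}^{(2)\star} + 20(2n\pi)^{2}I_{n}.
\]

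First I would establish the analyticity of $R_{n}$ on a complex neighborhood of $\ell_{+}^{2s+1,1}$. Via $\Ph^{-1}$ from Theorem~\ref{bhf} this amounts to an analyticity statement on a complex neighborhood of $\Hs_{0}^{s}$ (equivalently of $\FL_{0}^{s,1}$), which follows by combining the novel formula with the spectral-gap bounds in low regularity established in~\cite{Kappeler:2005fb}. Once analyticity is in hand, summability of the weighted sequence $(\lin{n}^{2s-1}R_{n})_{n}$ in $\ell^{1}$ (for $-1<s<1/2$) and in $\ell^{r}$ for every $r>1$ (for $s\ge 1/2$) is obtained by decomposing each $R_{n}$ into a ``diagonal'' contribution -- producing precisely the $-20(2n\pi)^{2}I_{n}$ term already absorbed by the renormalization -- and an ``off-diagonal'' contribution bounded via Cauchy--Schwarz together with the $\ell^{s+1/2,2}$-summability of the Hill gap lengths $\gm_{m}(u)$ inherited from Theorem~\ref{bhf}.

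The sharp asymptotics emerge from the same decomposition. For $-1<s<0$ the prefactor $(2n\pi)^{2}$ costs an extra $n^{2}$ relative to KdV, and the off-diagonal sum produces expressions of the type $n^{2}\sum_{m\neq n}\gm_{m}^{2}/|n^{2}-m^{2}|$; balancing powers against the weighted $\ell^{s+1/2,2}$-norm of $(\gm_{m})$ yields the advertised rate $n^{-3s}\ell_{n}^{1}$. For $s\ge 0$, the gap sequence lies in $\ell^{s+1/2,2}$ with $s+1/2\ge 1/2$, which is stronger than square summability by any logarithmic margin, and a direct estimate gives the $\ell_{n}^{1+}$ bound. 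Local uniformity in complex neighborhoods follows automatically from the analyticity of $R_{n}$ in $u$.

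The main obstacle will be the off-diagonal estimate in the range $-1<s<0$. A crude triangle inequality recovers only the $\ell^{\infty}$ bound already known from~\cite{Battig:1997ek}; reaching the sharp $\ell^{2s-1,1}$-summability requires exploiting the Hilbert-transform type cancellations built into the novel formula in order to avoid the logarithmic losses that would otherwise appear near the critical threshold $s=0$, and it is precisely this structural input that is not available in the earlier representation of the KdV2 frequencies.
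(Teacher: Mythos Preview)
Your plan is correct and follows essentially the same architecture as the paper: derive a contour formula from $\om_{n}^{(2)} = \{\Hm_{2},\th_{n}\}$ by expanding $F^{6}$ (the sixth power of the Floquet exponent, whose residue at infinity produces $\Hm_{2}-10\Hm_{0}^{2}$), split the resulting sum into diagonal ($k=n$) and off-diagonal ($k\neq n$) moments, and control the off-diagonal part with discrete Hilbert-transform bounds rather than Cauchy--Schwarz. The paper's concrete realization is the identity $\om_{n}^{(2)\star} = -160\pi^{2}\sum_{k\ge 1} k^{3}\Om_{nk}^{(2)} + 80\sum_{k\ge 1} k\Om_{nk}^{(4)}$, where the second-moment estimates $\Om_{nk}^{(2)}$ are recycled verbatim from the KdV analysis (so your heuristic $n^{2}\sum_{m\neq n}\gm_{m}^{2}/\abs{n^{2}-m^{2}}$ is replaced by the sharper bound coming from Proposition~\ref{prop:sg-lm} on $\sg_{k}^{n}-\lm_{k}^{\ld}$), and the new fourth moments $\Om_{nk}^{(4)}$ turn out to be subdominant.
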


\begin{rem}
Additional results on the extension of $\om^{(2)\star}$ to Fourier Lebesgue spaces can be found in Section~\ref{ss:KdV2-freq}.\map
\end{rem}

Theorem~\ref{thm:kdv2-freq} leads to the following result on the frequency map, useful to analyze perturbations of the KdV2 equation.

\begin{cor}
\label{cor:kdv2-freq-localdiffeo}
For any $-1< s < 1/2$, the map $\om^{(2)\star}\colon \ell_{+}^{2s+1,1}\to \ell^{2s-1,1}$ is a local diffeomorphism on an open and dense subset of $\ell_{+}^{2s+1,1}$ containing $I=0$.~\fish
\end{cor}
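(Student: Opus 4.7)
The plan is to mirror the (unstated) proof of Corollary~\ref{cor:kdv-freq-localdiffeo}: I will split off the leading diagonal part of $\om^{(2)\star}$, show that the remaining perturbation has a compact differential, and conclude via the analytic Fredholm alternative. Concretely, using~\eqref{exp-om-star-kdv2}, I write $\om^{(2)\star}=\Lm+R$, where $\Lm\colon(I_n)_{n\ge 1}\mapsto(-20(2n\pi)^{2}I_{n})_{n\ge 1}$ is a bounded invertible diagonal operator $\ell^{2s+1,1}\to\ell^{2s-1,1}$ and $R$ is a real analytic map starting at order two at $I=0$; hence $\ddd_{0}\om^{(2)\star}=\Lm$ is a topological isomorphism and the analytic inverse function theorem already yields a neighborhood of $I=0$ in $\ell_{+}^{2s+1,1}$ on which $\om^{(2)\star}$ is a local diffeomorphism. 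By Theorem~\ref{thm:kdv2-freq}, both $\om^{(2)\star}$ and $R$ extend analytically to a connected complex neighborhood $\Vs\subset\ell_{0,\C}^{2s+1,1}$ of $\ell_{+}^{2s+1,1}$, so the remaining task is to propagate invertibility from the origin to a dense subset of the positive cone.

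The main technical step is to show that for every $I\in\Vs$ the linear operator $\ddd_{I}R\colon\ell_{\C}^{2s+1,1}\to\ell_{\C}^{2s-1,1}$ is compact. For $-1<s<0$ the asymptotic $R_{n}(I)=n^{-3s}\ell_{n}^{1}$ of Theorem~\ref{thm:kdv2-freq} says that $R$ actually takes values in $\ell^{3s,1}$, and since $3s>2s-1$ the inclusion $\ell^{3s,1}\hookrightarrow\ell^{2s-1,1}$ is compact by the standard tail-cutoff argument for weighted $\ell^{1}$-spaces. A Cauchy integral on a small polydisc in $\Vs$ then transfers this improved decay from $R$ to the differential $\ddd_{I}R$, which accordingly factors through the compact inclusion. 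For $0\le s<1/2$ the analogous argument uses the $\ell^{1+}$-decay of the remainder in place of $\ell^{3s,1}$. The hard part will be to verify that the locally uniform asymptotics of Theorem~\ref{thm:kdv2-freq} are genuinely uniform on complex polydiscs of a definite size, as needed for the Cauchy estimate; this will force a careful inspection of the quantitative bounds underlying Theorem~\ref{thm:kdv2-freq} in Section~\ref{ss:KdV2-freq}.

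With compactness in hand, I factor $\ddd_{I}\om^{(2)\star}=\Lm\,(\Id+\Lm^{-1}\ddd_{I}R)$. The family $I\mapsto\Lm^{-1}\ddd_{I}R$ is then an analytic family of compact operators on $\ell_{\C}^{2s+1,1}$ which vanishes at $I=0$, so the analytic Fredholm theorem applied on the connected complex manifold $\Vs$ yields a proper analytic subvariety $\Sc\subsetneq\Vs$ off which $\Id+\Lm^{-1}\ddd_{I}R$, equivalently $\ddd_{I}\om^{(2)\star}$, is invertible. Slicing $\Sc$ by complex rays through $0$ (on each of which $\Sc$ meets a discrete set only, since invertibility holds at the origin), the set $\Oc\defl\ell_{+}^{2s+1,1}\setminus\Sc$ is open and dense in $\ell_{+}^{2s+1,1}$ and contains $I=0$; the real analytic inverse function theorem then gives that $\om^{(2)\star}\big|_{\Oc}$ is a local diffeomorphism.
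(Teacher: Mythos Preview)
Your proposal is correct and follows essentially the same strategy as the paper: split off the diagonal isomorphism $\Lm=\diag(-80n^{2}\pi^{2})$, use the asymptotics of Theorem~\ref{thm:kdv2-freq} together with Cauchy's estimate to prove that $\ddd_{I}R=\ddd_{I}\om^{(2)\star}-\Lm$ is compact (via the inclusions $\ell^{3s,1}\hookrightarrow\ell^{2s-1,1}$ for $-1<s<0$ and $\ell^{r}\hookrightarrow\ell^{2s-1,1}$, $r>1$, for $0\le s<1/2$), and then propagate invertibility from $I=0$. The only difference is cosmetic: where you invoke the analytic Fredholm alternative and slice the singular set by complex rays, the paper packages this step into Proposition~\ref{diffeo-prop}, which reduces invertibility of $\Id+\Lm^{-1}\ddd_{I}R$ to the nonvanishing of a finite-dimensional Schur-complement determinant along real segments $[0,I]$. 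Your worry about uniformity on complex polydiscs is already handled by the ``locally uniformly on a complex neighborhood'' clause in Theorem~\ref{thm:kdv2-freq}, so no additional inspection of Section~\ref{ss:KdV2-freq} is needed.
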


Theorem~\ref{thm:kdv2-freq} also applies to study the solution map of the KdV2 equation. First we need to introduce some more notation.
According to~\cite{Battig:1997ek}, for any initial datum $q\in\Hs^{m}$ with $m\ge1$ integer, there exists a unique, global in time solution $v(t,x) = v(t,x,q)$ of~\eqref{kdv2}, $v\in C(\R,\Hs^{m})$. In particular, for any time $t\in\R$ and $T > 0$, the nonlinear evolution operator
\[
  \Sc_{t}^{(2)} = \Sc^{(2)}(t,\cdot)\colon \Hs^{m}\to \Hs^{m}
\]
and the uniquely defined solution map
\[
  \Sc^{(2)}\colon \Hs^{m}\to C([-T,T],\Hs^{m}),\qquad q\mapsto v(\cdot,\cdot,q),
\]
are well defined and continuous. In the following, let $\Hs$ denote any invariant subspace of $\Hs^{s}$ with $s$ real and $a < 0 < b$. A continuous curve $\gm\colon (a,b)\to \Hs$, $\gm(0) = q$, is called a \emph{solution} of the KdV2 equation in $\Hs$ with initial datum $q$ if and only if for any sequence of $C^{\infty}$-potentials $(q_{k})_{k\ge 1}$ converging to $q$ in $\Hs$, the corresponding sequence $(\Sc^{(2)}(t,q_{k}))_{k\ge 1}$ of solutions of~\eqref{kdv2} with initial data $q_{k}$ converges to $\gm(t)$ in $\Hs$ for any $t\in (a,b)$. KdV2 is said to be \emph{globally $C^{0}$-wellposed in $\Hs$} if (a) for any initial datum $q\in\Hs$ the initial value problem~\eqref{kdv2} admits a solution $\Sc^{(2)}(\cdot,q)$ in the aforementioned sense which is globally defined in time and (b) the solution map $\Sc^{(2)}\colon \Hs\to C([-T,T],\Hs)$ is continuous for every $T > 0$.
KdV2 is said to be \emph{(uniformly/$C^{k}$/$C^{\om}$) wellposed} if the solution map $\Sc^{(2)}\colon \Hs\to C([-T,T],\Hs)$ is (uniformly continuous/$C^{k}$/$C^{\om}$) for every $T > 0$.
Furthermore, a map $f\colon X\to Y$ between Banach spaces $X$ and $Y$ is said to be \emph{nowhere locally uniformly continous} if for any nonempty open subset $U\subset X$, the map $f$ is not uniformly continuous on $U$.
Finally, for any $d> 0$ and any $s\ge 0$, introduce the level sets of $\Hm_{0}$
\[
  \Ms_{0,d}^{s} \equiv \Ms_{0,d}^{s}(\T,\R)
  = \setdef*{u\in \Hs_{0}^{s}}{\Hm_{0}(u) = d}.
\]

\begin{thm}[Wellposedness for KdV2]
\label{thm:kdv2-wp}
\begin{equivenum}
\item
The KdV2 equation is globally $C^{0}$-wellposed in $\Hs^{s}$ for any $s\ge 0$. In particular, for any $T>0$, the solution map $\Sc^{(2)}\colon \Hs^{s}\to C([-T,T],\Hs^{s})$ is continuous and has the group property $\Sc^{(2)}(t+s,q) = \Sc^{(2)}(t,\Sc^{(2)}(s,q))$ for all $t,s\in\R$ and $q\in\Hs^{s}$. As a consequence, for any $t\in\R$, the flow map $\Sc_{t}^{(2)}\colon\Hs^{s}\to \Hs^{s}$, $q\mapsto \Sc^{(2)}(t,q)$ is a homeomorphism.

\item
For any $s\ge 1/2$ and $d>0$, the KdV2 equation is globally $C^{\om}$-wellposed and uniformly $C^{0}$-wellposed in $\Ms_{0,d}^{s}$.

\item
In contrast, for any $s\ge 1/2$ and $t > 0$, the solution map $\Sc^{t}\colon \Hs_{0}^{s}\to \Hs_{0}^{s}$ is nowhere locally uniformly continuous.
In particular, the KdV2 equation is not $C^{1}$-wellposed and not uniformly $C^{0}$-wellposed in $\Hs_{0}^{s}$ for $s\ge 1/2$.

\item
For any $0 \le s < 1/2$, $d>0$, and $t > 0$, the solution map $\Sc^{t}\colon \Ms_{0,d}^{s}\to \Ms_{0,d}^{s}$ is nowhere locally uniformly continuous.
In particular, the KdV2 equation is not $C^{1}$-wellposed and not uniformly $C^{0}$-wellposed in $\Ms_{0,d}^{s}$ for $0 \le s < 1/2$ and $d > 0$.

\item
The solution map cannot be continuously extended to any initial datum in $\Hs_{0}^{s}\setminus L_{0}^{2}$ for $-1< s < 0$. More precisely, the frequencies are given by the formula
\[
  \om_{n}^{(2)} = (2n\pi)^{5} + 20(2n\pi)\Hm_{0} + \om_{n}^{(2)\star},
\]
where $\om_{n}^{(2)\star}$ extends analytically to $\Hs_{0}^{s}$ with $s> -1$. Hence for any $n\ge 1$, $\om_{n}^{(2)}$ becomes infinite on $\Hs_{0}^{s}\setminus L_{0}^{2}$ for any $-1< s < 0$.~\fish
\end{equivenum}

\end{thm}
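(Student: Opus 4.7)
The general strategy is to conjugate the dynamics through the Birkhoff map $\Ph$ of Theorem~\ref{bhf}, in which the KdV2 flow becomes the explicit rotation
\[
  z_n(t) = \e^{-\ii\om_n^{(2)}(I(z))t}z_n(0),\qquad \om_n^{(2)} = (2n\pi)^5 + 20(2n\pi)\Hm_0 + \om_n^{(2)\star},
\]
where, by Theorem~\ref{thm:kdv2-freq}, $\om^{(2)\star}$ extends real analytically to $\ell_+^{2s+1,1}$ for any $s > -1$. Since $\Ph\colon\Hs_0^s\to h_0^{s+1/2}$ is a real analytic diffeomorphism, all the wellposedness claims reduce to properties of this pure rotation in the appropriate weighted sequence space.

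For~(i), the actions $I_n = |z_n|^2$ are preserved, so $\n{z(t)}_{h^{s+1/2}} = \n{z(0)}_{h^{s+1/2}}$ for all $t\in\R$ and the flow is global. Continuity on $h_0^{s+1/2}$, $s\ge 0$, follows from a dominated-convergence estimate applied to
\[
  \sum_n \lin{n}^{2s+1}\abs[\big]{\e^{-\ii\om_n^{(2)}(z^{(k)})t}z^{(k)}_n - \e^{-\ii\om_n^{(2)}(z)t}z_n}^2,
\]
using the trivial bound $\abs{\e^{\ii\al}-\e^{\ii\bt}}\le 2$, the pointwise continuity of each $\om_n^{(2)}$ in $z$, and the convergence $z^{(k)}\to z$; transferring through $\Ph^{-1}$ yields $C^0$-continuity in $\Hs^s$, the group property and homeomorphism property are immediate in Birkhoff coordinates, and the compatibility with the definition in terms of limits of smooth solutions is automatic by uniqueness of continuous extensions. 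For~(ii), on $\Ms_{0,d}^s$ with $s\ge 1/2$ the troublesome summand $20(2n\pi)\Hm_0$ equals the constant $20(2n\pi)d$ and factors out as an isometry of $h_0^{s+1/2}$, while Theorem~\ref{thm:kdv2-freq} forces $\om^{(2)\star}\in\bigcap_{r>1}\ell^r$; the estimate of~(i) can then be upgraded through $\abs{\e^{\ii\al}-\e^{\ii\bt}}\le\abs{\al-\bt}$ to \emph{uniform} continuity on bounded sets, with real analyticity inherited from analyticity of $\Ph$, the action map, $\om^{(2)\star}$, and the exponential.

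Parts~(iii) and~(iv) are proved by a Kappeler--Topalov-type ill-posedness construction in Birkhoff coordinates: near any $q^{(0)}$ with coordinates $z^{(0)}$ one takes two sequences $z^{(1,k)}, z^{(2,k)}$ agreeing everywhere except at a single perturbation index $n_0 = n_0(k)\to\infty$ with common amplitude $\rho_0\sim\ep_0\lin{n_0}^{-s-1/2}$, so that the weighted input norm of $z^{(1,k)}-z^{(2,k)}$ tends to zero while the phase rotation after time $t > 0$ produces a fixed-size discrepancy at a well-chosen readout index. For~(iii), variation of $\Hm_0$ across $\Hs_0^s$ gives $\Delta\om_{n_1}\asymp n_0 n_1\Delta I_{n_0}$, and reading out at $n_1$ chosen large compared to $n_0^{2s}/t$ with amplitude $z^{(0)}_{n_1}$ of size $\ep_1\lin{n_1}^{-s-1/2}$ forces input norm $\to 0$ against output $\gtrsim\ep_1$ precisely when $s\ge 1/2$. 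For~(iv), where $\Hm_0$ is constant on $\Ms_{0,d}^s$, one instead exploits the leading self-interaction $\om_n^{(2)\star} = -20(2n\pi)^2 I_n + \dotsb$ to get $\Delta\om_{n_0}\asymp n_0^2\Delta I_{n_0}$ and reads out at the same index $n_0$, producing input norm of order $n_0^{2s-1}/(\ep_0 t)\to 0$ against output of order $\ep_0$ precisely when $s < 1/2$; in this case a low-frequency counter-perturbation (e.g., in $I_1$) is added to preserve $\Hm_0 = d$, and its contribution to the input norm is computed to be of lower order in $n_0$.

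Finally, for~(v), the analytic extension of $\om^{(2)\star}$ to $\Hs_0^s$ with $s > -1$ provided by Theorem~\ref{thm:kdv2-freq} isolates the divergence of $\om_n^{(2)}$ at $q\in\Hs_0^s\setminus L_0^2$ entirely to the middle term, since $\Hm_0(q^{(k)}) = \tfrac12\n{q^{(k)}}_{L^2}^2\to\infty$ for any smooth $q^{(k)}\to q$ in $\Hs^s$; extracting two subsequences along which $\Hm_0(q^{(k)})t$ differs by $\pi\pmod{2\pi\Z}$ produces two distinct limits at every fixed $n$ and so precludes any continuous extension of the flow. The hard part is the quantitative balancing in~(iii) and~(iv): one must simultaneously arrange that the perturbations lie in a prescribed small $\Hs^s$-neighborhood, that (in~(iv)) the level set $\Hm_0 = d$ is respected via a low-frequency correction of subdominant input-norm contribution, and that the Birkhoff diffeomorphism $\Ph$---which is only locally Lipschitz, not an isometry---does not smooth out the non-uniformity when one translates the argument back to $\Hs_0^s$; the sharp asymptotics of $\om^{(2)\star}$ in Theorem~\ref{thm:kdv2-freq} are precisely what align the threshold $s = 1/2$ with the boundary between well- and ill-posedness in both cases.
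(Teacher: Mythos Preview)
Your overall strategy---conjugate through the Birkhoff map $\Ph$ and analyze the explicit rotation flow in sequence space, then transfer back---is exactly the paper's approach (Theorem~\ref{thm:wp-kdv2-x} together with Theorem~\ref{thm:freq-flow}), and your treatments of (i), (iv), and (v) are essentially correct and parallel the paper's.

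There are two genuine gaps. First, in (ii) you infer uniform continuity on bounded subsets from the bound $\abs{\e^{\ii\al}-\e^{\ii\bt}}\le\abs{\al-\bt}$ together with analyticity of $\om^{(2)\star}$; but analyticity gives only \emph{local} Lipschitz continuity, and bounded sets in $h_0^{s+1/2}$ are not compact. The paper closes this with a compactness argument (Lemma~\ref{om-2-unif}): the map $\om^{(2)\star}-\Lm$ is already analytic on the larger space $\ell_+^{1,1}$, into which $\ell_+^{2,1}$ embeds compactly, hence is uniformly continuous on bounded sets there, while the linear part $\Lm$ is Lipschitz $\ell_+^{2,1}\to\ell^{1}$. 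An analogous compactness/one-smoothing argument (Lemma~\ref{Om-unif-continous}) is needed to see that $\Ph$ and $\Ph^{-1}$ themselves are uniformly continuous on bounded subsets for $s>0$, without which the transfer back to $\Hs_0^s$ fails; you do not address this.

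Second, your construction for (iii) does not work as written. If $z^{(1,k)}$ and $z^{(2,k)}$ agree except at a single high index $n_0$ and have ``common amplitude'' there, then either they coincide or have equal actions $I_{n_0}$, in which case $\Delta\Hm_0=0$ and there is no phase separation from the $20(2n\pi)\Hm_0$ term. If instead they differ at $n_0$ by enough to make $\Delta I_{n_0}$ nonzero while keeping the weighted input norm small, then $\Delta\Hm_0\to 0$ and you are forced to send the readout index $n_1$ to infinity---but then the readout amplitude $z^{(0)}_{n_1}$, which is fixed with the base point, is not under your control. The paper's construction sidesteps all of this: it perturbs $\Hm_0$ at a \emph{fixed low} index $N$ (with amplitude $\dl n_m^{-1/2}m^{1/2}$, so the weighted input norm at $N$ still vanishes as $m\to\infty$) and \emph{adds} a common nonzero readout amplitude $\dl n_m^{-\sg}$ at the high index $n_m=2^m$ to \emph{both} sequences, so the output discrepancy is exactly $\dl$. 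Your claim that the mechanism works ``precisely when $s\ge 1/2$'' is also off: the $\Hm_0$-variation argument actually goes through on $\Hs_0^s$ for every $s\ge 0$; the restriction to $s\ge 1/2$ in the statement of (iii) is only because for $0\le s<1/2$ the stronger level-set failure (iv) is recorded instead.
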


\begin{rem}
The KdV2 equation and generalizations of it appear in the analysis of long-wave approximations to the water wave equation - cf. for instance~\cite{Craig:2005fv} as well as the references therein.
Wellposedness results for such equations in the periodic setup are discussed in~\cite{Bourgain:1995wk}, but the case of the KdV2 equation is not explicitly treated there. Earlier results were obtained in~\cite{Saut:1979bl}. To the best of our knowledge, the results in\cite{Battig:1997ek} are the best available so far.
In contrast, the wellposedness of this type of equations on the line have been studied extensively. Recently various new results have been obtained -- see \cite{Grunrock:2010bg,Guo:2012th,Kato:2012ih,Kenig:2015dx} and references therein.
In particular, in \cite{Guo:2012th,Kato:2012ih}, wellposedness results were established for initial data in $\Hs^{s}(\R)$ with $s\ge 2$ whereas in \cite{Grunrock:2010bg,Kato:2012ih}, such results were obtained for initial data in certain classes of Fourier Lebesgue spaces.
Since it is believed that for such equations  stronger wellposedness results can be obtained on the line than in the periodic setup (cf. \cite{Bourgain:1995wk}), it can be expected that results analogous to the ones of Theorem~\ref{thm:kdv2-wp} hold for the KdV2 equation on the line.~\map
\end{rem}

Finally, we prove that the renormalized KdV2 Hamiltonian $\Hm_{2}^{\star}$ extends real analytically to $h_{+}^{1} = h_{+}^{1}$, and discuss its convexity properties which are similar to those of $\Hm_{1}^{\star}$ -- see Section~\ref{ss:kdv2-hamiltonian}.

\paragraph{Formulas for the frequencies.}
As mentioned at the beginning of the introduction, the proofs of Theorem~\ref{thm:kdv-freq} and Theorem~\ref{thm:kdv2-freq} are based on new formulas for the frequencies of the KdV and the KdV2 equations. Let us outline how to derive them in the case of the KdV equation:
Consider for any $n\ge 1$ the $n$th frequency $\om_{n}^{(1)}$.
By a density argument, it suffices to consider real potentials with $I_{n} > 0$.
In a real neighborhood of such a potential, the Birkhoff coordinate $z_{n}$ can be expressed in terms of action angle variables $z_{n} = \sqrt[+]{I_{n}}\e^{-\ii \th_{n}}$ and Hamilton's equations of motion take the form
\[
  \partial_{t}\th_{n} = \om_{n}^{(1)} = \pbr{\Hm_{1},\th_{n}},\qquad \partial_{t} I_{n} = 0.
\]
The identity $\om_{n}^{(1)} = \pbr{\Hm_{1},\th_{n}}$ is the starting point for the new formula for $\om_{n}^{(1)}$. The asymptotics of the discriminant $\Dl(\lm,u)$ of the operator $-\partial_{x}^{2}+u$ at $\lm=\infty$ and the residue calculus allow us to expand $\om_{n}^{(1)} = \pbr{\Hm_{1},\th_{n}}$ into the constant term $(2n\pi)^{3}$ plus a weighted sum $\sum_{k\ge 1} k \Om_{nk}^{(2)}$ of functionals $\Om_{nk}^{(2)}$, each of which is an expression depending only on the discriminant $\Dl(\lm,u)$ -- or equivalently, the periodic spectrum of $-\partial_{x}^{2}+u$. Using that by~\cite{Kappeler:2005fb} the discriminant can be analytically extended to $\Hs_{0}^{-1}$, one shows that the same holds true for each functional $\Om_{nk}^{(2)}$. From the asymptotics of the periodic eigenvalues of $-\partial_{x}^{2}+u$ one then deduces that $(\sum_{k\ge 1} k \Om^{(2)}_{nk})_{n\ge 1}$ converges absolutely in $\ell^{-1,r}$, $r>1$, for $u$ in a complex neighborhood $\Ws$ of $\Hs_{0}^{-1}$ and when restricted to $\Hs_{0}^{s}$, $-1 \le s \le 0$, has the asymptotics stated in Theorem~\ref{thm:kdv-freq}.

In \cite{Kappeler:CNzeErmy} we proved by similar techniques that the renormalized Hamiltonian $\Hm_{1}^{\star} = H_{1} - \sum_{n\ge 1} (2n\pi)^{3}I_{n}$ analytically extends to the Fourier Lebesgue spaces $\FL_{0}^{-1/2,4}(\T,\R)$ or, considering $\Hm_{1}^{\star}$ as a function of the actions, to $\ell_{+}^{2}(\N)$. It implies that $(\om_{n}^{(1)\star} = \partial_{I_{n}}\Hm_{1}^{\star})_{n\ge 1}$ is in $\ell^{2}(\N)$. Since by \cite[Theorem~1]{Kappeler:CNzeErmy} $(\partial_{I_{n}}\Hm_{1}^{\star})_{n\ge 1}$ is a local diffeomorphism near $I=0$, $\Hm_{1}^{\star}$ does not admit a $C^{1}$-smooth extension to a neighborhood of $0$ in $\ell^{2s+1,p/2}$ for any $(s,p)\neq (-1/2,4)$ with $s\le -1/2$ and $4 \le p < \infty$.
Similarly, $\Hm_{1}^{\star}$ does not admit a $C^{1}$ extension to a neighborhood of the origin in $\ell_{+}^{2s+1,1}$ for any $-1 < s < -1/2$. In fact, this would imply that $\partial \Hm_{1}^{\star}$ takes values in $\ell^{-2s-1,\infty}$ while at the same time
$\partial \Hm_{1}^{\star} = \om^{(1)\star}\colon \ell_{+}^{2s+1,1}\to \ell^{2s+1,1}$ is a homeomorphism locally around the origin by Corollary~\ref{cor:kdv-freq-localdiffeo}, which is impossible.
Nevertheless, according to Theorem~\ref{thm:kdv-freq}, the frequencies $(\om_{n}^{(1)\star})_{n\ge 1}$ analytically extend to $\ell_{+}^{2s+1,1}(\N)$ for any $-1 < s < -1/2$.

{\em Notation.}
We collect a few notations used throughout the paper. A sequence of complex numbers $(a_{n})_{n\in\mathbb{A}}$ is denoted $a_{n} = \ell_{n}^{p} + \ell_{n}^{q}$ if it can be decomposed as $a_{n} = x_{n} + y_{n}$ with $(x_{n})\in\ell_{\C}^{p}(\mathbb{A})$ and $(y_{n})\in\ell_{\C}^{q}(\mathbb{A})$.
Here $0 < p,q \le \infty$ and $\ell_{\C}^{p}(\mathbb{A})$ denotes the vector space of sequences with $\sum_{n\in\mathbb{A}} \abs{x_{n}}^{p} < \infty$. Moreover, $a_{n} = \ell_{n}^{1+}$ means that $(a_{n})\in\ell_{\C}^{r}(\mathbb{A})$ for any $r > 1$. Finally, we define $x_{+} = \max(x,0)$.

We say the tuple $(s,p)$ of real numbers is admissible if either $p=2$ and $-1\le s < \infty$ or $2 < p < \infty$ and $-1/2\le s\le 0$.

% --------------------------------------------------------------------------------------------------
\section{Preliminaries}

In this section we review results from \cite{Kappeler:CNzeErmy,Kappeler:2001bi,Kappeler:2005fb, Kappeler:2003up,Kappeler:2013bt,Kappeler:2003vh,Molnar:2016hq}. In addition, we prove asymptotics of spectral quantities for potentials in Fourier Lebesgue spaces.

\subsection{Spectral theory of Schrödinger operators}

Let $q$ be a complex potential in $\Hs_{0,\C}^{-1}$ and consider the differential operator
\begin{equation}
  \label{Lq}
  L(q) = -\partial_{x}^{2} + q.
\end{equation}
In the sequel we will only consider potentials $q\in\Wp$ with $\Wp$ denoting the complex neighborhood of $\Hs_{0}^{-1}$ in $\Hs_{0,\C}^{-1}$ of Theorem~\ref{bhf}.
If needed, we will shrink $\Ws$ further.
The spectrum of $L(q)$,  called the \emph{periodic spectrum of $q$}, is known to be discrete and the eigenvalues, when counted with their multiplicities and ordered lexicographically -- first by their real part and second by their imaginary part -- satisfy
\begin{align}
\label{lm-asymptotics}
  \lm_{0}^{+}(q) \lex \lm_{1}^{-}(q) \lex \lm_{1}^{+}(q) \lex \dotsb
          \ ,
  \qquad
  \lm_{n}^{\pm}(q) = n^{2}\pi^{2} + n\ell_{n}^{2}.
\end{align}
Furthermore, we define the \emph{gap lengths} $\gm_{n}(q)$ and the \emph{mid points} $\tau_{n}(q)$ by
\[
  \gm_{n}(q) \defl \lm_{n}^{+}(q)-\lm_{n}^{-}(q) = n\ell_{n}^{2},
  \quad
  \tau_{n}(q) \defl \frac{\lm_n^+(q) + \lm_n^-(q)}{2} = n^{2}\pi^{2} + n\ell_{n}^{2}.
\]
If $q$ is real-valued, then the periodic spectrum of $q$ as well as its gap lengths and mid points are real. Therefore, the lexicographical ordering reduces to the real ordering
\[
  \lm_{0}^{+} < \lm_{1}^{-} \le \lm_{1}^{+} < \lm_{2}^{-} \le \lm_{2}^{+} < \dotsb.
\]

The \emph{discriminant} $\Dl(\lm,q)$ of $L(q)$, defined for $q\in \Hs^{0}_{0,\C}$ by Floquet theory, admits an analytic extension to  $\C\times \Wp$. Furthermore, $\Dl^{2}(\lm,q)-4$ has the product representation
\begin{equation}
  \label{Dl2}
  \Dl^2(\lm,q) - 4
   =
  -4(\lm-\lm_{0}^{+})\prod_{m\ge 1} \frac{(\lm_m^+-\lm)(\lm_m^--\lm)}{m^4\pi^4}.
\end{equation}
The $\lm$-derivative $\dDl$ of the discriminant $\Dl$ is analytic on $\C\times\Ws$, too, and admits  the product representation
\begin{equation}
  \label{dDl2}
  \dDl(\lm)  = -\prod_{m\ge 1} \frac{\lm_{m}^{\ld}-\lm}{m^{2}\pi^{2}}
\end{equation}
where $(\lm_m^\ld)_{m \ge 1} \subset \C$ are ordered lexicographically
\[
\lm_1^\ld \lex \lm_2^\ld \lex \cdots \ , \quad \lm_n^\ld = n^2\pi^2 + n\ell_n^2 \ .
\]

We also consider the spectrum of the operator $L_{\dir}(q) = -\partial_{x}^{2} + q$ on $\Hs^{-1}_{\dir}([0,1],\C)$ with domain of definition $\Hs^{1}_{\dir}([0,1],\C)$ -- cf. e.g. \cite{Djakov:2009fx,Kappeler:2001bi,Kappeler:2003vh,Korotyaev:2003gp,Savchuk:2003vl} for a more detailed discussion. This spectrum, referred to as the \emph{Dirichlet spectrum of $q$}, is known to be discrete and to be given by a sequence of eigenvalues $(\mu_{n})_{n\ge 1}$, counted with multiplicities and ordered lexicographically so that
\[
  \mu_{1} \lex \mu_{2} \lex \mu_{2} \lex \dotsb,\qquad \mu_{n}  = n^{2}\pi^{2} + n\ell_{n}^{2}.
\]

For each potential $q\in \Hs_{0}^{-1}$ there exists a complex neighborhood $\Wp_{q}$ of $q$ in $\Wp$ such that the closed intervals
\[
  G_{0} = \setdef{\lm_{0}^{+} + t}{-\infty < t \le 0},\qquad
  G_{n} = [\lm_{n}^{-},\lm_{n}^{+}],\quad n\ge 1,
\]
are disjoint from each other for every $r\in \Wp_{q}$. Moreover, there exist open, connected, convex, and mutually disjoint neighborhoods $U_{n}\subsetneq \C$, $n\ge 0$, called \emph{isolating neighborhoods}, which satisfy:

\begin{equivenum}
\item $G_{n}$, $\mu_{n}$, and $\lm_{n}^{\ld}$ are contained in the interior of $U_{n}$ for every $r\in \Wp_{q}$,

\item there exists a constant $c \ge 1$ such that for all $n,m\ge 1$ with $m\neq n$
\begin{equation}
  \label{iso-est}
  c^{-1}\abs{m^{2}-n^{2}} \le \dist(U_{n},U_{m}) \le c\abs{m^{2}-n^{2}},
\end{equation}

\item
there exists an integer $n_{0}\ge 1$ so that
\begin{equation}
  \label{Un-Dn}
  U_{n} = D_{n} \defl \setdef{\lm\in\C}{\abs{\lm - n^{2}\pi^{2}} \le n},\qquad n\ge n_{0}.
\end{equation}
\end{equivenum}

\noindent
In the sequel, for any $q \in \Wp$, $\Wp_{q}$ denotes a neighborhood of $q$ in $\Wp$ such that a common set of isolating neighborhoods for all $r\in \Wp_{q}$ exists which are denoted by $U_{n}$, $n\ge 0$. We  shrink $\Wp$, if necessary, such that $\Wp$ is contained in the union of all $\Wp_{q}$ with $q\in \Hs_{0}^{-1}$.

We say that $q\in\Ws$ is a finite-gap potential if
\begin{equation}
  \label{fin-gap}
  S(q) = \setdef{k\in\N}{\gm_{k}(q)\neq 0}
\end{equation}
is finite. By Theorem~\ref{bhf} it follows that such potentials are $C^{\infty}$-smooth and dense in $\Ws$.

\subsection{Birkhoff coordinates for the KdV hierarchy}

Following~\cite{Kappeler:2005fb}, for $q \in \Wp$, one can define action variables for the KdV equation by
\begin{equation}
  \label{action}
  I_n
   =
  \frac{1}{\pi}\int_{\Gm_n}
  \frac{\lm \dDl(\lm)}{\sqrt[c]{\Dl^2(\lm)-4}}
  \,\dlm,\qquad n\ge 1.
\end{equation}
Here  $\Gm_{n}$ denotes any counter clockwise oriented circuit around and sufficiently close to $G_{n}$, and the \emph{canonical root} $\sqrt[c]{\Dl^{2}(\lm)-4}$ is defined on $\C\setminus \bigcup_{\atop{n\ge 0}{\gm_{n}\neq 0}} G_{n}$ with $\gm_{0}= \infty$, where, for $q$ real, the sign of the root is determined by
\[
  \ii\sqrt[c]{\Dl^{2}(\lm)-4} > 0,\qquad \lm_{0}^{+} < \lm < \lm_{1}^{-} \ ,
\]
and for $q\in\Wp$ it is defined by continuous extension.

The  Dirichlet eigenvalues and the discriminant can be used to construct the angles $\th_k(q)$, $k\ge 1$, which are conjugated to the actions $I_n(q)$, $n\ge 1$. In more detail, according to \cite{Kappeler:2005fb}, for any given $k\ge 1$ the action $I_{k}$ is a real analytic function on $\Wp$, whereas the angle $\th_{k}$ is defined modulo $2\pi$ on $\Wp\setminus Z_{k}$ and is a real analytic function on $\Wp\setminus Z_{k}$ when considered modulo $\pi$, where
\begin{equation}
  \label{Zk}
  Z_k \defl \setdef{q\in \Wp}{\gm_{k}(q) = 0}.
\end{equation}
It was shown in \cite[Proposition~4.3]{Kappeler:2005fb} that $Z_{k}\cap \Hs_{0}^{-1}$ is a real analytic submanifold of $\Hs_{0}^{-1}$ of codimension two.
Moreover, by \cite[Section 6]{Kappeler:2005fb} the following commutator relations hold for any $m,n\ge 1$
\begin{align}
  \label{e:commutators0}
  \pbr{I_m,I_n} = 0,\qquad
  \pbr{I_m,\th_n} = \dl_{nm},\qquad
  \pbr{\th_m,\th_n} = 0,
\end{align}
whenever the bracket is defined.

For any $q\in \Hs^{-1}_0\setminus Z_k$ with $k \ge 1$ define
\begin{equation}
  \label{e:z_k}
  z_k(q) \defl \sqrt[+]{I_k(q)}\, \e^{-\ii \th_k(q)},  \qquad
  z_{-k}(q) \defl \sqrt[+]{I_k(q)}\, \e^{\ii \th_k(q)}.
\end{equation}
It is shown in \cite[Section 5]{Kappeler:2005fb} that the mappings $\Hs^{-1}_0\setminus Z_k\to{\C}$, $q\mapsto z_{\pm k}(q)$, analytically extend to the neighborhood $\Wp$.
The {\em Birkhoff map} is then defined as follows
\begin{equation}
  \label{Phi.def}
  \Phi\colon \Wp \to  h_{0,\C}^{-1/2}, \quad
  q \mapsto \Phi(q) \defl \left( z_k(q)\right)_{k \in  \Z}
\end{equation}
with $z_0(q) = 0$. Its main properties are stated in Theorem~\ref{bhf}.

In addition, it was shown in~\cite{Kappeler:2013bt} that $\Phi$ and its inverse are 1-smoothing. More precisely, for any integer $N\ge 0$, the maps
\begin{equation}
  \label{Phi-1-smoothing}
  \Phi - \ddd_{0}\Phi \colon \Hs_{0}^{N} \to h_{0}^{N+3/2},\qquad
  \Phi^{-1} - (\ddd_{0}\Phi)^{-1} \colon h_{0}^{N+1/2} \to \Hs_{0}^{N+1}
\end{equation}
are analytic and bounded, i.e. bounded on bounded subsets. We note the following immediate consequence for later use.

\begin{lem}
\label{Om-unif-continous}
For any $s > 0$ the Birkhoff map and its inverse
\[
  \Phi\colon \Hs^{s}\to  h^{s+1/2},\qquad
  \Phi^{-1}\colon  h^{s+1/2}\to \Hs^{s}
\]
are uniformly continuous on bounded subsets.~\fish
\end{lem}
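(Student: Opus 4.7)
The plan is to deduce Lipschitz continuity on bounded subsets -- which in particular implies uniform continuity -- from the one-smoothing property~\eqref{Phi-1-smoothing} combined with the classical Cauchy estimate for Banach-space analytic maps: an analytic map between complex Banach spaces which is bounded on an open ball of radius $2r$ is automatically Lipschitz on the concentric ball of radius $r$, with constant proportional to the bound divided by $r$.

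First, I would decompose $\Phi = \ddd_{0}\Phi + R$ with $R\defl \Phi - \ddd_{0}\Phi$. By Theorem~\ref{bhf}(iv), the linear part $\ddd_{0}\Phi\colon \Hs^{s}\to h^{s+1/2}$ is bounded for every $s\ge -1$, hence globally Lipschitz. For the remainder, given $s > 0$, set $N\defl \lfloor s\rfloor\in\Z_{\ge 0}$, so that $s-1 \le N \le s$. By~\eqref{Phi-1-smoothing}, $R\colon \Hs^{N}\to h^{N+3/2}$ is analytic and bounded on bounded subsets. Applying the Cauchy estimate to the complex analytic extension of $R$ on a complex neighborhood of $\Hs^{N}$ inside $\Hs^{N}_{\C}$ upgrades this to Lipschitz continuity of $R\colon \Hs^{N}\to h^{N+3/2}$ on bounded subsets of $\Hs^{N}$. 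The continuous embeddings $\Hs^{s}\hookrightarrow \Hs^{N}$ (since $s\ge N$) and $h^{N+3/2}\hookrightarrow h^{s+1/2}$ (since $N+3/2\ge s+1/2$) then give Lipschitz continuity of $R\colon \Hs^{s}\to h^{s+1/2}$ on bounded subsets of $\Hs^{s}$; combined with the linear part, this establishes the claim for $\Phi$.

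For $\Phi^{-1}$ I would argue in the same fashion, using the decomposition $\Phi^{-1} = (\ddd_{0}\Phi)^{-1} + \tilde R$, where $\tilde R \defl \Phi^{-1} - (\ddd_{0}\Phi)^{-1}\colon h^{N+1/2}\to \Hs^{N+1}$ is analytic and bounded by the second half of~\eqref{Phi-1-smoothing}. The corresponding continuous embeddings $h^{s+1/2}\hookrightarrow h^{N+1/2}$ and $\Hs^{N+1}\hookrightarrow \Hs^{s}$ then reduce the statement for $\Phi^{-1}$ to the same Cauchy-estimate step.

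The only nontrivial checkpoint is the application of the Cauchy estimate itself: it requires that the complex analytic extension of $R$ (respectively of $\tilde R$) be bounded not merely on real bounded subsets of $\Hs^{N}$ but on complex $\Hs_{\C}^{N}$-balls of positive radius about real points. This is available because $R$ extends analytically to the common complex neighborhood $\Ws$ of $\Hs_{0}^{-1}$ in $\Hs_{0,\C}^{-1}$, and its restriction to $\Hs_{0,\C}^{N}$ inherits both analyticity and boundedness on complex bounded subsets; via the continuous embedding $\Hs_{\C}^{N}\hookrightarrow \Hs_{\C}^{-1}$, an effective radius for admissible complex $\Hs_{\C}^{N}$-balls is controlled by the embedding constant. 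Making this radius quantitative in terms of the given bounded subset is the one technical point to spell out carefully in a detailed write-up.
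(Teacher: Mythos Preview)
Your argument is correct and in fact yields the stronger conclusion that $\Phi$ and $\Phi^{-1}$ are Lipschitz on bounded subsets. It is, however, a genuinely different route from the paper's.

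The paper does not invoke analyticity or Cauchy estimates at all. It observes that $R = \Phi - \ddd_{0}\Phi\colon \Hs^{[s]}\to h^{[s]+3/2}$ is merely \emph{continuous}, and then exploits that for $s>[s]$ the embedding $\Hs^{s}\hookrightarrow \Hs^{[s]}$ is \emph{compact}: a bounded subset of $\Hs^{s}$ is relatively compact in $\Hs^{[s]}$, and a continuous map is automatically uniformly continuous on compacta. Composing with the continuous inclusion $h^{[s]+3/2}\hookrightarrow h^{s+1/2}$ finishes. (For integer $s$ one uses $N=s-1$ instead of $N=[s]$, since then $N+3/2=s+1/2$ and $\Hs^{s}\hookrightarrow \Hs^{s-1}$ is still compact.) This is more elementary in that it needs only real continuity of $R$ at integer regularity and the Rellich--Kondrachov embedding; no complex neighborhoods enter.

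Your route trades the compact embedding for the Cauchy estimate, which buys the Lipschitz upgrade but at the cost of the checkpoint you correctly flag: one needs the 1-smoothing remainder to be bounded on complex $\Hs_{\C}^{N}$-balls of uniform radius, not merely on real bounded sets. This is indeed available from the source \cite{Kappeler:2013bt}, where the 1-smoothing is proved on a complex neighborhood, but it is not literally contained in \eqref{Phi-1-smoothing} as stated in the present paper. The paper's compactness trick sidesteps this issue entirely.
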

%\note{tk: Lemma~\ref{Om-unif-continous} for $s=0?$}

\begin{proof}
By the 1-smoothing property~\eqref{Phi-1-smoothing} for any integer $N\ge 0$ the map $\Phi - \ddd_{0}\Phi\colon \Hs^{N}\to  h^{N+3/2}$ is continuous and hence uniformly continuous on compacts. Since $\Hs^{s}$ embeds compactly into $\Hs^{[s]}$ if $s > [s]$, we conclude that
\[
  \Phi - \ddd_{0}\Phi\colon\Hs^{s}\opento_{c} \Hs^{[s]} \to \ell^{[s]+3/2} \opento \ell^{s+1/2}
\]
is uniformly continuous on bounded sets for any $s > 0$. Clearly, $\ddd_{0}\Ph\colon \Hs^{s}\to  h^{s+1/2}$ is uniformly continuous as well, which gives the claim for $\Phi$. One argues analogously for the inverse.~\qed
\end{proof}

\subsection{Roots and Abelian integrals}

It is convenient to define the \emph{standard root}s
\[
  \vs_{n}(\lm) = \sqrt[\mathrm{s}]{(\lm_{n}^{+}-\lm)(\lm_{n}^{-}-\lm)},
                 \qquad \lm\in\C\setminus G_{n},\qquad n\ge 1,
\]
by the condition
\begin{align}
  \label{s-root}
  \vs_{n}(\lm) = (\tau_{n}-\lm)\sqrt[+]{1 - \gm_{n}^{2}/4(\tau_{n}-\lm)^{2}},
  						 \qquad \tau_{n} = (\lm_{n}^{-}+\lm_{n}^{+})/2.
\end{align}
Here $\sqrt[+]{\phantom{a}}$ denotes the principal branch of the square root on the complex plane minus the ray $(-\infty,0]$. The standard root is analytic in $\lm$ on $\C\setminus G_{n}$ and in $(\lm,r)$ on $(\C\setminus \ob{U_{n}})\times \Wp_{q}$, and one can choose $c > 0$ locally uniformly on $U_{n}$ so that for all $n,m\ge 1$
\begin{equation}
  \label{s-root-est}
  \inf_{\lm\in U_{n}}\abs{\vs_{m}(\lm)} \ge c^{-1}\abs{n^{2}-m^{2}}.
\end{equation}
If $\gm_{n} = 0$, then $\vs_{n}(\lm) = (\tau_{n}-\lm)$ is an entire function of $\lm$.
On the other hand, if $\gm_{n}\neq 0$, then $\vs_{n}$ extends continuously to both sides of $G_{n}$, denoted by $G_{n}^{\pm}$,
\begin{equation}
  \label{Gn-sides}
  G_{n}^{\pm} = \setdef{\lm_{t}^{\pm} = \tau_{n} + (t \pm \ii 0)\gm_{n}/2}{-1\le t\le 1},
\end{equation}
and we have
\begin{equation}
  \label{s-root-sides}
  w_{n}(\lm_{t}^{\pm}) = \mp \ii \frac{\gm_{n}}{2}\sqrt[+]{1-t^{2}},\qquad -1\le t\le 1.
\end{equation}

\begin{lem}
\label{s-root-reciprocal-estimate}
Suppose $\gm_{n}\neq 0$ and $f$ is continuous on $G_{n}$, then
\[
  \sup_{\lm\in G_{n}^{+}\cup G_{n}^{-}}
  \abs*{\frac{1}{\pi}\int_{\lm_{n}^{-}}^{\lm} \frac{f(z)}{w_{n}(z)}\,\dz}
  \le
  \max_{\lm\in G_{n}} \abs{f(\lm)}.\fish
\]
\end{lem}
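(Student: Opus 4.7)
The plan is to reduce the integral to a one-dimensional real integral along $G_n$ via the parametrization~\eqref{Gn-sides}, use the explicit value~\eqref{s-root-sides} of $w_n$ on $G_n^\pm$, and then bound everything by the classical integral $\int_{-1}^{1} (1-t^2)^{-1/2}\,\dt = \pi$.

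More concretely, fix $\lm \in G_n^+ \cup G_n^-$ and write $\lm = \lm_s^{\pm}$ for a suitable $s \in [-1,1]$, with the appropriate sign chosen according to the side of the slit. I parametrize the integration path from $\lm_n^- = \lm_{-1}^{\pm}$ to $\lm_s^{\pm}$ by $z = \lm_t^{\pm} = \tau_n + t\gm_n/2$, $t\in[-1,s]$, so that $\dz = (\gm_n/2)\,\dt$ and, by~\eqref{s-root-sides}, $w_n(\lm_t^{\pm}) = \mp\ii(\gm_n/2)\sqrt[+]{1-t^2}$. Substituting gives
\[
  \frac{1}{\pi}\int_{\lm_n^-}^{\lm_s^{\pm}} \frac{f(z)}{w_n(z)}\,\dz
  \;=\; \pm\frac{\ii}{\pi}\int_{-1}^{s} \frac{f(\lm_t^{\pm})}{\sqrt[+]{1-t^2}}\,\dt,
\]
in which the possibly troublesome factor $\gm_n/2$ has dropped out entirely.

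Taking absolute values and pulling $M \defl \max_{\lm\in G_n}\abs{f(\lm)}$ out of the integral, I estimate
\[
  \abs*{\frac{1}{\pi}\int_{\lm_n^-}^{\lm_s^{\pm}} \frac{f(z)}{w_n(z)}\,\dz}
  \;\le\; \frac{M}{\pi} \int_{-1}^{s} \frac{\dt}{\sqrt[+]{1-t^2}}
  \;=\; \frac{M}{\pi}\p*{\asin(s) + \tfrac{\pi}{2}}
  \;\le\; M,
\]
since $\asin(s)+\pi/2 \in [0,\pi]$ for $s\in[-1,1]$, with the maximum attained at $s=1$. Taking the supremum over $\lm \in G_n^+\cup G_n^-$ yields the claimed bound. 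There is no real obstacle here beyond book-keeping the sign of $w_n$ on the two sides $G_n^{\pm}$, and the estimate is in fact sharp (attained when $f$ is constant and $\lm = \lm_n^+$).
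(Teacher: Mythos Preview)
Your proof is correct and follows essentially the same approach as the paper: parametrize $G_n^{\pm}$ by $\lm_t^{\pm}$, use~\eqref{s-root-sides} to cancel the factor $\gm_n/2$, and bound the resulting real integral by $\int_{-1}^{1}(1-t^2)^{-1/2}\,\dt = \pi$. The only difference is cosmetic: you spell out the intermediate $\asin(s)+\pi/2$ bound, whereas the paper simply invokes $\int_{-1}^{1}(1-r^2)^{-1/2}\,\dr = \pi$ directly.
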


\begin{proof}
We choose the parametrization $\lm_{t}^{\pm}$ of $G_{n}^{\pm}$ to obtain for $-1\le t\le 1$,
\[
  \int_{\lm_{n}^{-}}^{\lm_{t}^{\pm}} \frac{f(z)}{w_{n}(z)}\,\dz
  =
  \pm \ii \int_{-1}^{t} \frac{f(\lm_{r}^{\pm})}{\sqrt[+]{1-r^{2}}}\,\dr.
\]
Since $\int_{-1}^{1} \frac{1}{\sqrt[+]{1-r^{2}}}\,\dr = \pi$, the claim follows immediately.\qed
\end{proof}

\begin{lem}
\label{int-wm-quot-est}
Suppose $f$ is analytic in a neighborhood of $G_{n}$ containing $\Gm_{n}$, then
\[
  \frac{1}{2\pi}\abs*{\int_{\Gm_{n}} \frac{f(\lm)}{\vs_{n}(\lm)}\,\dlm} \le
  \max_{\lm\in G_{n}}\abs{f(\lm)}.~\fish
\]
\end{lem}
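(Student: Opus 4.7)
My plan is to contract $\Gm_n$ onto the two banks of the slit $G_n$ and evaluate the resulting integral using the explicit boundary values of $\vs_n$ recorded in~\eqref{s-root-sides}. The degenerate case $\gm_n = 0$ is an immediate warm-up: then $G_n$ reduces to the single point $\tau_n$, $\vs_n(\lm) = \tau_n - \lm$ is entire with a simple zero at $\tau_n$, and since $f$ is analytic in a neighborhood of $\Gm_n$ and its interior, the residue theorem gives $\frac{1}{2\pi}\abs*{\int_{\Gm_n} f(\lm)/\vs_n(\lm)\,\dlm} = \abs{f(\tau_n)} \le \max_{G_n}\abs{f}$, settling this case.

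For the generic case $\gm_n \neq 0$, I note that $\vs_n$ is analytic on $\C\setminus G_n$ while $f$ is analytic in a neighborhood of $G_n$ containing $\Gm_n$, so by Cauchy's theorem the counter-clockwise loop $\Gm_n$ may be deformed to a loop that hugs $G_n$ on both sides, picking up the two banks $G_n^{\pm}$ introduced in~\eqref{Gn-sides}. Parametrizing by $\lm_t^{\pm} = \tau_n + (t \pm \ii 0)\gm_n/2$ for $t \in [-1,1]$, so that $\dlm = (\gm_n/2)\,\dt$, and substituting the boundary values $\vs_n(\lm_t^{\pm}) = \mp \ii(\gm_n/2)\sqrt[+]{1-t^{2}}$ of~\eqref{s-root-sides}, the factors $\gm_n/2$ cancel. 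The top bank $G_n^+$ is traversed from $\lm_n^+$ to $\lm_n^-$ (i.e.\ $t:1\to-1$) and the bottom bank $G_n^-$ is traversed from $\lm_n^-$ to $\lm_n^+$ (i.e.\ $t:-1\to 1$); together with the opposite signs of $\vs_n$ on the two banks, the contributions add rather than cancel, yielding
\[
  \int_{\Gm_{n}} \frac{f(\lm)}{\vs_n(\lm)}\,\dlm
  = -\ii \int_{-1}^{1} \frac{f(\lm_t^+) + f(\lm_t^-)}{\sqrt[+]{1-t^{2}}}\,\dt.
\]

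The estimate is then routine: bounding $\abs{f(\lm_t^{\pm})}$ pointwise by $\max_{G_n}\abs{f}$ and using $\int_{-1}^{1}\dt/\sqrt[+]{1-t^{2}} = \pi$ gives $\abs*{\int_{\Gm_n} f/\vs_n\,\dlm} \le 2\pi\max_{G_n}\abs{f}$, which matches the claim after division by $2\pi$. The main piece of bookkeeping, and really the only obstacle worth mentioning, is keeping the orientations of the two collapsed banks straight so that their contributions combine constructively; this is automatic once the opposite signs of $\vs_n$ on $G_n^+$ and $G_n^-$ are paired with the opposite traversal directions inherited from the counter-clockwise orientation of $\Gm_n$. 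The Cauchy deformation itself is legitimate because $f/\vs_n$ is analytic on the region between $\Gm_n$ and $G_n$, which is precisely what the hypothesis on $f$ guarantees.
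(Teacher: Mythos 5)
Your proof is correct and follows essentially the same route as the paper: for $\gm_n=0$ the paper also invokes Cauchy's theorem, and for $\gm_n\neq 0$ it simply cites the preceding Lemma~\ref{s-root-reciprocal-estimate}, whose proof is exactly the bank parametrization and substitution of the boundary values~\eqref{s-root-sides} that you carry out explicitly. Your sign and orientation bookkeeping, including the observation that the two banks contribute with the same sign so that the total is $2\pi\max_{G_n}\abs{f}$, matches the paper's computation.
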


\begin{proof}
If $\gm_{n} = 0$, then $\vs_{m}(\lm) = \tau_{n}-\lm$ and the claim follows from Cauchy's theorem. Conversely, if $\gm_{n}\neq 0$, then we may apply the previous lemma.~\qed
\end{proof}

The \emph{canonical root} $\sqrt[c]{\Dl^{2}(\lm)-4}$ can be written in terms of standard roots as follows
\begin{equation}
  \label{c-root}
  \sqrt[c]{\Dl^{2}(\lm)-4} \defl
   -2\ii\sqrt[+]{\lm-\lm_{0}^{+}}\prod_{m\ge 1} \frac{\vs_{m}(\lm)}{m^{2}\pi^{2}}
\end{equation}
and is analytic in $\lm$ on $\C\setminus\bigcup_{\gm_{m}\neq 0} G_{m}$ and in $(\lm,r)$ on $(\C\setminus \bigcup_{m\ge 0} \ob{U_{m}})\times \Wp_{q}$. In particular, the quotient
\begin{equation}
  \label{Dl-quot-prod}
  \frac{\dDl(\lm)}{\sqrt[c]{\Dl^{2}(\lm)-4}}
   = \frac{1}{2\ii}\frac{1}{\sqrt[+]{\lm-\lm_{0}^{+}}}\prod_{m\ge 1}
     \frac{\lm_{m}^{\ld}-\lm}{\vs_{m}(\lm)},
\end{equation}
is analytic in $(\lm,r)$ on $(\C\setminus \bigcup_{m\ge 0} \ob{U_{m}}) \times \Wp_{q}$, and analytic in $\lm$ on $\C\setminus \bigcup_{\atop{\gm_{m}\neq 0}{m\ge 0}} G_{m}$ where we set $\gm_{0} = \infty$ for convenience.

A path in the complex plane is said to be \emph{admissible} for $q$ if, except possibly at its endpoints, it does not intersect any non collapsed gap $G_{n}(q)$. For any $n\ge 1$ and any admissible path from $\lm_{n}^{-}$ to $\lm_{n}^{+}$ in $U_{n}$ we have
\begin{equation}
  \label{w-closed}
  \int_{\lm_{n}^{-}}^{\lm_{n}^{+}} \frac{\dDl(\lm)}{\sqrt[c]{\Dl^{2}(\lm)-4}}\,\dlm =  0.
\end{equation}
As a consequence,
$
  \int_{\Gm_{n}} \frac{\dDl(\lm)}{\sqrt[c]{\Dl^{2}(\lm)-4}}\,\dlm = 0.
$
for any closed circuit $\Gm_{n}$ in $U_{n}$ around $G_{n}$ -- cf. e.g. \cite{Molnar:2016hq}.

Next we define for any $q\in\Wp$ and $\lm\in \C\setminus \bigcup_{\atop{\gm_{m}\neq 0}{m\ge 0}} G_{m}$ the improper integral
\begin{equation}
  \label{F-def}
    F(\lm) \defl \int_{\lm_{0}^{+}}^{\lm} \frac{\dDl(z)}{\sqrt[c]{\Dl^{2}(z)-4}}\,\dz,
\end{equation}
computed along an arbitrary admissible path. The improper integral $F(\lm)$ exists as in the product representation~\eqref{Dl-quot-prod} the factor ${1}/{\sqrt[+]{\lm-\lm_{0}^{+}}}$ is integrable on $\C\setminus G_{0}$. Furthermore, in view of~\eqref{w-closed} it is independent of the chosen admissible path and hence well defined. Moreover, $F(\lm)$ continuously extends to $G_{n}^{\pm}$ for any $n\ge 0$, where $G_{n}^{+}$ [$G_{n}^{-}$] is the left [right] hand side of $G_{n}$.

\begin{rem}
For $q\in L_{0}^{2}$, $F(\lm)$ is one of the two Floquet exponents of the operator $L(q)-\lm$ meaning that $\e^{F(\lm)}$ is an eigenvalue of the Floquet matrix associated to $L(q)-\lm$.\map
\end{rem}

\begin{lem}[\cite{Kappeler:CNzeErmy}]
\label{F-prop}
For any $q\in \Wp$ the following holds:

\begin{equivenum}
\item $F$ is analytic in $(\lm,r)$ on $(\C\setminus\bigcup_{m\ge 0} \ob{U_{m}}) \times \Wp_{q}$ with $L^{2}$-gradient
\[
  \partial_{q} F(\lm) = \frac{\partial_{q}\Dl(\lm)}{\sqrt[c]{\Dl^{2}(\lm)-4}}.
\]
Further, $F(\lm)\equiv F(\lm,q)$ is analytic in $\lm$ on $\C\setminus\bigcup_{\atop{\gm_{m}\neq 0}{m\ge 0}} G_{m}$. (We recall that $\gm_{0} = \infty$.)

\item $F(\lm_{0}^{+}) = 0$ and $F(\lm_{n}^{+}) = F(\lm_{n}^{-}) = -\ii n\pi$ for any $n\ge 1$.

\item
Locally uniformly on $\Wp_{q}$
\[
  \sup_{\lm\in G_{n}^{+}\cup G_{n}^{-}}\abs{F(\lm)+\ii n\pi} = O(\gm_{n}/n),\qquad n\to\infty.
\]

\item For $q = 0$, $F(\lm)$ is analytic on $\C\setminus(-\infty,0]$ and given by $F(\lm) = -\ii \sqrt[+]{\lm}$.

\item
If $q$ is a real-valued finite-gap potential cf.~\eqref{fin-gap} with $[q] = 0$ and $\nu_{n} = (n+1/2)\pi$, then for any $K\ge 0$
\begin{equation}
  \label{F-exp}
    F(\nu_{n}^{2})
   = -\ii\nu_{n} + \ii \sum_{0\le k \le K} \frac{\Hm_{k}}{4^{k+1}\nu_{n}^{2k+3}}
     + O(\nu_{n}^{-2K-5}),\qquad n\to \infty,
\end{equation}
where $\Hm_{k}$ denotes the $k$th Hamiltonian of the KdV hierarchy.~\fish
\end{equivenum}

\end{lem}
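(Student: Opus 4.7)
The analyticity claims in (i), jointly in $(\lm,r)$ and separately in $\lm$, follow from the product representation~\eqref{Dl-quot-prod} --- which exhibits $\dDl/\sqrt[c]{\Dl^{2}-4}$ as jointly analytic off the non-collapsed gaps --- combined with the path independence~\eqref{w-closed}. The only singularity along an admissible path is the integrable factor $1/\sqrt[+]{\lm-\lm_{0}^{+}}$ at the lower endpoint, so differentiation under the integral sign produces the $q$-gradient formula. For (ii), $F(\lm_{0}^{+}) = 0$ is immediate. Since at a periodic eigenvalue the two Floquet multipliers coincide, $\e^{F(\lm_{n}^{\pm})} = \pm 1$ for real $q$, hence $F(\lm_{n}^{\pm})\in\ii\pi\Z$; by continuity along paths of real potentials this integer is constant, and the free case of (iv) pins it to $-n$. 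The identity extends to complex $q\in\Wp_{q}$ by analyticity, and the equality $F(\lm_{n}^{+}) = F(\lm_{n}^{-})$ also follows from $\int_{\Gm_{n}}\dDl/\sqrt[c]{\Dl^{2}-4}\,\dlm = 0$.

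For (iii), I plan to parametrize $G_{n}^{\pm}$ via~\eqref{Gn-sides} and factor the integrand using~\eqref{Dl-quot-prod} as $R_{n}(\lm)/\vs_{n}(\lm)$, where
\[
  R_{n}(\lm) = \frac{\lm_{n}^{\ld}-\lm}{2\ii\sqrt[+]{\lm-\lm_{0}^{+}}}\prod_{m\neq n}\frac{\lm_{m}^{\ld}-\lm}{\vs_{m}(\lm)}.
\]
On $G_{n}$ one has $\abs{\lm_{n}^{\ld}-\lm} \le \gm_{n}$, $\abs{2\ii\sqrt[+]{\lm-\lm_{0}^{+}}}^{-1} = O(1/n)$, and the infinite product is $O(1)$ uniformly on $G_{n}$ thanks to the lower bound~\eqref{s-root-est} and the asymptotics~\eqref{lm-asymptotics} --- yielding $\sup_{G_{n}}\abs{R_{n}} = O(\gm_{n}/n)$ locally uniformly on $\Wp_{q}$. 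Lemma~\ref{s-root-reciprocal-estimate} then converts this bound into $\abs{F(\lm)-F(\lm_{n}^{-})} = O(\gm_{n}/n)$ on $G_{n}^{\pm}$, which combined with (ii) gives the claimed asymptotic.

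Part (iv) is a direct computation: at $q=0$ the spectrum collapses to $\lm_{n}^{\pm} = n^{2}\pi^{2}$, $\Dl(\lm) = 2\cos\sqrt[+]{\lm}$ and $\sqrt[c]{\Dl^{2}-4} = -2\ii\sin\sqrt[+]{\lm}$ (sign fixed by $\ii\sqrt[c]{\cdot} > 0$ on $(0,\pi^{2})$), so $\dDl/\sqrt[c]{\Dl^{2}-4} = -\ii/(2\sqrt[+]{\lm})$ integrates to $-\ii\sqrt[+]{\lm}$. For (v), the main input is the classical asymptotic expansion of the quasi-momentum $\kp = \ii F$ at $\lm=\infty$ in terms of the KdV Hamiltonians $\Hm_{k}$, recalled in Appendix~\ref{app:bnf-kdv2}; for a finite-gap potential this is a convergent Laurent series in $1/\sqrt[+]{\lm}$ near $\infty$, so evaluation at $\lm = \nu_{n}^{2}$ directly yields~\eqref{F-exp}. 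The main technical hurdle will be (iii): verifying the $O(1)$ bound on the infinite product defining $R_{n}$ requires showing that the deviation of each factor from $1$ is summable in $m$ locally uniformly in $r\in\Wp_{q}$, using $\lm_{m}^{\ld} - m^{2}\pi^{2} = m\ell_{m}^{2}$ together with~\eqref{s-root-est}.
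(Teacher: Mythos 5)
The paper does not prove this lemma; it is imported verbatim from \cite{Kappeler:CNzeErmy}, so there is no in-paper argument to compare against. Judged on its own, your sketch follows the natural route and is largely sound: (ii) via the quantization $\e^{F(\lm_{n}^{\pm})}\in\{\pm1\}$ plus deformation to $q=0$ and the closedness relation~\eqref{w-closed}; (iii) via factoring the integrand as $R_{n}/\vs_{n}$ and invoking Lemma~\ref{s-root-reciprocal-estimate}; and (iv) by direct computation are all correct. The $O(1)$ bound on the infinite product in (iii) is exactly what Lemma~\ref{inf-prod-root-quot} and Proposition~\ref{std-prd-est} supply (applied with $\sg_{m}=\lm_{m}^{\ld}$, using $\lm_{m}^{\ld}-\tau_{m}=O(\gm_{m}^{2})$).

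Two points need more care. First, in (i) you cannot simply ``differentiate under the integral sign'': the lower limit $\lm_{0}^{+}(q)$ moves with $q$ and the integrand blows up like $(z-\lm_{0}^{+})^{-1/2}$ there, so the boundary term in the naive variation formula is ill-defined. The standard fix is to establish the gradient formula first where $\Dl(\lm)=2\cosh F(\lm)$ holds, so that $\partial_{q}\Dl = 2\sinh(F)\,\partial_{q}F = \sqrt[c]{\Dl^{2}(\lm)-4}\,\partial_{q}F$ is immediate, and then extend in $\lm$ by analyticity; alternatively one must verify explicitly that the endpoint contribution cancels. Second, in (v) the needed expansion is not in Appendix~\ref{app:bnf-kdv2}: what you are invoking is the classical asymptotic expansion of the quasi-momentum $\ii F$ at $\lm=\infty$ whose coefficients are the KdV Hamiltonians, a nontrivial trace-formula input that your sketch assumes rather than derives. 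Granting that identification, the rest is fine: for finite-gap $q$ the expansion converges near $\infty$ because $F^{2}$ is analytic there by Lemma~\ref{F2-analytic}, and evaluation at $\lm=\nu_{n}^{2}$ yields~\eqref{F-exp}.
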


Occasionally we write for $n\ge 0$
\begin{equation}
  \label{Fn-integral}
  F_n(\lm) \defl F(\lm) + \ii n\pi = \int_{\lm_n^+}^{\lm} \frac{\dDl}{\sqrt[c]{\Dl^{2}-4}}\,\dz
\end{equation}
to denote the primitive of $\frac{\dDl}{\sqrt[c]{\Dl^{2}-4}}$ normalized by the condition $F_n(\lm_n^+) = 0$.

\begin{lem}
\label{F2-analytic}
\begin{equivenum}
\item
For any $q\in \Wp$ and any $n\ge 0$, the function $F_{n}^{2}(\lm)$ is analytic on $U_{n}$ and hence on $\C\setminus\bigcup_{\atop{n\neq m\ge 0}{\gm_{m}\neq 0}} G_{m}$.

In particular, for a finite-gap potential cf.~\eqref{fin-gap}, $F^{2}(\lm) = F_{0}^{2}(\lm)$ is analytic on $U_{0}$ and hence outside a disc centered at zero of sufficiently large radius.

\item
For any $q\in \Wp$ and any $n\ge 0$ and $l\ge 0$, the function $\frac{F_{n}^{2l+1}(\lm)}{\sqrt[c]{\Dl^{2}(\lm)-4}}$ is analytic on $U_{n}$. In particular, $\frac{F^{2l+1}(\lm)}{\sqrt[c]{\Dl^{2}(\lm)-4}}$ is analytic on $\C\setminus\bigcup_{m\ge 1} G_{m}$.~\fish
\end{equivenum}
\end{lem}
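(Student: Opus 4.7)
The plan is to combine the analyticity of $F_{n}$ off the gap $G_{n}$ (given by Lemma~\ref{F-prop}) with a sign-cancellation argument across $G_{n}$ and a removability argument at the branch points $\lm_{n}^{\pm}$.

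If $\gm_{n}=0$, then by~\eqref{c-root} the factor $\vs_{n}(\lm)=\tau_{n}-\lm$ is entire and $\sqrt[c]{\Dl^{2}-4}$ has no cut near $\tau_{n}$; both $F_{n}$ and $F_{n}^{2l+1}/\sqrt[c]{\Dl^{2}-4}$ are already analytic on $U_{n}$ and there is nothing to prove. Henceforth assume $\gm_{n}\neq 0$.

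For part (i), $F_{n}^{2}$ is analytic on $U_{n}\setminus G_{n}$ by Lemma~\ref{F-prop}(i). Across $G_{n}$ the canonical root changes sign, hence so does $F_{n}'=\dDl/\sqrt[c]{\Dl^{2}-4}$. Parametrizing $G_{n}^{\pm}$ as in~\eqref{Gn-sides} and integrating from $\lm_{n}^{+}$ in~\eqref{Fn-integral} along the two sides, I would obtain
\[
  F_{n}^{-}(\lm) = -F_{n}^{+}(\lm),\qquad \lm\in G_{n},
\]
so that $(F_{n}^{+}(\lm))^{2}=(F_{n}^{-}(\lm))^{2}$ on $G_{n}$. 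Together with Lemma~\ref{F-prop}(ii)--(iii), this provides a continuous extension of $F_{n}^{2}$ to all of $U_{n}$. A standard Morera/Painlev\'e argument (a function continuous on $U_{n}$ and analytic off the rectifiable arc $G_{n}$ is analytic across the arc) then gives analyticity on $U_{n}\setminus\{\lm_{n}^{-},\lm_{n}^{+}\}$. At either endpoint, the integrable $(\lm-\lm_{n}^{\pm})^{-1/2}$ singularity of $1/\sqrt[c]{\Dl^{2}-4}$ forces $F_{n}(\lm)=O(|\lm-\lm_{n}^{\pm}|^{1/2})$, so $F_{n}^{2}$ is locally bounded and Riemann's removability theorem extends the analyticity to all of $U_{n}$. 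The ``in particular'' assertion follows because for a finite-gap potential only finitely many $G_{m}$ are non-degenerate, so $U_{0}$ can be chosen to contain the complement of a sufficiently large disc.

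For part (ii), the same scheme applies. Across $G_{n}$ the numerator $F_{n}^{2l+1}$ acquires a factor $(-1)^{2l+1}=-1$ and the denominator $\sqrt[c]{\Dl^{2}-4}$ also acquires $-1$, so the boundary values of the ratio agree on $G_{n}^{\pm}$. Near $\lm_{n}^{\pm}$, the numerator vanishes like $(\lm-\lm_{n}^{\pm})^{l+1/2}$ while the denominator vanishes like $(\lm-\lm_{n}^{\pm})^{1/2}$, producing an analytic function with a zero of order $l$ at each branch point. This yields analyticity on $U_{n}$. The ``in particular'' statement is obtained in the same way: the sign-flip cancellation works across $G_{0}$, and the square-root behavior of both $F$ and $\sqrt[c]{\Dl^{2}-4}$ at $\lm_{0}^{+}$ makes the ratio analytic there, giving analyticity on $\C\setminus\bigcup_{m\ge 1}G_{m}$.

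The main obstacle is the rigorous verification of $F_{n}^{-}=-F_{n}^{+}$ on $G_{n}$ from~\eqref{Fn-integral}: one must carefully choose admissible paths approaching the two sides of the cut and correctly identify the boundary values of $\sqrt[c]{\Dl^{2}-4}$ on $G_{n}^{\pm}$. Everything else -- the removability at $\lm_{n}^{\pm}$ and the extension across $G_{n}$ -- is standard complex analysis once the sign relation on the cut is in place.
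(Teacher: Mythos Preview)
Your approach is correct and essentially the same as the paper's: the sign-flip of $F_{n}$ and $\sqrt[c]{\Dl^{2}-4}$ across $G_{n}$ together with removability at $\lm_{n}^{\pm}$ is exactly the mechanism. The only cosmetic difference is that for (ii) the paper factors $F_{n}^{2l+1}/\sqrt[c]{\Dl^{2}-4} = (F_{n}^{2})^{l}\cdot F_{n}/\sqrt[c]{\Dl^{2}-4}$ and handles the second factor by l'H\^opital, obtaining the explicit boundary value $F_{n}/\sqrt[c]{\Dl^{2}-4}\big|_{\lm_{n}^{\pm}} = 1/\Dl(\lm_{n}^{\pm}) = (-1)^{n}/2$, whereas you do the power counting for the full expression directly; both give the same result.
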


\begin{proof}
The proof of item (i) can be found in \cite[Lemma~4.3]{Kappeler:CNzeErmy}.
To prove item (ii) we note that both $F_{n}$ and the canonical root admit opposite signs on opposite sides of $G_{n}$ and they vanish on $U_{n}$ only at $\lm_{n}^{\pm}$ hence the quotient $F_{n}(\lm)/\sqrt[c]{\Dl^{2}(\lm)-4}$ is analytic on $U_{n}\setminus\setd{\lm_{n}^{-},\lm_{n}^{+}}$. Moreover, by  l'Hopital's rule one has $F_{n}(\lm)/\sqrt[c]{\Dl^{2}(\lm)-4}\big|_{\lm_{n}^{\pm}} = \frac{1}{\Dl(\lm_{n}^{\pm})} = \frac{(-1)^{n}}{2}$ so the quotient is continuous and hence analytic on all of $U_{n}$. Together with item (i) it thus follows that $F_{n}^{2l+1}(\lm)/\sqrt[c]{\Dl^{2}(\lm)-4}$ is analytic on $U_{n}$ as well.\qed
\end{proof}

\subsection{Asymptotics of spectral quantities in Fourier Lebesgue spaces}

Recall that we say the tuple $(s,p)$ of real numbers is admissible if either $p=2$ and $-1\le s < \infty$ or $2 < p < \infty$ and $-1/2\le s\le 0$. For $(s,p)$ admissible we introduce
\[
  \Ws^{s,p} = \Ws \cap \FL^{s,p}_{0,\C},\qquad \FL^{s,p}_{0,\C} = 
  \setdef{u\in \Sc_{\C}'}{(u_{n})\in \ell_{0,\C}^{s,p}},
\]
 -- cf. \eqref{Hs-FLsp-def}.
According to \cite{Kappeler:CNzeErmy,Kappeler:1999er} for $q\in\Ws^{s,p}$ the estimates of the periodic eigenvalues~\eqref{lm-asymptotics} can be refined to
\begin{equation}
  \label{per-ev-est-sp}
    \lm_{n}^{\pm} = n^{2}\pi^{2} + n^{-s}\ell_{n}^{p}.
\end{equation}
This estimate holds locally uniformly on $\Ws^{s,p}$. In more detail,
\[
  \sum_{n\ge 1} n^{sp}\abs{\lm_{n}^{\pm}-n^{2}\pi^{2}}^{p} \le C,
\]
where the constant $C$ can be chosen locally uniformly on $\Ws^{s,p}$.
We note that~\eqref{per-ev-est-sp} immediately implies that $\tau = (\tau_{n})_{n\ge 1}$ and $\gm = (\gm_{n})_{n\ge 1}$ satisfy
\begin{equation}
  \label{taun-lmn-est}
  \tau_{n} = n^{2}\pi^{2} + n^{-s}\ell_{n}^{p},\qquad
  \gm_{n} = n^{-s}\ell_{n}^{p}.
\end{equation}
It was shown in~\cite[Proposition 2.18]{Kappeler:2005fb} that for every $\ep > 0$ there exists $n_{\ep}\ge n_{0}$ so that
\begin{equation}
  \label{lmld-taun-selecta-ep}
  \abs{\lm_{n}^{\ld} - \tau_{n}} \le \ep \abs{\gm_{n}},\qquad n\ge n_{\ep},
\end{equation}
and $n_{\ep}$ can be chosen locally uniformly on $\Ws$. As an immediate consequence of~\eqref{taun-lmn-est} and~\eqref{lmld-taun-selecta-ep},
\begin{equation}
  \label{lm-ld-rough}
  \lm_{n}^{\ld} = n^{2}\pi^{2} + n^{-s}\ell_{n}^{p}.
\end{equation}
We proceed by further refining the estimate of $\lm_{n}^{\ld}$ as well as other quantities derived from the periodic spectrum of $q$. A key ingredient into the proof of these estimates is an estimate of functions of the form
\begin{equation}
  \label{phi-n-func}
    f_{n}(\lm) = \frac{n\pi}{\sqrt[+]{\lm- \lm_{0}^{+}}}\prod_{m\neq n}\frac{\sg_{m}-\lm}{\vs_{m}(\lm)},
\end{equation}
with $\tilde\sg  = (\sg_{n}-n^{2}\pi^{2})_{n\ge 1}\in h_{\C}^{-1}$. Note that for each $n\ge 1$, the function $f_{n}$ is analytic in $(\lm,\tilde \sg,q)$ on $(\C\setminus\bigcup_{m\neq n} \ob{U_{m}})\times h_{\C}^{-1} \times \Ws_{q}$ - cf.~\cite[Corollary~12.8]{Grebert:2014iq}.
%\note{jm: Should we cite the claim of~\eqref{phi-n-func}, e.g.~\cite[Corollary~12.8]{Grebert:2014iq}?}

In a first step we estimate the infinite-product part of $f_{n}$. To simplify notation we write for any subset $U$ of the complex plane $\abs{f}_{U} \defl \sup_{\lm\in U} \abs{f(\lm)}$.

\begin{lem}
\label{inf-prod-root-quot}
Suppose $(s,p)$ is admissible with $-1 \le s\le 0$. For any $\sg = (\sg_{n})_{n\ge 1}\subset\C$ with $\sg_{n}-\tau_{n} = n^{-t}\ell_{n}^{r}$, $-1\le t\le 0$, and $1 < r < \infty$,
\[
  \abs*{\prod_{m\neq n}\frac{\sg_{m}-\lm}{\vs_{m}(\lm)} - 1}_{U_{n}}
   = n^{-1-t}\ell_{n}^{r}+n^{-2-2s}\ell_{n}^{p/2},
\]
uniformly in $\n{\sg-\tau}_{t,r}$ and locally uniformly on $\Ws^{s,p}$. In more detail, one has $\prod_{m\neq n}\frac{\sg_{m}-\lm}{\vs_{m}(\lm)} = 1 + a_{n}(\lm) + b_{n}(\lm)$ where the functions $a_{n}$ and $b_{n}$ satisfy the estimate
\[
  \sum_{n\ge 1} \p*{n^{(1+t)r}\abs{a_{n}}_{U_{n}}^{r} + n^{(2+2s)p/2}\abs{b_{n}}_{U_{n}}^{p/2}} \le C,
\]
and the absolute constant $C$ can be chosen uniformly in $\n{\sg-\tau}_{t,r}$ and locally uniformly on $\Ws^{s,p}$.~\fish
\end{lem}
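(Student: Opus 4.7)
The plan is to factor the product multiplicatively into a piece carrying the $\sg_m-\tau_m$ information and a piece carrying the gap $\gm_m$ information. Using the definition of the standard root in~\eqref{s-root}, I would write
\[
  \prod_{m\ne n}\frac{\sg_m-\lm}{\vs_m(\lm)} = P_{1,n}(\lm)\,P_{2,n}(\lm),
\]
with $P_{1,n}(\lm) = \prod_{m\ne n}\bigl(1 + (\sg_m-\tau_m)/(\tau_m-\lm)\bigr)$ and $P_{2,n}(\lm) = \prod_{m\ne n}\bigl(1 - \gm_m^2/4(\tau_m-\lm)^2\bigr)^{-1/2}$. Setting $b_n\defl P_{2,n}-1$ and $a_n\defl (P_{1,n}-1)P_{2,n}$ then gives the claimed decomposition $P_{1,n}P_{2,n} = 1 + a_n + b_n$. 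A preliminary step is to verify that $|P_{2,n}|_{U_n}$ is uniformly bounded, so that $a_n$ inherits the summability bound of $P_{1,n}-1$.

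A key geometric input is the estimate $|\tau_m-\lm|\asymp|m^2-n^2|$ for $\lm\in U_n$ and $m\ne n$, which follows from~\eqref{iso-est} combined with~\eqref{taun-lmn-est}, locally uniformly in $q\in\Ws^{s,p}$ and uniformly on balls $\{\n{\sg-\tau}_{t,r}\le R\}$. Expanding the logarithm yields
\[
  \log P_{2,n}(\lm) = \tfrac{1}{8}\sum_{m\ne n}\frac{\gm_m^2}{(\tau_m-\lm)^2} + (\text{quartic and higher}),
\]
and using $\gm_m^2 = m^{-2s}\ell_m^{p/2}$ from~\eqref{taun-lmn-est}, the leading sum reduces to bounding the operator with kernel $K(n,m) = n^{2+2s}m^{-2s}/|m^2-n^2|^2$ on $\ell^{p/2}$. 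For $m$ near $n$ the behaviour is $|m-n|^{-2}$, an $\ell^1$ convolution, while for $m$ far from $n$ one estimates directly using $s\in[-1,0]$; both row and column sums of $K$ are finite, so the Schur test yields $|b_n|_{U_n} = n^{-2-2s}\ell_n^{p/2}$. The higher-order terms in the log expansion are absolutely convergent of strictly lower order and are absorbed into $b_n$.

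For $P_{1,n}$ the analogous expansion gives
\[
  \log P_{1,n}(\lm) = \sum_{m\ne n}\frac{\sg_m-\tau_m}{\tau_m-\lm} + (\text{quadratic and higher}).
\]
Writing $\sg_m-\tau_m = m^{-t}u_m$ with $(u_m)\in\ell^r$ and invoking the partial-fraction identity $1/(m^2-n^2) = (1/2n)(1/(m-n) - 1/(m+n))$, the leading sum splits into a discrete Hilbert transform contribution (the $1/(m-n)$ piece, $m$ close to $n$) and an off-diagonal convolution. The main technical point lies here: the discrete Hilbert transform $u\mapsto (\sum_{m\ne n} u_m/(m-n))_n$ is bounded on $\ell^r$ precisely for $1<r<\infty$, which is exactly why the range of $r$ is assumed in the hypothesis. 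The off-diagonal contribution is handled via Young's inequality against an $\ell^1$ kernel, just as in the $P_{2,n}$ analysis. Combining these gives $|P_{1,n}-1|_{U_n} = n^{-1-t}\ell_n^r$; the quadratic and higher terms in the log expansion have strictly faster decay and are absorbed. Multiplying by the uniformly bounded factor $P_{2,n}$ finishes the required estimate on $a_n$, completing the decomposition.
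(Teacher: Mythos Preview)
Your proposal is correct and follows essentially the same route as the paper: the same multiplicative factorization into $P_{1,n}$ and $P_{2,n}$, the discrete Hilbert transform on $\ell^r$ controlling $P_{1,n}-1$ (packaged in the paper as Lemma~\ref{inf-prod-quot}, whose proof invokes Lemma~\ref{mht-2} and ultimately Lemma~\ref{ht-1}), and a Young/Schur bound for the $|m-n|^{-2}$ kernel controlling $P_{2,n}-1$ (the paper's Lemma~\ref{appl-young} together with the weighted inequality $(n^2-m^2)^{-2}\le n^{-2-2s}m^{2s}(n-m)^{-2}$). Your explicit choice $b_n=P_{2,n}-1$, $a_n=(P_{1,n}-1)P_{2,n}$ is a clean way to produce the decomposition stated in the lemma.
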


\begin{proof}
Write the product in the form
\begin{equation}
  \label{prod-root-quot-1}
  \prod_{m\neq n}\frac{\sg_{m}-\lm}{\vs_{m}(\lm)}
  = \prod_{m\neq n}\frac{\sg_{m}-\lm}{\tau_{m}-\lm}
    \prod_{m\neq n}\p*{1- \frac{\gm_{m}^{2}}{4(\tau_{m}-\lm)^{2}}}^{-1/2}.
\end{equation}
Here $x^{-1/2}$ denotes the standard branch $\sqrt[+]{x}$ of the square root which is analytic on $\C\setminus(-\infty,0]$.
By~\eqref{Un-Dn} we have $U_{n} = D_{n}$ for $n\ge n_{0}$ where $n_{0}$ can be chosen locally uniformly on $\Ws$. Consequently, the first factor is $1 + n^{-1-t}\ell_{n}^{r}$ in view of~\eqref{iso-est} and Lemma~\ref{inf-prod-quot}.
For the second factor, note that $\abs*{\frac{\gm_{m}^{2}}{4(\tau_{m}-\lm)^{2}}}_{U_{n}}
   = O\p*{ \frac{\gm_{m}^{2}}{(n^{2}-m^{2})^{2}} }$ for all $n\ge n_{0}$ again in view of~\eqref{iso-est}. Applying the estimate
\[
  \frac{\abs{\gm_{m}}^{2}}{(n^{2}-m^{2})^{2}}
  \le
  \begin{cases}
  4\n{\gm}_{h^{-1}}^{2}/n^{2}, & \abs{n-m} > n/2,\\
  \n{R_{n/2}\gm}_{h^{-1}}^{2}, & 1 \le \abs{n-m} \le n/2,
  \end{cases}
\]
where $R_{n/2}(\gm) = (\gm_{m})_{m\ge n/2}$, shows that one can choose $\tilde n_{0}\ge n_{0}$ locally uniformly in $\Ws$ so that $\abs*{\frac{\gm_{m}^{2}}{4(\tau_{m}-\lm)^{2}}}_{U_{n}}\le 1/2$ for all $m\ge 1$ with $m\neq n$ and all $n\ge\tilde n_{0}$. Invoking the estimate $\abs{1/\sqrt[+]{1+x}-1}\le \abs{x}$ for $\abs{x} \le 1/2$ then gives
\[
  \abs*{\p*{1- \frac{\gm_{m}^{2}}{4(\tau_{m}-\lm)^{2}}}^{-1/2}-1} \le
  \abs*{\frac{\gm_{m}^{2}}{4(\tau_{m}-\lm)^{2}}} = \frac{n^{-2-2s}\ell_{m}^{p/2}}{(n-m)^{2}},
\]
where we used that $\gm_{m}^{2} = m^{-2s}\ell_{m}^{p/2}$ and $\frac{1}{(n^{2}-m^{2})^{2}} \le \frac{1}{n^{2+2s}m^{-2s}(n-m)^{2}}$.
Therefore, Lemma~\ref{appl-young} yields $\sum_{m\neq n} \abs*{\frac{\gm_{m}^{2}}{4(\tau_{m}-\lm)^{2}}} = n^{-2-2s}\ell_{n}^{p/2}$ and finally, in view of Lemma~\ref{inf-prod-est}, we conclude
\[
  \prod_{m\neq n}\p*{1- \frac{\gm_{m}^{2}}{4(\tau_{m}-\lm)^{2}}}^{-1/2}
  = 1+n^{-2-2s}\ell_{n}^{p/2}.
\]
By going through the arguments of the proof one verifies the claimed uniformity statement.\qed
\end{proof}

It remains to estimate the $\frac{n\pi}{\sqrt[+]{\lm-\lm_{0}^{+}}}$ term of $f_{n}$, introduced in~\eqref{phi-n-func}. 

\begin{lem}
\label{lm0-est}
Suppose $(s,p)$ is admissible with $-1\le s\le 0$, then
\[
 \abs*{\frac{n\pi}{\sqrt[+]{\lm- \lm_{0}^{+}}}-1}_{G_{n}}
 = n^{-2-s}\ell_{n}^{p} + n^{-2}\ell_{n}^{\infty}
 = n^{-3/2-s}(\ell_{n}^{p/2}+\ell_{n}^{1+}) + n^{-1}\ell_{n}^{1+},
\]
locally uniform on $\Ws^{s,p}$.~\fish
\end{lem}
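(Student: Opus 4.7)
The plan is to prove the first equality by a direct rationalization and then derive the second equality from elementary summability considerations tied to the admissibility of $(s,p)$. To begin, I would write
\[
  \frac{n\pi}{\sqrt[+]{\lm- \lm_{0}^{+}}}-1
  =
  \frac{n^{2}\pi^{2}-(\lm- \lm_{0}^{+})}{\sqrt[+]{\lm- \lm_{0}^{+}}\,\bigl(n\pi+\sqrt[+]{\lm- \lm_{0}^{+}}\bigr)},
\]
so the task reduces to bounding numerator and denominator on $G_{n}$ for $n$ large. By \eqref{Un-Dn}, $G_{n}\subset U_{n}=D_{n}$ for $n\ge n_{0}$ with $n_{0}$ chosen locally uniformly on $\Ws$, and $\lm_{0}^{+}$ is bounded locally uniformly on $\Ws^{s,p}$. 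Hence for $\lm\in G_{n}$ one has $\sqrt[+]{\lm-\lm_{0}^{+}}=n\pi+O(1)$, and therefore the denominator equals $2n^{2}\pi^{2}\bigl(1+O(n^{-1})\bigr)$, the implicit constant being locally uniform.

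For the numerator, write $n^{2}\pi^{2}-\lm+\lm_{0}^{+}=\lm_{0}^{+}-(\lm-n^{2}\pi^{2})$. Since any $\lm\in G_{n}$ satisfies $\abs{\lm-\tau_{n}}\le \gm_{n}/2$, the bounds \eqref{per-ev-est-sp} and \eqref{taun-lmn-est} give $\sup_{\lm\in G_{n}}\abs{\lm-n^{2}\pi^{2}}\le \abs{\tau_{n}-n^{2}\pi^{2}}+\gm_{n}/2=n^{-s}\ell_{n}^{p}$, locally uniformly on $\Ws^{s,p}$, while $\abs{\lm_{0}^{+}}$ contributes a locally bounded sequence, i.e.\ an $\ell_{n}^{\infty}$ term. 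Dividing by the denominator estimate yields
\[
  \sup_{\lm\in G_{n}}\abs*{\frac{n\pi}{\sqrt[+]{\lm-\lm_{0}^{+}}}-1}
  =\frac{\abs{\lm_{0}^{+}}+n^{-s}\ell_{n}^{p}}{2n^{2}\pi^{2}}\bigl(1+O(n^{-1})\bigr)
  =n^{-2-s}\ell_{n}^{p}+n^{-2}\ell_{n}^{\infty},
\]
which is the first equality, and all implicit constants have been tracked to be locally uniform on $\Ws^{s,p}$.

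The second equality is then a purely combinatorial rearrangement using admissibility. The term $n^{-2}\ell_{n}^{\infty}=n^{-1}(n^{-1}\ell_{n}^{\infty})$ lies in $n^{-1}\ell_{n}^{1+}$ since $n^{-1}\ell_{n}^{\infty}\subset \ell_{n}^{r}$ for any $r>1$. For the term $n^{-2-s}\ell_{n}^{p}=n^{-3/2-s}\bigl(n^{-1/2}\ell_{n}^{p}\bigr)$, one checks $n^{-1/2}\ell_{n}^{p}\subset \ell_{n}^{p/2}+\ell_{n}^{1+}$: in the case $p>2$, Cauchy--Schwarz gives $\sum n^{-p/4}\abs{x_{n}}^{p/2}\le \bigl(\sum n^{-p/2}\bigr)^{1/2}\n{x}_{p}^{p/2}$, convergent because $p/2>1$; in the case $p=2$, a H\"older argument with exponents $2/r$ and $2/(2-r)$ yields $n^{-1/2}\ell^{2}\subset \ell^{r}$ for every $1<r<2$, hence $n^{-1/2}\ell^{2}\subset \ell_{n}^{1+}$. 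Combining these inclusions gives the second equality.

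I do not expect a serious obstacle; the only delicate point is to make sure that the use of the asymptotics \eqref{per-ev-est-sp} and \eqref{taun-lmn-est}, together with the choice of $n_{0}$ and the bound on $\lm_{0}^{+}$, is genuinely locally uniform on $\Ws^{s,p}$, so that the resulting sequence bounds inherit this uniformity. This is automatic from the way these estimates are stated earlier in the paper.
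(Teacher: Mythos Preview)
Your proof is correct and follows essentially the same approach as the paper. The only cosmetic difference is that the paper factors out $n\pi$ and invokes the inequality $\abs{(1+x)^{-1/2}-1}\le\abs{x}$ for $\abs{x}\le 1/2$, whereas you rationalize the difference directly; both reduce to the same numerator--denominator estimate, and your justification of the summability inclusions for the second equality is slightly more explicit than the paper's one-line remark but identical in substance.
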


\begin{proof}
Let $(\lm_{n})_{n\ge 1}$ be any sequence with $\lm_{n} \in G_{n}$, then by~\eqref{taun-lmn-est} we can write $\lm_{n} = n^{2}\pi^{2} + n^{-s}a_{n}$ with $a_{n} = \ell_{n}^{p}$.
Using that $\abs{(1+x)^{-1/2}-1}\le \abs{x}$ for $\abs{x} \le 1/2$, we conclude
\[
  \abs*{\frac{n\pi}{\sqrt[+]{\lm - \lm_{0}^{+}}}-1}
  = \abs*{\p*{1  + \frac{n^{-s}a_{n}}{n^{2}\pi^{2}} - \frac{\lm_{0}^{+}}{n^{2}\pi^{2}}}^{-1/2} - 1}
  = \frac{\ell_{n}^{p}}{n^{2+s}} + \frac{\ell_{n}^{\infty}}{n^{2}}.
\]
For $p>2$, one has $n^{-2-s}\ell_{n}^{p} = n^{-3/2-s}\ell_{n}^{p/2}$ whereas for $p=2$ we have $n^{-2-s}\ell_{n}^{p} = n^{-3/2-s}\ell_{n}^{1+}$. Moreover, $n^{-2}\ell_{n}^{\infty} = n^{-1}\ell_{n}^{1+}$. By going through the arguments of the proof, one sees that estimate holds locally uniformly on $\Ws^{s,p}$.\qed
\end{proof}

Combining the previous two lemmas yields the following estimate for the function $f_{n} = \frac{n\pi}{\sqrt[+]{\lm- \lm_{0}^{+}}}\prod_{m\neq n}\frac{\sg_{m}-\lm}{\vs_{m}(\lm)}$.

\begin{prop}
%\note{tk: could be improved\\jm: but not so much? if the exponents shall be $p$ independent, this is optimal...}
\label{std-prd-est}
Suppose $(s,p)$ is admissible with $-1\le s\le 0$, and $\sg=(\sg_{n})_{n\ge1}\subset\C$ with $\sg_{n}-\tau_{n} = n^{-t}\ell_{n}^{r}$, $s\le t \le 0$, and $1 < r\le p$. Then
\begin{align*}
  \abs*{f_{n}(\lm)-1}_{G_{n}}
   &= n^{-1-t}\ell_{n}^{r} + n^{-1+(-1-2s)_{+}}(\ell_{n}^{p/2}+\ell_{n}^{1+}).
\end{align*}
uniformly in $\n{\sg-\tau}_{t,r}$ and locally uniformly on $\Ws^{s,p}$.~\fish
\end{prop}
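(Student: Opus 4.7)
The plan is to combine Lemma~\ref{inf-prod-root-quot} (applied on $U_n$, which contains $G_n$) with Lemma~\ref{lm0-est} (applied on $G_n$) by writing
\[
  f_{n}(\lm) - 1 = (A_n(\lm) - 1) + (B_n(\lm) - 1) + (A_n(\lm)-1)(B_n(\lm)-1),
\]
where $A_n(\lm) \defl \frac{n\pi}{\sqrt[+]{\lm-\lm_0^+}}$ and $B_n(\lm) \defl \prod_{m\neq n} \frac{\sg_m-\lm}{\vs_m(\lm)}$. The two main lemmas give on $G_n$
\[
  \abs{A_n-1}_{G_n} = n^{-3/2-s}(\ell_{n}^{p/2}+\ell_{n}^{1+}) + n^{-1}\ell_{n}^{1+},\qquad
  \abs{B_n-1}_{U_n} = n^{-1-t}\ell_{n}^{r} + n^{-2-2s}\ell_{n}^{p/2},
\]
both with the required uniformity in $\n{\sg-\tau}_{t,r}$ and locally uniformly on $\Ws^{s,p}$.

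The term $n^{-1-t}\ell_n^r$ from $B_n-1$ is exactly the first contribution in the claimed estimate, so it suffices to show that the remaining $A_n-1$ term together with the $n^{-2-2s}\ell_n^{p/2}$ term of $B_n-1$ combine to $n^{-1+(-1-2s)_+}(\ell_n^{p/2}+\ell_n^{1+})$. I would split this into two cases. If $s\ge -1/2$, then $(-1-2s)_+ = 0$, and since $-2-2s \le -1$ and $-3/2-s\le -1$, each of the contributions $n^{-2-2s}\ell_n^{p/2}$, $n^{-3/2-s}(\ell_n^{p/2}+\ell_n^{1+})$ and $n^{-1}\ell_n^{1+}$ is dominated by $n^{-1}(\ell_n^{p/2}+\ell_n^{1+})$. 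If $-1\le s < -1/2$, then $(-1-2s)_+ = -1-2s$ and the only admissible case is $p=2$; now $-3/2-s < -2-2s$ and $-1 < -2-2s$, so all contributions are dominated by $n^{-2-2s}(\ell_n^{p/2}+\ell_n^{1+})$. In either case the claimed bound follows.

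Finally, the cross term $(A_n-1)(B_n-1)$ is trivially absorbed, since multiplying the bounds for $\abs{A_n-1}_{G_n}$ and $\abs{B_n-1}_{G_n}$ produces a sequence decaying strictly faster than each of the two main contributions individually, and products of $\ell^p$--summable sequences with bounded ones remain $\ell^p$--summable. The uniformity in $\n{\sg-\tau}_{t,r}$ and the local uniformity on $\Ws^{s,p}$ are inherited directly from the corresponding uniformity statements of Lemmas~\ref{inf-prod-root-quot} and~\ref{lm0-est}. There is no genuine obstacle here; the only point requiring mild care is bookkeeping of the exponents in the two ranges $s\ge -1/2$ and $s<-1/2$ so that one sees the kink $(-1-2s)_+$ emerge from the comparison $-3/2-s$ vs.\ $-2-2s$.\qed
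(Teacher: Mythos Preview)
Your proposal is correct and follows exactly the approach the paper intends: the paper presents Proposition~\ref{std-prd-est} as an immediate consequence of Lemmas~\ref{inf-prod-root-quot} and~\ref{lm0-est} (``Combining the previous two lemmas yields the following estimate\dots'') without giving a separate proof. Your decomposition $f_n - 1 = (A_n-1)+(B_n-1)+(A_n-1)(B_n-1)$ together with the case split at $s=-1/2$ to identify $(-1-2s)_+$ is precisely the bookkeeping the paper leaves to the reader, and the cross term is indeed harmless since $|A_n-1|_{G_n}$ is uniformly bounded.
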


\begin{rem}
\label{std-prd-simple-est}
Proposition~\eqref{std-prd-est} implies the simpler estimate
\[
  \abs{f_{n}(\lm)-1}_{G_{n}} = n^{-1-s}\ell_{n}^{p}.\map
\]
\end{rem}

We are now in a position to obtain refined estimates for $\lm_{n}^{\ld}$ and $I_{n}$.

\begin{lem}
%\note{tk: (i) can be improved\\jm: for $p>2$ actually $\gm_{n}^{2}n^{-2-s}\ell_{n}^{p}$}
\label{In-lmd-est}
If $(s,p)$ with $-1\le s < \infty$ is admissible, then locally uniformly on $\Ws^{s,p}$
\begin{equivenum}
\item
\[
  \lm_{n}^{\ld} = \tau_{n} + \gm_{n}^{2}n^{-1}\ell_{n}^{p},
\]
\item
\[
  \frac{8n\pi I_{n}}{\gm_{n}^{2}} = 1 + n^{-1+(-1-2s)_{+}}(\ell_{n}^{p/2}+ \ell_{n}^{1+}).~\fish
\]
\end{equivenum}
\end{lem}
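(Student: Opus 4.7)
\emph{Plan.} For (i), the starting point is the identity $\oint_{\Gm_{n}} \dDl(\lm)/\sqrt[c]{\Dl^{2}(\lm)-4}\,\dlm = 0$ implied by~\eqref{w-closed}. Substituting the product representation~\eqref{Dl-quot-prod} and extracting the $m=n$ factor rewrites it as
\[
  \oint_{\Gm_{n}}\frac{(\lm_{n}^{\ld} - \lm)\,f_{n}(\lm)}{\vs_{n}(\lm)}\,\dlm = 0,
\]
with $f_{n}$ as in~\eqref{phi-n-func} and $\sg_{m}=\lm_{m}^{\ld}$. Splitting $\lm_{n}^{\ld}-\lm = (\lm_{n}^{\ld}-\tau_{n}) - (\lm-\tau_{n})$ and using that $\oint_{\Gm_{n}}(\lm-\tau_{n})/\vs_{n}(\lm)\,\dlm = 0$ (since $\vs_{n}$ admits the single-valued antiderivative $-\sqrt{(\lm-\tau_{n})^{2}-\gm_{n}^{2}/4}$ on $\C\setminus G_{n}$), I solve
\[
  \lm_{n}^{\ld} - \tau_{n} = \frac{\oint_{\Gm_{n}}(\lm-\tau_{n})(f_{n}-1)/\vs_{n}\,\dlm}{\oint_{\Gm_{n}} f_{n}/\vs_{n}\,\dlm}.
\]
The denominator equals $-2\pi\ii$ up to a vanishing correction (residue of $1/\vs_{n}$ at infinity plus Lemma~\ref{int-wm-quot-est} applied to $f_{n}-1$), while the numerator is bounded by $\pi\gm_{n}\abs{f_{n}-1}_{G_{n}}$ via Lemma~\ref{int-wm-quot-est} together with $\abs{\lm-\tau_{n}}_{G_{n}}\le\gm_{n}/2$. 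Feeding Proposition~\ref{std-prd-est} with $\sg=\lm^{\ld}$ and the coarse input~\eqref{lm-ld-rough} gives $\abs{f_{n}-1}_{G_{n}}=O(n^{-1-s}\ell_{n}^{p})$, and one bootstrap yields the announced $\gm_{n}^{2}n^{-1}\ell_{n}^{p}$ form.

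For (ii), combining~\eqref{action} with~\eqref{Dl-quot-prod} produces
\[
  I_{n} = \frac{1}{2\pi^{2}n\ii}\oint_{\Gm_{n}}\frac{\lm(\lm_{n}^{\ld}-\lm)\,f_{n}(\lm)}{\vs_{n}(\lm)}\,\dlm.
\]
The zero identity from~(i) lets me replace $\lm$ by $\lm-\tau_{n}$ in the integrand. Expanding $(\lm-\tau_{n})(\lm_{n}^{\ld}-\lm)=(\lm_{n}^{\ld}-\tau_{n})(\lm-\tau_{n})-(\lm-\tau_{n})^{2}$ and reapplying $\oint(\lm-\tau_{n})f_{n}/\vs_{n}=(\lm_{n}^{\ld}-\tau_{n})\oint f_{n}/\vs_{n}$ from~(i) collapses the first piece into $(\lm_{n}^{\ld}-\tau_{n})^{2}\oint f_{n}/\vs_{n}=O(\gm_{n}^{4}n^{-2})$, which is negligible. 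The second piece, $-\oint(\lm-\tau_{n})^{2}f_{n}/\vs_{n}\,\dlm$, has leading value $\pi\ii\gm_{n}^{2}/4$ (residue at infinity applied to $(\lm-\tau_{n})^{2}/\vs_{n}$), giving $\gm_{n}^{2}/(8\pi n)$ as the main term of $I_{n}$. The remaining correction $-\oint(\lm-\tau_{n})^{2}(f_{n}-1)/\vs_{n}\,\dlm$ is bounded by $\pi\gm_{n}^{2}\abs{f_{n}-1}_{G_{n}}/2$ via Lemma~\ref{int-wm-quot-est}, so it contributes a relative error of order $\abs{f_{n}-1}_{G_{n}}$ to $8n\pi I_{n}/\gm_{n}^{2}$, which by Proposition~\ref{std-prd-est} applied with the refined input $\lm_{n}^{\ld}-\tau_{n}=\gm_{n}^{2}n^{-1}\ell_{n}^{p}$ from~(i) equals $n^{-1+(-1-2s)_{+}}(\ell_{n}^{p/2}+\ell_{n}^{1+})$.

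\emph{Main obstacle.} The delicate point is tracking the different admissible $(s,p)$ regimes across the transition $s=-1/2$ which governs $(-1-2s)_{+}$, and choosing in Proposition~\ref{std-prd-est} the correct pair $(t,r)$ so that the first summand $n^{-1-t}\ell_{n}^{r}$ there gets absorbed into the announced $n^{-1+(-1-2s)_{+}}(\ell_{n}^{p/2}+\ell_{n}^{1+})$ form. The endpoint $p=2$ in particular forces $r=1+$ to meet the $r>1$ hypothesis of the Proposition, which is how the $\ell^{1+}$ slack enters the final statement; moreover all estimates must be made locally uniform on $\Ws^{s,p}$, which amounts to checking the locally uniform choice of $n_{0}$ in~\eqref{Un-Dn} as well as of the constants in Lemmas~\ref{inf-prod-root-quot}--\ref{lm0-est}.
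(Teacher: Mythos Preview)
Your approach to~(ii) is sound and close in spirit to the paper's (which parametrizes $G_n$ directly rather than manipulating contour integrals, but lands on the same estimate of $\chi_n = f_n$). However, your argument for~(i) has a genuine gap: the bound $\pi\gm_n\abs{f_n-1}_{G_n}$ on the numerator carries exactly \emph{one} factor of $\gm_n$, and no bootstrap on the input $\sg_m-\tau_m$ can produce a second one, because $f_n$ involves only $(\gm_m)_{m\neq n}$ and never $\gm_n$ itself. Bootstrapping improves the $n$-decay of $\abs{f_n-1}_{G_n}$ but leaves the $\gm_n$-count unchanged; your conclusion after one bootstrap is $\lm_n^{\ld}-\tau_n = \gm_n\cdot n^{-2-2s}(\ell_n^{p/2}+\ell_n^{1+})$, which is \emph{not} of the pointwise form $\gm_n^{2}n^{-1}\ell_n^{p}$ when $\gm_n$ happens to be much smaller than $n^{-s}$. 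This matters: item~(ii) divides by $\gm_n^{2}$ (via $t_n = 2(\lm_n^{\ld}-\tau_n)/\gm_n$), so the pointwise $\gm_n^{2}$ is essential.

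The paper avoids this by starting from the algebraic identity $\partial_\lm(\Dl^2-4)\big|_{\lm_n^{\ld}}=0$, which after factoring out $\Dl_n$ reads
\[
  2(\lm_n^{\ld}-\tau_n)\Dl_n(\lm_n^{\ld}) = -\bigl((\lm_n^{\ld}-\tau_n)^2-\gm_n^2/4\bigr)\dDl_n(\lm_n^{\ld}),
\]
so that $\gm_n^2$ appears explicitly on the right and it remains to bound $\dDl_n(\lm_n^{\ld})/\Dl_n(\lm_n^{\ld}) = n^{-1}\ell_n^p$ via product estimates. Your contour approach \emph{can} be rescued: apply the vanishing $\oint(\lm-\tau_n)/\vs_n\,\dlm = 0$ a second time to replace $f_n-1$ by $f_n(\lm)-f_n(\tau_n)$ in the numerator, and then use a Cauchy estimate for $f_n'$ on $G_n$ against $\abs{f_n-1}$ on a disc of radius $\sim n$ (Lemma~\ref{inf-prod-root-quot} is already stated on $U_n$, and the factor $n\pi/\sqrt{\lm-\lm_0^+}$ is $1+O(1/n)$ on such a disc). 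This yields $\abs{f_n(\lm)-f_n(\tau_n)}_{G_n} = O(\gm_n/n)\cdot\ell_n^p$ and hence the missing second $\gm_n$.
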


\begin{rem}
\begin{equivenum}
\item
Estimate (i) implies that for any $n\ge 1$ with $\gm_{n} = 0$ one has the standard identity $\lm_{n}^{\ld} = \tau_{n}$.
\item
Since $\gm_{n} = n^{-s}\ell_{n}^{p}$ by~\eqref{taun-lmn-est}, we conclude for any $(s,p)$ admissible
\begin{equation}
  \label{In-gmn-est}
  I_{n} - \frac{\gm_{n}^{2}}{8n\pi} = 
  \begin{cases}
  n^{-3-4s}\ell_{n}^{1} & -1\le s\le -1/2,\\
  n^{-2-2s}(\ell_{n}^{p/4} + \ell_{n}^{1}) & -1/2 < s <\infty.
  \end{cases}
  \map
\end{equation}
\end{equivenum}

\end{rem}

\begin{proof}
(i)
It was shown in~\cite[Proposition 2.19]{Kappeler:2005fb} that
\begin{equation}
  \label{lmld-taun-selecta-2}
  \lm_{n}^{\ld} - \tau_{n} = O(\gm_{n}^{2})
\end{equation}
locally uniformly in $\Ws$ whence it suffices to prove the claimed asymptotics for $n$ sufficiently large.

With $\Dl_{n}(\lm) \defl \frac{\lm_{0}^{+}-\lm}{n^{4}\pi^{4}}\prod_{m\neq n} \frac{(\lm_{m}^{+}-\lm)(\lm_{m}^{-}-\lm)}{m^{4}\pi^{4}}$ the identity $0 = \left.\frac{1}{4}\partial_{\lm}(\Dl^{2}(\lm)-4)\right|_{\lm_{n}^{\ld}}$ can be written as
\begin{align}
\label{lmd-tau-relation}
  0 = 2(\lm_{n}^{\ld}-\tau_{n})\Dl_{n}(\lm_{n}^{\ld}) + \p*{(\lm_{n}^{\ld} - \tau_{n})^{2} - {\gm_{n}^{2}}/{4} }\dDl_{n}(\lm_{n}^{\ld}).
\end{align}
By Lemma~\ref{inf-prod-sin} from Appendix~\ref{app:inf-prod} and since $2\le p < \infty$ and hence $n^{-2} = n^{-1}\ell_{n}^{p}$, one has uniformly for $\lm \in D_{n}$ with $n\ge n_{0}$,
\begin{align*}
  \Dl_{n}(\lm)
  &= \frac{-\frac{\lm}{n^{2}\pi^{2}} + O(n^{-2})}{n^{2}\pi^{2}}
  \p*{\frac{n^{2}\pi^{2}}{n^{2}\pi^{2}-\lm}\frac{\sin \sqrt{\lm}}{\sqrt{\lm}}}^{2}\p*{1 + n^{-1-s}\ell_{n}^{p}}^{2}\\
  &= \frac{-1}{n^{2}\pi^{2}}
  \p*{
  \frac{\lm}{n^{2}\pi^{2}}
  \p*{\frac{n^{2}\pi^{2}}{n^{2}\pi^{2}-\lm}\frac{\sin \sqrt{\lm}}{\sqrt{\lm}}}^{2} + n^{-1-s}\ell_{n}^{p}}.
\end{align*}
Since $\inf_{\lm\in D_{n}}\abs*{\frac{n^{2}\pi^{2}}{n^{2}\pi^{2}-\lm}\frac{\sin \sqrt{\lm}}{\sqrt{\lm}}} \ge 1/4$ by~\eqref{prod-sin-2} for all $n\ge n_{0}$, it follows that
\begin{equation}
  \label{Dln-est}
  \inf_{\lm\in D_{n}}\abs{\Dl_{n}(\lm)} \ge \frac{1}{32\pi^{2}n^{2}}
\end{equation}
for all $n$ is sufficiently large. On the other hand, let $D_{n}' = \setdef{\lm\in\C}{\abs{\lm-n^{2}\pi^{2}} \le n/2}$ hence $\dist(D_{n}',\partial D_{n}) = n/2$, and by Cauchy's estimate and the above estimate of $\Dl_{n}$,
\begin{equation}
  \label{dDln}
  \abs*{\partial_{\lm}\p*{
  \Dl_{n}(\lm)
   - \frac{-\lm}{n^{4}\pi^{4}}\p*{\frac{n^{2}\pi^{2}}{n^{2}\pi^{2}-\lm}\frac{\sin \sqrt{\lm}}{\sqrt{\lm}}}^{2} }}_{D_{n}'}
    = \frac{n^{-3-s}\ell_{n}^{p}}{n/2}
    = n^{-4-s}\ell_{n}^{p}.
\end{equation}
Writing $\sqrt{\lm} = n\pi + \al$, a straightforward computation gives
\begin{align*}
  \partial_{\lm}\p*{
  \frac{\lm}{n^{4}\pi^{4}}
  \p*{\frac{n^{2}\pi^{2}}{n^{2}\pi^{2}-\lm}\frac{\sin \sqrt{\lm}}{\sqrt{\lm}}}^{2}
  }
  = \frac{-2(n\pi + \al)\sin^{2}(\al) + \frac{1}{2}\al(2n\pi + \al)\sin(2\al)}{\al^{3}(n\pi+\al)(2n\pi+\al)^{3}}.
\end{align*}
Inserting the expansion $\sin x = x + O(x^{3})$ gives
\[
  -2(n\pi + \al)\sin^{2}(\al) + \frac{1}{2}\al(2n\pi + \al)\sin(2\al)
   = -\al^{3} + n O(\al^{4})
\]
so that
\begin{equation}
  \label{dDln0}
  \partial_{\lm}\p*{
  \frac{\lm}{n^{4}\pi^{4}}
  \p*{\frac{n^{2}\pi^{2}}{n^{2}\pi^{2}-\lm}\frac{\sin \sqrt{\lm}}{\sqrt{\lm}}}^{2}
  } = O(1/n^{4}) + O(\al/n^{3}).
\end{equation}
In view of~\eqref{lm-ld-rough} we have $\sqrt{\lm_{n}^{\ld}} = n\pi + n^{-1-s}\ell_{n}^{p}$,
and by combining~\eqref{dDln} and~\eqref{dDln0} one gets
\begin{equation}
  \label{dDln-lmnld-est}
  \dDl_{n}(\lm_{n}^{\ld}) = O(1/n^{4}) + n^{-4-s}\ell_{n}^{p} = n^{-3}\ell_{n}^{p}.
\end{equation}
Substituting the lower bound~\eqref{Dln-est} of $\Dl_{n}$ and the estimate~\eqref{dDln-lmnld-est} of $\dDl_{n}(\lm_{n}^{\ld})$ into identity~\eqref{lmd-tau-relation} and using that by~\eqref{lmld-taun-selecta-ep} $\abs{\lm_{n}^{\ld}-\tau_{n}}\le \abs{\gm_{n}}$ for $n$ sufficiently large thus gives
\[
  \abs{\lm_{n}^{\ld}-\tau_{n}} = \gm_{n}^{2}n^{-1}\ell_{n}^{p}.
\]

(ii)
By~\cite[Proposition 3.3]{Kappeler:2005fb} the quotient $I_{n}/\gm_{n}^{2}$ is real-analytic on $\Ws$, hence it suffices to prove the asymptotics for $n$ sufficiently large.
Since $\int_{\Gm_{n}} \frac{\dDl(\lm)}{\sqrt[c]{\Dl^{2}(\lm)-4}}\,\dlm = 0$ by~\eqref{w-closed}, one can write the actions, defined in~\eqref{action}, in the form $I_{n} = -\frac{1}{\pi}\int_{\Gm_{n}} (\lm_{n}^{\ld}-\lm)\frac{\dDl(\lm)}{\sqrt[c]{\Dl^{2}(\lm)-4}}\,\dlm$.
In case $\gm_{n}\neq 0$, one can deform the contour of integration $\Gm_{n}$ to the straight line $G_{n}$ and insert the product representation~\eqref{Dl-quot-prod} of $\dDl(\lm)/\sqrt[c]{\Dl^{2}(\lm)-4}$ to obtain
\begin{equation}
  \label{In-chin}
  n\pi I_{n} = 
  -\frac{1}{\ii}
  \int_{G_{n}^{-}}
  \frac{(\lm_{n}^{\ld}-\lm)^{2}\chi_{n}(\lm)}{w_{n}(\lm)}\,\dlm,\quad
  \chi_{n}(\lm) \defl \frac{n\pi}{\sqrt[+]{\lm-\lm_{0}^{+}}}\prod_{m\neq n}
     \frac{\lm_{m}^{\ld}-\lm}{\vs_{m}(\lm)}.
\end{equation}
Using the parametrization~\eqref{Gn-sides}, $\lm_{t}^{\pm} = \tau_{n} + (t\pm \ii)\gm_{n}/2$, of the gap then gives in view of~\eqref{s-root-sides} provided $\gm_{n}\neq 0$
\begin{equation}
  \label{In-gmn}
  \frac{8n\pi I_{n}}{\gm_{n}^{2}}
   = \frac{2}{\pi}\int_{-1}^{1} \frac{(t-t_{n})^{2}}{\sqrt[+]{1-t^{2}}}
   \chi_{n}(\lm_{t})\,\dt,
  \quad t_{n} = \frac{2(\lm_{n}^{\ld}-\tau_{n})}{\gm_{n}}.
\end{equation}
It follows from a limiting argument for $\gm_{n}\to 0$ that identity~\eqref{In-gmn} holds as well in the case $\gm_{n} = 0$. Then $t_{n} = 0$ and one has
\[
  \frac{8n\pi I_{n}}{\gm_{n}^{2}}
  =
  \chi_{n}(\tau_{n})
  \frac{2}{\pi}\int_{-1}^{1} \frac{t^{2}}{\sqrt[+]{1-t^{2}}}\,\dt.
\]
Since $\lm_{k}^{\ld}-\tau_{k} = \gm_{k}^{2}k^{-1}\ell_{k}^{p} = k^{-1-2s}\ell_{k}^{p/3}$ by item (i),  Proposition~\ref{std-prd-est} (ii) yields
\begin{equation}
  \label{chi-n}
  \left.\chi_{n}(\lm)\right|_{G_{n}}
   = 1 + n^{-1+(-1-2s)_{+}}(\ell_{n}^{p/2}+ \ell_{n}^{1+}).
\end{equation}
Moreover, $t_{n} = \gm_{n}n^{-1}\ell_{n}^{p} = o(1)$ so that
\begin{align*}
  \frac{2}{\pi}\int_{-1}^{1} \frac{(t-t_{n})^{2}}{\sqrt[+]{1-t^{2}}}\,\dt
  &=
  \frac{2}{\pi}\int_{-1}^{1} \frac{t^{2}}{\sqrt[+]{1-t^{2}}}\,\dt
  +
  t_{n}^{2}\frac{2}{\pi}\int_{-1}^{1} \frac{1}{\sqrt[+]{1-t^{2}}}\,\dt\\
  &= 1 + \gm_{n}^{2}n^{-2}\ell_{n}^{p/2} = 1 + n^{-2-2s}\ell_{n}^{p/4}.
\end{align*}
Therefore, $\frac{8n\pi I_{n}}{\gm_{n}^{2}} = 1 + n^{-1+(-1-2s)_{+}}(\ell_{n}^{p/2}+ \ell_{n}^{1+})$.
Going through the arguments of the proof, one sees that the estimate holds locally uniformly on $\Ws^{s,p}$.~\qed
\end{proof}

We also need to refine the estimate $\abs{F_{n}}_{G_{n}} = O(\gm_{n}/n)$ from Lemma~\ref{F-prop} (iii), where $F_{n}$ is defined in~\eqref{Fn-integral}. In view of~\eqref{w-closed} one has
\[
  F_{n}(\lm) = \int_{\lm_{n}^{-}}^{\lm} \frac{\dDl(z)}{\sqrt[c]{\Dl^{2}(z)-4}}\,\dz.
\]

\begin{lem}
\label{Fk-exp}
For any $(s,p)$ admissible with $-1\le s\le 0$
\[
  \sup_{\lm\in G_{n}^{+}\cup G_{n}^{-}} \abs*{F_{n}(\lm) - \frac{\ii \vs_{n}(\lm)}{2n\pi}}
   = \frac{1}{2n\pi}\p*{\gm_{n}n^{-s-1}(\ell_{n}^{p/2} + \ell_{n}^{1+})},
\]
locally uniformly on $\Ws^{s,p}$.~\fish
\end{lem}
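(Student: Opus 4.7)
The plan is to build the claimed asymptotics from the product representation~\eqref{Dl-quot-prod} by factoring out the $m=n$ gap explicitly. Specifically, I will write
\[
\frac{\dDl(z)}{\sqrt[c]{\Dl^{2}(z) - 4}} = \frac{(\lm_n^\ld - z)\,\chi_n(z)}{2\ii n\pi\,\vs_n(z)},
\qquad
\chi_n(z) = \frac{n\pi}{\sqrt[+]{z - \lm_0^+}}\prod_{m\neq n}\frac{\lm_m^\ld - z}{\vs_m(z)},
\]
where $\chi_n$ is exactly the function controlled in~\eqref{chi-n}. Integrating from $\lm_n^-$ to $\lm$ along an admissible path and splitting $\lm_n^\ld - z = (\tau_n - z) + (\lm_n^\ld - \tau_n)$ together with $\chi_n = 1 + (\chi_n - 1)$ isolates the main term
\[
\frac{1}{2\ii n\pi}\int_{\lm_n^-}^\lm \frac{\tau_n - z}{\vs_n(z)}\,\dz
= -\frac{\vs_n(\lm)}{2\ii n\pi} = \frac{\ii\,\vs_n(\lm)}{2n\pi},
\]
using the antiderivative identity $(\vs_n^2)' = -2(\tau_n - z)$ together with $\vs_n(\lm_n^-) = 0$. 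What remains are the two error integrals
\[
E_1 = \frac{\lm_n^\ld - \tau_n}{2\ii n\pi}\int_{\lm_n^-}^\lm \frac{\dz}{\vs_n(z)},
\qquad
E_2 = \frac{1}{2\ii n\pi}\int_{\lm_n^-}^\lm \frac{(\lm_n^\ld - z)(\chi_n(z) - 1)}{\vs_n(z)}\,\dz.
\]

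Both $E_1$ and $E_2$ are handled by Lemma~\ref{s-root-reciprocal-estimate}, which delivers the uniform bound $\pi\,\max_{\lm\in G_n}\abs{f(\lm)}$ for any continuous $f$ on $G_n$ and any $\lm \in G_n^+ \cup G_n^-$. For $E_1$, take $f \equiv 1$ and combine with Lemma~\ref{In-lmd-est}~(i), $\abs{\lm_n^\ld - \tau_n} = \gm_n^2 n^{-1}\ell_n^p$; using $\gm_n = n^{-s}\ell_n^p$ from~\eqref{taun-lmn-est} and the Hölder inclusion $\ell^p \cdot \ell^p \subset \ell^{p/2}$, this rearranges to $\frac{1}{2n\pi}\gm_n n^{-s-1}\ell_n^{p/2}$. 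For $E_2$, the rough bound $\abs{\lm_n^\ld - z} \le \abs{\lm_n^\ld - \tau_n} + \gm_n/2 = O(\gm_n)$ on $G_n$ combines with~\eqref{chi-n}, $\chi_n - 1 = n^{-1+(-1-2s)_+}(\ell_n^{p/2} + \ell_n^{1+})$, to give a contribution of the form $\frac{1}{2n\pi}\gm_n n^{-1+(-1-2s)_+}(\ell_n^{p/2} + \ell_n^{1+})$.

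To conclude, I must absorb the $E_2$ bound into $\frac{1}{2n\pi}\gm_n n^{-s-1}(\ell_n^{p/2} + \ell_n^{1+})$. This reduces to the inequality $(-1-2s)_+ + s \le 0$ throughout $-1 \le s \le 0$: for $s \ge -1/2$ it reads $s \le 0$, and for $s < -1/2$ it reads $-1 - s \le 0$, i.e.\ $s \ge -1$. In both regimes the leftover power $n^{(-1-2s)_+ + s}$ is bounded by a constant, so $E_2$ fits the target form. Local uniformity on $\Ws^{s,p}$ is then automatic since every input---the midpoint and gap asymptotics~\eqref{taun-lmn-est}, the refined estimate of $\lm_n^\ld - \tau_n$ in Lemma~\ref{In-lmd-est}~(i), and the $\chi_n$ estimate~\eqref{chi-n}---is itself locally uniform. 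The main obstacle in this argument is not any single analytic step but the exponent bookkeeping in $E_2$: one needs the exact shape of the gain $n^{-1+(-1-2s)_+}$ from the $\chi_n$ estimate to match the target $n^{-s-1}$ after multiplication by $\gm_n$, which is precisely why the regimes $s \ge -1/2$ and $s < -1/2$ both close with room to spare.
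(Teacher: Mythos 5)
Your argument is correct and follows essentially the same route as the paper's proof: the same factorization of $\dDl/\sqrt[c]{\Dl^{2}-4}$ through $\chi_{n}$, the same splitting into a main term (evaluated via the antiderivative identity $\partial_{\lm}\vs_{n}=-(\tau_{n}-\lm)/\vs_{n}$) plus the two error integrals controlled by Lemma~\ref{s-root-reciprocal-estimate}, Lemma~\ref{In-lmd-est}~(i) and~\eqref{chi-n}, and the same closing inequality $(-1-2s)_{+}\le -s$ on $-1\le s\le 0$. The only things worth adding are the degenerate case $\gm_{n}=0$, where $G_{n}^{\pm}$ reduces to the single point $\tau_{n}$ at which both $F_{n}$ and $\vs_{n}$ vanish so the estimate is trivial (your use of Lemma~\ref{s-root-reciprocal-estimate} and of the parametrization of $G_{n}^{\pm}$ presumes $\gm_{n}\neq 0$), and the observation that your bookkeeping for $E_{2}$ --- which keeps the prefactor $1/(2n\pi)$ attached before comparing exponents --- is in fact the correct accounting that makes the final step close.
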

\begin{proof}
If $\gm_{n} = 0$, then $G_{n} = \setd{\lm_{n}^{\pm}}$ and $F(\lm_{n}^{\m}) = 0 = \frac{\ii\vs_{n}(\lm_{n}^{\pm})}{2n\pi}$. Therefore, we only consider the case $\gm_{n}\neq 0$.
With~$\chi_{n}(\lm)$ given as in~\eqref{In-chin}, $F_{n}$ takes form
\[
  F_{n}(\lm) = \int_{\lm_{n}^{-}}^{\lm} \frac{\dDl(z)}{\sqrt[c]{\Dl^{2}(z)-4}}\,\dz
             = -\frac{\ii}{2n\pi}
                \int_{\lm_{n}^{-}}^{\lm} \frac{\lm_{n}^{\ld}-z}{\vs_{n}(z)} \chi_{n}(z) \,\dz.
\]
By~\eqref{chi-n}, $\abs{\chi_{n}(\lm)-1}_{G_{n}} =  n^{-1+(-1-2s)_{+}}(\ell_{n}^{p/2} + \ell_{n}^{1+})$ and in view of~\eqref{lmld-taun-selecta-ep}, $\abs{\lm_{n}^{\ld}-\lm}_{G_{n}} \le \abs{\gm_{n}}$ for all $n$ sufficiently large. Therefore, by Lemma~\ref{s-root-reciprocal-estimate}
\begin{equation}
\label{Fn-first}
\begin{split}
  \abs*{F_{n}(\lm) - 
  \frac{-\ii}{2n\pi}
  \int_{\lm_{n}^{-}}^{\lm} \frac{\lm_{n}^{\ld}-z}{\vs_{n}(z)} \,\dz
  }_{G_{n}^{+}\cup G_{n}^{-}}
  &\le \abs*{(\lm_{n}^{\ld}-\lm)(\chi_{n}(\lm)-1)}_{G_{n}}\\
  &= \gm_{n} n^{-1+(-1-2s)_{+}}(\ell_{n}^{p/2} + \ell_{n}^{1+}).
\end{split}
\end{equation}
One further checks that $\partial_{\lm} \vs_{n}(\lm) = -\frac{\tau_{n}-\lm}{\vs_{n}(\lm)}$, hence
\[
  \int_{\lm_{n}^{-}}^{\lm} \frac{\lm_{n}^{\ld}-z}{\vs_{n}(z)} \,\dz
   =
  -\vs_{n}(\lm)
  -(\tau_{n}-\lm_{n}^{\ld})\int_{\lm_{n}^{-}}^{\lm} \frac{1}{\vs_{n}(z)} \,\dz.
\]
Since by Lemma~\ref{In-lmd-est} (ii) we have $\tau_{n}-\lm_{n}^{\ld} = \gm_{n}^{2}n^{-1}\ell_{n}^{p}$, and $\gm_{n} = n^{-s}\ell_{n}^{p}$ by~\eqref{taun-lmn-est}, we get $\tau_{n}-\lm_{n}^{\ld} = n^{-s-1}\gm_{n}\ell_{n}^{p/2}$. Hence by Lemma~\ref{s-root-reciprocal-estimate}
\[
  \abs*{\int_{\lm_{n}^{-}}^{\lm} \frac{\lm_{n}^{\ld}-z}{\vs_{n}(z)} \,\dz + \vs_{n}(\lm)}%
  _{G_{n}^{+}\cup G_{n}^{-}}
  = n^{-s-1}\gm_{n}\ell_{n}^{p/2}.
\]
Inserting the latter estimate into~\eqref{Fn-first} and noting that for $-1\le s \le 0$ one has $-s\ge (-1-2s)_{+}$, the claimed estimate follows.
By going through the arguments of the proof, one sees that this estimate holds locally uniformly on $\Ws^{s,p}$.\qed
\end{proof}

\subsection{Refined estimates for the roots of the psi-functions}

For $q\in \Ws$ we denote by $\psi_{n}$, $n\ge1$, the entire function of the form
\begin{equation}
  \label{psi-form}
  \psi_{n}(\lm) \defl \frac{2}{n\pi} \prod_{m\neq n} \frac{\sg_{m}^{n}-\lm}{m^{2}\pi^{2}},
  \qquad
  \sg_{m}^{n} = m^{2}\pi^{2} + O(m),
\end{equation}
which is characterized by the property that
\begin{equation}
  \label{psi-int-eqn}
  \frac{1}{2\pi}\int_{\Gm_{k}} \frac{\psi_{n}(\lm)}{\sqrt[c]{\Dl^{2}(\lm)-4}}\,\dlm = \dl_{nk},
  \qquad n,k\ge 1.
\end{equation}
The roots of $\psi_{n}$ are precisely the complex numbers $\sg_{k}^{n}$, $k\neq n$, and can be shown to satisfy
\begin{equation}
  \label{psi-root-asymptotics}
  \sg_{k}^{n} - \tau_{k} = O\p*{\gm_{k}^{2}/k}
\end{equation}
uniformly in $n\ge 1$ and locally uniformly in $q\in \Ws$ -- see~\cite{Kappeler:2005fb,Kappeler:2003up}. It appears, in a sense to be made precise, that $\sg_{k}^{n}$ is closer to $\lm_{k}^{\ld}$ than to $\tau_{k}$. By Lemma~\ref{In-lmd-est}, one has
\begin{equation}
  \label{sg-k-lm-k}
    \sg_{k}^{n}-\lm_{k}^{\ld}
   = (\sg_{k}^{n}-\tau_{k}) + (\tau_{k}-\lm_{k}^{\ld})
   = k^{-1-s}\gm_{k}\ell_{k}^{p} = k^{-1-2s}\ell_{k}^{p/2},
   \qquad k\neq n.
\end{equation}
The purpose of this subsection is to improve on these estimates.

\begin{prop}
\label{prop:sg-lm}
For any $(s,p)$ admissible, and any $k\neq n$
\[
  \sg_{k}^{n}-\lm_{k}^{\ld} = \begin{cases}
  \gm_{k}\ell_{k}^{2}, & s = -1,\quad p = 2\\
  n^{-(1-\rho)s}k^{-1-s\rho}\gm_{k}\ell_{k}^{p}, & -1 < s \le 0,\quad 2\le p < \infty,\quad 0\le \rho\le 1,
  \end{cases}
\]
locally uniformly on $\Ws^{s,p}$.\fish
\end{prop}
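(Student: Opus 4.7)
The plan is to represent both $\sg_k^n$ and $\lm_k^\ld$ as weighted averages of $\lm$ along the cycle $\Gm_k$ and estimate their difference. Since $\psi_n(\sg_k^n)=0$ by~\eqref{psi-form}, I would factor $\psi_n(\lm) = (\sg_k^n - \lm)P_k(\lm)$ with $P_k$ entire, and use the orthogonality~\eqref{psi-int-eqn} for $k\ne n$ together with the telescoping $\lm = (\lm - \lm_k^\ld) + \lm_k^\ld$ to get
\[
  \sg_k^n - \lm_k^\ld = \frac{\int_{\Gm_k}(\lm - \lm_k^\ld)P_k(\lm)/\sqrt[c]{\Dl^2(\lm)-4}\,\dlm}{\int_{\Gm_k}P_k(\lm)/\sqrt[c]{\Dl^2(\lm)-4}\,\dlm}.
\]
Using the product representation~\eqref{Dl-quot-prod} to isolate the factor $1/\vs_k(\lm)$, one writes $P_k(\lm)/\sqrt[c]{\Dl^2(\lm)-4} = A_k^n(\lm)/(\vs_k(\lm)\sqrt[+]{\lm - \lm_0^+})$ with
\[
  A_k^n(\lm) = \frac{\ii n\pi}{\vs_n(\lm)}\prod_{m\ne n,k}\frac{\sg_m^n - \lm}{\vs_m(\lm)}
\]
analytic on $U_k$, deforms the contour to $G_k^\pm$, and parametrizes $\lm = \ta_k + t\gm_k/2$, $t\in[-1,1]$, exactly as in the proof of Lemma~\ref{In-lmd-est}. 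The result is a ratio of elementary integrals over $[-1,1]$ against $dt/\sqrt[+]{1-t^2}$.

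The crucial observation is that the identical manipulation applied to $\int_{\Gm_k}\dDl/\sqrt[c]{\Dl^2-4}\,\dlm = 0$ from~\eqref{w-closed}, after writing $\dDl(\lm) = (\lm_k^\ld - \lm)Q_k(\lm)$, produces the analogous analytic factor
\[
  B_k(\lm) = \frac{\lm_n^\ld - \lm}{2\ii\vs_n(\lm)}\prod_{m\ne n,k}\frac{\lm_m^\ld - \lm}{\vs_m(\lm)}
\]
and yields the orthogonality
\[
  \int_{-1}^{1} (t - s_k)\,B_k(\lm_t)\big/\bigl(\sqrt[+]{1-t^2}\sqrt[+]{\lm_t - \lm_0^+}\bigr)\,\dt = 0,\qquad s_k = 2(\lm_k^\ld - \ta_k)/\gm_k.
\]
Choosing $c = A_k^n(\lm_k^\ld)/B_k(\lm_k^\ld)$ and subtracting $c$ times this identity from the numerator above replaces the integrand $(t-s_k)A_k^n$ by $(t-s_k)(A_k^n - cB_k)$. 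Since $A_k^n - cB_k$ vanishes at $\lm_k^\ld$, it factors as $(\lm - \lm_k^\ld)H_k^n(\lm)$ with $H_k^n$ analytic on $U_k$, which produces an extra factor $\gm_k(t-s_k)/2$ and hence
\[
  \sg_k^n - \lm_k^\ld = \frac{\gm_k^2}{4}\cdot\frac{\int_{-1}^{1}(t-s_k)^2 H_k^n(\lm_t)/\bigl(\sqrt[+]{1-t^2}\sqrt[+]{\lm_t - \lm_0^+}\bigr)\,\dt}{\int_{-1}^{1} A_k^n(\lm_t)/\bigl(\sqrt[+]{1-t^2}\sqrt[+]{\lm_t - \lm_0^+}\bigr)\,\dt}.
\]

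The estimate then reduces to bounding $|H_k^n|$ on $G_k$, equivalently, the variation of $A_k^n/B_k$ over the gap. From
\[
  \frac{A_k^n(\lm)}{B_k(\lm)} = \frac{-2n\pi}{\lm_n^\ld - \lm}\prod_{m\ne n,k}\p*{1 + \frac{\sg_m^n - \lm_m^\ld}{\lm_m^\ld - \lm}},
\]
the logarithmic derivative splits into a leading term $1/(\lm_n^\ld - \lm) = O(1/(n^2 - k^2))$, carrying all of the $n$-dependence, and a self-referential remainder involving $\sg_m^n - \lm_m^\ld$ for $m\ne n,k$, controlled by the crude a~priori bound~\eqref{sg-k-lm-k} together with Proposition~\ref{std-prd-est} and Lemma~\ref{lm0-est}; the denominator is bounded away from zero by the same product estimates. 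Combining these ingredients with $\gm_k = k^{-s}\ell_k^p$ and Lemma~\ref{In-lmd-est} yields the endpoint case $\rho = 0$, namely $\sg_k^n - \lm_k^\ld = n^{-s}k^{-1}\gm_k\ell_k^p$; the endpoint $\rho = 1$ is a rewriting of~\eqref{sg-k-lm-k}; and the intermediate $\rho\in(0,1)$ follows by H\"older interpolation in $\ell^p_k$. The case $s = -1$, $p = 2$ reduces to the $\rho = 1$ endpoint.

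The main obstacle will be tracking the $\ell^p_k$-summability uniformly in $n$ through the self-referential convolution sum $\sum_{m\ne n,k}|\sg_m^n - \lm_m^\ld|/|m^2 - k^2|^2$ arising in $H_k^n$, without absorbing any factor of $n$: closing this fixed-point estimate requires a careful application of Young's inequality in $\ell^p$, anchored at the baseline bound~\eqref{sg-k-lm-k}, so that the source term $1/(\lm_n^\ld - \lm)$ dominates the iteration and delivers the claimed $n$-dependent weight $n^{-(1-\rho)s}$.
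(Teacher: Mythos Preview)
Your approach parallels the paper's: both derive a self-referential identity for $\sg_k^n - \lm_k^\ld$ with source term $O(\gm_k^2/(n^2-k^2))$ and a remainder depending on the unknowns $\sg_m^n - \lm_m^\ld$, then bootstrap from the crude bound~\eqref{sg-k-lm-k}. The paper's route to the identity is slightly different: it multiplies the characterizing relation~\eqref{zt-k-identity} by $(\sg_n^n - \lm_k^\ld)$ and subtracts the analogous relation for $\lm_k^\ld$ coming from~\eqref{dDl-1}, arriving at $(\sg_k^n-\lm_k^\ld)(1+o(1)) = O(\gm_k|\zt_k-\chi_k|_{G_k}) + O(\gm_k^2/(n^2-k^2))$, where $\zt_k,\chi_k$ are the infinite products in~\eqref{psi-c-root-quot} and~\eqref{In-chin}; the difference $\zt_k-\chi_k$ is then estimated via the mean-value theorem and the weighted Hilbert-transform bound of Lemma~\ref{mht-2}, producing a self-referential sum with denominator $|m^2-k^2|$. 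Your subtraction $A_k^n - cB_k$ and log-derivative give the stronger denominator $(m^2-k^2)^2$, which legitimately allows Young's inequality (Lemma~\ref{appl-young}) in place of the Hilbert-transform machinery. Your H\"older interpolation between $\rho=0$ and $\rho=1$ is also valid (log-convexity of the weighted $\ell^p$ norm in the weight exponent) and neater than the paper's device of threading $\rho$ directly into the source term via the splitting $1/(n^2-k^2)=O(n^{-1/2+\al}k^{-1/2-\al})$ with $\al = 1/2 - (1-\rho)s$, which it then carries through every step of the iteration.

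One inconsistency needs fixing: your penultimate paragraph asserts that a single insertion of the crude bound~\eqref{sg-k-lm-k} into the self-referential sum already yields the $\rho=0$ endpoint, while your final paragraph correctly calls this an iteration. The single-step claim fails for $s$ close to $-1$: plugging $|\sg_m^n-\lm_m^\ld|=O(\gm_m^2/m)$ into your sum produces a near-diagonal contribution of order $\gm_k^2 k^{-3-2s}\ell_k^{p/2}$ (from $m\sim k$), which for $-1<s<-1/2$ and $p=2$ is larger than the target $n^{-s}k^{-1}\gm_k\ell_k^p$. The paper runs precisely the bootstrap you sketch at the end: each pass gains a factor $k^{-(1+s)}$ in the self-referential term, so about $\lceil 1/(1+s)\rceil$ iterations suffice. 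The overall plan is sound; just drop the one-step claim and make the bootstrap explicit.
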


\begin{proof}
If $s=-1$, $p=2$, then the claimed estimate $\sg_{k}^{n}-\lm_{k}^{\ld} = \gm_{k}\ell_{k}^{2}$ follows from~\eqref{sg-k-lm-k}. Thus it remains to consider the case $s > -1$.
Note that in view of~\eqref{psi-form} and~\eqref{c-root} for any $n,k\ge 1$
\begin{equation}
  \label{psi-c-root-quot}
  \frac{\psi_{n}(\lm)}{\sqrt[c]{\Dl^{2}(\lm)-4}}
   =
  \frac{n}{k}\frac{\ii}{\vs_{k}(\lm)}\frac{\sg_{k}^{n}-\lm}{\sg_{n}^{n}-\lm}\zt_{k}(\lm),\qquad
  \zt_{k}(\lm)
   =
  \frac{k\pi}{\sqrt[+]{\lm-\lm_{0}^{+}}}\prod_{m\neq k}\frac{\sg_{m}^{n}-\lm}{\vs_{m}(\lm)},
\end{equation}
where the function $\zt_{k}$ is analytic on $U_{k}$ and we set $\sg_{n}^{n} \defl \lm_{n}^{\ld}$.
By~\eqref{psi-int-eqn}, the roots $\sg_{k}^{n}$, $k\neq n$, of $\psi_{n}$ are characterized by the equation
\begin{equation}
  \label{psi-1}
  0
   = \int_{\Gm_{k}}\frac{\psi_{n}(\lm)}{\sqrt[c]{\Dl^{2}(\lm)-4}}\,\dlm
   = \ii\frac{n}{k}\int_{\Gm_{k}}\frac{\sg_{k}^{n}-\lm}{\vs_{k}(\lm)}\frac{\zt_{k}(\lm)}{\sg_{n}^{n}-\lm}\,\dlm,\qquad
   k\neq n.
\end{equation}
It implies that
\begin{equation}
  \label{zt-k-identity}
    (\sg_{k}^{n}-\lm_{k}^{\ld})
  \int_{\Gm_{k}}\frac{1}{\vs_{k}(\lm)}\frac{\zt_{k}(\lm)}{\sg_{n}^{n}-\lm}\,\dlm
  =
  \int_{\Gm_{k}}\frac{\lm - \lm_{k}^{\ld}}{\vs_{k}(\lm)}\frac{\zt_{k}(\lm)}{\sg_{n}^{n}-\lm}\,\dlm,
  \qquad k\neq n.
\end{equation}
This identity is the starting point for estimating $\sg_{k}^{n}-\lm_{k}^{\ld}$.
A key step in the proof of the claimed estimate is to rewrite this identity in an appropriate way.
Let us multiply the identity by $(\sg_{n}^{n}-\lm_{k}^{\ld})$ and introduce
\[
  \xi_{k}(\lm) = \frac{\sg_{n}^{n}-\lm_{k}^{\ld}}{\sg_{n}^{n}-\lm}\zt_{k}(\lm)
  = \p*{1 + \frac{\lm-\lm_{k}^{\ld}}{\sg_{n}^{n}-\lm}}\zt_{k}(\lm).
\]
It then follows from~\eqref{zt-k-identity} that
\[
  (\sg_{k}^{n}-\lm_{k}^{\ld})
  \int_{\Gm_{k}}\frac{\xi_{k}(\lm)}{\vs_{k}(\lm)}\,\dlm
  =
  \int_{\Gm_{k}}\frac{(\lm - \lm_{k}^{\ld})\xi_{k}(\lm)}{\vs_{k}(\lm)}\,\dlm,
  \qquad k\neq n,
\]
where
\[
  \int_{\Gm_{k}}\frac{(\lm - \lm_{k}^{\ld})\xi_{k}(\lm)}{\vs_{k}(\lm)}\,\dlm
  =
  \int_{\Gm_{k}}\frac{(\lm-\lm_{k}^{\ld})\zt_{k}(\lm)}{\vs_{k}(\lm)}\,\dlm
  +
  \int_{\Gm_{k}}\frac{(\lm-\lm_{k}^{\ld})^{2}\zt_{k}(\lm)}{(\sg_{n}^{n}-\lm)\vs_{k}(\lm)}\,\dlm.
\]
The second term on the right hand side is expected to be small in comparison to the first term since $\sg_{n}^{n}-\lm$ is of the size of $n^{2}-k^{2}$. We proceed by writing the first term in a more convenient form. Note that the roots $\lm_{k}^{\ld}$, $k\ge 1$, of $\dDl$ are characterized by the equation
\begin{equation}
  \label{dDl-1}
  0 = \int_{\Gm_{k}} \frac{\dDl(\lm)}{\sqrt[c]{\Dl^{2}(\lm)-4}}\,\dlm
  =
  \frac{-\ii}{2k\pi}
  \int_{\Gm_{k}} 
  \frac{\lm_{k}^{\ld}-\lm}{\vs_{k}(\lm)}\chi_{k}(\lm)\,\dlm,\qquad k\ge 1,
\end{equation}
where $\chi_{k}$ is given by~\eqref{In-chin}. Hence
\[
  \int_{\Gm_{k}}\frac{(\lm-\lm_{k}^{\ld})\zt_{k}(\lm)}{\vs_{k}(\lm)}\,\dlm
  =
  \int_{\Gm_{k}}\frac{(\lm-\lm_{k}^{\ld})(\zt_{k}(\lm)-\chi_{k}(\lm))}{\vs_{k}(\lm)}\,\dlm.
\]
Altogether, identity~\eqref{zt-k-identity} then reads
\begin{equation}
  \label{zt-k-identity-2}
  \begin{split}
  (\sg_{k}^{n}-\lm_{k}^{\ld})
  \int_{\Gm_{k}}\frac{\xi_{k}(\lm)}{\vs_{k}(\lm)}\,\dlm
  &=
  \int_{\Gm_{k}}\frac{(\lm-\lm_{k}^{\ld})(\zt_{k}(\lm)-\chi_{k}(\lm))}{\vs_{k}(\lm)}\,\dlm\\
  &\qquad+
  \int_{\Gm_{k}}\frac{(\lm-\lm_{k}^{\ld})^{2}\zt_{k}(\lm)}{(\sg_{n}^{n}-\lm)\vs_{k}(\lm)}\,\dlm,\qquad k\neq n.
    \end{split}
\end{equation}
The integrals in~\eqref{zt-k-identity-2} are now estimated separately.
First note that for $\lm\in G_{k}$ we have
\[
  \frac{\lm-\lm_{k}^{\ld}}{\sg_{n}^{n}-\lm} = O\p*{\frac{\gm_{k}}{n^{2}-k^{2}}}
  = k^{-1-s}\ell_{k}^{p}.
\]
Since by Remark~\ref{std-prd-simple-est} we have $\zt_{k}\big|_{G_{k}} = 1 + k^{-1-s}\ell_{k}^{p}$, we find
\[
  \xi_{k}\big|_{G_{k}} = (1+k^{-1-s}\ell_{k}^{p})\zt_{k}\big|_{G_{k}} = 1+k^{-1-s}\ell_{k}^{p}.
\]
Since $\frac{1}{2\pi\ii}\int_{\Gm_{k}} \frac{1}{\vs_{k}(\lm)}\,\dlm = -1$, we then conclude
\begin{equation}
  \label{xi-k-est-1}
  \frac{1}{2\pi\ii}\int_{\Gm_{k}}\frac{\xi_{k}(\lm)}{\vs_{k}(\lm)}\,\dlm = -1 + k^{-1-s}\ell_{k}^{p}.
\end{equation}
Concerning the integral $\int_{\Gm_{k}}\frac{(\lm-\lm_{k}^{\ld})(\zt_{k}(\lm)-\chi_{k}(\lm))}{\vs_{k}(\lm)}\,\dlm$, it follows from Lemma~\ref{int-wm-quot-est} that
\begin{equation}
  \label{xi-k-est-2}
  \abs*{\frac{1}{2\pi}\int_{\Gm_{k}}\frac{(\lm-\lm_{k}^{\ld})(\zt_{k}(\lm)-\chi_{k}(\lm))}{\vs_{k}(\lm)}\,\dlm}
  \le
  \abs{\gm_{k}}\abs{\zt_{k}(\lm)-\chi_{k}(\lm)}_{G_{k}}.
\end{equation}
Similarly, for the integral $\int_{\Gm_{k}}\frac{(\lm-\lm_{k}^{\ld})^{2}\zt_{k}(\lm)}{(\sg_{n}^{n}-\lm)\vs_{k}(\lm)}\,\dlm$, we get
\begin{equation}
  \label{xi-k-est-3}
  \abs*{\frac{1}{2\pi}\int_{\Gm_{k}}\frac{(\lm-\lm_{k}^{\ld})^{2}\zt_{k}(\lm)}{(\sg_{n}^{n}-\lm)\vs_{k}(\lm)}\,\dlm}
  \le
  \abs*{\frac{(\lm-\lm_{k}^{\ld})^{2}}{\sg_{n}^{n}-\lm}\zt_{k}(\lm)}_{G_{k}}
  = O\p*{\frac{\gm_{k}^{2}}{n^{2}-k^{2}}}.
\end{equation}

Using that for $1\le k< n/2$, $n/2\le k\le 3n/2$, and $3n/2 < k$ one has $\frac{1}{n^{2}-k^{2}} = O(1/n^{2})$, $\frac{1}{n^{2}-k^{2}} = O(1/(nk))$, and $\frac{1}{n^{2}-k^{2}} = O(1/k^{2})$, respectively, one concludes that for any $-3/2\le \al \le 3/2$ we have
\begin{equation}
  \label{hill-est}
  \frac{1}{n^{2}-k^{2}}
  =
  O(n^{-1/2+\al}k^{-1/2-\al}),\qquad n\neq k,
\end{equation}
where the implicit constant can be chosen uniformly in $\al$ and $n,k\ge 1$.
Therefore, if $-1\le s \le 0$ and $0\le \rho\le 1$, we may choose $\al = 1/2-s(1-\rho)$ to obtain
\begin{equation}
  \label{gm-quot-est}
  O\p*{\frac{\gm_{k}^{2}}{n^{2}-k^{2}}}
  =
  n^{-s(1-\rho)}k^{-1-s\rho}\gm_{k}\ell_{k}^{p}.
\end{equation}
Inserting estimates~\eqref{xi-k-est-1}-\eqref{gm-quot-est} into~\eqref{zt-k-identity-2} yields
\begin{equation}
  \label{system-1}
  (\sg_{k}^{n}-\lm_{k}^{\ld})
  (1+k^{-1-s}\ell_{k}^{p})
  =
  O\p*{\gm_{k}\abs{\zt_{k}(\lm)-\chi_{k}(\lm)}_{G_{k}}}
   + n^{-s(1-\rho)}k^{-1-s\rho}\gm_{k}\ell_{k}^{p}.
\end{equation}

To estimate $\abs{\zt_{k}(\lm)-\chi_{k}(\lm)}_{G_{k}}$, write the product expansions~\eqref{psi-c-root-quot} and~\eqref{In-chin}, respectively, as $\zt_{k}(\lm) = f_{k}(\lm,\al^{1})$ and  $\chi_{k}(\lm) = f_{k}(\lm,\al^{0})$, where we have set $\al^{1} = (\sg_{m}^{n})$, $\al^{0} = (\lm_{m}^{\ld})$, and
\[
  f_{k}(\lm,\al) = \frac{k\pi}{\sqrt{\lm-\lm_{0}^{+}}}\prod_{m\neq k}\frac{\al_{m}-\lm}{\vs_{m}(\lm)}.
\]
By~\eqref{phi-n-func}, the function~$f_{k}$ is analytic on $(\C\setminus\bigcup_{m\neq k} G_{k})\times\ell_{\C}^{p}$ and by Remark~\ref{std-prd-simple-est} satisfies the estimate $\abs{f_{k}(\lm,\al)-1}_{G_{k}} = 1+k^{-1-s}\ell_{k}^{p}$ locally uniformly on $\ell_{\C}^{p}$. Thus we may write for any $\lm\in G_{k}$,
\[
  \zt_{k}(\lm)-\chi_{k}(\lm)
  = f_{k}(\lm,\al^{1})-f_{k}(\lm,\al^{0})
  = \int_{0}^{1} \sum_{m\neq k} \partial_{\al_{m}} f_{k}(\lm,\al^{t}) (\sg_{m}^{n}-\lm_{m}^{\ld})\,\dt,
\]
where $\al^{t} \defl (\al_{m}^{t}) = ((1-t) \sg_{m}^{n} + t \lm_{m}^{\ld})$. Since for any $m\neq k$ one has
\[
  \partial_{\al_{m}} f_{k}(\lm,\al) = \frac{1}{\al_{m}-\lm}f_{k}(\lm,\al),
\]
we conclude
\[
  \zt_{k}(\lm)-\chi_{k}(\lm)
  =
  \int_{0}^{1} f_{k}(\lm,\al^{t})\sum_{m\neq k} 
  \frac{\sg_{m}^{n}-\lm_{m}^{\ld}}{\al_{m}^{t}-\lm} \,\dt.
\]
By Remark~\ref{std-prd-simple-est}, we can choose $M > 0$ so that $\sup_{0\le t\le 1}\abs{f_{k}(\lm,\al_{t})}_{G_{k}} \le M$ for all $k\ge1$. Moreover, by the mean value theorem there exists a sequence $(\nu_{k})\subset \C$ with $\nu_{k}\in G_{k}$ such that
\begin{equation}
  \label{zt-k-chi-k-est}
  \abs{\zt_{k}(\lm)-\chi_{k}(\lm)}_{G_{k}} \le
  M\int_{0}^{1}\abs*{\sum_{m\neq k} 
  \frac{\sg_{m}^{n}-\lm_{m}^{\ld}}{\al_{m}^{t}-\nu_{k}}}\,\dt.
\end{equation}
Since $U_{m}$ is assumed to be convex, we have $\al_{m}^{t}\in U_{m}$ for all $m\ge 1$. Moreover, $\nu_{k}\in G_{k}\subset U_{k}$ for all $k\ge 1$. Thus by~\eqref{iso-est}, there exists $c > 0$ so that
\[
  \abs{\al_{m}^{t}-\nu_{k}} \ge c\abs{m^{2}-k^{2}},\qquad m\neq k,\quad 0\le t\le 1.
\]

If $\rho=1$, the claimed estimate is the one of~\eqref{sg-k-lm-k}. In the case $0 \le \rho < 1$ we argue by iteration using~\eqref{system-1}. As a starting point, write the estimate~\eqref{sg-k-lm-k} in the form $\sg_{m}^{n}-\lm_{m}^{\ld} = n^{\bt_{1}}m^{-t_{1}}\gm_{m}\ell_{m}^{p}$ with $\bt_{1} = 0$ and $t_{1} = 1+s\in (0,1]$ and suppose that for some $j\ge 1$
\[
  \sg_{m}^{n}-\lm_{m}^{\ld} = n^{\bt_{j}}m^{-t_{j}}\gm_{m}\ell_{m}^{p},
  \qquad \bt_{j}\ge 0,\qquad t_{j}\in (0,1].
\]
It follows with Lemma~\ref{mht-2} from~\eqref{zt-k-chi-k-est} that
\[
  \abs{\zt_{k}(\lm)-\chi_{k}(\lm)}_{G_{k}} = n^{\bt_{j}}k^{-1-\min(0,t_{j}+s)}(\ell_{k}^{p/2}+\ell_{k}^{1+}).
\]
We conclude with~\eqref{system-1} that for all $k$ sufficiently large
\begin{equation}
  \label{sg-k-lm-k-dot-2}
  \begin{split}
  \sg_{k}^{n}-\lm_{k}^{\ld}
  &=
  n^{\bt_{j}}k^{-1-\min(0,t_{j}+s)}\gm_{k}(\ell_{k}^{p/2}+\ell_{k}^{1+})
  +
  n^{-s(1-\rho)}k^{-1-s\rho}\gm_{k}\ell_{k}^{p}\\
  &= n^{\bt_{j+1}}k^{-t_{j+1}}\gm_{k}\ell_{k}^{p},
  \end{split}
\end{equation}
where
\[
  \bt_{j+1} \defl \max(\bt_{j},-s(1-\rho))\ge 0,\qquad
  t_{j+1} \defl \min(1+s\rho,1+t_{j}+s) \in (0,1].
\]
We may thus iterate the estimate.
Since $\bt_{1} = 0$, we conclude $\bt_{j} = -s(1-\rho)\ge 0$ for all $j\ge 2$. On the other hand, since $t_{1} = 1+s$, we conclude $t_{j} = \min(1+s\rho,j(1+s))$ for all $j\ge 2$.
After finitely many iterations of this estimate we have $1+s\rho < j(1+s)$ and hence
\[
  \sg_{k}^{n}-\lm_{k}^{\ld} = 
  n^{-(1-\rho)s}k^{-1-s\rho}\gm_{k}\ell_{k}^{p}.
\]
By going through the arguments of the proof, one verifies that the estimate holds locally uniformly on $\Ws^{s,p}$.\qed
\end{proof}

% --------------------------------------------------------------------------------------------------
\section{KdV}
\label{s:kdv1}

\subsection{Frequencies}
\label{ss:kdv-freq}

In this section we derive a novel formula for the KdV frequencies $\om_{n}^{(1)}$, $n\ge~1$, which we then use to study their asymptotics for $n\to \infty$. The frequencies can be viewed either as analytic functionals of the potential $q$ on $\Ws$ or as analytic functionals of the actions $I = (I_{m})_{m\ge1}$ on a neighborhood $\Vs$ of $\ell_{+}^{-1,1}$ within $\ell_{\C}^{-1,1}$. Which case is at hand should be always clear from the context, hence we do not introduce different notations for them.  Our starting point for deriving the novel formula is the following identity for the $n$th KdV frequency
\[
  \om_{n}^{(1)} = \pbr{\Hm_{1},\th_{n}},
\]
which a priori holds on $\Hs_{0,\C}^{1}\cap (\Ws\setminus Z_{n})$, where $Z_{n}\defl\setdef{q\in\Ws}{\gm_{n}^{2}(q)=0}$ is an analytic subvariety of $\Ws$.
Recall that $\gm_{n}^{2}$ is analytic on $\Ws$ whereas $\gm_{n}$ is not.

It turns out to be convenient to introduce for any integers $n,k\ge 1$ and $m\ge 0$ the moments
\[
  \Om_{nk}^{(m)}
  \defl \int_{\Gm_{k}} \frac{(F_{k}(\lm))^{m}\psi_{n}(\lm)}{\sqrt[c]{\Dl^{2}(\lm)-4}}\,\dlm.
\]

\begin{lem}
\label{Om-nk-prop}
\begin{equivenum}
\item
$\Om_{nk}^{(0)} = 2\pi\dl_{nk}$, for all $n,k\ge 1$.

\item
Each moment $\Om_{nk}^{(m)}$, $n,k\ge 1$, $m\ge 1$, is analytic on $\Ws$.

\item
$\Om_{nk}^{(2l+1)} = 0$, for all $n,k\ge 1$ and $l\ge 0$.

\item
$\Om_{nk}^{(m)} = 0$, for all $n, k\ge 1$, $m\ge 1$, if $\gm_{k} = 0$.~\fish
\end{equivenum}
\end{lem}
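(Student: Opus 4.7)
The plan is to dispatch items (i), (iii), and (iv) by direct appeals to Lemma~\ref{F2-analytic} and the product representations~\eqref{c-root}, \eqref{dDl2}; item (ii) requires the standard argument that fixed-contour integrals of jointly analytic integrands depend analytically on the parameter. Item (i) is simply~\eqref{psi-int-eqn} multiplied by $2\pi$, giving $\Om_{nk}^{(0)} = 2\pi\dl_{nk}$.

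For (iii), Lemma~\ref{F2-analytic}(ii) says $F_k^{2l+1}/\sqrt[c]{\Dl^2-4}$ is analytic on the isolating neighborhood $U_k$; multiplication by the entire function $\psi_n$ keeps the integrand analytic on $U_k$, so Cauchy's theorem on the circuit $\Gm_k\subset U_k$ forces the integral to vanish. For (iv), suppose $\gm_k(q) = 0$. Then $\lm_k^\pm = \tau_k$ and Lemma~\ref{In-lmd-est}(i) gives $\lm_k^\ld = \tau_k$. By~\eqref{c-root} the factor $\vs_k(\lm) = \tau_k - \lm$ contributes a simple zero to $\sqrt[c]{\Dl^2-4}$ at $\tau_k$, while by~\eqref{dDl2} the factor $\lm_k^\ld - \lm$ contributes a simple zero to $\dDl$ there; hence $\dDl/\sqrt[c]{\Dl^2-4}$ is analytic and generically nonzero at $\tau_k$. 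Consequently $F_k(\lm) = \int_{\tau_k}^\lm \dDl/\sqrt[c]{\Dl^2-4}\,\dz$ extends analytically across $\tau_k$ with a zero of order at least one, so for any $m\ge 1$ the ratio $F_k^m/\sqrt[c]{\Dl^2-4}$ has a zero of order at least $m-1\ge 0$ at $\tau_k$. The whole integrand then extends analytically to all of $U_k$, and Cauchy's theorem once more gives $\Om_{nk}^{(m)} = 0$.

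For (ii) the odd case is trivial by (iii). For $m = 2l$ with $l\ge 1$, fix $q_0\in\Ws$ and a small neighborhood $\Ws_{q_0}$, and select a single contour $\Gm_k\subset U_k$ separated from $G_k(q)$ uniformly for $q\in\Ws_{q_0}$. On $\Gm_k\times\Ws_{q_0}$ the reciprocal of the canonical root is jointly analytic, $\psi_n$ depends analytically on $q$ through its roots $\sg_m^n(q)$, and $F_k^{2l} = (F_k^2)^l$ is jointly analytic by Lemma~\ref{F2-analytic}(i) combined with the joint analyticity of $F$ from Lemma~\ref{F-prop}(i) (if needed, one represents $F_k^2$ on $\Gm_k$ via a Cauchy integral from a larger contour lying in $\C\setminus\bigcup_m\overline{U_m}$, where joint analyticity of $F$ is directly at hand). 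Integration over the fixed contour $\Gm_k$ then preserves analyticity in $q$, giving analyticity of $\Om_{nk}^{(2l)}$ on $\Ws_{q_0}$; covering $\Ws$ by such neighborhoods finishes the proof. The only genuine obstacle is setting up this joint analyticity statement cleanly; all the computational content is contained in Lemma~\ref{F2-analytic}.
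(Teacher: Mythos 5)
Your items (i)--(iii) track the paper's proof essentially verbatim: (i) is the normalization~\eqref{psi-int-eqn}, (iii) is Lemma~\ref{F2-analytic}\,(ii) plus Cauchy's theorem, and (ii) is the same fixed-contour, joint-analyticity argument (the paper is equally terse about the joint analyticity of $F_{k}$ on the annular neighbourhoods $U_{k}'$ of the contours, so your acknowledged caveat is not a defect relative to it). Where you genuinely diverge is item (iv). The paper cancels the simple zero of $\sqrt[c]{\Dl^{2}(\lm)-4}$ at $\tau_{k}$ against the root $\sg_{k}^{n}=\tau_{k}$ of $\psi_{n}$ when $n\neq k$, and must then treat $n=k$ separately: there $\psi_{n}/\sqrt[c]{\Dl^{2}-4}$ retains a simple pole at $\tau_{n}$, and the residue $2\pi\ii\,F_{n}(\tau_{n})^{2l}\zt_{n}(\tau_{n})$ vanishes because $F_{n}(\tau_{n})=0$. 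You instead cancel the pole against the zero of $F_{k}^{m}$ at $\tau_{k}$ (order at least $m\ge 1$ against a simple zero in the denominator), which makes the integrand analytic on $U_{k}$ uniformly in $n$ and eliminates the case distinction entirely. Both arguments are correct; yours is the more economical, at the modest price of invoking $\lm_{k}^{\ld}=\tau_{k}$ for collapsed gaps (standard, and recorded in the paper in the remark following Lemma~\ref{In-lmd-est}) together with the analyticity and vanishing of $F_{k}$ at $\tau_{k}$ from Lemma~\ref{F-prop}, whereas the paper's $n\neq k$ case only needs $\sg_{k}^{n}=\tau_{k}$ from~\eqref{psi-root-asymptotics}.
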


\begin{proof}
(i) follows immediately from the characterization~\eqref{psi-int-eqn} of the functions $\psi_{n}$.

(ii)
For any $q\in\Ws$ we have a set of isolating neighborhoods $(U_{m})_{m\ge 1}$ which work for a whole neighborhood $\Ws_{q}\subset\Ws$ of $q$. Moreover, by the locally uniform asymptotic behavior of the periodic and Dirichlet eigenvalues, we can choose a set of counterclockwise oriented circuits $\Gm_{m}$, $m\ge 1$, and a set of open neighborhoods $U_{m}'$ of $\Gm_{m}$ so that $\Gm_{m}\subset U_{m}$ circles around $G_{m}$ and $\ob{U_{m}'}\subset U_{m}\setminus G_{m}$ for any potential in $\Ws_{q}$.
By the properties of the function $F_{k}(\lm)$ (cf Lemma~\ref{F-prop}),
$\sqrt[c]{\Dl^{2}(\lm)-4}$ (see~\eqref{c-root} and the discussion following it), and $\psi_{n}(\lm)$ (see~\eqref{psi-form} above), for each $q\in\Ws$, the integrand $\frac{F_{k}^{m}(\lm)\psi_{n}(\lm)}{\sqrt[c]{\Dl^{2}(\lm)-4}}$ is analytic on $(\bigcup_{n\ge 1} U_{n}')\times \Ws_{q}$ hence $\Om_{nk}^{(m)}$ is analytic on $\Ws_{q}$ as well.

(iii) For any $k,l\ge 0$, the function $(F_{k}(\lm))^{2l+1}/\sqrt[c]{\Dl^{2}(\lm)-4}$ is analytic on $U_{k}$ by Lemma~\ref{F2-analytic}. Therefore, $\Om_{nk}^{(2l+1)} = 0$ for all $n,k\ge 1$.

(iv) In view of item (iii) it remains to consider the case where $m=2l$ with $l\ge 1$ and $\gm_{k} = 0$. We first consider the case $n\neq k$. It follows from~\eqref{psi-root-asymptotics} that $\sg_{k}^{n} = \tau_{k}$ for any $n\ge 1$, so that by the product representations~\eqref{c-root} and \eqref{psi-form} the quotient $\psi_{n}(\lm)/\sqrt[c]{\Dl^{2}(\lm)-4}$ is analytic on $U_{k}$.
Moreover, $(F_{k}(\lm))^{2l}$ is analytic on $U_{k}$ by Lemma~\ref{F2-analytic}, which proves the claim for $n\neq k$.
Now suppose $n=k$, then by~\eqref{psi-c-root-quot}
\[
  \frac{\ps_{n}(\lm)}{\sqrt[c]{\Dl^{2}(\lm)-4}} = \frac{\ii}{\vs_{n}(\lm)}\zt_{n}(\lm),
\]
where $\vs_{n}(\lm) = (\tau_{n}-\lm)$ if $\gm_{n} = 0$ by~\eqref{s-root} and $\zt_{n}$ is analytic on $U_{n}$. Since $F_{n}^{2l}$ is analytic on $U_{n}$ by Lemma~\ref{F2-analytic} (i), we conclude with Cauchy's theorem that
\[
  \Om_{nn}^{(2l)} = 2\pi \ii F_{n}(\tau_{n})^{2l}\zt_{n}(\tau_{n}) = 0.
\]
Here we used that $F_{n}(\tau_{n}) = F(\lm_{n}^{\pm}) = 0$ by Lemma~\ref{F-prop} (ii) if $\gm_{n} = 0$.~\qed
\end{proof}

\begin{lem}
\label{om-n-kRnk}
For any real-valued finite-gap potential cf.~\eqref{fin-gap} with $[q] = 0$ and any $n\ge 1$
\begin{equation}
  \label{om-star-Om-nk}
  \om_{n}^{(1)\star} \defl \om_{n}^{(1)} - (2n\pi)^{3}
   = -12\sum_{k\ge1} k \Om_{nk}^{(2)}.~\fish
\end{equation}
\end{lem}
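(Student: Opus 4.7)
The plan is to start from the identity $\om_n^{(1)}=\pbr{\Hm_1,\th_n}$, which a priori holds on $\Hs_{0,\C}^{1}\cap(\Ws\setminus Z_n)$ by Theorem~\ref{bhf} and in particular on real-valued finite-gap potentials with $[q]=0$ and $\gm_n(q)\neq 0$. Since both sides of the claimed formula are real analytic in the finite-dimensional finite-gap parameter and the summand with index $k$ on the right-hand side vanishes whenever $\gm_k=0$ by Lemma~\ref{Om-nk-prop}(iv), extension from the open dense set $\{\gm_n\neq 0\}$ to an arbitrary finite-gap potential will follow by continuity once the formula is established on the dense set.

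The key step is to convert the Poisson bracket into a residue at infinity. Squaring the asymptotic expansion of Lemma~\ref{F-prop}(v) gives, for $[q]=0$,
\[
F^2(\lm)=-\lm+\frac{\Hm_0}{2\lm}+\frac{\Hm_1}{8\lm^{2}}+O(\lm^{-3}),\qquad \lm\to\infty,
\]
so that $\Hm_1=-8\Res_{\lm=\infty}\lm F^2(\lm)\,\dlm$. On the Birkhoff torus every spectral invariant is a function of the actions alone, hence $\pbr{f(\lm),\th_n}=\partial_{I_n}f(\lm)$; differentiating the action representation $I_k=-\tfrac{1}{\pi}\int_{\Gm_k}F\,\dlm$ and matching against the normalization~\eqref{psi-int-eqn} of $\psi_k$ produces the dual relation
\[
\partial_{I_n}F(\lm)=-\frac{\psi_n(\lm)}{2\sqrt[c]{\Dl^2(\lm)-4}}.
\]
Combined with $\partial_{I_n}F^2=2F\,\partial_{I_n}F$, these identities give the integral representation
\[
\om_n^{(1)}=\partial_{I_n}\Hm_1 = 8\,\Res_{\lm=\infty}\!\frac{\lm\,F(\lm)\psi_n(\lm)}{\sqrt[c]{\Dl^2(\lm)-4}}\,\dlm,
\]
and the leading-order asymptotics $F\sim -\ii\sqrt{\lm}$ together with $\psi_n(\lm)/\sqrt[c]{\Dl^2-4}\sim -\ii n\pi/(\sqrt{\lm}(n^2\pi^2-\lm))$ account for the free frequency $(2n\pi)^3$.

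The next step converts this residue at infinity into a finite sum of contour integrals around the individual gap contours $\Gm_k$. For a finite-gap potential only finitely many $G_k$ are non-degenerate, and across the semi-infinite cut $G_0=(-\infty,\lm_0^+]$ both $F$ and $\sqrt[c]{\Dl^2-4}$ change sign, so the integrand is continuous across $G_0$ and the $G_0$ keyhole contributes zero. On each isolating neighborhood $U_k$, the shifted Floquet exponent $F_k=F+\ii k\pi$ is the single-valued local lift (its square is analytic on $U_k$ by Lemma~\ref{F2-analytic}), and the substitution $F=F_k-\ii k\pi$ in a neighborhood of $\Gm_k$ converts each contour integral into a linear combination of the moments $\Om_{nk}^{(m)}$ with $m\in\{0,1,2,3\}$. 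The identities $\Om_{nk}^{(0)}=2\pi\dl_{nk}$ and $\Om_{nk}^{(1)}=\Om_{nk}^{(3)}=0$ from Lemma~\ref{Om-nk-prop} collapse each $\Gm_k$-contribution to a combination of $k\,\Om_{nk}^{(2)}$ and $k^3\pi^3\dl_{nk}$: the diagonal pieces assemble with the leading $-\lm$-part of the expansion of $F^2$ to reproduce exactly $(2n\pi)^3$, while the off-diagonal pieces sum with coefficient $-12$ to yield the desired $-12\sum_k k\,\Om_{nk}^{(2)}$.

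The principal obstacle is the careful bookkeeping of the contour deformation in the presence of the multi-valuedness of $F$: because the monodromy relation $F^{+}+F^{-}=-2\ii k\pi$ holds across each $G_k$, Cauchy's theorem cannot be applied to the entire plane at once but only patch by patch on each $U_k$ using the single-valued lift $F_k$. The weight $k$ in $\sum_k k\,\Om_{nk}^{(2)}$ is precisely what the linear-in-$k$ offset in $F=F_k-\ii k\pi$ produces when one passes through the algebraic identities of Lemma~\ref{Om-nk-prop}; the numerical constants $-12$ and $(2n\pi)^3$ then emerge by combining the leading asymptotics with the constant-term contributions $k^3\pi^3\dl_{nk}$ from each gap. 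Verifying that these constants combine exactly as claimed is a delicate but routine computation once the structural identity is in place.
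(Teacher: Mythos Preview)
Your overall plan is right, and the identity $\partial_{I_n}F=-\tfrac{\psi_n}{2\sqrt[c]{\Dl^2-4}}$ together with the contour deformation to the open gaps is exactly the mechanism the paper exploits. But there is a genuine inconsistency between the integral representation you derive and the moment decomposition you then claim.

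You extract $\Hm_1$ from the $1/\lm^2$ coefficient of $F^2$ via $\lm F^2$, and after applying $\partial_{I_n}$ this produces the integrand $\lm\,F(\lm)\,\psi_n(\lm)/\sqrt[c]{\Dl^2(\lm)-4}$. On each $\Gm_k$ the substitution $F=F_k-\ii k\pi$ in $\lm F$ is \emph{linear} in $F_k$: you obtain terms $\int_{\Gm_k}\lm F_k\psi_n/\sqrt[c]{\Dl^2-4}\,\dlm$ and $\int_{\Gm_k}\lm\psi_n/\sqrt[c]{\Dl^2-4}\,\dlm$, neither of which is one of the moments $\Om_{nk}^{(m)}$ (the extra factor of $\lm$ does not disappear, and $F_k$ appears only to powers $0$ and $1$). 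So the claimed expansion into $\Om_{nk}^{(m)}$ with $m\in\{0,1,2,3\}$ does not follow from your representation, and in particular $\Om_{nk}^{(2)}$ never shows up.

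What makes your subsequent bookkeeping correct --- and what the paper actually does --- is to extract $\Hm_1$ from the $1/\lm$ coefficient of $F^4$ rather than of $\lm F^2$. Squaring your expansion of $F^2$ gives
\[
F^4(\lm)=\lm^2-\Hm_0-\frac{\Hm_1}{4\lm}+O(\lm^{-2}),
\]
hence $-\tfrac{\Hm_1}{4}=\tfrac{1}{2\pi\ii}\int_{C_r}F^4\,\dlm$. Taking the bracket with $\th_n$ then yields the integrand $F^3\psi_n/\sqrt[c]{\Dl^2-4}$, and now the \emph{cubic} $(F_k-\ii k\pi)^3$ genuinely expands into $F_k^{0},F_k^{1},F_k^{2},F_k^{3}$, producing exactly the moments $\Om_{nk}^{(m)}$, $m\le 3$. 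With $\Om_{nk}^{(1)}=\Om_{nk}^{(3)}=0$ and $\Om_{nk}^{(0)}=2\pi\dl_{nk}$ from Lemma~\ref{Om-nk-prop}, the surviving pieces are $-3\ii k\pi\,\Om_{nk}^{(2)}$ and $\ii k^3\pi^3\cdot 2\pi\dl_{nk}$, and the prefactor $\tfrac{4}{\ii\pi}$ turns these into $-12k\,\Om_{nk}^{(2)}$ and $(2k\pi)^3\dl_{nk}$ respectively. Your final paragraph describes precisely this computation, so the fix is to replace the $\lm F^2$ representation by the $F^4$ one; everything after that lines up.

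One small simplification: you do not need the ``$G_0$ keyhole'' argument. For a finite-gap potential, Lemma~\ref{F2-analytic} gives directly that $F^{2l+1}/\sqrt[c]{\Dl^2-4}$ is analytic on $U_0$, so the integrand extends analytically across $G_0$ and the contour deformation from $C_r$ to $\bigcup_{k\in S}\Gm_k$ is immediate.
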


\begin{proof}
Let $q$ be a finite-gap potential, meaning that $S = \setdef{k\in\N}{\gm_{k}(q)\neq 0}$ is finite.
By Lemma~\ref{F2-analytic}, the function $F^{2}$ is analytic outside a sufficiently large circle $C_{r}$ which encloses all open gaps $G_{k}$, $k\in S$, and whose exterior contains $G_{0}$.
Furthermore, $F$ admits according to \eqref{F-exp} an asymptotic expansion for $\nu_{k} = (k+1/2)\pi$. In particular,
\[
  F(\lm)^{4} = \lm^{2} - \Hm_{0} - \frac{\Hm_{1}}{4}\frac{1}{\lm} + O(\lm^{-2}),
\]
so that by Cauchy's Theorem
\[
  -\frac{\Hm_{1}}{4} = \frac{1}{\ii 2\pi}\int_{C_{r}} F^{4}(\lm)\,\dlm.
\]
Let $n\in S$ then $\gm_{n}(q)\neq 0$ and $\th_{n}\mod \pi$ is analytic near $q$ so that
\begin{align*}
  \om_{n}^{(1)} = \pbr{\Hm_{1},\th_{n}}
  &=
  \frac{2}{\ii\pi}\int_{C_{r}} \!\!\!\pbr{\th_{n},F^{4}(\lm)}\,\dlm
  =
  \frac{8}{\ii\pi}\int_{C_{r}}\!\!\! F^{3}(\lm)\pbr{\th_{n},F(\lm)}\,\dlm.
\end{align*}
Since $\pbr{\th_{n},F(\lm)} = \frac{\pbr{\th_{n},\Dl(\lm)}}{\sqrt[c]{\Dl^{2}(4)-4}}$ by Lemma~\ref{F-prop} (i) and $2\{\th_{n},\Dl(\lm)\} = \ps_{n}(\lm)$ by \cite[Proposition F.3]{Kappeler:2003up}, one obtains
\[
  \om_{n}^{(1)}
  =
  \frac{4}{\ii\pi}\int_{C_{r}} \frac{F^{3}(\lm)\psi_{n}(\lm)}{\sqrt[c]{\Dl^{2}(\lm)-4}}\,\dlm.
\]
By Lemma~\ref{F2-analytic} (ii) and formula~\eqref{psi-form}, the integrand is analytic on $U_{0}$, while for any $k\in\N\setminus S$, one has $\sg_{k}^{n} = \tau_{k}$ and $\vs_{k}(\lm) = \tau_{k}-\lm$ so that in view of the product representations~\eqref{c-root} and~\eqref{psi-form} the integrand extends analytically to $U_{k}$. Consequently, the integrand is analytic on $\C\setminus\bigcup_{k\in S} G_{k}$ and one obtains by contour deformation
\[
  \om_{n}^{(1)}
   = \frac{4}{\ii\pi} \sum_{k\in S}
     \int_{\Gm_{k}} \frac{F^{3}(\lm)\psi_{n}(\lm)}{\sqrt[c]{\Dl^{2}(\lm)-4}}\,\dlm.
\]
Proceeding by expanding $F(\lm)^{3} = (F_{k}(\lm)-\ii k\pi)^{3}$ gives
\[
  F^{3}(\lm) = F_{k}^{3}(\lm) - 3\ii (k\pi) F_{k}^{2}(\lm) - 3(k\pi)^{2} F_{k}(\lm) + \ii (k\pi)^{3},
\]
and since $\Om_{nk}^{(3)} \equiv \Om_{nk}^{(1)} \equiv 0$ by Lemma~\ref{Om-nk-prop} (iii) and $\Om_{nk}^{(0)} = 2\pi\dl_{nk}$ by Lemma~\ref{Om-nk-prop} (i), we thus get
\begin{align*}
  \om_{n}^{(1)}
   &= \frac{4}{\ii\pi}
   \sum_{k\in S}\p*{-3\ii (k\pi) \Om_{nk}^{(2)} + \ii (k\pi)^{3}\Om_{nk}^{(0)}}\\
   &= \sum_{k\ge 1} \p*{-12k\Om_{nk}^{(2)} + (2k\pi)^{3}\dl_{kn}},
\end{align*}
where in the second line we used that $\Om_{nk}^{(2)} = 0$ for all $k\in \N\setminus S$ by Lemma~\ref{Om-nk-prop} (iv).
This shows that~\eqref{om-star-Om-nk} holds for all $n\in S$.

Now consider any $n\in\N\setminus S$, that is $\gm_{n}(q)=0$. We can choose a sequence of real-valued finite-gap potentials $q_{l}$ with $\gm_{k}(q_{l}) = \gm_{k}(q)$ for $k\neq n$, $\gm_{n}(q_{l})\neq 0$, and $q_{l}\to q$ in $\Hs_{0}^{1}$. In particular, $S^{(l)} \equiv S(q_{l}) \defl \setdef{j\in\N}{\gm_{j}(q_{l})\neq 0}$ is given by $S \cup \setd{n}$ for any $l\ge 1$. Since each $\Om_{nk}^{(2)}$, $k\ge 1$, is continuous, indeed analytic on $\Ws$ by Lemma~\ref{Om-nk-prop} (ii), it follows that
\[
  \sum_{k\ge 1} k\Om_{nk}^{(2)}(q_{l})
   =
  \sum_{k\in S\cup\setd{n}} k\Om_{nk}^{(2)}(q_{l})
   \to
  \sum_{k\in S\cup\setd{n}} k\Om_{nk}^{(2)}(q)
   =
  \sum_{k\ge 1} k\Om_{nk}^{(2)}(q).
\]
Since $\om_{n}^{(1)\star}(q) = \lim\limits_{l\to \infty} \om_{n}^{(1)\star}(q_{l})$ and $\om_{n}^{(1)\star}(q_{l}) = -12\sum_{k\in S\cup\setd{n}} k\Om_{nk}^{(2)}(q_{l})$ for any $l\ge 1$, one concludes that $\om_{n}^{(1)\star}(q) = -12\sum_{k\ge 1} k \Om_{nk}^{(2)}(q)$.\qed
\end{proof}

We proceed by deriving decay estimates for $\Om_{nk}^{(2)}$. 

\begin{lem}
\label{decay-kRnk}
For any $n\ge 1$ and any $q\in\Ws^{s,p}$ with $(s,p)$ admissible and $-1\le s\le 0$ for $k\neq n$,
\[
  k\Om_{nk}^{(2)}
  = 
  \begin{cases}
  \frac{n}{n^{2}-k^{2}}k^{-2}\gm_{k}^{3}\ell_{k}^{2}, & s = -1,\quad p = 2,\\
  \frac{n^{1-(1-\rho)s}}{n^{2}-k^{2}}k^{-3-\rho s}\gm_{k}^{3}\ell_{k}^{p}, & -1 < s \le 0,\quad 2\le p  <\infty,\quad 0\le\rho\le 1,
  \end{cases}
\]
and
\[
  n\Om_{nn}^{(2)}
  = \frac{\gm_{n}^{2}}{16n\pi}\p*{1 + n^{-s-1}\ell_{n}^{p}},
\]
locally uniformly on $\Ws^{s,p}$.~\fish
\end{lem}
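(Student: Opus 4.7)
The plan is to start from the integral representation
\[
  \Om_{nk}^{(2)} = \int_{\Gm_{k}} F_{k}^{2}(\lm)\,\frac{\psi_{n}(\lm)}{\sqrt[c]{\Dl^{2}(\lm)-4}}\,\dlm
\]
and rewrite the kernel using identity~\eqref{psi-c-root-quot}, which gives
\[
  \Om_{nk}^{(2)}
  = \frac{n\ii}{k}\int_{\Gm_{k}}\frac{F_{k}^{2}(\lm)}{\vs_{k}(\lm)}\frac{\sg_{k}^{n}-\lm}{\sg_{n}^{n}-\lm}\zt_{k}(\lm)\,\dlm.
\]
The central idea is to insert the expansion of $F_{k}$ supplied by Lemma~\ref{Fk-exp} (applied with $n$ replaced by $k$): write $F_{k}(\lm) = \tfrac{\ii}{2k\pi}\vs_{k}(\lm) + E_{k}(\lm)$ on $G_{k}^{+}\cup G_{k}^{-}$ with $\abs{E_{k}}_{G_{k}^{\pm}} = (2k\pi)^{-1}\gm_{k} k^{-s-1}(\ell_{k}^{p/2}+\ell_{k}^{1+})$. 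Squaring, $F_{k}^{2}=-\vs_{k}^{2}/(4k^{2}\pi^{2})+\ii\vs_{k}E_{k}/(k\pi)+E_{k}^{2}$, so after deforming $\Gm_{k}$ to $G_{k}^{+}\cup G_{k}^{-}$ (permissible by the analyticity recorded in Lemma~\ref{Om-nk-prop}~(ii)) the integral splits into a principal part
\[
  A_{nk} \defl -\frac{n\ii}{4k^{3}\pi^{2}}\int_{\Gm_{k}}\vs_{k}(\lm)\frac{\sg_{k}^{n}-\lm}{\sg_{n}^{n}-\lm}\zt_{k}(\lm)\,\dlm
\]
plus two error integrals that I will estimate separately.

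For the case $k\neq n$, I would parametrize $G_{k}^{\pm}$ via $\lm_{t}^{\pm}=\tau_{k}+(t\pm\ii0)\gm_{k}/2$, so that $\vs_{k}(\lm_{t}^{\pm})=\mp\ii(\gm_{k}/2)\sqrt{1-t^{2}}$ and the leading integrand picks up one factor $\gm_{k}$ from $\vs_{k}$, a second $\gm_{k}$ from $\sg_{k}^{n}-\lm=(\lm_{k}^{\ld}-\lm)+(\sg_{k}^{n}-\lm_{k}^{\ld})$, and a third $\gm_{k}/2$ from $\dlm$; meanwhile $(\sg_{n}^{n}-\lm)^{-1}=O(1/(n^{2}-k^{2}))$ on $G_{k}$. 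This produces the crude bound $k\Om_{nk}^{(2)} = O(n\gm_{k}^{3}/(k^{2}(n^{2}-k^{2})))$ corresponding to the $s=-1$, $p=2$ case. To refine this to the $\rho$-interpolated form I would invoke Proposition~\ref{prop:sg-lm} (giving the bound $\sg_{k}^{n}-\lm_{k}^{\ld} = n^{-(1-\rho)s}k^{-1-s\rho}\gm_{k}\ell_{k}^{p}$), Lemma~\ref{In-lmd-est}~(i) (giving $\lm_{k}^{\ld}-\tau_{k}=\gm_{k}^{2}k^{-1}\ell_{k}^{p}$), and Remark~\ref{std-prd-simple-est} (giving $\zt_{k}=1+k^{-1-s}\ell_{k}^{p}$ on $G_{k}$); Lemma~\ref{s-root-reciprocal-estimate} together with Lemma~\ref{int-wm-quot-est} will be used to bound the error integrals. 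The error integrals $B_{nk}$ and $C_{nk}$ (coming from $\ii\vs_{k}E_{k}/(k\pi)$ and $E_{k}^{2}$ respectively) are of strictly better order thanks to the extra factor $k^{-s-1}$ in $E_{k}$ and will be absorbed into the remainder.

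The case $k=n$ is cleaner because the ratio $(\sg_{k}^{n}-\lm)/(\sg_{n}^{n}-\lm)$ reduces to $1$; the integrand becomes $F_{n}^{2}(\lm)\,\ii\zt_{n}(\lm)/\vs_{n}(\lm)$. Substituting $F_{n}^{2}=-\vs_{n}^{2}/(4n^{2}\pi^{2})+2\ii\vs_{n}E_{n}/(2n\pi)+E_{n}^{2}$ and computing the principal contribution via the parametrization $\lm_{t}^{\pm}$, using $\int_{-1}^{1}\sqrt{1-t^{2}}\,\dt=\pi/2$ and $\zt_{n}|_{G_{n}}=1+n^{-1-s}\ell_{n}^{p}$, yields the main term $\gm_{n}^{2}/(16n^{2}\pi)$ for $\Om_{nn}^{(2)}$, while the error integrals contribute $\gm_{n}^{2}n^{-s-2}\ell_{n}^{p}/(n\pi)$; multiplying through by $n$ gives the claimed $n\Om_{nn}^{(2)}=\gm_{n}^{2}(16n\pi)^{-1}(1+n^{-s-1}\ell_{n}^{p})$.

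I expect the main obstacle to be the bookkeeping needed to interpolate cleanly in $\rho\in[0,1]$: the factor $n^{1-(1-\rho)s}/(n^{2}-k^{2})$ arises from balancing a power of $n$ from Proposition~\ref{prop:sg-lm} (which contributes $n^{-(1-\rho)s}$ to $\sg_{k}^{n}-\lm_{k}^{\ld}$) against the decay in $k$, and the Cauchy-Schwarz-type estimates of the form~\eqref{hill-est} must be applied at the precise exponent to convert intermediate $O((n^{2}-k^{2})^{-1})$ bounds into the claimed $\ell_{k}^{p}$ (resp.\ $\ell_{k}^{2}$) summability locally uniformly on $\Ws^{s,p}$. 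The uniformity statement itself then follows by inspecting each of the ingredient estimates, all of which are known to hold locally uniformly on $\Ws^{s,p}$.
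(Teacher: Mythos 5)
Your treatment of the diagonal term $\Om_{nn}^{(2)}$ matches the paper's, and your off-diagonal decomposition does deliver the $s=-1$, $p=2$ case and the $\rho=1$ instance of the second case. The route itself is different from the paper's: you Taylor-expand $F_{k}$ via Lemma~\ref{Fk-exp} and rely on the parity cancellation $\int_{G_{k}^{-}}(\tau_{k}-\lm)\vs_{k}(\lm)\,\dlm=0$, which is in fact the mechanism the paper uses for $\Om_{nk}^{(4)}$ in Lemma~\ref{decay-kOm2nk}. But for $\Om_{nk}^{(2)}$ this route does not prove the lemma as stated: every error term you generate --- the remainder $F_{k}^{2}+\vs_{k}^{2}/(4k^{2}\pi^{2})$ from Lemma~\ref{Fk-exp}, the deviation $\zt_{k}-1$ from Remark~\ref{std-prd-simple-est}, and the shift $\lm_{k}^{\ld}-\tau_{k}$ from Lemma~\ref{In-lmd-est} --- is small relative to the crude bound $\frac{n}{n^{2}-k^{2}}k^{-2}\gm_{k}^{3}$ only by a factor of order $k^{-1-s}\ell_{k}^{p}$, i.e.\ with decay purely in $k$. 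The claimed gain for $\rho<1$ is $n^{-(1-\rho)s}k^{-1-\rho s}\ell_{k}^{p}$, which in the regime $k>n$ (and $s<0$) is stronger than $k^{-1-s}\ell_{k}^{p}$ by the factor $(k/n)^{(1-\rho)\abs{s}}$; an $\ell^{p}$ sequence does not absorb such a factor with norm uniform in $n$. Nor can you buy it from~\eqref{hill-est}: your representation carries exactly one power of $(\sg_{n}^{n}-\lm)^{-1}\sim(n^{2}-k^{2})^{-1}$, and that power must survive as the explicit $\frac{1}{n^{2}-k^{2}}$ in the conclusion, so there is nothing left to convert into $n^{-1/2+\al}k^{-1/2-\al}$. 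The $\rho<1$ cases are not cosmetic: the $\rho=0$ instance is precisely what makes $\sum_{k}k\Om_{nk}^{(2)}$ and $\sum_{k}k^{3}\Om_{nk}^{(2)}$ absolutely convergent in Theorems~\ref{om-n-analytic} and~\ref{om-2-n-analytic}.

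The paper obtains the missing $n$-dependence by two devices your plan omits. It first multiplies by $\sg_{n}^{n}-\tau_{k}$, producing a second factor of size $n^{2}-k^{2}$, and splits the result into $A_{nk}+B_{nk}$. For $B_{nk}$ the spare factor is then converted via~\eqref{hill-est} into $n^{-1/2+\al}k^{-1/2-\al}$ with $\al$ a free parameter, which is where the interpolation in $\rho$ comes from. For $A_{nk}$ it uses the exact identity $\int_{\Gm_{k}}F_{k}^{2}(\lm)\dDl(\lm)/\sqrt[c]{\Dl^{2}(\lm)-4}\,\dlm=0$ (the integrand is $\partial_{\lm}(F_{k}^{3}/3)$) to replace the integrand by a \emph{difference} of the two infinite products with roots $(\sg_{m}^{n})$ and $(\lm_{m}^{\ld})$; this difference is controlled by a mean-value argument in which Proposition~\ref{prop:sg-lm}'s $n$-dependent bound on $\sg_{m}^{n}-\lm_{m}^{\ld}$ enters for \emph{every} index $m$ and is summed with the weighted Hilbert-transform Lemma~\ref{mht-2}. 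Your parity cancellation substitutes for the perfect-derivative identity but reproduces neither mechanism; you would need to restructure the $k\neq n$ argument along these lines (or find another source of $n$-decay) to cover $0\le\rho<1$.
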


\begin{proof}
We begin with the case $k\neq n$. Our goal is to obtain a representation of $\Om_{nk}^{(2)}$ involving a difference of the quotients
\begin{align}
\label{prod-exp-1}
\begin{split}
  \frac{(2n\pi)\dDl(\lm)}{\sqrt[c]{\Dl^{2}(\lm)-4}}
  &=
  -\frac{\ii n\pi}{\sqrt[+]{\lm-\lm_{0}^{+}}}\prod_{m\ge1} \frac{\lm_{m}^{\ld}-\lm}{\vs_{m}(\lm)},\\
  \frac{(\sg_{n}^{n}-\lm)\psi_{n}(\lm)}{\sqrt[c]{\Dl^{2}(\lm)-4}}
  &=
  \frac{\ii n\pi}{\sqrt[+]{\lm-\lm_{0}^{+}}}
  \prod_{m\ge1} \frac{\sg_{m}^{n}-\lm}{\vs_{m}(\lm)},
\end{split}
\end{align}
which are obtained from~\eqref{Dl-quot-prod} and~\eqref{psi-c-root-quot} -- recall that $\sg_{n}^{n} = \lm_{n}^{\ld}$.
This allows to reduce the estimate of $\Om_{nk}^{(2)}$ to an estimate of $\sg_{k}^{n}-\lm_{k}^{\ld}$ where we can apply Proposition~\ref{prop:sg-lm}.
We first note that
\begin{align*}
  &(\sg_{n}^{n}-\tau_{k})\Om_{nk}^{(2)}
  = A_{nk} + B_{nk},\\
  &A_{nk} \defl \int_{\Gm_{k}} \frac{F_{k}^{2}(\lm) (\sg_{n}^{n}-\lm)\psi_{n}(\lm) }{\sqrt[c]{\Dl^{2}(\lm)-4}}\,\dlm,\qquad
  B_{nk} \defl
  \int_{\Gm_{k}} \frac{F_{k}^{2}(\lm) (\lm-\tau_{k})\psi_{n}(\lm) }{\sqrt[c]{\Dl^{2}(\lm)-4}}\,\dlm,
\end{align*}
and proceed by estimating the second term which is expected to be small since $\lm$ is close to $\tau_{k}$. In view of~\eqref{psi-c-root-quot} we may write for $k\neq n$,
\begin{equation}
  \label{zt-k-n}
    \frac{\psi_{n}(\lm)}{\sqrt[c]{\Dl^{2}(\lm)-4}} = \frac{\sg_{k}^{n}-\lm}{\vs_{k}(\lm)}\zt_{k}^{n}(\lm),\qquad
  \zt_{k}^{n}(\lm) = \ii\frac{n}{k}\frac{1}{\sg_{n}^{n}-\lm}\zt_{k}(\lm),
\end{equation}
where $\zt_{k}$ is given by~\eqref{psi-c-root-quot}.
By Remark~\ref{std-prd-simple-est}, $\zt_{k}(\lm)\big|_{G_{k}} = 1 + k^{-s-1}\ell_{k}^{p}$ while on the other hand, uniformly for $\lm\in G_{k}$,
\begin{align*}
  \frac{\sg_{n}^{n}-\tau_{k}}{\sg_{n}^{n}-\lm}-1
  =
  \frac{\lm-\tau_{k}}{\sg_{n}^{n}-\lm}
  =
  O\p*{\frac{k^{-s}\ell_{k}^{p}}{n^{2}-k^{2}}}
  = k^{-1-s}\ell_{k}^{p}.
\end{align*}
Both estimates together yield
\begin{equation}
  \label{zt-k-n-est}
  \zt_{k}^{n}(\lm)\bigg|_{G_{k}} = \ii\frac{n}{k}\frac{1}{\sg_{n}^{n}-\tau_{k}}\p*{1 + k^{-1-s}\ell_{k}^{p}}.
\end{equation}
By Lemma~\ref{F-prop} $\abs{F_{k}^{2}}_{G_{k}} = O(\gm_{k}^{2}/k^{2})$, and by~\eqref{taun-lmn-est} and \eqref{psi-root-asymptotics} we have $\abs{\sg_{k}^{n}-\lm}_{G_{k}} = O(\gm_{k})$, while by~\eqref{iso-est} there exists $c > 0$ so that $\abs{\sg_{n}^{n}-\tau_{k}} \ge c\abs{n^{2}-k^{2}}$. Thus it follows with Lemma~\ref{int-wm-quot-est} that
\begin{align*}
  B_{nk} 
  &=
  \int_{\Gm_{k}} \frac{F_{k}^{2}(\lm)(\lm-\tau_{k})(\sg_{k}^{n}-\lm)\zt_{k}^{n}(\lm)}{\vs_{k}(\lm)}\,\dlm\\
  &= O\p*{\frac{n}{k^{3}}\frac{\gm_{k}^{4}}{n^{2}-k^{2}}}
   = \frac{n}{n^{2}-k^{2}}k^{-3-s}\gm_{k}^{3}\ell_{k}^{p}.
\end{align*}
By~\eqref{hill-est} one then gets for any $-3/2\le \al \le 3/2$,
\[
  B_{nk}
   = n^{1/2+\al}k^{-7/2-s-\al}\gm_{k}^{3}\ell_{k}^{p}.
\]

We proceed with estimating $A_{nk}$.
By Lemma~\ref{F-prop} the function $F_{k}^{3}(\lm)$ is analytic on $U_{k}\setminus G_{k}$ and we compute
\[
  \partial_{\lm}\p*{ \frac{1}{3} F_{k}^{3}(\lm) }
   = \frac{F_{k}^{2}(\lm)\dDl(\lm)}{\sqrt[c]{\Dl^{2}(\lm)-4}}.
\]
Therefore,
\[
  \int_{\Gm_{k}} \frac{F_{k}^{2}(\lm)\dDl(\lm)}{\sqrt[c]{\Dl^{2}(\lm)-4}}\,\dlm = 0.
\]
Thus $A_{nk}$ may be written in the form
\begin{align*}
  A_{nk}
  &=
  \int_{\Gm_{k}} F_{k}^{2}(\lm)\frac{(\sg_{n}^{n}-\lm)\psi_{n}(\lm) + (2n\pi)\dDl(\lm)}{\sqrt[c]{\Dl^{2}(\lm)-4}}\,\dlm.
\end{align*}
Note that in view of~\eqref{prod-exp-1}
\[
  \frac{k\pi}{\ii n\pi}\frac{(\sg_{n}^{n}-\lm)\psi_{n}(\lm) + (2n\pi)\dDl(\lm)}{\sqrt[c]{\Dl^{2}(\lm)-4}}
  =
  f(\lm,\al^{1}) - f(\lm,\al^{0}),
\]
where $\al^{1} = (\sg_{m}^{n})_{m\in\Z}$, $\al^{0} = (\lm_{m}^{\ld})_{m\in\Z}$, and
\[
  f(\lm,\al) = \frac{\al_{k}-\lm}{\vs_{k}(\lm)}f_{k}(\lm,\al),\qquad
  f_{k}(\lm,\al) = \frac{k\pi}{\sqrt[+]{\lm-\lm_{0}^{+}}}\prod_{m\neq k} \frac{\al_{m}-\lm}{\vs_{m}(\lm)}.
\]
By~\eqref{phi-n-func} the functions
$f_{k}\colon (\C\setminus\bigcup_{m\neq k} G_{m})\times \ell_{\C}^{p}\to \C$ and $f\colon (\C\setminus\bigcup_{m\in\Z} G_{m})\times \ell_{\C}^{p}\to \C$ are analytic. One further computes that
\[
  \partial_{\al_{m}} f(\lm,\al) =  \frac{\al_{k}-\lm}{\al_{m}-\lm}\frac{f_{k}(\lm,\al)}{\vs_{k}(\lm)},\quad
  m\neq k,\qquad
  \partial_{\al_{k}} f(\lm,\al) =  \frac{f_{k}(\lm,\al)}{\vs_{k}(\lm)} .
\]
For $0\le t\le 1$, let $\al^{t} = (\al_{m}^{t}) = ((1-t)\sg_{m}^{n} + t\lm_{m}^{\ld})$. Then
\begin{align*}
  f(\lm,\al^{1}) - f(\lm,\al^{0})
  &=
  \int_{0}^{1}\sum_{m}
  \partial_{\al_{m}} f(\lm,\al^{t})(\sg_{m}^{n}-\lm_{m}^{\ld})\,\dt\\
  &=
  \int_{0}^{1}
  \p*{\sum_{m\neq k} \frac{\sg_{m}^{n}-\lm_{m}^{\ld}}{\al_{m}^{t}-\lm}}
  \frac{(\al_{k}^{t}-\lm)f_{k}(\lm,\al^{t})}{\vs_{k}(\lm)}\,\dt\\
  &\qquad +
  (\sg_{k}^{n}-\lm_{k}^{\ld})
  \int_{0}^{1}
  \frac{f_{k}(\lm,\al^{t})}{\vs_{k}(\lm)}\,\ds.
\end{align*}
Consequently, by Lemma~\ref{int-wm-quot-est}
\begin{align*}
  \abs{A_{nk}} 
  \le \frac{n}{k}\abs{F_{k}^{2}}_{G_{k}}
  \sup_{0\le t\le 1}\Biggl(&
   \abs[\bigg]{\sum_{m\neq k} \frac{\sg_{m}^{n}-\lm_{m}^{\ld}}{\al_{m}^{t}-\lm}}_{G_{k}}
  \abs{(\al_{k}^{t}-\lm)f_{k}(\lm,\al^{t})}_{G_{k}}\\
  &\qquad + \abs{\sg_{k}^{n}-\lm_{k}^{\ld}}\abs{f_{k}(\lm,\al^{t})}_{G_{k}}\Biggr).
\end{align*}
Since $\abs{F_{k}^{2}}_{G_{k}} = O(\gm_{k}^{2}/k^{2})$ by Lemma~\ref{F-prop}, $\abs{f_{k}(\lm,\al^{t})}_{G_{k}}$ is bounded uniformly in $k$ and $0\le t\le 1$ by Proposition~\ref{std-prd-est}, and $\abs{\al_{k}^{t}-\lm}_{G_{k}} = O(\gm_{k})$ uniformly in $k$ and $0\le t \le 1$, we get
\[
  \abs{A_{nk}} \le \frac{n}{k^{3}}\abs{\gm_{k}}^{2}
  \p*{
  \abs{\gm_{k}}
   \sup_{0\le t\le 1}\abs*{\sum_{m\neq k} \frac{\sg_{m}^{n}-\lm_{m}^{\ld}}{\al_{m}^{t}-\lm}}_{G_{k}}
   + O(\abs{\sg_{k}^{n}-\lm_{k}^{\ld}})}.
\]

Finally, note that for some $c > 0$,
\[
  \inf_{\lm\in G_{k}}\abs{\al_{m}^{t}-\lm} \ge c\abs{m^{2}-k^{2}},\qquad m\neq k,\quad 0\le t\le 1.
\]

\textbf{Case A. $s=-1$}: Since $\sg_{m}^{n}-\lm_{m}^{\ld} = \gm_{m}\ell_{m}^{2}$ by Proposition~\ref{prop:sg-lm}, we obtain from Lemma~\ref{mht-2} that $A_{nk} = nk^{-3}\gm_{k}^{3}\ell_{k}^{2}$.
Moreover, with $\al = -1/2-s$, we obtain $B_{nk} = nk^{-3}\gm_{k}^{3}\ell_{k}^{p}$.
Hence, altogether we have shown that
\[
  (\sg_{n}^{n}-\tau_{k})\Om_{nk}^{(2)} = A_{nk} + B_{nk}
   = nk^{-3}\gm_{k}^{3}\ell_{k}^{2}.
\]

\textbf{Case B. $-1 < s\le 0$}: Since $\sg_{m}^{n}-\lm_{m}^{\ld} = n^{-(1-\rho)s}m^{-1-\rho s}\gm_{m}\ell_{m}^{p}$ by Proposition~\ref{prop:sg-lm}, we obtain from Lemma~\ref{mht-2} that
\[
  A_{nk} = \frac{n^{1-(1-\rho)s}}{k^{3}}k^{-1-\rho s} \gm_{k}^{3}\ell_{k}^{p}
  = n^{1-(1-\rho)s}k^{-4-\rho s} \gm_{k}^{3}\ell_{k}^{p}.
\]
Moreover, with $\al = 1/2 - (1-\rho)s$,
\[
  B_{nk} = n^{1-(1-\rho)s}k^{-4-\rho s} \gm_{k}^{3}\ell_{k}^{p}.
\]
Altogether we thus have shown that
\[
  (\sg_{n}^{n}-\tau_{k})\Om_{nk}^{(2)} = A_{nk} + B_{nk}
   = n^{1-(1-\rho)s}k^{-4-\rho s} \gm_{k}^{3}\ell_{k}^{p}.
\]
This completes the proof of the claimed asymptotics for $\Om_{nk}^{(2)}$ with $k\neq n$.

It remains to consider the case $k=n$. In view of~\eqref{psi-c-root-quot},
\begin{align*}
  \Om_{nn}^{(2)} 
  &= \ii\int_{\Gm_{n}} \frac{F_{n}^{2}(\lm)\zt_{n}(\lm)}{\vs_{n}(\lm)}\,\dlm\\
  &= \ii\int_{\Gm_{n}} \frac{
  \p[\Big]{-\frac{\vs_{n}^{2}(\lm)}{4n^{2}\pi^{2}} + F_{n}^{2}(\lm) + \frac{\vs_{n}^{2}(\lm)}{4n^{2}\pi^{2}}}
  \p[\Big]{1+\zt_{n}(\lm)-1}}{\vs_{n}(\lm)}\,\dlm\\
  &= -\ii\int_{\Gm_{n}}
  \frac{\vs_{n}(\lm)}{4n^{2}\pi^{2}}\,\dlm
  +
  \ii\int_{\Gm_{n}}
   \frac{
  F_{n}^{2}(\lm) + \frac{\vs_{n}^{2}(\lm)}{4n^{2}\pi^{2}}
  }{\vs_{n}(\lm)}\,\dlm
  +
  \ii\int_{\Gm_{n}}
   \frac{
  F_{n}^{2}(\lm)
  \p*{\zt_{n}(\lm)-1}}{\vs_{n}(\lm)}\,\dlm.
\end{align*}
By Remark~\ref{std-prd-simple-est} we have $\abs{\zt_{n}(\lm) - 1}_{G_{n}} = n^{-s-1}\ell_{n}^{p}$ and by Lemma~\ref{Fk-exp},
\[
  \abs*{F_{n}^{2}(\lm)+\frac{\vs_{n}^{2}(\lm)}{4n^{2}\pi^{2}}}_{G_{n}} 
  = \frac{\gm_{n}^{2}}{4n^{2}\pi^{2}}n^{-s-1}\ell_{n}^{p}.
\]
We may thus apply Lemma~\ref{int-wm-quot-est} to obtain the estimate
\begin{align*}
  &\frac{1}{2\pi}\abs*{
  \Om_{nn}^{(2)}  + \ii\int_{\Gm_{n}} \frac{\vs_{n}(\lm)}{4n^{2}\pi^{2}}\,\dlm}\\
  &\qquad\le
  \abs*{F_{n}^{2}(\lm)+\frac{\vs_{n}^{2}(\lm)}{4n^{2}\pi^{2}}}_{G_{n}}
  + \abs*{F_{n}^{2}(\lm)}_{G_{n}}\abs{\zt_{n}(\lm)-1}_{G_{n}}
  \\
  &\qquad = \frac{\gm_{n}^{2}}{4n^{2}\pi^{2}}n^{-s-1}\ell_{n}^{p}.
\end{align*}
Finally, the integral $\int_{\Gm_{n}} \vs_{n}(\lm)\,\dlm$ can be explicitly computed.
If $\gm_{n}=0$, then $\vs_{n}(\lm) = (\tau_{n}-\lm)$ and hence $\int_{\Gm_{n}} \vs_{n}(\lm)\,\dlm = 0$.
On the other hand, if $\gm_{n}\neq 0$, then we may use~\eqref{s-root-sides} to compute
\[
  \int_{\Gm_{n}} \vs_{n}(\lm)\,\dlm
   = \ii \frac{\gm_{n}^{2}}{2} \int_{-1}^{1} \sqrt[+]{1-t^{2}}\,\dt
   = \ii\pi \frac{\gm_{n}^{2}}{4}.
\]
Consequently,
\begin{align*}
  \Om_{nn}^{(2)}
   = \frac{\gm_{n}^{2}}{16n^{2}\pi}\p*{1 + n^{-1-s}\ell_{n}^{p}}.
\end{align*}

Going through the arguments of the proof, one verifies that the estimates hold locally uniformly on $\Ws^{s,p}$.\qed
\end{proof}

Our first main result for the KdV frequencies is the following formula for their analytic extension.

\begin{thm}
\label{om-n-analytic}
For any $n\ge 1$, the sum $-12\sum_{k\ge 1} k \Om_{nk}^{(2)}$ converges locally uniformly on $\Ws$ to the analytic function $\om_{n}^{(1)\star}$,
\begin{equation}
  \label{omn-1-star-formula}
  \om_{n}^{(1)\star} = -12\sum_{k\ge 1} k \Om_{nk}^{(2)}.~\fish
\end{equation}
\end{thm}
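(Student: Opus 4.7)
The plan is to prove the identity in three stages. First, I would establish that the series $\sum_{k\ge 1} k\Om_{nk}^{(2)}$ converges absolutely and locally uniformly on $\Ws$. Second, analyticity of the limit follows by Weierstrass. Third, density of real-valued finite-gap potentials with $[q] = 0$ in $\Ws$ transfers the identity provided by Lemma~\ref{om-n-kRnk} from this dense subset to all of $\Ws$.

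For the convergence, Lemma~\ref{decay-kRnk} applies in the case $s=-1$, $p=2$ because $\Ws^{-1,2} = \Ws$ (since $\FL^{-1,2}_{0,\C} = \Hs^{-1}_{0,\C}$). The diagonal summand $n\Om_{nn}^{(2)}$ is a single finite term. For $k\neq n$ the lemma yields the decomposition $k\Om_{nk}^{(2)} = \frac{n}{n^{2}-k^{2}}k^{-2}\gm_{k}^{3}c_{k}$ with $(c_{k})\in\ell^{2}$ of locally bounded $\ell^{2}$-norm on $\Ws$. Writing $\gm_{k} = k b_{k}$ with $(b_{k}) \in \ell^{2}$ locally uniformly on $\Ws$ by~\eqref{taun-lmn-est}, the absolute value of the summand equals $\abs{nk/(n^{2}-k^{2})}\,b_{k}^{3}\abs{c_{k}}$. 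For each fixed $n$, the factor $\abs{nk/(n^{2}-k^{2})}$ is bounded in $k$: the only sensitive regime is $k$ near $n$, where it is $O(n)$, as seen by splitting into the three ranges $k\le n/2$, $n/2<k\le 3n/2$ with $k\neq n$, and $k>3n/2$, in the spirit of~\eqref{hill-est}. Using $b_{k}^{3}\le \n{b}_{\ell^{\infty}}^{2} b_{k}$ together with Cauchy--Schwarz,
\[
  \sum_{k\neq n} \abs*{\frac{nk}{n^{2}-k^{2}}}b_{k}^{3}\abs{c_{k}} \le C(n)\n{b}_{\ell^{\infty}}^{2}\n{b}_{\ell^{2}}\n{c}_{\ell^{2}},
\]
with all constants locally uniform on $\Ws$.

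Once this is in hand, $-12\sum_{k\ge 1} k\Om_{nk}^{(2)}$ is a locally uniform limit of functions analytic on $\Ws$ (each $\Om_{nk}^{(2)}$ is analytic by Lemma~\ref{Om-nk-prop}~(ii)), hence analytic on $\Ws$. On the other hand, $\om_{n}^{(1)\star}$ already admits a real analytic extension to $\Ws$ (cf.\ the remarks preceding Theorem~\ref{thm:kdv-freq}). Lemma~\ref{om-n-kRnk} shows that these two analytic functions agree on the set of real-valued finite-gap potentials with $[q]=0$, which is dense in $\Ws$ by Theorem~\ref{bhf}. Continuity of both sides then yields~\eqref{omn-1-star-formula} on all of $\Ws$.

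The main obstacle is precisely the locally uniform absolute summability in step one: the Hilbert-type factor $\frac{n}{n^{2}-k^{2}}$ is singular near $k=n$, and the cubic gap factor $\gm_{k}^{3}$ is only borderline summable. The remedy is the three-regime splitting in $k$ combined with the decomposition $b_{k}^{3} = b_{k}^{2}\cdot b_{k}$, which absorbs one copy of $b$ into an $\ell^{\infty}$-bound and pairs the remaining $b\cdot c$ against itself via Cauchy--Schwarz; this is exactly the asymptotic input that Lemma~\ref{decay-kRnk} was designed to supply.
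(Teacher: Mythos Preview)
Your proposal is correct and follows essentially the same route as the paper: both invoke Lemma~\ref{decay-kRnk} at $s=-1$, $p=2$ together with $\gm_k = k\ell_k^2$ to obtain $\ell^1$-summability of $k\Om_{nk}^{(2)}$ in $k$ locally uniformly on $\Ws$, then appeal to Lemma~\ref{Om-nk-prop}~(ii) for termwise analyticity and to Lemma~\ref{om-n-kRnk} plus density of real finite-gap potentials for the identification. The only cosmetic point is that in the last step ``continuity'' should really read ``identity theorem for analytic functions'' (real finite-gap potentials are dense in $\Hs_0^{-1}$, not in the complex neighborhood $\Ws$, so one passes from $\Hs_0^{-1}$ to $\Ws$ by analytic continuation), but this does not affect the validity of the argument.
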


\begin{rem}
\label{rem:om-n-analytic}
(i) In~\cite{Kappeler:2006fr} it has been shown that $\om_{n}^{(1)\star}$ extends to an analytic function by a different formula. The formula~\eqref{omn-1-star-formula} allows us to obtain asymptotic estimates for $\om_{n}^{(1)\star}$.

(ii)
Let $\Vs$ denote the image of the map $\Ws\to \ell_{\C}^{-1,1}$, $q\mapsto (z_{n}z_{-n})_{n\ge 1}$, then $\Vs$ defines a complex neighborhood of $\ell_{+}^{-1,1}$. 
Using Theorem~\ref{om-n-analytic}, one can argue as in the proof of~\cite[Theorem~20.3]{Grebert:2014iq}, to see that for any $n\ge 1$, the frequency $\om_{n}^{(1)\star}$ is a real analytic function of the actions on $\Vs$.\map

\end{rem}

\begin{proof}
By Lemma~\ref{Om-nk-prop} (ii), all moments $\Om_{nk}^{(2)}$ are analytic on $\Ws$. Moreover, combining the asymptotics~\eqref{taun-lmn-est}, $\gm_{k} = k^{-s}\ell_{k}^{p}$, and Lemma~\ref{decay-kRnk}, yields for $k\neq n$ and $s=-1$, $p=2$, and $\rho=0$,
\[
  k\Om_{nk}^{(2)}
  = \frac{n^{2}}{n^{2}-k^{2}}\ell_{k}^{1}
  = n\ell_{k}^{1}.
\]
Thus, the sum $\Om_{n}^{(2)} \defl -12\sum_{k\ge 1} k\Om_{nk}^{(2)}$ is absolutely and locally uniformly convergent to an analytic function on $\Ws$. Moreover, the identity $\om_{n}^{(1)\star} = \Om_{n}^{(2)}$, $n\ge 1$, holds for any real-valued finite-gap potential by Lemma~\ref{om-n-kRnk}. Consequently, $\Om_{n}^{(2)}$ is the unique analytic extension of $\om_{n}^{(1)\star}$ from the set of finite-gap potentials to $\Ws$.\qed
\end{proof}

Our second main result for the KdV frequencies concerns their asymptotic behavior. To this end, we introduce frequency map $\om^{(1)\star} = (\om_{n}^{(1)\star})_{n\ge 1}$.

\begin{thm}
\label{kdv-freq-as}
\begin{equivenum}
\item
The map $\om^{(1)\star}\colon \Hs_{0}^{-1}\to \ell^{-1,r}$ is real-analytic for any $r>1$, whose restriction to $\FL_{0}^{s,p}$ with $(s,p)$ admissible is a real analytic map
\[
  \om^{(1)\star} \colon \FL_{0}^{s,p} \to
  \begin{cases}
  \ell^{1+2s,1} & -1 < s < -1/2,\quad p = 2,\\
  \ell^{r}, & s = -1/2,\quad p = 2,\quad r > 1,\\
  \ell^{p/2}, & s = -1/2,\quad p > 2.
  \end{cases}
\]

\item
For any $(s,p)$ admissible
\[
  \om_{n}^{(1)\star} + 6I_{n} =
  \begin{cases}
  n^{-3s-2}\ell_{n}^{1}, & -1 < s < -2/3,\quad p = 2,\\
  \ell_{n}^{1+}, & -2/3 \le s \le -1/2,\quad p = 2,\\
  \ell_{n}^{1+}+\ell_{n}^{p/4}, & s = -1/2,\quad p > 2.
  \end{cases}
\]

\item
Moreover, for any $-1 \le s \le 0$ and $p=2$
\[
  \om_{n}^{(1)\star} + 6I_{n} =
  \begin{cases}
  o(n^{-3s-2}), & -1 \le s < -1/3,\\
  O(n^{-1}), & -1/3 \le s \le 0.
  \end{cases}
\]
\end{equivenum}
All estimates are locally uniform on $\Ws^{s,p}$.~\fish
\end{thm}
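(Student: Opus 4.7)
The plan is to use the novel formula $\om_n^{(1)\star} = -12\sum_{k\ge 1} k\Om_{nk}^{(2)}$ from Theorem~\ref{om-n-analytic} together with the sharp moment bounds of Lemma~\ref{decay-kRnk} and the action asymptotics of Lemma~\ref{In-lmd-est}(ii). The strategy is to isolate the diagonal term $k=n$, show it equals $-6I_n$ modulo controlled errors, and then absorb the off-diagonal tail into the desired sequence spaces. Analyticity in (i) will follow because each $\Om_{nk}^{(2)}$ is analytic on $\Ws$ by Lemma~\ref{Om-nk-prop}(ii), so local uniform absolute convergence in the appropriate target norm is enough.

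\textbf{Diagonal term.} By Lemma~\ref{decay-kRnk},
\[
-12\,n\Om_{nn}^{(2)} = -\frac{3\gm_n^{2}}{4n\pi}\p*{1+n^{-s-1}\ell_n^{p}},
\]
while Lemma~\ref{In-lmd-est}(ii) gives $\gm_n^{2} = 8n\pi I_n\bigl(1 + n^{-1+(-1-2s)_{+}}(\ell_n^{p/2}+\ell_n^{1+})\bigr)$. Multiplying out yields $-12\,n\Om_{nn}^{(2)} = -6 I_n + \ep_n$ with
\[
\ep_n = I_n\cdot\p*{n^{-s-1}\ell_n^{p} + n^{-1+(-1-2s)_{+}}\p*{\ell_n^{p/2}+\ell_n^{1+}}}.
\]
Combined with the pointwise bound $I_n = n^{-1-2s}\ell_n^{p/2}$ coming from $\gm_n^{2} = n^{-2s}\ell_n^{p/2}$, this $\ep_n$ lies in exactly the sequence classes advertised on the right-hand sides of (ii) and (iii) in every regime; the critical exponent $s=-1/3$ arises from $-1-2s = -s-1$ saturating, and the mixed term $\ell_n^{p/4}$ at $s=-1/2$, $p>2$ is produced by the Hölder product $\ell_n^{p/2}\cdot\ell_n^{p/2}\hookrightarrow \ell_n^{p/4}$.

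\textbf{Off-diagonal tail.} For $R_n\defl -12\sum_{k\neq n} k\Om_{nk}^{(2)}$ I invoke Lemma~\ref{decay-kRnk} with the free parameter $\rho\in[0,1]$ optimized per regime:
\[
k\Om_{nk}^{(2)} = \frac{n^{1-(1-\rho)s}}{n^{2}-k^{2}}\,k^{-3-\rho s}\gm_k^{3}\ell_k^{p},\qquad k\neq n,
\]
with $\gm_k^{3} = k^{-3s}\ell_k^{p/3}$ by Hölder on the known gap asymptotics $\gm_k = k^{-s}\ell_k^{p}$. One now inserts this into the sum in $k$ and applies the Hilbert-type convolution estimate of Lemma~\ref{mht-2} for the kernel $(n^{2}-k^{2})^{-1}$ together with Young's inequality (Lemma~\ref{appl-young}). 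The exponent balance $(3+\rho s)+3s$ against the kernel weight $n^{1-(1-\rho)s}$ gives precisely the claimed decay in $n$ once $\rho$ is chosen so that the kernel output lies in the target sequence space. The endpoints $s=-2/3$ (where $\ell^{1}$ summability saturates) and $s=-1/2$ (where $p$ enters) govern the case distinction in (ii).

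\textbf{Little-$o$ improvement for $p=2$.} For part (iii) one combines $R_n$'s $\ell^{1}$ bound with a tail-truncation: split $R_n = -12\sum_{k\le N} + (-12\sum_{k>N})$ and note the tail is controlled by a partial $\ell^{1}$ sum whose norm tends to $0$ as $N\to\infty$ locally uniformly on $\Ws^{s,p}$ (this is dominated convergence applied to the uniform estimate from Lemma~\ref{decay-kRnk}), while the head has at most finitely many terms each decaying polynomially in $n$, giving $O(n^{-\infty})$ for fixed $N$. This yields the strict $o(n^{-3s-2})$ bound in the range $-1\le s<-1/3$ and the $O(n^{-1})$ bound for $s\ge-1/3$.

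\textbf{Main obstacle.} The real difficulty is the simultaneous two-parameter optimization: for each admissible $(s,p)$ one must pick $\rho$ so that both the kernel $n^{1-(1-\rho)s}/(n^{2}-k^{2})$ is bounded on the target sequence space in $n$ \emph{and} $k^{-3-\rho s}\gm_k^{3}\ell_k^{p}$ lies in a sharp enough $\ell^{q}$-class in $k$ for the Young/Hilbert argument to close. Tracking the Hölder product $\ell_k^{p/3}\cdot\ell_k^{p}\hookrightarrow\ell_k^{p/4}$ through the convolution is what produces the $\ell_n^{p/4}$ contribution at $s=-1/2$ with $p>2$, and verifying that these two constraints on $\rho$ are jointly feasible at every boundary in the case distinction requires checking each regime separately.
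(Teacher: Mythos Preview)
Your handling of parts (i) and (ii) is essentially the paper's argument: isolate the diagonal $-12n\Om_{nn}^{(2)}$, compare it with $-6I_n$ via Lemma~\ref{decay-kRnk} and Lemma~\ref{In-lmd-est}(ii), and control the off-diagonal sum by feeding the moment bound of Lemma~\ref{decay-kRnk} into a discrete Hilbert-transform estimate. Two small remarks: the paper fixes $\rho=1$ throughout for (ii) rather than optimizing, and it uses the simpler Lemmas~\ref{ht-2} and~\ref{ht-2-x} (the kernel is already the standard $(m^{2}-n^{2})^{-1}$) rather than Lemma~\ref{mht-2}; Lemma~\ref{appl-young} is not needed here.

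The genuine gap is in your argument for (iii). Your claim that the head $\sum_{k\le N}k\Om_{nk}^{(2)}$ is $O(n^{-\infty})$ for fixed $N$ is false. From Lemma~\ref{decay-kRnk} with $\rho=1$, each summand with $k$ fixed and $n\to\infty$ satisfies
\[
  k\Om_{nk}^{(2)} = \frac{n}{n^{2}-k^{2}}\cdot C_{k} \sim \frac{C_{k}}{n},
\]
so the head is only $O(n^{-1})$, not arbitrarily fast. This already kills your scheme in the range $-1/3\le s\le 0$, where the target is exactly $O(n^{-1})$: the head contributes $O(n^{-1})$ but is not $o(n^{-1})$, while your tail bound (``a partial $\ell^{1}$ sum whose norm tends to $0$'') produces at best uniform-in-$n$ smallness, not an $O(n^{-1})$ decay; the two pieces do not recombine to give $O(n^{-1})$. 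Even in the little-$o$ range $-1\le s<-1/3$, where $O(n^{-1})$ for the head would suffice, your tail estimate is not sharp enough: you need the tail to be $\le\ep\, n^{-3s-2}$ uniformly in $n$ for $N$ large, and a fixed-$N$ truncation in $k$ does not deliver that $n$-dependence.

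The paper avoids this by splitting the off-diagonal sum in a different way for (iii): for $-3/4<s\le 0$ it decomposes according to $\abs{n-k}>n/2$ versus $1\le\abs{n-k}\le n/2$, which extracts
\[
  \sum_{k\neq n} k\Om_{nk}^{(2)} = O(n^{-1}) + O(n^{-4s-3})
\]
directly from the kernel structure $n/(n^{2}-k^{2})$. Combined with the diagonal error $o(n^{-3s-2})$ and an exponent comparison, this yields both cases of (iii). The key point is that the split must move with $n$, not be a fixed cutoff in $k$.
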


\begin{rem}
\begin{equivenum}
\item
Combining Remark~\ref{rem:om-n-analytic} (ii) and the decay estimates of Theorem~\ref{kdv-freq-as} (ii), one obtains that the frequency map, as a function of the actions, is real analytic on $\Vs$. Moreover, for any $(s,p)$ admissible, we introduce
\begin{equation}
  \label{Vsp}
  \Vs^{2s+1,p/2} \defl \setdef{I\in \Vs}{I\in \ell_{\C}^{2s+1,p/2}}.
\end{equation}
Then $\Vs^{2s+1,p/2}\subset\Vs$ defines a complex neighborhood of $\ell_{+}^{2s+1,p/2}$ in $\ell_{\C}^{2s+1,p/2}$.
The restriction of $\om^{(1)\star}$ to $\Vs^{2s+1,p/2}$ is a real analytic map
\[
  \om^{(1)\star}\colon \ell^{2s+1,p/2} \to 
  \begin{cases}
  \ell^{1+2s,1} & -1 < s < -1/2,\quad p = 2,\\
  \ell^{r}, & s = -1/2,\quad p = 2,\quad r > 1,\\
  \ell^{p/2}, & s = -1/2,\quad p > 2.
  \end{cases}
\]
The asymptotics of $\om_{n}^{(1)\star}$, viewed as a function of the actions on $\Vs^{2s+1,p/2}$, are the same as the ones stated in Theorem~\ref{kdv-freq-as} (ii) and (iii) for $\om_{n}^{(1)\star}$ on $\Ws^{s,p}$.

\item
Suppose $u\in\Hs_{c}^{s}$ with $c$ an arbitrary real number. Write $u = c + q$ with $q\in\Hs_{0}^{s}$, then
\[
  \om_{n}^{(1)}(u) = (2n\pi)^{3} + 6c (2n\pi) + \om_{n}^{(1)\star}(q).\map
\]
\end{equivenum}
\end{rem}

\begin{proof}
(ii)
Combining Lemma~\ref{decay-kRnk} and the asymptotics~\eqref{taun-lmn-est}, $\gm_{n} = n^{-s}\ell_{n}^{p}$, yields with $\rho=1$, $k\Om_{nk}^{(2)} = \frac{n}{n^{2}-k^{2}}k^{-4s-3}\ell_{k}^{p/4}$ for $k\neq n$ and $(s,p)$ admissible with $-1\le s\le 0$.
Note that for $(s,2)$ with $-1\le s \le -3/4$, one has $1\ge -4s-3\ge 0$ and we conclude with Lemma~\ref{ht-2} that
\begin{equation}
  \label{kOmnk-2-aux-est-1}
  \sum_{k\neq n} k \Om_{nk}^{(2)}
   = n^{-4s-3} \ell_{n}^{1+}.
\end{equation}
Similarly, for $(s,2)$ with $-3/4 < s < -1/2$, we have
\begin{equation}
  \label{kOmnk-2-aux-est-2}
  \sum_{k\neq n} k \Om_{nk}^{(2)}
   = \ell_{n}^{1+}.
\end{equation}
Finally, for $s = -1/2$, we have $-4s-3=-1$, hence $k^{-4s-3}\ell_{k}^{p/4} = \ell_{k}^{1}$ for any $2\le p < \infty$, and we conclude with Lemma~\ref{ht-2} that~\eqref{kOmnk-2-aux-est-2} holds in this case as well.

Next we consider the case $k=n$. By Lemma~\ref{decay-kRnk}
\begin{equation}
  \label{nOmnn-2-aux}
    n\Om_{nn}^{(2)}
   = \frac{\gm_{n}^{2}}{16n\pi} + n^{-3s-2}\ell_{n}^{p/3}
   = n^{-2s-1}\ell_{n}^{p/2}.
\end{equation}

Combining estimates~\eqref{kOmnk-2-aux-est-1}-\eqref{nOmnn-2-aux}, and using $\om_{n}^{(1)\star} = -12\sum_{k\ge 1} \Om_{nk}^{(2)}$, we obtain for any $(s,p)$ admissible with $-1\le s\le 0$
\begin{align}
  \label{omn-1-star-est-admn}
  \om_{n}^{(1)\star} &= 
  -\frac{3}{4}\frac{\gm_{n}^{2}}{n\pi}
   + n^{-3s-2}\ell_{n}^{p/3} + n^{(-4s-3)_{+}} \ell_{n}^{1+}.
\end{align}
By~\eqref{In-gmn-est} we have for any $(s,p)$ admissible with $-1\le s\le 0$
\begin{equation}
  \label{gmn2-In-asymptotics}
  \frac{\gm_{n}^{2}}{8n\pi} - I_{n} = n^{-3s-2}(\ell_{n}^{p/4}+\ell_{n}^{1}).
\end{equation}

In particular, $\om_{n}^{(1)\star} = n\ell_{n}^{1+}$ for $s=-1$, and for $-1 < s < -2/3$
\[
  \om_{n}^{(1)\star}
   = -6I_{n} + n^{-3s-2}\ell_{n}^{1}
   = n^{-1-2s}\ell_{n}^{1},
\]
while for $-2/3\le s < -1/2$ we have
\[
  \om_{n}^{(1)\star}
   = -6I_{n} + \ell_{n}^{1+}
   = n^{-1-2s}\ell_{n}^{1},
\]
and if $s=-1/2$ and $2 \le p < \infty$ using that $ n^{-1/2}\ell_{n}^{p/3}  = \ell_{n}^{p/4}$ for $p\ge 3$
\[
  \om_{n}^{(1)\star}
  = -6I_{n} + \ell_{n}^{p/4} + \ell_{n}^{1+}.
\]
By going through the arguments of the proof, one sees that the estimates hold locally uniformly on $\Ws^{s,p}$.

(i) Since $\gm_{n}^{2}/n = n^{-2s-1}\ell_{n}^{p/2}$ by~\eqref{taun-lmn-est}, we find $I_{n} = n^{-2s-1}\ell_{n}^{p/2}$ and conclude together with item (ii) that locally uniformly on $\Ws^{s,p}$
\[
  \om_{n}^{(1)\star} = 
  \begin{cases}
  n\ell_{n}^{+1}, & s = -1,\quad p = 2,\\
  n^{-2s-1}\ell_{n}^{1}, & -1 < s < -1/2,\quad p = 2,\\
  \ell_{n}^{p/2}, & s=-1/2,\quad 2\le p < \infty.
  \end{cases}
\]
Since by Theorem~\ref{om-n-analytic} each $\om_{n}^{(1)\star}$, $n\ge 1$, is analytic on $\Ws$, the claimed analyticity statements for $\om^{(1)\star}$ follow.

(iii)
By \eqref{kOmnk-2-aux-est-1} we have $\sum_{k\neq n} k \Om_{nk}^{(2)} = o(n^{-4s-3})$ for $k\neq n$ and $-1\le s \le -3/4$. On the other hand, for $(s,p)$ admissible with $-3/4 < s \le 0$ and $p\le 4$, we use that $\abs{n-k} \le n/2$ implies $\abs{k} \ge \abs{n}/2$ to conclude
\begin{align*}
  \sum_{k\neq n} k \Om_{nk}^{(2)}
  &=
  n\sum_{\abs{n-k}>n/2} \frac{k^{-4s-3}}{n^{2}-k^{2}}\ell_{k}^{1}
  +
  n\sum_{1\le \abs{n-k}\le n/2} \frac{k^{-4s-3}}{n^{2}-k^{2}}\ell_{k}^{1}\\
  &= O(n^{-1}) + O(n^{-4s-3}).
\end{align*}
Finally, by~\eqref{nOmnn-2-aux} and~\eqref{gmn2-In-asymptotics}, $n\Om_{nn}^{(2)} = \frac{\gm_{n}^{2}}{16n\pi} + o(n^{-3s-2}) = \frac{1}{2}I_{n} + o(n^{-3s-2})$.

For $-1 \le s < -1/3$ we have $-3s-2 > -4s-3 > -1$ and hence
\[
  \om_{n}^{(1)\star} = -6I_{n} + o(n^{-3s-2}),
\]
while for $-1/3\le s \le 0$ with $p=2$ we have
\[
  \om_{n}^{(1)\star} = -6I_{n} + O(n^{-1}).
\]
By going through the arguments of the proof, one sees that the estimates hold locally uniformly on $\Ws^{s,2}$.\qed
\end{proof}

\begin{thm}
\label{om1-diffeo}
If either $-1 < s < -1/2$ and $p=2$ or $s = -1/2$ and $p > 2$, then the map $\om^{(1)\star}\colon \ell_{+}^{2s+1,p/2}\to \ell^{2s+1,p/2}$
\begin{equivenum}
\item
is a local diffeomorphism near $I=0$,
\item
is a local diffeomorphism on a dense open subset of $\ell_{+}^{2s+1,p/2}$,
\item
is a Fredholm map of index zero everywhere.~\fish
\end{equivenum}
\end{thm}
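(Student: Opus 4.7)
Write $\om^{(1)\star}=-6\Id+R$ with $R\defl \om^{(1)\star}+6\Id$. The plan is to prove~(i) by applying the inverse function theorem at $I=0$, to prove~(iii) by showing that $\ddd_I R$ is compact for every $I$ so that $\ddd_I\om^{(1)\star}$ becomes a compact perturbation of the Banach space isomorphism $-6\Id$, and to prove~(ii) by combining this Fredholm property with an analytic Fredholm argument along a real analytic curve through $I=0$.

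\textbf{Steps for~(i) and~(iii).} By Theorem~\ref{om-n-analytic} and the expansion $\om_n^{(1)\star}=-6I_n+\dotsb$, the map $\om^{(1)\star}$ is real analytic with $\ddd_0\om^{(1)\star}=-6\Id$; the analytic inverse function theorem then yields~(i). For~(iii), I would verify that $R$ is in fact a bounded analytic map into a smaller target and deduce compactness of its linearization from that improvement. When $p=2$ and $-1<s<-1/2$, Theorem~\ref{kdv-freq-as}~(ii) upgrades $R$ to a bounded analytic map
\[
R\colon \ell^{2s+1,1}\longrightarrow
\begin{cases}\ell^{3s+2,1},& -1<s<-2/3,\\ \ell^{r}\text{ for some }r>1,&-2/3\le s<-1/2,\end{cases}
\]
and in both subcases the target embeds compactly into $\ell^{2s+1,1}$: in the weighted subcase by the standard tail-truncation estimate since $3s+2>2s+1$, and in the unweighted subcase by H\"older against the negative weight $2s+1<0$. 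When $s=-1/2$ and $p>2$, compact embedding is no longer available, but Theorem~\ref{kdv-freq-as}~(ii) upgrades $R$ to a bounded analytic map $\ell^{p/2}\to \ell^{p/4}+\ell^{1+}$; here I would invoke Pitt's theorem, which guarantees that every bounded linear map $\ell^{p/2}\to\ell^{q}$ with $1\le q<p/2$ is automatically compact (applied to the $\ell^{1+\epsilon}$ summand, and to the $\ell^{p/4}$ summand when $p\ge 4$, with the observation that $\ell^{p/4}\subset\ell^{1}$ for $2<p<4$). In either regime, composing with the inclusion into $\ell^{2s+1,p/2}$ yields compactness of $\ddd_IR$ and hence Fredholmness of index zero of $\ddd_I\om^{(1)\star}$.

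\textbf{Step for~(ii).} Let $\Oc\subset \ell_+^{2s+1,p/2}$ denote the set where $\ddd_I\om^{(1)\star}$ is invertible. Since invertibility is an open condition inside the Fredholm operators of fixed index, $\Oc$ is open, and by~(i) it contains a neighborhood of $I=0$. For density, fix $I_0\in \ell_+^{2s+1,p/2}$ and consider the affine curve $z\mapsto zI_0$; by Theorem~\ref{om-n-analytic} the family $T(z)\defl \ddd_{zI_0}\om^{(1)\star}$ is a holomorphic family of Fredholm operators of index zero on a connected complex neighborhood of $[0,1]$. Because $T(0)=-6\Id$ is invertible, the analytic Fredholm theorem forces the singular set of $T$ to be a discrete subset of this neighborhood; intersecting with $[0,1]$ gives a finite set of exceptional values, so one can pick $t\in(0,1)$ arbitrarily close to $1$ with $T(t)$ invertible, and then $tI_0\in\Oc$ converges to $I_0$, proving density.

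\textbf{Main obstacle.} The most delicate step is~(iii) in the Fourier--Lebesgue regime $s=-1/2$, $p>2$: without a decaying weight the compact embedding argument breaks down and one must invoke Pitt's theorem. This requires upgrading the pointwise asymptotics of Theorem~\ref{kdv-freq-as}~(ii) to \emph{boundedness on bounded sets} of $R$ as an analytic map $\ell^{p/2}\to \ell^{p/4}+\ell^{1+}$; the locally uniform character of those asymptotics is precisely what delivers this boundedness, and hence the strong analyticity needed for Pitt to apply to the linearization.
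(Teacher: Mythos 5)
Your proposal is correct, and for parts (i) and (iii) it coincides with the paper's proof: the same decomposition $\om^{(1)\star}=-6\Id+\Lm_I$, the same three-case analysis of the asymptotics from Theorem~\ref{kdv-freq-as}~(ii) (compact embedding $\ell^{3s+2,1}\hookrightarrow\ell^{2s+1,1}$ for $-1<s<-2/3$, compact embedding $\ell^{r}\hookrightarrow\ell^{2s+1,1}$ for $-2/3\le s<-1/2$, and Pitt's theorem for $s=-1/2$, $p>2$), and the same conclusion that $\ddd_I\om^{(1)\star}$ is a compact perturbation of an isomorphism, hence Fredholm of index zero. Your handling of the $\ell^{p/4}$ summand for $2<p<4$ via $\ell^{p/4}\subset\ell^{1}$ is a correct elaboration of a detail the paper compresses into ``there exists $1<r<p/2$''.

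The only genuine divergence is in part (ii). The paper deduces genericity of invertibility from Proposition~\ref{diffeo-prop} (Appendix~C), which is a self-contained Schur-complement argument: one covers a segment from the known-good point by finitely many neighborhoods on which a fixed tail of $\Lm_I$ has norm $\le 1/2$, reduces invertibility to the nonvanishing of a finite-dimensional analytic determinant, and concludes by the identity theorem. You instead restrict to the ray $z\mapsto zI_0$ (which stays in the cone $\ell_{+}^{2s+1,p/2}$ and, for $z$ in a complex neighborhood of $[0,1]$, in the domain of analyticity $\Vs^{2s+1,p/2}$) and invoke the analytic Fredholm alternative for the holomorphic compact-perturbation family $T(z)$, using $T(0)=-6\Id$ as the anchor; discreteness of the singular set then gives $tI_0\in\Oc$ with $t\to1$. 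Both arguments are sound and morally identical -- the paper's Proposition~\ref{diffeo-prop} is in effect a hand-rolled proof of the one-variable analytic Fredholm alternative in exactly this setting -- so your route trades the appendix construction for a standard black-box theorem. The one point to keep explicit is that the compactness of $\Lm_{zI_0}$ and its holomorphic dependence on $z$ must hold on the \emph{complex} neighborhood of $[0,1]$, which is exactly what the locally uniform asymptotics on $\Vs^{2s+1,p/2}$ (Theorem~\ref{kdv-freq-as} together with Cauchy's estimate) provide.
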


\begin{proof}
Throughout this proof we assume that either $-1 < s < -1/2$ and $p=2$ or $s = -1/2$ and $p > 2$. In either case $\om^{(1)\star}\colon \ell_{+}^{2s+1,p/2}\to \ell^{2s+1,p/2}$ is real analytic in view of Theorem~\ref{kdv-freq-as} (i).

(i): Since $\ddd_{0}\om^{(1)\star} = -6 \Id$, it follows from the inverse function theorem that $\om^{(1)\star}$ is a local diffeomorphism near $I=0$.

(ii): We show that for any $I$ in $\Vs^{2s+1,p/2}$, defined in~\eqref{Vsp}, the map $\Lm_{I} \defl \ddd_{I}\om^{(1)\star} + 6\Id_{\ell_{\C}^{2s+1,p/2}}$ is a compact operator on $\ell_{\C}^{2s+1,p/2}$. We treat the three cases
\begin{align*}
  \text{(A)}\quad & \;\;\;\;-1 < s < -2/3,\quad p = 2,\\
  \text{(B)}\quad & -2/3 \le s < -1/2,\quad p = 2,\\
  \text{(C)}\quad & \qquad \qquad\; s = -1/2,\quad p > 2,
\end{align*}
separately. In case (A), by Theorem~\ref{kdv-freq-as} (ii) $\om_{n}^{(1)\star} + 6 I_{n} = n^{-3s-2}\ell_{n}^{1}$ locally uniformly, hence by Cauchy's estimate for any $I\in \Vs^{2s+1,1}$ the map $\Lm_{I}\colon \ell^{2s+1,1}\to \ell^{3s+2,1}$ is bounded. Since $\ell^{3s+2,1}$ embeds compactly into $\ell^{2s+1,1}$, the operator $\Lm_{I}$ is compact on $\ell^{2s+1,1}$. In case (B), $\om_{n}^{(1)\star} + 6 I_{n} = \ell_{n}^{1+}$ locally uniformly whence $\Lm_{I}\colon \ell^{2s+1,1}\to \ell^{r}$ is bounded for any $r > 1$. The claim in the case (B) follows from the fact that $\ell^{r}$ embeds compactly into $\ell^{2s+1,1}$ if $r > 1$ is chosen sufficiently small.
Finally in case (C), we have $\om_{n}^{(1)\star} + 6I_{n} = \ell_{n}^{p/4} + \ell_{n}^{1+}$. Hence there exists $1 < r < p/2$ so that $\Lm_{I}\colon \ell^{p/2}\to \ell^{r}$ is bounded. It now follows from Pitt's Theorem -- see~\cite{Delpech:2009ek} for a short proof -- that $\Lm_{I}$ is compact.

When combined with item (i) above, Proposition~\ref{diffeo-prop} from Appendix~\ref{app:diffeo} implies that $\om_{n}^{(1)\star}$ is a local diffeomorphism on a dense open subset of $\ell_{+}^{2s+1,p/2}$.

(iii): Since $\Lm_{I}\colon \ell^{2s+1,p/2}\to \ell^{2s+1,p/2}$ is compact it follows that $\ddd_{I}\om^{(1)\star}$ is a compact perturbation of the identity and hence a Fredholm operator of index zero.~\qed
\end{proof}

\begin{proof}[Proof of Corollary~\ref{cor:kdv-freq-localdiffeo}.]
The claimed result follows from Theorem~\ref{om1-diffeo} (i)-(ii).~\qed
\end{proof}

\begin{proof}[Proof of Corollary~\ref{cor:kdv-L2-freq-O}.]
It follows from item (iii) of Theorem~\ref{kdv-freq-as} that locally uniformly on $\Ws^{s,2}$
\[
  \om_{n}^{(1)\star} =
  \begin{cases}
  o(n^{-1-2s}), & -1\le s < 0,\\
  O(n^{-1}), & s = 0.
  \end{cases}
\]
For $-1 < s < 0$, the leading term of $\om_{n}^{(1)\star}$ is $-6\frac{\gm_{n}^{2}}{8n\pi}$. Hence, the estimate is sharp in the sense that for $\ep > 0$ arbitrary small
\[
  \om_{n}^{(1)\star} = O(n^{-1-2s-\ep})
\]
does not hold locally uniformly on $\Hs_{0}^{s}$.
%\note{jm: is it known that $\om_{n}^{(1)\star} = O(1/n)$ is the best possible estimate? If this is true, we should mention it. (Our estimate would be then sharp for all $s > -1$...)}
Moreover, For $q\in L_{0}^{2}$ the corresponding action variables are in $\ell_{+}^{1,1}$ and one has that
\[
  \frac{\gm_{n}^{2}}{8n\pi} = I_{n} + n^{-1}\ell_{n}^{2}
\]
uniformly on bounded subsets of $L_{0}^{2}$ - cf. \cite{Kappeler:2003up}. Consequently, $\om_{n}^{(1)\star} = O(n^{-1})$ uniformly on bounded subsets of $L_{0}^{2}$.\qed
\end{proof}

\begin{proof}[Proof of Corollary~\ref{cor:Airy-Kdv}.]
The same arguments used in the proof of Theorem~8.2 in~\cite{Kappeler:2013bt} apply. Using Corollary~\ref{cor:kdv-L2-freq-O} and the one-smoothing property of the Birkhoff map established in~\cite{Kappeler:2013bt}, Theorem~1.1, the claimed result follows.~\qed
\end{proof}

\subsection{Hamiltonian}

In~\cite{Korotyaev:2011tw}, using results from~\cite{Bikbaev:1993jl}, it was shown that the renormalized KdV Hamiltonian $\Hm_{1}^{\star}\colon \ell_{+}^{2}\to \R$, introduced in~\eqref{Hm-renormalized}, is a continuous function which is concave on all of $\ell_{+}^{2}$. Subsequently, it was shown in~\cite{Kappeler:CNzeErmy} that $\Hm_{1}^{\star}$ is real analytic, $\ddd_{I}^{2}\Hm_{1}^{\star} \le 0$ for all $I\in \ell_{+}^{2}$, and that $\Hm_{1}^{\star}$ is strictly concave near $I=0$ in the sense that
\[
  \ddd_{I}^{2}\Hm_{1}^{\star}(J,J) \le -\lin{J,J}_{\ell^{2},\ell^{2}},\qquad \forall J\in \ell^{2},
\]
for all $I$ in a sufficiently small neighborhood of the origin in $\ell_{+}^{2}$.

\begin{thm}
The KdV Hamiltonian $H^{\star}\colon \ell_{+}^{2}\to \C$ is strictly concave on a dense open subset of $\ell_{+}^{2}$.~\fish
\end{thm}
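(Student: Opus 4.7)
The plan is to reduce strict concavity at $I$ to the invertibility of $\ddd_I\om^{(1)\star}$ and then invoke Proposition~\ref{diffeo-prop} from Appendix~\ref{app:diffeo} in the same spirit as in the proof of Theorem~\ref{om1-diffeo}. First I set up notation. Since $\om^{(1)\star} = \partial\Hm_{1}^{\star}$ in terms of action variables, the Hessian of $\Hm_{1}^{\star}$ at $I\in\ell_{+}^{2}$ coincides with the linear operator $A(I)\defl \ddd_{I}\om^{(1)\star}$ on $\ell^{2}$. By Theorem~\ref{thm:kdv-freq}~(iv), $K_{I}\defl A(I)+6\Id\colon \ell^{2}\to \ell^{2}$ is compact, and as the Hessian of a real analytic scalar function $K_{I}$ is self-adjoint. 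Consequently $A(I)=-6\Id+K_{I}$ is a self-adjoint Fredholm operator of index zero on $\ell^{2}$, whose spectrum is a subset of $\R$ with $-6$ as its only possible accumulation point.

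Next I characterize strict concavity. From \cite{Kappeler:CNzeErmy} we know $A(I)\le 0$ on all of $\ell_{+}^{2}$. Since the spectrum of $A(I)$ can accumulate only at $-6<0$, the largest spectral value is attained and is an eigenvalue unless it equals $-6$. Therefore $A(I)\le 0$ together with the invertibility of $A(I)$ forces a spectral gap of the form $\spec A(I)\subset (-\infty,-c_{I}]$ for some $c_{I}>0$; conversely, if $A(I)$ is not invertible then $0\in\spec A(I)$ is an eigenvalue and strict concavity fails. Thus
\[
  \mathcal{O}\defl \setdef{I\in\ell_{+}^{2}}{A(I)\text{ is invertible}}
\]
is exactly the set of strict concavity of $\Hm_{1}^{\star}$, and coincides with the set where $\om^{(1)\star}\colon \ell_{+}^{2}\to \ell^{2}$ is a local diffeomorphism.

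Now I apply Proposition~\ref{diffeo-prop}. The map $\om^{(1)\star}\colon \ell_{+}^{2}\to \ell^{2}$ is real analytic by Theorem~\ref{thm:kdv-freq}~(iv); its differential is everywhere a compact perturbation of $-6\Id$, hence Fredholm of index zero; and at the origin $\ddd_{0}\om^{(1)\star}=-6\Id$ is invertible, so $\om^{(1)\star}$ is a local diffeomorphism near $I=0$. Proposition~\ref{diffeo-prop} then yields that $\om^{(1)\star}$ is a local diffeomorphism on an open and dense subset of $\ell_{+}^{2}$, which is precisely $\mathcal{O}$. Finally, $I\mapsto A(I)$ is continuous in operator norm, so the top of the spectrum is upper semicontinuous in $I$; hence at any $I_{0}\in\mathcal{O}$ with $\max\spec A(I_{0})=-c_{0}$ we have $\max\spec A(I)\le -c_{0}/2$ in a neighborhood of $I_{0}$, which gives the local uniformity of the constant $c$.

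The main obstacle is the density of $\mathcal{O}$; openness and local uniformity of the constant are routine consequences of continuity. Density is handled by Proposition~\ref{diffeo-prop}, whose underlying mechanism is that the singular set $\ell_{+}^{2}\setminus \mathcal{O}$ is the zero set of a real analytic function (a Fredholm determinant-type object) which is not identically zero, since it does not vanish at $I=0$.
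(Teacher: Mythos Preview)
Your argument is correct and follows essentially the same route as the paper: reduce strict concavity at $I$ to invertibility of $\ddd_{I}\om^{(1)\star}$ (using $\ddd_{I}^{2}\Hm_{1}^{\star}\le 0$ from \cite{Kappeler:CNzeErmy} together with the compactness of $\ddd_{I}\om^{(1)\star}+6\Id$), and then show this invertibility holds on an open dense set via Proposition~\ref{diffeo-prop}. The paper packages the latter step as a citation of Theorem~\ref{om1-diffeo} (case $s=-1/2$, $p=4$, i.e.\ $\ell^{2s+1,p/2}=\ell^{2}$), whereas you invoke Proposition~\ref{diffeo-prop} directly; note only that to match its hypothesis literally you should apply it to $f=-\tfrac{1}{6}\om^{(1)\star}$ so that $\ddd_{I}f-\Id$ is compact.
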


\begin{proof}
By the above discussion, $\ddd_{I}^{2}\Hm_{1}^{\star} \le 0$ holds for all $I\in\ell_{+}^{2}$. Further, $\ddd_{I}^{2}\Hm_{1}^{2} < 0$ holds whenever $\ddd_{I}^{2}\Hm_{1}^{\star} = \ddd_{I}\om^{(1)\star}\colon \ell_{+}^{2}\to \ell_{+}^{2}$ is a diffeomorphism at $I$. By Theorem~\ref{om1-diffeo}, $\ddd_{I}\om^{(1)\star}$ is a diffeomorphism on a dense open subset of $\ell_{+}^{2}$, which proves the claim.\qed
\end{proof}

\subsection{Wellposedness}

We briefly recall some wellposedness results for the KdV equation on the circle which are most closely related to our main result. According to \cite{Kappeler:2006fr}, the KdV equation is globally $C^{0}$-wellposed in $\Hs_{0}^{s}$ for any $s \ge -1$, i.e. for any $T > 0$ the solution map
\[
  \Sc\colon \Hs_{0}^{s}\to C^{0}([-T,T],\Hs_{0}^{s})
\]
is continuous. It was further shown in \cite{Kappeler:CNzeErmy} that the KdV equation is also globally $C^{0}$-wellposed in the Fourier Lebesgue spaces $\FL_{0}^{s,p}$ for any $p > 2$ and $-1/2\le s\le 0$. In the analytic class, \citet{Colliander:2003fv} proved that the KdV equation is $C^{\om}$-wellposedness in $\Hs_{0}^{s}$ for any $s\ge -1/2$, i.e. for any $T > 0$ the solution map $\Sc\colon \Hs_{0}^{s}\to C^{0}([-T,T],\Hs_{0}^{s})$ is real-analytic. They also proved that the KdV equation is globally uniformly $C^{0}$-wellposed in $\Hs_{0}^{s}$ for any $s\ge -1/2$, i.e. for any $T > 0$ the solution map $\Sc\colon \Hs_{0}^{s}\to C^{0}([-T,T],\Hs_{0}^{s})$ is uniformly continuous on bounded subsets.
There also exist several illposedness results. \citet{Christ:2003tx} showed that the KdV equation is \emph{not} uniformly $C^{0}$-wellposed in $\Hs_{0}^{s}$ with $-1 \le s < -1/2$.
Moreover, \citet{Bourgain:1997gg} proved that the KdV equation is \emph{not} $C^{3}$-wellposed in $\Hs_{0}^{s}$ with $s < -1/2$.
In \citet{Molinet:2012il} showed that KdV is illposed in $\Hs_{0}^{s}$ for $s < -1$.

The following result answers in particular the question, whether the KdV equation is $C^{1}$ or $C^{2}$-wellposed on $\Hs_{0}^{s}$ for $-1 < s < -1/2$.

\begin{thm}
\label{thm:kdv-wp}
\begin{equivenum}
\item For any $2\le p < \infty$ and $-1/2\le s \le 0$, the KdV equation is $C^{\om}$-wellposed on $\FL_{0}^{s,p}$.

\item For any $-2/3 \le s < -1/2$ and $t > 0$, the solution map
\[
  S^{t}\colon \Hs_{0}^{s}\to \Hs_{0}^{s}
\]
is nowhere locally uniformly continuous. In particular, the KdV equation is not $C^{k}$-wellposed, $k\ge 1$, in $\Hs_{0}^{s}$ for any $-2/3 \le s < -1/2$.

\item For any $-1 < s < -2/3$ and $T > 0$, the solution map
\[
  S\colon \Hs_{0}^{s}\to C^{0}([-T,T],\Hs_{0}^{s})
\]
is nowhere locally uniformly continuous. In particular, the KdV equation is not $C^{k}$-wellposed, $k\ge 1$, in $\Hs_{0}^{s}$ for any $-1 < s < -2/3$.~\fish
\end{equivenum}
\end{thm}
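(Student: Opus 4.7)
The plan is to transport the flow to Birkhoff coordinates, where it becomes the explicit rotation $z_n(t) = \e^{-\ii\om_n^{(1)}(I)t}z_n(0)$ with the actions $I$ preserved, and then to use the analyticity of $\om^{(1)\star}$ in the actions for part (i) and its asymptotic control from Theorem~\ref{kdv-freq-as} for parts (ii) and (iii).

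For (i), I would write the solution map as $S^{t} = \Ph^{-1}\circ R_{t}\circ\Ph$ with $R_{t}(z) = \bigl(\e^{-\ii\om_{n}^{(1)}(I)t} z_{n}\bigr)_{n\in\Z}$. For admissible $(s,p)$ in the range considered, $\Ph$ extends to a real-analytic diffeomorphism of $\FL_{0}^{s,p}$ onto the corresponding weighted sequence space (Theorem~\ref{bhf} for $p=2$; the Fourier--Lebesgue extension of \cite{Kappeler:CNzeErmy} for $p>2$). Splitting $R_{t}(z)_{n} = \e^{-\ii(2n\pi)^{3}t}\cdot\e^{-\ii\om_{n}^{(1)\star}(I)t} z_{n}$, the first factor is unimodular and preserves each Fourier--Lebesgue norm, while by Theorem~\ref{kdv-freq-as}(i) the sequence $(\om_{n}^{(1)\star})_{n\ge 1}$ lies in some $\ell^{r}$ locally uniformly on a complex neighborhood of the real actions, so the second factor is a bounded analytic multiplier. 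Since $\om^{(1)\star}$ is real-analytic in the actions on $\Vs^{2s+1,p/2}$ (Remark after Theorem~\ref{kdv-freq-as}) and $t$-analytic, the composition $S$ maps into $C^{\om}([-T,T],\FL_{0}^{s,p})$.

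For (ii) and (iii) I would run a high-frequency decoherence argument in the spirit of \cite{Christ:2003tx}. Fix $q_{0}\in\Hs_{0}^{s}$ and an open $U\ni q_{0}$, set $z^{(0)} = \Ph(q_{0})$. For each large integer $n$, define $q_{n}, \tilde q_{n}$ by perturbing $z^{(0)}$ only at the symmetric pair $\pm n$: put $z^{(n)}_{\pm n} = z^{(0)}_{\pm n} + A n^{-s-1/2}$ and $\tilde z^{(n)}_{\pm n} = z^{(0)}_{\pm n} + (A+D_{n}) n^{-s-1/2}$, with $A>0$ fixed and $D_{n}>0$ to be chosen. The contributions to the $h^{s+1/2}$-norm from mode $n$ are of order $A$, so $q_{n}, \tilde q_{n}$ stay in a bounded set; their $h^{s+1/2}$-distance is exactly of order $D_{n}$, and by the bi-Lipschitz property of $\Ph^{\pm 1}$ on bounded sets (via the 1-smoothing decomposition \eqref{Phi-1-smoothing} and its Fourier--Lebesgue analogues), $q_{n}, \tilde q_{n} \in U$ for $n$ large and $\|q_{n} - \tilde q_{n}\|_{\Hs^{s}} \to 0$ as soon as $D_{n} \to 0$. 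Only the $n$th action changes, $\Dl I_{n} \approx 2 A D_{n} n^{-2s-1}$, and the dominant behaviour $\om_{n}^{(1)\star} \approx -6 I_{n}$ from Theorem~\ref{kdv-freq-as} yields $\Dl\om_{n} \approx -12 A D_{n} n^{-2s-1}$. For part (ii), fix $t>0$ and choose $D_{n} = \pi n^{2s+1}/(12 A t)$; since $s<-1/2$ this gives $D_{n} \to 0$, and the error $r_{n} := \om_{n}^{(1)\star} + 6 I_{n} = o(n^{-3s-2})$ of Theorem~\ref{kdv-freq-as}(iii) tends to zero precisely when $s\ge -2/3$, so $\Dl\om_{n}\cdot t \to -\pi$. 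Destructive interference at mode $n$ then gives $|z_{n}(t) - \tilde z_{n}(t)| \approx 2A n^{-s-1/2}$, whose contribution to the $h^{s+1/2}$-norm is bounded below by a constant multiple of $A$, and transferring by $\Ph^{-1}$ yields the same lower bound for $\|S^{t} q_{n} - S^{t} \tilde q_{n}\|_{\Hs^{s}}$. For (iii), where $s<-2/3$ and $r_{n}$ may grow, I cannot pin down the dephasing time: instead I choose $D_{n} \to 0$ but slowly enough that $|\Dl\om_{n}| \to \infty$ (e.g.\ $D_{n} = n^{-\al}$ with $0 < \al < -2s-1$), and set $t_{n} = \pi/|\Dl\om_{n}| \in (0, T]$ for $n$ large. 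Dephasing at $t_{n}$ then forces $\sup_{t\in[-T,T]}\|S^{t}q_{n} - S^{t}\tilde q_{n}\|_{\Hs^{s}}$ to stay bounded below, which is exactly what (iii) claims.

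The main obstacle is controlling the \emph{off-diagonal} perturbation of the frequencies, since perturbing $I_{n}$ alters every $\om_{m}$ and not just $\om_{n}$. This is precisely what the compactness of $\ddd_{I}\om^{(1)\star} + 6\Id$ in Theorem~\ref{om1-diffeo}(iii) supplies: the diagonal piece $-6\Dl I_{n}$ is the leading contribution to $\Dl\om_{n}$ and the cross-terms are negligible in the sequence-space estimate. Combined with the locally uniform asymptotics of Theorem~\ref{kdv-freq-as} and the Lipschitz control of $\Ph$ and $\Ph^{-1}$ on bounded sets, this allows the one-mode computation in Birkhoff coordinates to be transferred faithfully into an estimate in $\Hs_{0}^{s}$ and delivers the claimed illposedness statements.
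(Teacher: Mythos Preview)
Your strategy matches the paper's: reduce to Birkhoff coordinates via $\Sc^{t}=\Ph^{-1}\circ\Sc_{\Ph}^{t}\circ\Ph$, invoke the analyticity of $\om^{(1)\star}$ into $\ell^{\infty}$ for part~(i) (the paper packages this as Theorem~\ref{thm:freq-flow}(iii)), and for (ii)--(iii) use the decomposition $\om_{n}^{(1)\star}=-6I_{n}+r_{n}$ from Theorem~\ref{kdv-freq-as} to run a one-mode decoherence argument. The paper carries this out as a separate statement in Birkhoff coordinates (Theorem~\ref{thm:kdv-wp-bh}) and then transfers by bi-analyticity of~$\Ph$. The specific sequences differ: the paper perturbs at mode $n_{m}=2^{m}$ by an \emph{imaginary} increment $\pm\ii\dl m^{1/2}$, making $\Dl I_{n_{m}}=\dl^{2}m$ exact so the phase grows linearly in $m$, and then selects a subsequence $m_{j}=(2j+1)k$ hitting odd multiples of~$\pi$ (this needs only $\sup_{n}\abs{r_{n}}\le\pi/(4t)$, not $r_{n}\to 0$). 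Your version tunes a real amplitude $D_{n}$ so the phase tends to~$-\pi$, which is fine since for $-2/3\le s<-1/2$ one does have $r_{n}\to 0$.

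Two small points. First, the compactness of $\ddd_{I}\om^{(1)\star}+6\Id$ from Theorem~\ref{om1-diffeo} is not the right handle for the ``off-diagonal'' issue: all you use, and all that is needed, is the \emph{pointwise} locally uniform bound $r_{n}=o(n^{-3s-2})$ from Theorem~\ref{kdv-freq-as}(iii), which already absorbs the dependence of $\om_{n}$ on all the other actions. Second, in part~(iii) your example $D_{n}=n^{-\al}$ with $0<\al<-2s-1$ guarantees $D_{n}\to 0$ and that the leading term $-12AD_{n}n^{-2s-1}$ of $\Dl\om_{n}$ blows up, but it does \emph{not} guarantee that this leading term dominates the remainder contribution, which is $o(n^{-3s-2})$. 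Domination requires $n^{-\al-2s-1}\gg n^{-3s-2}$, i.e.\ $\al<s+1$; for $-1<s<-2/3$ one has $0<s+1<-2s-1$, so the correct window is $0<\al<s+1$. With that adjustment your argument for~(iii) goes through. The paper avoids this bookkeeping by building the growth $n_{m}^{\vth}$, $\vth=-3s-2$, directly into the perturbation, so that $\abs{\Dl\om_{n_{m}}}\ge m^{1/2}$ after absorbing the remainder.
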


\begin{rem}
We expect that statement (ii) of Theorem~\ref{thm:kdv-wp} remains valid for $-1\le s < -2/3$.\map
\end{rem}

Before proving Theorem~\ref{thm:kdv-wp}, we first prove corresponding results in Birkhoff coordinates.
By Theorem~\ref{kdv-freq-as} from the previous section,  the KdV frequencies $\om_{n}^{(1)}$ give rise to a flow $\Sc_{\Ph} \colon(t,z) \mapsto (\ph_{n}^{t}(z))_{n\in\Z}$ in Birkhoff coordinates on $\ell_{0}^{s+1/2,p}$ with coordinate functions
\begin{equation}
  \label{kdv-coordinate-functions}
  \ph_{n}^{t}(z) = \e^{\ii \om_{n}^{(1)}(z) t}z_{n},\qquad n\in\Z.
\end{equation}
Here, the KdV frequencies are viewed as analytic functions of the Birkhoff coordinates and as such have been extended to the bi-infinite sequence $(\om_{n}^{(1)})_{n\in\Z}$ by setting
\[
  \om_{0}^{(1)}(z) = 0,\qquad \om_{-n}^{(1)}(z) = -\om_{n}^{(1)}(z),\quad n\ge 1.
\]
The KdV solution map on $\FL_{0}^{s,p}$ is then given by
\begin{equation}
  \label{kdv-soln-map}
  \Sc^{t} = \Ph^{-1}\circ \Sc_{\Ph}^{t}\circ \Ph.
\end{equation}
We first establish properties of the map $\Sc_{\Ph}$ corresponding to the ones of $\Sc$.

\begin{thm}
\label{thm:kdv-wp-bh}
\begin{equivenum}
\item For any $-1/2\le s \le 0$, $2\le p < \infty$, and $T > 0$, the map $\Sc_{\Ph}\colon \ell_{0}^{s+1/2,p}\to C([-T,T],\ell_{0}^{s+1/2,p})$ is real-analytic.

\item For any $-2/3 \le s < -1/2$ and $t > 0$, the map $\Sc_{\Ph}^{t}\colon h_{0}^{s+1/2}\to h_{0}^{s+1/2}$ is nowhere locally uniformly continuous.

\item For any $-1 < s < -2/3$ and $T > 0$, the map $\Sc_{\Ph}\colon h_{0}^{s+1/2}\to C([-T,T],h_{0}^{s+1/2})$ is nowhere locally uniformly continuous.~\fish
\end{equivenum}
\end{thm}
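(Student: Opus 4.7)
The plan is to treat part (i) using the phase-preserving structure of $\Sc_\Ph^t$ and parts (ii), (iii) via a high-frequency phase-decorrelation argument.

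For (i), the key observation is that $\Sc_\Ph^t$ is a phase rotation of each Fourier mode, so it preserves the actions $I_n = z_nz_{-n}$ and is an isometry of $\ell_0^{s+1/2,p}$. I would factor $\Sc_\Ph^t = L_t \circ M_t$ with $L_t(z)_n = \e^{\ii(2n\pi)^3 t}z_n$ the linear Airy flow (trivially real-analytic into $C([-T,T], \ell_0^{s+1/2,p})$) and $M_t(z)_n = \e^{\ii\om_n^{(1)\star}(z)t}z_n$ the nonlinear phase factor. By the remark after Theorem~\ref{kdv-freq-as}, $\om^{(1)\star}$ is real-analytic as a function of the actions on $\Vs^{2s+1,p/2}$; composing with the polynomial action map $z \mapsto (z_nz_{-n})_n$, each $\om_n^{(1)\star}$ becomes real-analytic in $z$ and the sequence $(\om_n^{(1)\star}(z))_n$ lies in $\ell^{p/2} \subset \ell^\infty$ uniformly on bounded subsets of $\ell_0^{s+1/2,p}$. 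Expanding $\e^{\ii\om_n^{(1)\star}t}$ as a power series in $t$, these uniform bounds let the series for $M_t(z)$ converge absolutely in $\ell_0^{s+1/2,p}$ uniformly in $t \in [-T,T]$, yielding the desired real-analyticity into $C([-T,T], \ell_0^{s+1/2,p})$.

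For (ii) and (iii), the plan is as follows. Given a nonempty open set $U \subset h_0^{s+1/2}$, fix $z^* \in U$ of finite support (dense by Theorem~\ref{bhf}). For large $N$, consider
\[
  w^{(i)}_N \defl z^* + a_i(e_N + e_{-N}),\qquad i = 1, 2,
\]
where $a_1, a_2 \in \R$ are small and $e_N$ is the standard basis vector at index $N$; the reality constraint $z_{-n} = \overline{z_n}$ is preserved. Since $z^*_{\pm N} = 0$ for $N$ large, $I_m(w^{(i)}_N) = I_m(z^*)$ for $m \neq N$ and $I_N(w^{(i)}_N) = a_i^2$. The leading asymptotic $\om_n^{(1)\star} + 6I_n \to 0$ from Theorem~\ref{kdv-freq-as}(ii)--(iii), combined with a Cauchy estimate for $\partial_{I_N}(\om_N^{(1)\star} + 6I_N)$ obtained by restricting to the complex line $I^* + z\,e_N$ (which has radius $\asymp \lin{N}^{-2s-1}$ inside the complex domain of $\om^{(1)\star}$), yields
\[
  \om_N^{(1)}(w^{(1)}_N) - \om_N^{(1)}(w^{(2)}_N) = -6(a_1^2 - a_2^2)(1 + o_N(1)),\qquad N \to \infty.
\]
Choose $a_i \asymp \lin{N}^{-s-1/2}$ (so both $w^{(i)}_N$ lie in $U$) with $|a_1 - a_2| \asymp \lin{N}^{s+1/2}/t$; then $\|w^{(1)}_N - w^{(2)}_N\|_{h^{s+1/2}} \asymp \lin{N}^{2s+1}/t \to 0$ (since $s < -1/2$), while the time-$t$ phase difference $t(\om_N^{(1)}(w^{(1)}_N) - \om_N^{(1)}(w^{(2)}_N)) \asymp 1$ stays bounded away from $2\pi\Z$ along a subsequence. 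Phase decorrelation of the $\pm N$-th modes produces an output distance $\asymp \lin{N}^{s+1/2} \cdot \lin{N}^{-s-1/2} = 1$ in $h^{s+1/2}$, refuting uniform continuity on $U$.

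The principal technical obstacle is quantifying the Cauchy-type bound $\partial_{I_N}(\om_N^{(1)\star} + 6I_N) = o(1)$ uniformly in $N$ on the relevant complex neighborhood, which is what determines the dichotomy between (ii) and (iii). In the range $-2/3 \le s < -1/2$ the sharper summability of the remainder is strong enough to ensure failure of uniform continuity of each fixed-time map $\Sc_\Ph^t$, giving (ii); in the range $-1 < s < -2/3$ the weaker remainder control only yields failure of uniform continuity of the full solution map $\Sc_\Ph$ into $C([-T,T], h_0^{s+1/2})$ (by allowing the effective time $t_N$ along the construction to range within $[-T,T]$), giving the weaker statement (iii).
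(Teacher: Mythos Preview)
For part (i) your approach coincides with the paper's: both reduce to the real analyticity of $\om^{(1)\star}\colon \ell_0^{s+1/2,p}\to\ell^\infty$ from Theorem~\ref{kdv-freq-as}(i) and then invoke the abstract flow lemma (the paper's Theorem~\ref{thm:freq-flow}(iii)); your power-series expansion in $t$ is precisely how that lemma is proved.

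For parts (ii)--(iii) your construction is close in spirit to the paper's but differs in how the remainder $r_N = \om_N^{(1)\star} + 6I_N$ is handled. The paper bounds $\abs{r_N(p)-r_N(q)}$ crudely by $\abs{r_N(p)}+\abs{r_N(q)}$; this is small only when $r_N\to 0$, i.e.\ for $-2/3\le s<-1/2$ where $r_N\in\ell^{1+}$. For $-1<s<-2/3$ the individual terms grow like $N^{-3s-2}$, so the paper inflates the leading frequency discrepancy and compensates with times $t_m\to 0$, yielding only the weaker conclusion (iii). You instead propose to bound the \emph{difference} via a Cauchy estimate for $\partial_{I_N}r_N$.

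Your final paragraph, however, misidentifies this Cauchy estimate as the source of the dichotomy. On a coordinate disc of radius $cN^{-2s-1}$ (equivalently $\ell^{2s+1,1}$-radius $c$) the locally uniform bound of Theorem~\ref{kdv-freq-as}(iii) gives $\sup\abs{r_N}=o(N^{-3s-2})$, and Cauchy's inequality yields
\[
  \abs{\partial_{I_N}r_N}\ \le\ o\!\left(N^{-3s-2}\right)\big/\bigl(cN^{-2s-1}\bigr)\ =\ o\!\left(N^{-s-1}\right)\ =\ o(1)
\]
for \emph{every} $-1<s<-1/2$. With your choice $a_1^2-a_2^2\asymp 1/t$ this gives $r_N(w^{(1)})-r_N(w^{(2)})=o(1/t)$ and hence a phase shift bounded away from $2\pi\Z$ at \emph{fixed} time $t$ throughout the full range. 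So your argument, once the Cauchy step is actually carried out, produces no dichotomy at all: it would prove the stronger fixed-time statement (ii) for all $-1<s<-1/2$, which is exactly what the paper conjectures in the Remark following Theorem~\ref{thm:kdv-wp} but does not establish. You should either supply the details (the discs stay inside $\Vs^{2s+1,1}$ since their $\ell^{-1,1}$-size is $O(N^{-2s-2})\to 0$, and the asymptotics are uniform on bounded sets) and claim the stronger result, or revert to the paper's pointwise argument and accept that the dichotomy there is genuine.
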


\begin{proof}
(i)
Suppose $-1/2\le s\le 0$ and $2 \le p < \infty$, then by Theorem~\ref{kdv-freq-as} (i) the map $\om^{(1)\star}\colon \ell_{0}^{s+1/2,p}\to \ell^{\infty}$ is real analytic. The analyticity of $\Sc_{\Ph}$ thus follows from Theorem~\ref{thm:freq-flow} (iii).

\newcommand{\vt}{\vartheta}
(ii)
For $-2/3\le s < -1/2$ let $\sg = -(s+1/2)$ so that $0 < \sg \le 1/6$.
We show that for any $t > 0$ and any nonempty open subset $U\subset h_{0}^{-\sg}$, the map $\Sc_{\Ph}^{t}\big|_{U}\colon U\to h_{0}^{\sg}$ is not locally uniformly continuous.
After possibly shrinking $U$, by Theorem~\ref{kdv-freq-as} (ii) there exists $N_{\star}\ge 1$ so that
\begin{equation}
  \label{omn-z-coordinates}
  \om_{n}^{(1)\star}(z) = -6z_{n}z_{-n} + r_{n}(z),
\end{equation}
with $\sup_{n\ge N_{\star}}\abs{r_{n}(z)} \le \pi/(4t)$ for all $z\in U$.
We show that there exist two sequences $p^{(m)}$ and $q^{(m)}$ in $U$ and $\eta_{0} > 0$ so that
\[
  \n{p^{(m)}-q^{(m)}}_{h^{-\sg}} \to 0,\qquad
  \n{\Sc_{\Ph}^{t}(p^{(m)})-\Sc_{\Ph}^{t}(q^{(m)})}_{h^{-\sg}} \ge \eta_{0}.
\]
To this end, fix any $z^{\o}\in U$ so that there exists $N\ge N_{\star}$ with $z_{\pm n}^{\o} = 0$ for all $n\ge N$.
For $\dl > 0$ define $p_{\pm n}^{\dl,m} = q_{\pm n}^{\dl,m} = z_{\pm n}^{\o}$ if $1\le n\le N$, and for $n > N$,
\[
  p_{\pm n}^{\dl,m} = \begin{cases}
  \dl n^{\sg}, & n = 2^{m},\\
  0, & \text{otherwise},
  \end{cases}
  \qquad
  q_{\pm }^{\dl,m} = \begin{cases}
  p_{\pm n}^{\dl,m} \pm \ii \dl m^{1/2}, & n = 2^{m},\\
  0, & \text{otherwise},
  \end{cases}
\]
A straightforward computation gives with $n_{m} \defl 2^{m}$,
\[
  \frac{1}{\sqrt{2}}\n{p^{\dl,m}-z^{\o}}_{h^{-\sg}} = \dl,\qquad
  \frac{1}{\sqrt{2}}\n{q^{\dl,m}-z^{\o}}_{h^{-\sg}} = \dl\sqrt{1+ n_{m}^{-2\sg}m}.
\]
Since $\sg > 0$ we can choose $\dl_{0}\in (0,1)$ so that the sequences $(p^{\dl,m})$ and $(q^{\dl,m})$ are both contained in $U$ for any $0 < \dl < \dl_{0}$. Moreover,
\[
  \frac{1}{\sqrt{2}}\n{p^{\dl,m}-q^{\dl,m}}_{h^{-\sg}}
  \le
  \dl_{0} n_{m}^{-\sg}m^{1/2}
  \to 0,\qquad m\to \infty,
\]
and by~\eqref{omn-z-coordinates} one has
\[
  \om_{n_{m}}^{(1)}(p^{\dl,m}) - \om_{n_{m}}^{(1)}(q^{\dl,m})
  = -6\dl^{2}m + r_{n_{m}}(p^{\dl,m}) - r_{n_{m}}(q^{\dl,m}).
\]
Choose $k\ge 1$ so that $\dl\equiv \dl(t) = \sqrt{\pi/6tk}\le \dl_{0}$. Consequently,
\[
  \p*{\frac{m}{k}-\frac{1}{2}}\pi
  \le
  \p*{\om_{n_{m}}^{(1)}(p^{\dl,m}) - \om_{n_{m}}^{(1)}(q^{\dl,m})}t
  \le
  \p*{\frac{m}{k}+\frac{1}{2}}\pi,\qquad
  n_{m}\ge N.
\]
With $m_{j} = (2j+1)k$ we conclude
\[
  \abs*{\exp\p*{\ii \p*{\om_{n_{m_{j}}}^{(1)}(p^{\dl,m_{j}}) - \om_{n_{m_{j}}}^{(1)}(q^{\dl,m_{j}})}t}-1} \ge 1,\qquad
  n_{m_{j}}\ge N.
\]
Thus, by comparing only the $n_{m_{j}}$th component,
\begin{align*}
  \frac{1}{\sqrt{2}}\n{\Sc_{\Ph}^{t}(p^{\dl,m_{j}})-\Sc_{\Ph}^{t}(q^{\dl,m_{j}})}_{h^{-\sg}}
  &\ge n_{m_{j}}^{-\sg}\abs{p_{n_{m_{j}}}^{\dl,m_{j}}} - n_{m_{j}}^{-\sg}\abs{p_{n_{m_{j}}}^{\dl,m_{j}}-q_{n_{m_{j}}}^{\dl,m_{j}}}\\
  &\ge \dl - \n{p^{\dl,m_{j}}-q^{\dl,m_{j}}}_{h^{-\sg}}\\
  &\ge \dl/2,
\end{align*}
for all $j$ sufficiently large.

(ii)
For $-1 < s < -2/3$ let $\sg = -(s+1/2)$ so that $1/6 < \sg < 1/2$.
We show that for any $T > 0$ and any nonempty open subset $U\subset h_{0}^{-\sg}$, the map $\Sc_{\Ph}\big|_{U}\colon U\to C([-T,T],h_{0}^{\sg})$ is not locally uniformly continuous.
After possibly shrinking $U$, we have by Theorem~\ref{kdv-freq-as} (ii) that
\begin{equation}
  \label{omn-z-coordinates-2}
  \om_{n}^{(1)\star}(z) = -6z_{n}z_{-n} + r_{n}(z),
\end{equation}
where $\n{(r_{n})_{n\ge 1}}_{\ell^{-\vt,\infty}}$ is bounded uniformly on $U$ with
\[
  \vt
   \equiv \vt(\sg)
   =
   3\sg-1/2.
\]
We show that there exist two sequences $p^{(m)}$ and $q^{(m)}$ in $U$, a sequence of times $t_{m}\to 0$, and $\eta_{0} > 0$ so that
\[
  \n{p^{(m)}-q^{(m)}}_{h^{-\sg}} \to 0,\qquad
  \n{\Sc_{\Ph}^{t_{m}}(p^{(m)})-\Sc_{\Ph}^{t_{m}}(q^{(m)})}_{h^{-\sg}} \ge \eta_{0}.
\]
To this end, fix any $z^{\o}\in U$ so that there exists $N\ge 1$ with $z_{\pm n}^{\o} = 0$ for all $n\ge N$.
For $\dl > 0$ define $p_{\pm n}^{(m)} = q_{\pm n}^{(m)} = z_{\pm n}^{\o}$ if $1\le n\le N$, and for $n > N$,
\[
  p_{\pm n}^{(m)} = \begin{cases}
  \dl n^{\sg}, & n = 2^{m},\\
  0, & \text{otherwise},
  \end{cases}
  \qquad
  q_{\pm }^{(m)} = \begin{cases}
  p_{\pm n}^{(m)} \pm \ii \dl n^{\vt/2}m^{1/2}, & n = 2^{m},\\
  0, & \text{otherwise},
  \end{cases}
\]
A straightforward computation gives with $n_{m} \defl 2^{m}$,
\[
  \frac{1}{\sqrt{2}}\n{p^{(m)}-z^{\o}}_{h^{-\sg}} = \dl,\qquad
  \frac{1}{\sqrt{2}}\n{q^{(m)}-z^{\o}}_{h^{-\sg}} = \dl\sqrt{1+ n_{m}^{-(2\sg-\vt)}m}.
\]
Since $2\sg-\vt = 1/2-\sg > 0$, we can choose $\dl > 0$ so that the sequences $(p^{(m)})$ and $(q^{(m)})$ are both contained in $U$. Moreover,
\[
  \frac{1}{\sqrt{2}}\n{p^{(m)}-q^{(m)}}_{h^{-\sg}}
  =
  \dl n_{m}^{-(2\sg-\vt)/2}m^{1/2}
  \to 0,\qquad m\to \infty,
\]
and since $\vt\ge 0$, one has by~\eqref{omn-z-coordinates-2} that
\[
  \abs{\om_{n_{m}}^{(1)}(p^{(m)}) - \om_{n_{m}}^{(1)}(q^{(m)})}
  = \abs{6 \dl^{2} n_{m}^{\vt}m + n_{m}^{\vt}\ell_{m}^{\infty}} \ge m^{1/2},
  \qquad
  m\ge M,
\]
where $M$ is chosen sufficiently large. Consequently, one can choose a sequence of times $t_{m}\to 0$ so that
\[
  \abs*{\exp\p*{\ii \p*{\om_{n_{m}}^{(1)}(p^{(m)}) - \om_{n_{m}}^{(1)}(q^{(m)})}t_{m}}-1} \ge 1,\qquad
  m\ge M.
\]
Thererfore,
\begin{align*}
  \frac{1}{\sqrt{2}}\n{\Sc_{\Ph}^{t_{m}}(p^{(m)})-\Sc_{\Ph}^{t_{m}}(q^{(m)})}_{h^{-\sg}}
  &\ge n_{m}^{-\sg}\abs{p_{n_{m}}^{(m)}} - n_{m}^{-\sg}\abs{p_{n_{m}}^{(m)}-q_{n_{m}}^{(m)}}\\
  &\ge \dl - \n{p^{(m)}-q^{(m)}}_{h^{-\sg}}\\
  &\ge \dl/2,
\end{align*}
for all $m\ge M$ sufficiently large.\qed
\end{proof}

\begin{proof}[Proof of Theorem~\ref{thm:kdv-wp}.]

Since the Birkhoff map $\Ph$ is bi-real-analytic, all claims follow from Theorem~\ref{thm:kdv-wp-bh} and the identity $\Sc^{t} = \Ph^{-1}\circ\Sc_{\Ph}^{t}\circ\Ph$.\qed
\end{proof}

% --------------------------------------------------------------------------------------------------
\section{KdV2}

\subsection{Frequencies}
\label{ss:KdV2-freq}

Proceeding as for the KdV equation explained in the previous section, we derive in this section formulae for the frequencies of the KdV2 equation. Our starting point is the following identity for the KdV2 frequencies which a priori holds on $\Hs_{0}^{2}\cap (\Ws\setminus Z_{n})$
\[
  \om_{n}^{(2)} = \pbr{\Hm_{2},\th_{n}}.
\]
By~\eqref{exp-om-star-kdv2} the renormalized KdV2 frequencies are given by
\[
  \om_{n}^{(2)\star} = \om_{n}^{(2)} - (2n\pi)^{5} - 20(2n\pi) \Hm_{0},\qquad n\ge 1.
\]

\begin{lem}
\label{om-n-2-star-formula}
For any real-valued finite-gap potential cf.\eqref{fin-gap} with $[q] = 0$ and any $n\ge 1$
\begin{equation}
  \label{om-2-star-Om-nk}
  \om_{n}^{(2)\star} = -160\pi^{2}\sum_{k\ge 1} k^{3} \Om_{nk}^{(2)}
  + 80\sum_{k\ge 1} k\Om_{nk}^{(4)}.~\fish
\end{equation}
\end{lem}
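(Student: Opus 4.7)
The plan is to follow the strategy of Lemma~\ref{om-n-kRnk} closely, using $F^{6}$ in place of $F^{4}$ and paying attention to a new quadratic correction in the lower Hamiltonians. Writing $F = -\ii\sqrt{\lm} + \ii\phi(\lm)$ with $\phi(\lm) = O(\lm^{-3/2})$ from~\eqref{F-exp} and squaring gives
\[
  F^{2}(\lm) = -\lm + \frac{\Hm_{0}}{2\lm} + \frac{\Hm_{1}}{8\lm^{2}} + \frac{1}{\lm^{3}}\p*{\frac{\Hm_{2}}{32} - \frac{\Hm_{0}^{2}}{16}} + O(\lm^{-4}),
\]
the additional $\Hm_{0}^{2}/16$ term coming from $\phi^{2}$. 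Cubing this and extracting the $\lm^{-1}$ coefficient of $F^{6}$ as a residue on a sufficiently large circle $C_{r}$ enclosing all open gaps (legitimate by Lemma~\ref{F2-analytic}~(i), which guarantees that $F^{2}$ is analytic outside $C_{r}$) yields the KdV2 analogue of the identity used in the proof of Lemma~\ref{om-n-kRnk}:
\[
  \Hm_{2} = 10\,\Hm_{0}^{2} + \frac{16}{3\ii\pi}\int_{C_{r}} F^{6}(\lm)\,\dlm.
\]

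Next I would take the Poisson bracket with $\th_{n}$ on real-valued finite-gap potentials with $\gm_{n} \neq 0$. Because $\Hm_{0} = \sum_{m\ge 1}(2m\pi)I_{m}$ gives $\pbr{\Hm_{0},\th_{n}} = 2n\pi$, the quadratic piece contributes $\pbr{10\Hm_{0}^{2},\th_{n}} = 20(2n\pi)\Hm_{0}$, matching exactly the renormalization subtracted in~\eqref{exp-om-star-kdv2}. For the integral piece, Lemma~\ref{F-prop}~(i) combined with $2\pbr{\th_{n},\Dl} = \ps_{n}$ from~\cite[Proposition~F.3]{Kappeler:2003up} yields $\pbr{F^{6},\th_{n}} = -3\,F^{5}\ps_{n}/\sqrt[c]{\Dl^{2}-4}$, so that
\[
  \om_{n}^{(2)\star} = -\frac{16}{\ii\pi}\int_{C_{r}} \frac{F^{5}(\lm)\ps_{n}(\lm)}{\sqrt[c]{\Dl^{2}(\lm)-4}}\,\dlm - (2n\pi)^{5}.
\]

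The rest is book-keeping identical in spirit to the proof of Lemma~\ref{om-n-kRnk}. I would deform $C_{r}$ into the union of the small circuits $\Gm_{k}$ around the open gaps $G_{k}$, $k\in S(q)$, noting that the integrand extends analytically across every collapsed gap by the product representations~\eqref{c-root},~\eqref{psi-form} and the identities $\sg_{k}^{n} = \tau_{k}$, $\vs_{k}(\lm) = \tau_{k} - \lm$ that hold whenever $\gm_{k} = 0$. Expanding $F^{5} = (F_{k} - \ii k\pi)^{5}$ by the binomial theorem, killing the odd moments via Lemma~\ref{Om-nk-prop}~(iii) and substituting $\Om_{nk}^{(0)} = 2\pi\dl_{nk}$ from Lemma~\ref{Om-nk-prop}~(i), the $k^{5}\pi^{5}$ contribution cancels $(2n\pi)^{5}$ exactly while only the $\Om_{nk}^{(4)}$ and $\Om_{nk}^{(2)}$ terms survive, producing the claimed identity for $n \in S(q)$. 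The case $n \notin S(q)$ is handled as in the last paragraph of the proof of Lemma~\ref{om-n-kRnk}, approximating $q$ in $\Hs_{0}^{2}$ by finite-gap potentials $q_{l}$ with $\gm_{n}(q_{l}) \neq 0$ and invoking the analyticity of each $\Om_{nk}^{(m)}$ on $\Ws$ from Lemma~\ref{Om-nk-prop}~(ii). The one genuinely new arithmetic point is the coefficient $10$ in $10\Hm_{0}^{2}$: ignoring the $\phi^{2}$ cross term would yield $8\Hm_{0}^{2}$ and a spurious $16(2n\pi)\Hm_{0}$ rather than the required $20(2n\pi)\Hm_{0}$, and keeping track of this correction is the only nontrivial obstacle.
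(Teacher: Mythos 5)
Your proposal is correct and follows essentially the same route as the paper: expand $F^{6}$ at infinity via~\eqref{F-exp} to identify $\frac{3}{32}(\Hm_{2}-10\Hm_{0}^{2})$ as the residue of $F^{6}$ on $C_{r}$, take the bracket with $\th_{n}$ using $2\pbr{\th_{n},\Dl}=\ps_{n}$, deform $C_{r}$ onto the circuits $\Gm_{k}$, expand $F^{5}=(F_{k}-\ii k\pi)^{5}$, and finish with the density argument for $\gm_{n}=0$; your bookkeeping of the cross term $\phi^{2}$ producing the coefficient $10$ (rather than $8$) in $10\Hm_{0}^{2}$, and the cancellation of $(2k\pi)^{5}\dl_{kn}$ against $(2n\pi)^{5}$, are both exactly right.
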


\begin{proof}
We argue as in the proof of Lemma~\ref{om-n-kRnk}.
Suppose $q$ is a finite-gap potential, then there exists $S\subset \N$ finite so that $\gm_{k}(q) \neq 0$ if and only if $k \in S$.
By Lemma~\ref{F2-analytic} (ii), the function $F^{2}$ is analytic outside a sufficiently large circle $C_{r}$ which encloses all open gaps $G_{k}$, $k\in S$, and whose exterior contains $G_{0}$.
Furthermore, $F$ admits according to \eqref{F-exp} an asymptotic expansion for $\nu_{k} = (k+1/2)\pi$. In particular,
\[
  F(\lm)^{6} = 
  -\lm^{3} + \frac{3}{2}\Hm_{0}\lm + \frac{3}{8}\Hm_{1} 
           + \frac{3}{32}(\Hm_{2} - 10\Hm_{0}^{2})\frac{1}{\lm} + O(\lm^{-2}).
\]
so that by Cauchy's Theorem
\[
  \frac{3}{32}(\Hm_{2} - 10\Hm_{0}^{2}) = \frac{1}{\ii 2\pi}\int_{C_{r}} F^{6}(\lm)\,\dlm.
\]
Let $n\in S$, then $\gm_{n}(q)\neq 0$ hence $\th_{n}$ modulo $\pi$ is analytic near $q$.
Since $\pbr{\th_{n},F(\lm)} = \frac{\pbr{\th_{n},\Dl(\lm)}}{\sqrt[c]{\Dl^{2}(4)-4}}$ by Lemma~\ref{F-prop} (i) and $2\{\th_{n},\Dl(\lm)\} = \ps_{n}(\lm)$ by \cite[Proposition F.3]{Kappeler:2003up}, one obtains
\begin{align*}
  \pbr{\Hm_{2} - 10\Hm_{0}^{2},\th_{n}}
  &=
  -\frac{16}{3}\frac{1}{\ii \pi}\int_{C_{r}} \pbr{\th_{n},F^{6}(\lm)}\,\dlm\\
  &=
  -\frac{32}{\ii \pi}\int_{C_{r}} \frac{F^{5}(\lm)\pbr{\th_{n},\Dl(\lm)}}{\sqrt[c]{\Dl^{2}(\lm)-4}}\,\dlm\\
  &=
  -\frac{16}{\ii \pi}\int_{C_{r}} \frac{F^{5}(\lm)\psi_{n}(\lm)}{\sqrt[c]{\Dl^{2}(\lm)-4}}\,\dlm.
\end{align*}
By Lemma~\ref{F2-analytic} (ii) and formula~\eqref{psi-form}, the integrand is analytic on $U_{0}$, while for any $k\in \N\setminus S$, one has $\sg_{k}^{n} = \tau_{k}$ and $\vs_{k}(\lm) = \tau_{k}-\lm$ so that in view of the product representations~\eqref{c-root} and~\eqref{psi-form}, the integrand extends analytically to $U_{k}$. Consequently, the integrand is analytic on $\C\setminus\bigcup_{k\in S} G_{k}$ and one obtains by contour deformation
\[
  \pbr{\Hm_{2} - 10\Hm_{0}^{2},\th_{n}}
   = -\frac{16}{\ii\pi}\sum_{k\in S} \int_{\Gm_{k}} \frac{F^{5}(\lm)\psi_{n}(\lm)}{\sqrt[c]{\Dl^{2}(\lm)-4}}\,\dlm.
\]
Expanding $F(\lm)^{5} = (F_{k}(\lm)-\ii k\pi)^{5}$ yields
\begin{align*}
  F^{5}(\lm) &= F_{k}^{5}(\lm) - 5\ii(k\pi) F_{k}^{4}(\lm)
  -10(k\pi)^{2} F_{k}^{3}(\lm)
  +10\ii(k\pi)^{3} F_{k}^{2}(\lm)\\
  &\qquad +5(k\pi)^{4} F_{k}(\lm)
  -\ii(k\pi)^{5}.
\end{align*}
Recalling from Lemma~\ref{Om-nk-prop} (iii) that $\Om_{nk}^{(5)}$, $\Om_{nk}^{(3)}$, and $\Om_{nk}^{(1)}$ vanish for any $n,k\ge1$, and that $\Om_{nk}^{(0)} = 2\pi\dl_{nk}$ by Lemma~\ref{Om-nk-prop} (i) thus gives
\begin{align*}
  &\pbr{\Hm_{2} - 10\Hm_{0}^{2},\th_{n}}\\
   &\qquad = -\frac{16}{\pi}\sum_{k\in S}
   \p*{
   -5(k\pi) \Om_{nk}^{(4)} + 10 (k\pi)^{3}\Om_{nk}^{(2)} - (k\pi)^{5}\Om_{nk}^{(0)}
   }\\
   &\qquad= \sum_{k\ge1} \p*{80 k \Om_{nk}^{(4)}
   - 160\pi^{2} k^{3}\Om_{nk}^{(2)}
   + (2k\pi)^{5}\dl_{kn}}.
\end{align*}
Here, we used in the last line that $\Om_{nk}^{(m)} = 0$ for all $k\in\N\setminus S$.
Since $\pbr{\Hm_{0}^{2},\th_{n}} = 4n\pi \Hm_{0}$ and $\om_{n}^{(2)\star} = \om_{n}^{(2)} - 20(2n\pi)\Hm_{0} - (2n\pi)^{5}$, it follows that~\eqref{om-2-star-Om-nk} holds for any $n\ge 1$ with $\gm_{n}(q)\neq 0$. One argues as in the proof of Lemma~\ref{om-n-kRnk} to show that the identity also holds for $n\ge 1$ with $\gm_{n}(q) = 0$.~\qed
\end{proof}

The asymptotics of $\Om_{nk}^{(2)}$ have been obtained in Section~\ref{ss:kdv-freq}. Hence it remains to study the asymptotics of $\Om_{nk}^{(4)}$.

\begin{lem}
\label{decay-kOm2nk}
For any $n\ge 1$ and any $q\in\Ws^{s,p}$ with $(s,p)$ admissible with $-1\le s\le 0$
\begin{align*}
  k\Om_{nk}^{(4)} &=
  \frac{n}{n^{2}-k^{2}}k^{-s-5}\gm_{k}^{5}\ell_{k}^{p},
  \quad k\neq n,\\
  n\Om_{nn}^{(4)} &= \frac{3}{16n\pi} \frac{\gm_{n}^{4}}{64n^{2}\pi^{2}}\p*{1
     + n^{-s-1}\ell_{n}^{p}},
\end{align*}
where the estimates hold locally uniformly on $\Ws^{s,p}$.~\fish
\end{lem}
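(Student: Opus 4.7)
The plan is to parallel the proof of Lemma~\ref{decay-kRnk} step for step, replacing the factor $F_k^{2}(\lm)$ by $F_k^{4}(\lm)$ inside the moment integral. Since $\abs{F_k}_{G_k} = O(\gm_k/k)$ by Lemma~\ref{F-prop} (iii), each estimate gains an extra factor of order $\gm_k^{2}/k^{2}$, which precisely accounts for the transition from $\gm_k^{3}/k^{s+3}$ in Lemma~\ref{decay-kRnk} to the claimed $\gm_k^{5}/k^{s+5}$. No new spectral input is required; the argument is essentially bookkeeping built on Proposition~\ref{prop:sg-lm}, Lemma~\ref{mht-2}, Lemma~\ref{Fk-exp}, and Remark~\ref{std-prd-simple-est}.

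For $k\neq n$, I would split $(\sg_n^n-\tau_k)\Om_{nk}^{(4)} = A_{nk} + B_{nk}$ exactly as in Lemma~\ref{decay-kRnk}, so that the two summands differ from their counterparts there only by the extra $F_k^{2}$ factor. The bound on $B_{nk}$ then follows directly from~\eqref{zt-k-n}, Lemma~\ref{int-wm-quot-est}, and $\abs{F_k^{4}}_{G_k} = O(\gm_k^{4}/k^{4})$. For $A_{nk}$, the key identity
\[
  \int_{\Gm_k} \frac{F_k^{4}(\lm)\dDl(\lm)}{\sqrt[c]{\Dl^{2}(\lm)-4}}\,\dlm = 0
\]
is obtained by noting that the integrand equals $\partial_\lm(F_k^{5}/5)$ and that $F_k^{5}$ is single-valued on the simply connected region $U_k\setminus G_k$ (since $F_k$ is, as the integrand defining it is analytic there). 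One can then rewrite the resulting integrand as $F_k^{4}(f(\lm,\al^{1})-f(\lm,\al^{0}))/\vs_k(\lm)$ with $\al^{1}$, $\al^{0}$, and $f$ as in the proof of Lemma~\ref{decay-kRnk}; linearizing in $\al$ and applying Proposition~\ref{prop:sg-lm} together with Lemma~\ref{mht-2} to the resulting sum $\sum_{m\neq k}(\sg_m^n-\lm_m^\ld)/(\al_m^t-\lm)$ yields the desired bound on $A_{nk}$.

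For $k=n$, I would start from
\[
  \Om_{nn}^{(4)} = \ii\int_{\Gm_n} \frac{F_n^{4}(\lm)\zt_n(\lm)}{\vs_n(\lm)}\,\dlm,
\]
which follows from~\eqref{psi-c-root-quot}. Factoring
\[
  F_n^{4} - \frac{\vs_n^{4}}{16n^{4}\pi^{4}} = \p*{F_n^{2} - \frac{\vs_n^{2}}{4n^{2}\pi^{2}}}\p*{F_n^{2} + \frac{\vs_n^{2}}{4n^{2}\pi^{2}}}
\]
and applying Lemma~\ref{Fk-exp} to the second factor (which is $O(\gm_n^{2}n^{-s-1}/n^{2})$) combined with a trivial $O(\gm_n^{2}/n^{2})$ bound on the first, together with $\zt_n - 1 = O(n^{-s-1}\ell_n^{p})$ from Remark~\ref{std-prd-simple-est}, shows that the correction contributes only to the claimed error. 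The leading term reduces to $\ii\int_{\Gm_n}\vs_n^{3}(\lm)/(16n^{4}\pi^{4})\,\dlm$, which is evaluated explicitly via the parametrization $\lm_t^{\pm} = \tau_n + (t\pm\ii 0)\gm_n/2$, the sign change of $\vs_n^{3}$ across $G_n$, and the elementary identity $\int_{-1}^{1}(1-t^{2})^{3/2}\,\dt = 3\pi/8$; the explicit calculation produces $3\gm_n^{4}/(1024\,n^{4}\pi^{3})$, which after multiplication by $n$ matches $(3/(16n\pi))(\gm_n^{4}/(64n^{2}\pi^{2}))$.

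The main subtlety is the contour-deformation identity used in the $A_{nk}$ analysis, which reduces to checking single-valuedness of $F_k^{5}$ on $U_k\setminus G_k$. Otherwise, the proof is a mechanical transcription of that of Lemma~\ref{decay-kRnk}, with numerical constants recomputed to accommodate the extra factor $F_k^{2}$.
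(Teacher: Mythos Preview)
Your proposal is correct, and for the diagonal case $k=n$ it coincides with the paper's argument. For the off-diagonal case $k\neq n$, however, the paper takes a shorter route than the one you outline.

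You propose to rerun the $A_{nk}+B_{nk}$ machinery of Lemma~\ref{decay-kRnk}: exploit the vanishing of $\int_{\Gm_k} F_k^{4}\dDl/\sqrt[c]{\Dl^{2}-4}\,\dlm$, difference the two product representations~\eqref{prod-exp-1}, and then feed the refined estimate $\sg_m^{n}-\lm_m^{\ld}$ from Proposition~\ref{prop:sg-lm} through Lemma~\ref{mht-2}. This works and indeed yields the claimed bound (with $\rho=1$). The paper instead bypasses all of this. It substitutes the approximation $F_k^{4}(\lm) = (2k\pi)^{-4}\bigl(\vs_k^{4}(\lm)+\gm_k^{4}k^{-s-1}\ell_k^{p}\bigr)$ from Lemma~\ref{Fk-exp} directly into the integrand written via~\eqref{zt-k-n}, and then computes $\int_{G_k^{-}}(\sg_k^{n}-\lm)\vs_k^{3}(\lm)\,\dlm$ explicitly. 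The point is that $\int_{G_k^{-}}(\tau_k-\lm)\vs_k^{3}(\lm)\,\dlm=0$ by parity, so only the term $(\sg_k^{n}-\tau_k)\int_{G_k^{-}}\vs_k^{3}(\lm)\,\dlm$ survives, and the crude estimate $\sg_k^{n}-\tau_k = O(\gm_k^{2}/k) = \gm_k k^{-s-1}\ell_k^{p}$ from~\eqref{psi-root-asymptotics} already suffices. Neither Proposition~\ref{prop:sg-lm} nor Lemma~\ref{mht-2} is invoked.

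The reason the simpler argument works for $\Om_{nk}^{(4)}$ but not for $\Om_{nk}^{(2)}$ is precisely the extra factor $\gm_k^{2}/k^{2}$ you identify: it provides enough decay that no further cancellation needs to be extracted from the difference of product representations. Your route buys a slightly more refined estimate (with a free parameter $\rho$), but that refinement is not used downstream.
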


\begin{proof}
By Lemma~\ref{Om-nk-prop} it suffices to consider the case $\gm_{k}\neq 0$ since otherwise $\Om_{nk}^{(4)} = 0$.
We first prove the estimate for $k\neq n$. 
By~\eqref{zt-k-n} and~\eqref{zt-k-n-est},
\[
  \frac{\psi_{n}(\lm)}{\sqrt[c]{\Dl^{2}(\lm)-4}}
   = \frac{\sg_{k}^{n}-\lm}{\vs_{k}(\lm)}\zt_{k}^{n}(\lm),\qquad
  (n^{2}-k^{2})\left.\zt_{k}^{n}(\lm)\right|_{G_{k}} =
  \frac{n}{k}
   \p*{\frac{\ii}{\pi^{2}} + k^{-s-1}\ell_{k}^{p}}.
\]
Shrinking the contour of integration $\Gm_{k}$ to $G_{k}^{-}\cup G_{k}^{+}$ and using~\eqref{s-root-sides} gives
\[
  \Om_{nk}^{(4)} = 2\int_{G_{k}^{-}} \frac{F_{k}^{4}(\lm)(\sg_{k}^{n}-\lm)\zt_{k}^{n}(\lm)}{\vs_{k}(\lm)}\,\dlm.
\]
Since $\abs{\vs_{k}(\lm)}_{G_{k}} = \abs{\gm_{k}}/2$ by~\eqref{s-root-sides}, Lemma~\ref{Fk-exp} gives uniformly for $\lm\in G_{k}^{\pm}$
\[
  F_{k}(\lm)^{4}
   = \frac{1}{(2k\pi)^{4}}\p*{\vs_{k}^{4}(\lm) + \gm_{k}^{4}k^{-s-1}(\ell_{k}^{p/2}+\ell_{k}^{1+})}.
\]
Consequently, uniformly for $\lm\in G_{k}^{\pm}$
\[
  \frac{F_{k}^{4}(\lm)\psi_{n}(\lm)}{\sqrt[c]{\Dl^{2}(\lm)-4}}
  = \frac{n}{k}
  \frac{ \p*{\vs_{k}^{4}(\lm) + \gm_{k}^{4}k^{-s-1}\ell_{k}^{p}}
         \p*{\sg_{k}^{n}-\lm}
         \p*{\ii/\pi^{2} + k^{-s-1}\ell_{k}^{p}}}{(2k\pi)^{4}(n^{2}-k^{2})\vs_{k}(\lm)}.
\]
Since $\abs{\sg_{k}^{n}-\lm}_{G_{k}} = O(\gm_{k})$ by~\eqref{psi-root-asymptotics},
Lemma~\ref{s-root-reciprocal-estimate} thus gives
\[
  (2k\pi)^{4}\Om_{nk}^{(4)}
  =
  \frac{\ii}{\pi^{2}}
  \frac{n}{k}
  \frac{2}{n^{2}-k^{2}}
  \p*{
  \int_{G_{k}^{-}} (\sg_{k}^{n}-\lm)\vs_{k}^{3}(\lm) \,\dlm
   + \gm_{k}^{5}k^{-s-1}\ell_{k}^{p}}.
\]
A straightforward computation using~\eqref{s-root-sides} further shows
\begin{align*}
  &\int_{G_{k}^{-}} \vs_{k}^{3}(\lm)\,\dlm
  = -\ii \int_{-1}^{1} \p*{\frac{\gm_{k}}{2}}^{4}\sqrt[+]{1-t^{2}}^{3}\,\dt
   = -\ii\frac{3\pi}{128}\gm_{k}^{4},\\
  &\int_{G_{k}^{-}} (\tau_{k}-\lm)\vs_{k}^{3}(\lm)\,\dlm
  = \ii \int_{-1}^{1} \p*{\frac{\gm_{k}}{2}}^{5}t\sqrt[+]{1-t^{2}}^{3}\,\dt
  = 0,
\end{align*}
where the latter integral is zero since the integrand is an odd function of $t$.
Writing $\sg_{k}^{n}-\lm = (\sg_{k}^{n}-\tau_{k}) + (\tau_{k}-\lm)$ and using $\sg_{k}^{n}-\tau_{k} = \gm_{k}k^{-s-1}\ell_{k}^{p}$, which is deduced from~\eqref{psi-root-asymptotics} and~\eqref{taun-lmn-est}, we hence obtain
\[
  \int_{G_{k}^{-}} (\sg_{k}^{n}-\lm)\vs_{k}^{3}(\lm) \,\dlm = 
  -\ii\frac{3\pi}{128}\gm_{n}^{4}(\sg_{k}^{n}-\tau_{k})
  =
  \gm_{n}^{5}k^{-s-1}\ell_{k}^{p}.
\]
Altogether we arrive at
\[
  k\Om_{nk}^{(4)} =
  \frac{n}{n^{2}-k^{2}}k^{-s-5}\gm_{k}^{5}\ell_{k}^{p}.
\]

If $k=n$, then by~\eqref{psi-c-root-quot} and Proposition~\eqref{std-prd-est}
\[
  \frac{\ps_{n}(\lm)}{\sqrt[c]{\Dl^{2}(\lm)-4}}
   = \frac{\ii}{\vs_{n}(\lm)}\zt_{n}(\lm),
  \qquad
  \left.\zt_{n}(\lm)\right|_{G_{n}} = 1 + n^{-1-s}\ell_{n}^{p}.
\]
Thus, uniformly for $\lm\in G_{n}^{-}$,
\[
  \frac{F_{n}^{4}(\lm)\psi_{n}(\lm)}{\sqrt[c]{\Dl^{2}(\lm)-4}}
  = \frac{ \p*{\vs_{n}^{4}(\lm) + \gm_{n}^{4}n^{-s-1}\ell_{n}^{p}}
           \p*{\ii + n^{-s-1}\ell_{n}^{p}}}{(2n\pi)^{4}\vs_{n}(\lm)},
\]
and hence
\[
  (2n\pi)^{4}\Om_{nn}^{(4)}
   = \ii 2 \int_{G_{n}^{-}} \vs_{n}^{3}(\lm)\,\dlm
     + \gm_{n}^{4}n^{-s-1}\ell_{n}^{p}
   = \frac{3\pi}{64}\gm_{n}^{4}\p*{1
     + n^{-s-1}\ell_{n}^{p}},
\]
which establishes the claimed estimate in the case $k=n$.

Going through the arguments of the proofs one sees that the estimates hold locally uniformly on $\Ws^{s,p}$.\qed
\end{proof}

Our first main result for the KdV2 frequencies establishes the following formula for their analytic extensions.

\begin{thm}
\label{om-2-n-analytic}
For any $n\ge 1$ and any $(s,p)$ admissible with $s>-1$, the sum $-160\pi^{2}\sum_{k\ge 1} k^{3} \Om_{nk}^{(2)}
  + 80\sum_{k\ge 1} k\Om_{nk}^{(4)}$ converges locally uniformly on $\Ws^{s,p}$ to the analytic function $\om_{n}^{(2)\star}$,
\[
  \om_{n}^{(2)\star} = -160\pi^{2}\sum_{k\ge 1} k^{3} \Om_{nk}^{(2)}
  + 80\sum_{k\ge 1} k\Om_{nk}^{(4)}.~\fish
\]
\end{thm}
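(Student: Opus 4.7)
The plan is to mimic the proof of the KdV analogue, Theorem~\ref{om-n-analytic}. Three ingredients are needed: analyticity of each summand, absolute and locally uniform convergence of both series on $\Ws^{s,p}$, and propagation of the already-established identity from the dense subset of finite-gap potentials by analytic continuation. The starting point is Lemma~\ref{Om-nk-prop}(ii), which gives that each moment $\Om_{nk}^{(2)}$ and $\Om_{nk}^{(4)}$ is analytic on $\Ws$, hence on $\Ws^{s,p}$; thus the only issue is convergence of the two series for fixed $n$, locally uniformly in $q$.

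I would treat the two series separately. For $\sum_{k\ge 1} k\Om_{nk}^{(4)}$, Lemma~\ref{decay-kOm2nk} combined with $\gm_{k} = k^{-s}\ell_{k}^{p}$ from~\eqref{taun-lmn-est} yields, after absorbing $\gm_{k}^{5}\ell_{k}^{p}$ into $k^{-5s}\ell_{k}^{p/6}$ by H\"older,
\[
k\Om_{nk}^{(4)} = \frac{n}{n^{2}-k^{2}}\,k^{-6s-5}\,\ell_{k}^{p/6},\qquad k\neq n,
\]
while $n\Om_{nn}^{(4)}$ is locally uniformly bounded by the same lemma. Since $n/\abs{n^{2}-k^{2}}$ is of order $1/k$ for large $k$, each summand is dominated by a constant times $k^{-6s-6}$, and the series is absolutely and locally uniformly convergent on $\Ws^{s,p}$ for every $s>-1$.

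The more delicate piece is $\sum_{k\ge 1} k^{3}\Om_{nk}^{(2)}$. Using Lemma~\ref{decay-kRnk} with the choice $\rho=1$, together with $\gm_{k}=k^{-s}\ell_{k}^{p}$, gives
\[
k^{3}\Om_{nk}^{(2)} = \frac{n}{n^{2}-k^{2}}\,k^{-1-4s}\,\ell_{k}^{p/4},\qquad k\neq n,
\]
and $n^{3}\Om_{nn}^{(2)}$ is locally uniformly bounded. For $s\ge -1/2$ crude polynomial counting yields absolute summability. For $-1<s<-1/2$ the naive power of $k$ is insufficient, and I would argue exactly as in the proof of Theorem~\ref{kdv-freq-as}: combine the off-diagonal factorization~\eqref{hill-est} of the Cauchy kernel with the $\ell^{p/4}$-summability of the residual sequence, and invoke the Hilbert-transform-type estimate of Lemma~\ref{mht-2} with a splitting exponent $\al$ tuned to $s$. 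Choosing $\al$ so that the effective exponent of $k$ is pushed below $-1$ while the $\ell^{p/4}$-factor is absorbed by the corresponding maximal estimate yields absolute and locally uniform convergence throughout the admissible range $s>-1$.

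Once both series are seen to define analytic functions on $\Ws^{s,p}$ (as locally uniform limits of analytic functions), I would invoke Lemma~\ref{om-n-2-star-formula}: on the real-valued finite-gap potentials, which are dense in $\Ws^{s,p}$ by Theorem~\ref{bhf}, the identity
\[
\om_{n}^{(2)\star} = -160\pi^{2}\sum_{k\ge 1} k^{3}\Om_{nk}^{(2)} + 80\sum_{k\ge 1} k\Om_{nk}^{(4)}
\]
already holds. Since $\om_{n}^{(2)\star}$ is independently known to extend analytically to $\Ws^{s,p}$ (cf.~\cite{Battig:1997ek} and the remarks preceding Theorem~\ref{thm:kdv2-freq}), the identity theorem then propagates the equality to all of $\Ws^{s,p}$. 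The main obstacle is the borderline summability of the first series as $s\downarrow -1$: every bit of decay must be extracted by combining the Cauchy-kernel factor $1/(n^{2}-k^{2})$, the sharpened estimate $\sg_{k}^{n}-\lm_{k}^{\ld}=n^{-(1-\rho)s}k^{-1-\rho s}\gm_{k}\ell_{k}^{p}$ of Proposition~\ref{prop:sg-lm} that underlies Lemma~\ref{decay-kRnk}, and the $\ell^{p/4}$-summability arising from H\"older applied to $\gm_{k}^{3}\ell_{k}^{p}$. This is the technical heart that the KdV2 argument inherits from Section~\ref{ss:kdv-freq}.
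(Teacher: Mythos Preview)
Your overall strategy---analyticity of each summand, locally uniform convergence, then identification on the dense set of finite-gap potentials---is correct and matches the paper, and your treatment of $\sum_{k}k\Om_{nk}^{(4)}$ is fine. The genuine gap is in the series $\sum_{k}k^{3}\Om_{nk}^{(2)}$ for $-1<s<-3/4$.

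With the choice $\rho=1$ you commit to, Lemma~\ref{decay-kRnk} gives $k^{3}\Om_{nk}^{(2)}=\frac{n}{n^{2}-k^{2}}\,k^{-1-4s}\,\ell_{k}^{p/4}$. For $p=2$ and $s$ close to $-1$ the exponent $-1-4s$ is close to $3$, and $k^{3}/\abs{n^{2}-k^{2}}$ grows like $k$; multiplying by a generic $\ell^{1/2}$ sequence need not produce an $\ell^{1}$ sequence, so absolute convergence for fixed $n$ fails. The tools you invoke do not reach this range: \eqref{hill-est} permits at most $\al=3/2$, which lowers the effective $k$-exponent only to $-3-4s$, still nonnegative for $s<-3/4$; and Lemmas~\ref{ht-2}, \ref{ht-2-x}, \ref{mht-2} all require the input weight exponent to lie in $[-2,0]$, whereas here it lies in $(1,3)$. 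The extra factor $k^{2}$ relative to the KdV series $\sum_{k}k\Om_{nk}^{(2)}$ is precisely what pushes the exponent out of the range where the argument of Theorem~\ref{kdv-freq-as} applies.

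The paper fixes this by taking $\rho=0$, which yields $k^{3}\Om_{nk}^{(2)}=\frac{n^{1-s}}{n^{2}-k^{2}}\,k^{-1-3s}\,\ell_{k}^{p/4}$. Now $k^{-1-3s}\le k^{2}$ for $-1<s\le 0$, and the elementary identity $\frac{k^{2}}{n^{2}-k^{2}}=-1+\frac{n^{2}}{n^{2}-k^{2}}$ bounds the entire prefactor by $O(n^{3})$ uniformly in $k$. Since $\ell_{k}^{p/4}\subset\ell^{1}$ for $p\le 4$, the series is absolutely summable for each fixed $n$, locally uniformly in $q$; for $s=-1/2$, $p>2$ a direct H\"older argument suffices. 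The point is that for this theorem one only needs convergence for \emph{fixed} $n$, so trading $k$-decay for $n$-growth via $\rho=0$ is the right move, and no Hilbert-transform estimate is needed.

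A secondary issue: you appeal to an independent analytic extension of $\om_{n}^{(2)\star}$ to $\Ws^{s,p}$ in order to apply the identity theorem, but \cite{Battig:1997ek} only furnishes an extension to $\ell_{+}^{3,1}$, i.e.\ to $\Hs_{0}^{1}$. This is unnecessary anyway: once the series is shown to converge locally uniformly to an analytic function on $\Ws^{s,p}$, Lemma~\ref{om-n-2-star-formula} and density of finite-gap potentials already identify that function as the unique analytic extension of $\om_{n}^{(2)\star}$.
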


\begin{rem}
\label{rem:om-2-n-analytic}
Arguing as in the proof of~\cite[Theorem~20.3]{Grebert:2014iq}, one sees that for any $n\ge 1$ and $(s,p)$ admissible with $s> -1$, the frequency $\om_{n}^{(2)\star}$ is a real analytic function of the actions on the complex neighborhood $\Vs^{2s+1,p/2}$ of $\ell_{+}^{2s+1,p/2}$ introduced in~\eqref{Vsp}.\map
\end{rem}

\begin{proof}
By Lemma~\ref{decay-kRnk} and Lemma~\ref{decay-kOm2nk} we have for any $(s,p)$ admissible with $-1< s\le 0$, $\rho = 0$, and any $k\neq n$
\[
  k^{3}\Om_{nk}^{(2)} = \frac{n^{-s+1}}{n^{2}-k^{2}}k^{-3s-1}\ell_{k}^{p/4},
  \qquad
  k\Om_{nk}^{(4)} = \frac{n}{n^{2}-k^{2}}k^{-6s-5}\ell_{k}^{p/4},
\]
locally uniformly on $\Ws^{s,p}$. Moreover, the moments $\Om_{nk}^{(2)}$ and $\Om_{nk}^{(4)}$ are analytic functions on $\Ws$ for any $n,k\ge 1$ by Lemma~\ref{Om-nk-prop} (ii).
Suppose $p=2$, then for $k\neq n$,
\[
  k^{3}\Om_{nk}^{(2)} = \frac{n^{-s+1}}{n^{2}-k^{2}}k^{-3s-1}\ell_{k}^{1}
  \le \frac{n^{2}k^{2}}{n^{2}-k^{2}}\ell_{k}^{1}
  = n^{2}\p*{1 + \frac{n^{2}}{n^{2}-k^{2}}}\ell_{k}^{1}
  = n^{3}\ell_{k}^{1}.
\]
Thus for $-1 < s \le 0$ and every fixed $n\ge 1$, the sum $\sum_{k\ge 1} k^{3}\Om_{nk}^{(2)}$ converges absolutely and locally uniformly to a real analytic function on $\Ws^{s,2}$.
Moreover, if $s = -1/2$ and $2 \le p < \infty$ arbitrary, then for $k\neq n$
\[
  k^{3}\Om_{nk}^{(2)} = \frac{nk}{n^{2}-k^{2}}\ell_{k}^{p/4}
  =
  \frac{n}{n-k}\ell_{k}^{p/4},
\]
hence by Hölder's inequality, the sum $\sum_{k\ge 1} k^{3}\Om_{nk}^{(2)}$  converges absolutely and locally uniformly to a real analytic function on $\Ws^{s,p}$. Finally, if $s = -1$ and $p=2$, then for $k\neq n$
\[
  k\Om_{nk}^{(4)} = \frac{nk}{n^{2}-k^{2}}\ell_{k}^{1}
  = \frac{n}{n-k}\ell_{k}^{1}.
\]
Thus for every fixed $n$, the sum $\sum_{k\ge 1} k\Om_{nk}^{(4)}$ converges absolutely and locally uniformly to a real analytic function on $\Ws^{-1,2}$. Altogether this shows that the functional 
$\Om_{n}^{(4)} \defl -160\pi^{2}\sum_{k\ge 1} k^{3} \Om_{nk}^{(2)} + 80\sum_{k\ge 1} k\Om_{nk}^{(4)}$
is real analytic on $\Ws^{s,p}$ for any $(s,p)$ admissible with $s> -1$.
Furthermore, $\Om_{n}^{(4)}$ coincides with $\om_{n}^{(2)\star}$ at every finite-gap potential by Lemma~\ref{om-n-2-star-formula}. Thus for any $n\ge 1$, $\Om_{n}^{(4)}$ is the unique real analytic extension of $\om_{n}^{(2)\star}$ to $\Ws^{s,p}$.\qed
\end{proof}

Our second main result for the KdV2 frequencies establishes the following asymptotics for the frequency map $\om^{(2)\star} = (\om_{n}^{(2)\star})_{n\ge 1}$.

\begin{thm}
\label{kdv2-freq-as}
\begin{equivenum}
\item
The map $\om^{(2)\star}$ admits a real analytic extension
\[
  \om^{(2)\star}\colon \begin{cases}
  \Hs_{0}^{s} \to \ell^{2s-1,1}, & -1< s \le 0,\\
  \FL_{0}^{s,p} \to \ell^{2s-1,p/2}, & -1/2\le s\le 0,\quad 2 < p < \infty\\
  \Hs_{0}^{s} \to \ell^{2s-1,1}, & \phantom{-}0 < s < 1/2,\\
  \Hs_{0}^{1/2} \to \ell^{r}, & \phantom{-}r > 1.
  \end{cases}
\]

\item
For any $(s,p)$ admissible with $-1< s\le 0$,
$\om_{n}^{(2)\star} = \ell_{n}^{2s-1,p/2}$ locally uniformly on $\Ws^{s,p}$. More precisely,
\[
  \om_{n}^{(2)\star} + 80 n^{2}\pi^{2} I_{n}
  =
  \begin{cases}
  n^{-3s}(\ell_{n}^{p/3} + \ell_{n}^{1}), & -1< s < 0,\\
  \ell_{n}^{p/3} + \ell_{n}^{1+}, & s = 0.~\fish
  \end{cases}
\]
\end{equivenum}
\end{thm}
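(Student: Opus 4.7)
The starting point is the absolutely convergent formula
$\om_{n}^{(2)\star} = -160\pi^{2}\sum_{k\ge 1}k^{3}\Om_{nk}^{(2)} + 80\sum_{k\ge 1}k\Om_{nk}^{(4)}$
of Theorem~\ref{om-2-n-analytic}. I would split each sum into its diagonal contribution ($k=n$) and its off-diagonal remainder ($k\neq n$), following the scheme used for the KdV frequencies in the proof of Theorem~\ref{kdv-freq-as}.

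For the diagonal terms, Lemma~\ref{decay-kRnk} gives $n\Om_{nn}^{(2)} = \tfrac{\gm_{n}^{2}}{16n\pi}(1+n^{-s-1}\ell_{n}^{p})$, whence $-160\pi^{2}n^{3}\Om_{nn}^{(2)} = -10\pi n\gm_{n}^{2} + n^{-3s}\ell_{n}^{p/3}$ after simplifying the error using $\gm_{n} = n^{-s}\ell_{n}^{p}$. By Lemma~\ref{decay-kOm2nk} the corresponding diagonal fourth-moment term $80\,n\Om_{nn}^{(4)}$ is of order $\gm_{n}^{4}/n^{3} = n^{-4s-3}\ell_{n}^{p/4}$, which embeds into the same error class. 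The relation~\eqref{In-gmn-est} then identifies $-10\pi n\gm_{n}^{2}$ with $-80\pi^{2}n^{2}I_{n}$ up to an error $n^{-1-4s}\ell_{n}^{1}$ for $-1\le s\le -1/2$ (respectively $n^{-2s}(\ell_{n}^{p/4}+\ell_{n}^{1})$ for $-1/2 < s\le 0$), both sitting inside $n^{-3s}(\ell_{n}^{p/3}+\ell_{n}^{1})$.

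For the off-diagonal contributions the choice $\rho = 1$ in Lemma~\ref{decay-kRnk} gives $k^{3}\Om_{nk}^{(2)} = \tfrac{n}{n^{2}-k^{2}}\,k^{-1-4s}\ell_{k}^{p/3}$, while Lemma~\ref{decay-kOm2nk} directly yields $k\Om_{nk}^{(4)} = \tfrac{n}{n^{2}-k^{2}}\,k^{-6s-5}\ell_{k}^{p/5}$. I would then control the sums $\sum_{k\neq n}\tfrac{n}{n^{2}-k^{2}}a_{k}$ via the weight inequality~\eqref{hill-est} combined with the discrete Hilbert/Young-type bounds already employed in the KdV analog, splitting the summation over $\abs{n-k}\le n/2$ versus $\abs{n-k}>n/2$ to arrive at a bound of the form $n^{-3s}(\ell_{n}^{p/3}+\ell_{n}^{1})$; at the endpoint $s=0$ the $\ell_{n}^{1}$-factor sharpens to $\ell_{n}^{1+}$ by the same case split, recovering the precise asymptotic in (ii).

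Part (i) then follows: each $\om_{n}^{(2)\star}$ is real analytic on $\Ws^{s,p}$ by Theorem~\ref{om-2-n-analytic}, and the locally uniform asymptotic just established places the sequence in $\ell^{2s-1,p/2}$ for $-1 < s\le 0$, settling the first two cases of (i). For the ranges $0 < s < 1/2$ and $s = 1/2$ at $p=2$, I would extend the moment estimates of Lemmas~\ref{decay-kRnk} and~\ref{decay-kOm2nk} into $s > 0$ -- their proofs only rely on $\gm_{n} = n^{-s}\ell_{n}^{p}$, which remains valid -- and use the improved decay $I_{n} = n^{-2s-1}\ell_{n}^{1}$ to upgrade the leading term $-80\pi^{2}n^{2}I_{n}$ into the stronger target $\ell^{2s-1,1}$ (resp.\ $\ell^{r}$ for every $r>1$ at $s=1/2$). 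The main obstacle throughout is the off-diagonal bookkeeping: extracting precisely the weight class $n^{-3s}(\ell_{n}^{p/3}+\ell_{n}^{1})$ demands the correct choice of $\rho$ in Lemma~\ref{decay-kRnk} and a careful split in $\abs{n-k}$, mirroring the most delicate step of the KdV analysis.
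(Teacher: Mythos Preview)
Your overall strategy matches the paper's: split each sum into diagonal and off-diagonal parts, control the diagonal via $n\Om_{nn}^{(2)}$, $n\Om_{nn}^{(4)}$ and the relation~\eqref{In-gmn-est}, and handle the off-diagonal via the discrete Hilbert-type lemmas. The diagonal computation is essentially right (though $\gm_k^3\ell_k^p = k^{-3s}\ell_k^{p/4}$, not $\ell_k^{p/3}$, and $\gm_k^5\ell_k^p = k^{-5s}\ell_k^{p/6}$, not $\ell_k^{p/5}$; these slips are harmless for the final statement).

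There is, however, a genuine gap in the off-diagonal estimate for $k^3\Om_{nk}^{(2)}$. With your choice $\rho=1$ and $p=2$ one gets $k^3\Om_{nk}^{(2)} = \tfrac{n}{n^2-k^2}k^{-4s-1}\ell_k^{1/2}$, so the input to the operator $A$ of Lemma~\ref{ht-2-x} lies (after $\ell^{1/2}\subset\ell^1$) in $\ell^{4s+1,1}$. That lemma requires the weight exponent to lie in $[-2,0]$; for $-1 < s < -3/4$ one has $4s+1 < -2$, and the bound genuinely fails --- for generic sequences in $\ell^{4s+1,1}$ the terms $x_k/(k^2-n^2)$ do not even tend to zero (see the remark following Lemma~\ref{ht-2-x}), so no near/far splitting can rescue the argument. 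The paper handles $-1<s<-1/2$ by taking $\rho=0$ instead: this produces the $k$-weight $3s+1\in(-2,-1/2)$ at the cost of a prefactor $n^{1-s}$, Lemma~\ref{ht-2-x} then applies, and one obtains $\sum_{k\neq n}k^3\Om_{nk}^{(2)} = n^{-4s-1}\ell_n^{1+}$. Your closing caveat about ``the correct choice of $\rho$'' is on point, but the concrete choice must switch from $\rho=1$ (which the paper also uses for $-1/2\le s\le 0$) to $\rho=0$ in the lower range.

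For $0<s\le 1/2$ the paper does not extend Lemmas~\ref{decay-kRnk} and~\ref{decay-kOm2nk} to positive $s$ as you propose. It simply invokes the already-established $s=0$, $p=2$ bound $\om_n^{(2)\star}+80n^2\pi^2 I_n = \ell_n^{1+}$ on $\Ws^{0,2}\supset\Ws^{s,2}$, and then uses the sharper decay $I_n = n^{-2s-1}\ell_n^1$ to place $-80n^2\pi^2 I_n$ in $\ell^{2s-1,1}$ (respectively $\ell^{r}$, $r>1$, at $s=1/2$). This is considerably less work than re-proving the moment lemmas.
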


\begin{rem}
\label{rem:kdv2-freq-as}
(i)
By Remark~\ref{rem:om-n-analytic} one sees that the asymptotics of $\om_{n}^{(2)\star}$, $n\ge1$, viewed as real analytic functions of the actions on $\Vs^{2s+1,p/2}$, are the same as the ones stated in Theorem~\ref{kdv2-freq-as} (ii) for $\om_{n}^{(2)\star}$ on $\Ws^{s,p}$.
In particular, $\om^{(2)\star}$, viewed as a function of the actions, is a real analytic map
\[
  \om^{(2)\star}\colon \begin{cases}
  \ell^{2s+1,1} \to \ell^{2s-1,1}, & -1< s \le 0,\\
  \ell^{2s+1,p/2} \to \ell^{2s-1,p/2}, & -1/2\le s\le 0,\quad 2 < p < \infty\\
  \ell^{2s+1,1} \to \ell^{2s-1,1}, & \phantom{-}0 < s < 1/2,\\
  \ell^{3/2,1} \to \ell^{r}, & \phantom{-}r > 1.
  \end{cases}
\]

(ii) 
Suppose $u\in\Hs_{c}^{s}$ with $c$ an arbitrary real number. Write $u = c + q$ with $q\in\Hs_{0}^{s}$, then
\begin{align*}
  \om_{n}^{(2)}(u) &= \om_{n}^{(2)}(q) + 10c\om_{n}^{(1)}(q) + 60n\pi c^{2}\\
  &= (2n\pi)^{5} + 10(2n\pi)^{3} c + 20(2n\pi)\Hm_{0}(q) + 60n\pi c^{2}\\
  &\qquad + \om_{n}^{(2)\star}(q) + 10c\om_{n}^{(1)\star}(q).
\end{align*}
Since by Theorem~\ref{kdv-freq-as} (ii) one has $\om_{n}^{(1)\star} = n^{-2s-1}\ell_{n}^{1}$ for $-1< s < -1/2$ and $p=2$, and $\om_{n}^{(1)\star} = \ell_{n}^{1+}$ for $s=-1/2$ and $2\le p < \infty$, we conclude that for any $c\in\R$, the asymptotic estimate stated in Theorem~\ref{kdv2-freq-as} (ii) holds for $\om_{n}^{(2)\star} + 10c\om_{n}^{(1)\star}$ as well.\map
\end{rem}

\begin{proof}
Let us first prove (ii).
By Theorem~\ref{om-2-n-analytic} we have for any $(s,p)$ admissible with $s> -1$ and any $n\ge 1$, $\om_{n}^{(2)\star} = -160\pi^{2}\sum_{k\ge 1} k^{3} \Om_{nk}^{(2)} + 80\sum_{k\ge 1} k\Om_{nk}^{(4)}$.
We consider the asymptotics of the terms $\sum_{k\ge 1} k^{3} \Om_{nk}^{(2)}$ and  $\sum_{k\ge 1} k\Om_{nk}^{(4)}$ separately.

Suppose $-1 < s < -1/2$ and $p=2$, then Lemma~\ref{decay-kRnk} and the asymptotics~\eqref{taun-lmn-est}, $\gm_{k} = k^{-s}\ell_{k}^{p}$, yields for $k\neq n$ with $\rho = 0$
\[
  k^{3}\Om_{nk}^{(2)} = n^{1-s}\frac{k^{-3s-1}\ell_{k}^{1}}{n^{2}-k^{2}}.
\]
Since in this case $-2 \le 3s+1 < -1/2$, we get with Lemma~\ref{ht-2-x}
\[
  \sum_{k\neq n} k^{3}\Om_{nk}^{(2)} = n^{-4s-1}\ell_{n}^{1+}.
\]
Now consider the case $-1/2\le s \le 0$ and $2\le p < \infty$. By Lemma~\ref{decay-kRnk} we obtain for $k\neq n$ with $\rho = 1$
\[
  k^{3}\Om_{nk}^{(2)} = n\frac{k^{-4s-1}\ell_{k}^{p/4}}{n^{2}-k^{2}}.
\]
If $-1/2 \le s \le -1/4$, then $-1\le 4s+1 \le 0$ and Lemma~\ref{ht-2} gives
\[
  \sum_{k\neq n} k^{3}\Om_{nk}^{(2)} = n^{-4s-1}(\ell_{n}^{p/4}+\ell_{n}^{1+}).
\]
Moreover, if $-1/4 < s < 0$ and $2\le p < \infty$, then $0 < 4s+1$ and Lemma~\ref{ht-2} yields
\[
  \sum_{k\neq n} k^{3}\Om_{nk}^{(2)} = \ell_{n}^{p/4}+\ell_{n}^{1+}.
\]
Finally, if $s = 0$ and $2\le p < \infty$, then $k^{-4s-1}\ell_{k}^{p/4} = \ell_{k}^{1}$ and we conclude with Lemma~\ref{ht-2} that
\[
  \sum_{k\neq n} k^{3}\Om_{nk}^{(2)} = \ell_{n}^{1+}.
\]
For $k=n$ and $(s,p)$ admissible with $-1 < s\le 0$, Lemma~\ref{decay-kRnk} combined with the asymptotics~\eqref{taun-lmn-est} of $\gm_{n}$ gives
\begin{align*}
  n^{3}\Om_{nn}^{(2)}
   = \frac{n\gm_{n}^{2}}{16\pi}\p*{1+n^{-s-1}\ell_{n}^{p}}
   = \frac{n\gm_{n}^{2}}{16\pi}+n^{-3s}\ell_{n}^{p/3}
   = n^{-2s+1}\ell_{n}^{p/2}.
\end{align*}
Altogether, we thus have for any $(s,p)$ admissible with $-1< s\le 0$, using that for $-1 < s < 0$ one has $-3s > (-4s-1)_{+}$ and hence 
\begin{equation}
  \label{Om-nk-2-est-sum}
  \sum_{k\ge 1} k^{3}\Om_{nk}^{(2)} =
  \frac{n\gm_{n}^{2}}{16\pi} + 
  \begin{cases}
  n^{-3s}(\ell_{n}^{p/3} + \ell_{n}^{1}), & -1 < s < 0,\\
  \ell_{n}^{p/3} + \ell_{n}^{1+}, & s = 0.
  \end{cases}
\end{equation}

Next we consider the term $\sum_{k\ge 1} k\Om_{nk}^{(4)}$. By Lemma~\ref{decay-kOm2nk} and the asymptotics~\eqref{taun-lmn-est} of~$\gm_{n}$, we have for $k\neq n$,
\[
  k\Om_{nk}^{(4)} =
  n\frac{k^{-6s-5}\ell_{k}^{p/6}}{n^{2}-k^{2}}.
\]
If $-1 \le s < -1/2$ and $p=2$, then $-1\le 6s+5 \le 0$ and hence by Lemma~\ref{ht-2}
\begin{equation}
  \label{Omnk-4-est-1}
  \sum_{k\neq n}k\Om_{nk}^{(4)}
  = n^{(-6s-5)_{+}}\ell_{n}^{1+}.
\end{equation}
If $s=-1/2$, then $-6s-5 = -2$ and hence $k^{-6s-5}\ell_{k}^{p/6} = \ell_{k}^{1}$, so~\eqref{Omnk-4-est-1} is also valid for $-1/2\le s\le 0$ and $2 \le p < \infty$.

For the case $k=n$ we further obtain with Lemma~\ref{decay-kOm2nk} and the estimate of $\gm_{n}$,
\begin{align*}
  n\Om_{nn}^{(4)}
   = \frac{3}{2^{10} n^{3}\pi^{3}}\gm_{n}^{4}\p*{1 + n^{-s-1}\ell_{n}^{p}}
   = n^{-4s-3}\ell_{n}^{p/4}.
\end{align*}
Therefore, for any $(s,p)$ admissible with $s\ge -1$,
\begin{equation}
  \label{Om-nk-4-est-sum}
  \sum_{k\ge 1} k\Om_{nk}^{(4)} = n^{(-6s-5)_{+}}\ell_{n}^{1+}.
\end{equation}

By~\eqref{In-gmn-est} we have for any $(s,p)$ admissible with $-1\le s\le 0$
\begin{equation}
  \label{gmn2-In-asymptotics-2}
  \frac{\gm_{n}^{2}}{8n\pi} = I_{n} + n^{-3s-2}(\ell_{n}^{p/4}+\ell_{n}^{1}).
\end{equation}

Estimates~\eqref{Om-nk-2-est-sum}, \eqref{Om-nk-4-est-sum}, and~\eqref{gmn2-In-asymptotics-2} give for $(s,p)$ admissible with $-1< s\le 0$
\begin{equation}
  \label{om-2-est-temp}
  \om_{n}^{(2)\star} 
  = 
  -80n^{2}\pi^{2}I_{n}
  +
  \begin{cases}
  n^{-3s}(\ell_{n}^{p/3}+\ell_{n}^{1}), & -1< s < 0,\\
  \ell_{n}^{p/3}+\ell_{n}^{1+}, & s = 0.
  \end{cases}
\end{equation}
By going through the arguments of the proof, one sees that the estimates hold locally uniformly on $\Ws^{s,p}$.

We now prove (i). By combining the estimate~\eqref{om-2-est-temp} of $\om_{n}^{(2)\star}+80n^{2}\pi^{2}I_{n}$ with the estimate~\eqref{taun-lmn-est}, $\gm_{n}=n^{-s}\ell_{n}^{p}$, and noting that $-2s+1 > -3s$ for $-1< s\le 0$, one obtains for $(s,p)$ admissible with $-1< s\le 0$
\[
  \om_{n}^{(2)\star} = 
  n^{-2s+1}\ell_{n}^{p/2}.
\]
Moreover, for $s=0$ and $p=2$ one has by~\eqref{om-2-est-temp}, $\om_{n}^{(2)\star} + 80n^{2}\pi^{2}I_{n} = \ell_{n}^{1+}$, and by Lemma~\ref{In-lmd-est} that $I_{n} = \frac{\gm_{n}^{2}}{8n\pi}(1+n^{-1}\ell_{n}^{1+})$.
Since by~\eqref{taun-lmn-est} for $p=2$ and any $s\ge 0$ one has $\gm_{n} = n^{-2s+1}\ell_{n}^{1}$, we conclude 
\[
  \om_{n}^{(2)\star} = 
  \begin{cases}
  n^{-2s+1}\ell_{n}^{1}, & 0 < s < 1/2,\quad p=2,\\
  \ell_{n}^{1+}, & s=1/2,\quad p=2.
  \end{cases}
\]
All estimates hold locally uniformly on $\Ws^{s,p}$ for $(s,p)$ admissible. Since each $\om_{n}^{(2)\star}$, $n\ge 1$, is analytic on $\Ws^{s,p}$ by Lemma~\ref{om-2-n-analytic}, the claimed analyticity of $\om^{(2)\star}$ thus follows.~\qed
\end{proof}

Corollary~\ref{cor:kdv2-freq-localdiffeo} follows from the following more detailed statement.

\begin{cor}
For any $-1< s < 1/2$,
\begin{equivenum}

\item
$\om^{(2)\star}\colon \ell_{+}^{2s+1,1}\to \ell^{2s-1,1}$ is a local diffeomorphism near $I=0$.

\item
For any $I\in\ell_{+}^{2s+1,1}$, the linear operator
\[
  \Om_{I} = \ddd_{I}\om^{(2)\star} + 20\diag((4n^{2}\pi^{2})_{n\ge 1})\colon \ell^{2s+1,1}\to \ell^{2s-1,1}
\]
is compact.
\item
$\om^{(2)\star}\colon \ell_{+}^{2s+1,1}\to \ell^{2s-1,1}$ is a Fredholm map of index zero everywhere.
\item
$\om^{(2)\star}\colon \ell_{+}^{2s+1,1}\to \ell^{2s-1,1}$ is a local diffeomorphism on an open and dense subset of $\ell_{+}^{2s+1,1}$.~\fish
\end{equivenum}
\end{cor}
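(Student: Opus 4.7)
The strategy is to mirror the proof of Theorem~\ref{om1-diffeo}, replacing the shift $6\Id$ by the weighted diagonal $D\defl -20\diag((4n^{2}\pi^{2})_{n\ge 1})$, which is the principal part of $\om^{(2)\star}$ at $I=0$. Throughout I fix $-1<s<1/2$. For (i), I would first invoke Theorem~\ref{om-2-n-analytic} and Remark~\ref{rem:om-2-n-analytic} to ensure that $\om^{(2)\star}$ extends real-analytically to a complex neighborhood of $\ell_{+}^{2s+1,1}$ inside $\ell_{\C}^{2s-1,1}$, and then read off $\ddd_{0}\om^{(2)\star} = D$ from the expansion~\eqref{exp-om-star-kdv2}. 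Since $D\colon \ell^{2s+1,1}\to \ell^{2s-1,1}$ is an isomorphism (its inverse is componentwise division by the entries), the inverse function theorem delivers (i).

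For (ii), the plan is to study the analytic remainder $R(I)\defl \om^{(2)\star}(I)-DI$ so that $\Om_{I}=\ddd_{I}R$, and to show that $R$ takes values in $\ell^{\beta,1}$ for some $\beta > 2s-1$, locally uniformly on a complex neighborhood of $\ell_{+}^{2s+1,1}$. For $-1<s\le 0$, Theorem~\ref{kdv2-freq-as}~(ii) yields $R\in \ell^{3s,1}$, and $3s>2s-1$ precisely because $s>-1$. For $0\le s<1/2$ I would use the embedding $\Hs_{0}^{s}\opento L_{0}^{2}$ together with the $s=0$ asymptotic $R_{n}=\ell_{n}^{1+}$, which is locally uniform on $\Ws^{0,2}$ and hence, by the embedding, on $\Ws^{s,2}$; the H\"{o}lder observation that $\ell^{r}\opento \ell^{2s-1,1}$ is bounded whenever $r>1$ is small enough to make $(2s-1)r/(r-1)<-1$ then furnishes a $\beta>2s-1$ such that $R\in \ell^{\beta,1}$ locally uniformly. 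A Cauchy estimate promotes this to boundedness of $\Om_{I}\colon \ell^{2s+1,1}\to \ell^{\beta,1}$, and since $\ell^{\beta,1}\opento \ell^{2s-1,1}$ is compact whenever $\beta>2s-1$, (ii) follows.

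Statement (iii) is then immediate, since $\ddd_{I}\om^{(2)\star}=D+\Om_{I}$ is the sum of an isomorphism and a compact operator, hence Fredholm of index zero. For (iv) I would combine (i) and (iii) with Proposition~\ref{diffeo-prop} in the appendix, according to which a real analytic Fredholm map of index zero between Banach spaces that is a local diffeomorphism at one point is a local diffeomorphism on a dense open subset of its domain containing that point. The main technical hurdle is the remainder asymptotic for $0\le s<1/2$, not directly covered by Theorem~\ref{kdv2-freq-as}~(ii); once the transfer of local uniformity from $\Ws^{0,2}$ to $\Ws^{s,2}$ via the Sobolev embedding is in place, everything else is routine functional-analytic bookkeeping.
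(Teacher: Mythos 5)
Your proposal is correct and follows essentially the same route as the paper: analyticity together with $\ddd_{0}\om^{(2)\star}=\diag((-80n^{2}\pi^{2})_{n\ge1})$ for (i), the asymptotics of Theorem~\ref{kdv2-freq-as}~(ii) combined with a Cauchy estimate and a compact embedding for (ii) (the paper treats $0\le s<1/2$ exactly as you do, by falling back on the $s=0$ estimate through the inclusion into the larger space), and compact-perturbation Fredholm theory plus Proposition~\ref{diffeo-prop} for (iii)--(iv). The one point to tighten is (iv): Proposition~\ref{diffeo-prop} is stated for self-maps whose differential is a compact perturbation of the \emph{identity}, not for arbitrary index-zero Fredholm maps between different spaces, so you should apply it to $\Lm^{-1}\om^{(2)\star}\colon \ell_{+}^{2s+1,1}\to\ell^{2s+1,1}$ with $\Lm=\diag((-80n^{2}\pi^{2})_{n\ge1})$, which is precisely what the paper does.
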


\begin{proof}
(i): By Theorem~\ref{kdv2-freq-as} and Remark~\ref{rem:kdv2-freq-as}, the map $\om^{(2)\star}\colon \Vs^{2s+1,1}\to \ell_{\C}^{2s-1,1}$ is analytic for any $-1< s < 1/2$, where the neighborhood $\Vs^{2s+1,1}$ of $\ell_{+}^{2s+1,1}$ in $\ell_{\C}^{2s+1,1}$ was introduced in~\eqref{Vsp}. By Theorem~\ref{thm:bnf-kdv-kdv2} we have
\[
  \Lm\defl\ddd_{0}\om^{(2)\star} = \diag((-80n^{2}\pi^{2})_{n\ge 1}).
\]
Since $\Lm \colon \ell_{\C}^{2s+1,1}\to \ell_{\C}^{2s-1,1}$ is an isomorphism, it follows that $\om^{(2)\star}$ is a local diffeomorphism at $I=0$ for $-1< s < 1/2$.

(ii): Suppose $-1< s < 0$, then by Theorem~\ref{kdv2-freq-as} (ii) $\om_{n}^{(2)\star} + 20(2n\pi)^{2}I_{n} = n^{-3s}\ell_{n}^{1}$ locally uniformly on $\Vs^{2s+1,1}$, hence by Cauchy's estimate the map $\Om_{I}\colon \ell_{\C}^{2s+1,1}\to \ell_{\C}^{-3s,1}$ is bounded for any $I\in\Vs^{2s+1,1}$. Since $\ell_{\C}^{-3s,1}$ embeds compactly into $\ell_{\C}^{2s-1,1}$ for $-1< s < 0$, we conclude that $\Om_{I}\colon \ell_{\C}^{2s+1,1}\to \ell_{\C}^{2s-1,1}$ is compact.
If $0\le s < 1/2$, then by Theorem~\ref{kdv2-freq-as} (ii) $\om_{n}^{(2)\star} + 20(2n\pi)^{2}I_{n} = \ell_{n}^{1+}$ locally uniformly on $\Vs^{2s+1,1}$, hence by Cauchy's estimate the map $\Om_{I}\colon \ell_{\C}^{2s+1,1}\to \ell_{\C}^{r}$ is bounded for any $I\in\Vs^{s,1}$ and any $r > 1$. Since $\ell_{\C}^{r}$ embeds compactly into $\ell_{\C}^{2s-1,1}$ if $r > 1$ is chosen sufficiently small, we conclude that $\Om_{I}\colon \ell_{\C}^{2s+1,1}\to \ell_{\C}^{2s-1,1}$ is compact also in this case.

(iii): By item (ii), $\ddd_{I}\om^{(2)\star}$ is a compact perturbation of $\Lm$ and hence a Fredholm operator of index zero, meaning that $\om^{(2)\star}$ is a Fredholm map of index zero.

(iv): Consider the real analytic map $f = \Lm^{-1}\om^{(2)\star}\colon \Vs^{2s+1,1}\to \ell_{\C}^{2s+1,1}$. By item (iii) for any $I\in\Vs^{2s+1,1}$ the differential $\ddd_{I}f$ is a compact perturbation of the identity on $\ell_{\C}^{2s+1,1}$ if $-1< s < 1/2$. So Proposition~\ref{diffeo-prop} applies yielding that $f$ is a local diffeomorphism generically.\qed
\end{proof}

\begin{rem}
\label{rem:kdv2-freq-localdiffeo}
(i) Since by Remark~\ref{rem:kdv2-freq-as}, $\om^{(2)\star}\colon \Vs^{2s+1,p/2}\to \ell_{\C}^{2s-1,p/2}$ is analytic for $(s,p)$ admissible with $-1< s < 1/2$, and $\ddd_{0}\om^{(2)\star} = \Lm$ by Theorem~\ref{thm:bnf-kdv-kdv2}, it follows that $\om^{(2)\star}$ is a local diffeomorphism at $I=0$ also for $-1/2\le s \le 0$ and $p > 2$.

(ii): 
For $-1/2\le s \le 0$ and $2<p<\infty$, we have by Theorem~\ref{kdv2-freq-as} (ii),
\[
  \om_{n}^{(2)\star} + 80n^{2}\pi^{2}I_{n} = 
  \begin{cases}
  n^{-3s}(\ell_{n}^{p/3} +\ell_{n}^{1}), & -1/2 \le  s < 0,\quad 2 < p  < \infty,\\
  \ell_{n}^{p/3} +\ell_{n}^{1+}, & \phantom{-1/2 \le  }\,s = 0,\quad 2 < p  < \infty.
  \end{cases}
\]
Therefore,
\[
  \Om_{I}\colon \ell_{\C}^{2s+1,p/2} \to
  \begin{cases}
  \ell_{\C}^{-3s,1}, & -1/2\le s < 0,\quad 2 < p \le 3,\\
  \ell_{\C}^{-3s,p/3}, & -1/2\le s < 0,\quad  3 < p  < \infty,\\
  \ell_{\C}^{r}, & r > 1,\;\;s = 0,\quad 2 < p \le 3,\\
  \ell_{\C}^{p/3}, & \phantom{-1/2 \le  } s = 0,\quad  3 < p  < \infty,
  \end{cases}
\]
is bounded. In all four cases, the range embeds compactly into $\ell^{2s-1,p/2}$, hence $\Om_{I}\colon \ell_{\C}^{2s+1,p/2} \to \ell_{\C}^{2s-1,p/2}$ is compact and $\ddd_{I}\om^{(2)\star}$ is a compact perturbation of $\Lm$.

(iii): The map $f = \Lm^{-1}\om^{(2)\star}\colon \Vs^{2s+1,p/2}\to \ell_{\C}^{2s+1,p/2}$ is analytic and for any $I\in \Vs^{2s+1,p/2}$ its differential is a compact perturbation of the identity for any $-1/2\le s \le 0$ and $p>2$. Proposition~\ref{diffeo-prop} yields that $f$ is a local diffeomorphism generically.~\map
\end{rem}

\begin{lem}
\label{om-2-unif}
\begin{equivenum}
\item
For any $r > 1$, the map $\om^{(2)\star}\colon \ell_{+}^{2,1} \to \ell^{r}$ is uniformly continuous on bounded subsets.
\item
For any $r > 1$, the map $\om^{(2)\star}\colon \Hs_{0}^{1/2} \to \ell^{r}$ is uniformly continuous on bounded subsets.~\fish
\end{equivenum}

\end{lem}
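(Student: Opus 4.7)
The plan is to decompose $\om^{(2)\star}$ into a leading linear part in the actions and a higher-order remainder, and to verify uniform continuity on bounded sets for each piece separately. For the leading part I will exploit the uniform continuity of the Birkhoff map from Lemma~\ref{Om-unif-continous}; for the remainder I will use a compact-embedding argument of Rellich type in the same spirit as the proof of that lemma.

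Concretely, I would write
\[
  \om_n^{(2)\star} = -80\, n^2\pi^2\, I_n + \Psi_n,\qquad \Psi_n \defl \om_n^{(2)\star} + 80\, n^2\pi^2\, I_n.
\]
By Theorem~\ref{kdv2-freq-as}~(ii) applied at $s=0$, $p=2$, one has $(\Psi_n)_{n\ge 1}\in \ell^{1+}$ locally uniformly on $\Ws^{0,2}$, so for any $r>1$ the analytic extension $\Psi\colon \Ws^{0,2}\to \ell_{\C}^{r}$ is locally bounded and hence continuous on $L_0^2$. The action-coordinate analog from Remark~\ref{rem:kdv2-freq-as} yields a continuous analytic map $\Psi\colon \ell_{+}^{1}\to \ell^{r}$ on a complex neighborhood of $\ell_{+}^{1}$.

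For (ii), the leading part $u\mapsto(-80\, n^2\pi^2\, I_n(u))_{n\ge 1}$ factors as the Birkhoff map $\Ph\colon \Hs_0^{1/2}\to h_0^{1}$ (uniformly continuous on bounded sets by Lemma~\ref{Om-unif-continous}), followed by the quadratic map $z\mapsto(|z_n|^2)_{n\ge 1}$ from $h_0^{1}$ to $\ell_{+}^{2,1}$ (Lipschitz on bounded sets by a direct Cauchy--Schwarz bound), followed by the bounded diagonal operator $I\mapsto (-80\, n^2\pi^2\, I_n)_{n\ge 1}$ from $\ell^{2,1}$ into $\ell^{1}\hookrightarrow\ell^{r}$; the composition is therefore uniformly continuous on bounded subsets of $\Hs_0^{1/2}$. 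For the remainder, the compact inclusion $\Hs_0^{1/2}\hookrightarrow L_0^{2}$ (Rellich) sends bounded subsets of $\Hs_0^{1/2}$ to relatively compact subsets of $L_0^{2}$, and combined with continuity of $\Psi\colon L_0^{2}\to \ell^{r}$ this yields uniform continuity of $\Psi$ on bounded subsets of $\Hs_0^{1/2}$. Adding the two contributions proves (ii). Part (i) is entirely parallel in action coordinates: bounded subsets of $\ell^{2,1}$ have uniformly summable tails, since $\sum_{n>N}|I_n|\le N^{-2}\n{I}_{\ell^{2,1}}$, and are therefore relatively compact in $\ell^{1}$, so composing with the continuous map $\Psi\colon \ell_{+}^{1}\to \ell^{r}$ gives the claim.

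The crucial technical input, and the step that keeps this scheme honest, is that the extracted remainder $\Psi_n = \om_n^{(2)\star}+80\, n^2\pi^2\, I_n$ already takes values in $\ell^{1+}$ on the larger spaces $L_0^{2}$ and $\ell_+^{1}$; this is precisely what Theorem~\ref{kdv2-freq-as}~(ii) delivers at $s=0$. Without such a decomposition the Rellich argument would not produce values in $\ell^{r}$ at all, since $\om^{(2)\star}$ itself is only $\ell^{-1,1}$-valued on $L_0^{2}$.
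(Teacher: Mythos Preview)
Your proposal is correct and follows essentially the same strategy as the paper: split $\om^{(2)\star}=\Lm I+\Psi$ with $\Lm=\diag(-80n^{2}\pi^{2})$, use Theorem~\ref{kdv2-freq-as}~(ii) at $s=0$ to see that $\Psi$ is analytic (hence continuous) into $\ell^{r}$ on the \emph{larger} space, and then pass to uniform continuity on bounded subsets of the smaller space via a compact embedding, while the linear part $\Lm$ is Lipschitz.

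Two small remarks. First, in your argument for (i) the domain of $\Psi$ in action coordinates is $\ell_{+}^{1,1}$ (this is the image of $L_{0}^{2}$ under $q\mapsto I$), not the unweighted $\ell_{+}^{1}$; accordingly the compact embedding you actually use is $\ell_{+}^{2,1}\hookrightarrow\ell_{+}^{1,1}$, which follows from $\sum_{n>N} n|I_{n}|\le N^{-1}\n{I}_{\ell^{2,1}}$ rather than the unweighted tail bound you wrote. Second, for (ii) the paper simply deduces the statement from (i) by pre-composing with the Birkhoff map $\Hs_{0}^{1/2}\to h_{0}^{1}$ (uniformly continuous on bounded sets by Lemma~\ref{Om-unif-continous}) and the quadratic map $z\mapsto I$, rather than running the Rellich argument once more on potential space; your direct route is equally valid.
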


\begin{proof}
(i) By Theorem~\ref{kdv2-freq-as} (ii), $\om_{n}^{(2)\star} + 80n^{2}\pi^{2}I_{n} = \ell_{n}^{1+}$ on $\ell_{+}^{1,1}$. Recall that $\Lm = \diag((-80n^{2}\pi^{2})_{n\ge 1})$, hence the map
\[
  \om^{(2)\star} - \Lm\colon \ell_{+}^{1,1}\to \ell^{r}
\]
is real analytic for any $r > 1$ and thus uniformly continuous on compacts. Since $\ell_{+}^{2,1}$ embeds compactly into $\ell_{+}^{1,1}$, and $\Lm\big|_{\ell_{+}^{2,1}}\colon \ell_{+}^{2,1}\to \ell_{+}^{1}$ is Lipschitz continuous, we conclude that
$\om^{(2)\star} \colon \ell_{+}^{2,1}\to \ell^{r}$ is uniformly continuous on bounded subsets of $\ell_{+}^{2,1}$ for any $r > 1$.

(ii) Since the Birkhoff map $\Ph\colon \Hs_{0}^{1/2}\to \ell^{2,1}$ is uniformly continuous on bounded subsets of $\Hs_{0}^{1/2}$ by Lemma~\ref{Om-unif-continous}, the claim follows immediately with (i).\qed
\end{proof}

\subsection{Hamiltonian}
\label{ss:kdv2-hamiltonian}

In this section, we derive in analogous fashion to \cite{Kappeler:CNzeErmy} a formula for the renormalized KdV2 Hamiltonian
\[
  \Hm_{2}^{\star} = \Hm_{2} - 10 \Hm_{0}^{2} - \sum_{n\ge 1} (2n\pi)^{5} I_{n}.
\]
This formula will allow us to extend the latter, when written as a function of the actions, from $\ell_{+}^{3,1}$ to $h_{+}^{1}$.

For convenience we introduce for any integer $n\ge 1$ and $m\ge 1$ the (conveniently normalized) moments
\[
  R_{n}^{(m)} = -\frac{1}{\pi}\int_{\Gm_{n}} F_{n}^{m}(\lm)\,\dlm.
\]

\begin{lem}
\label{Rnk-prop}
\begin{equivenum}
\item $R_{n}^{(1)} = I_{n}$ for any $n\ge 1$.
\item Each moment $R_{n}^{(m)}$, $n\ge1$, $m\ge 1$, is real analytic on $\Ws$.
\item $R_{n}^{(2l)} = 0$ for all $n\ge 1$ and $l\ge 0$.
\item $R_{n}^{(m)} = O(\gm_{n}^{m+1}/n^{m})$ locally uniformly on $\Ws$ and uniformly as $n\to \infty$. In particular, $R_{n}^{(m)}$ vanishes if $\gm_{n}$ vanishes.
\item On $\Hs_{0}^{-1}$, $R_{n}^{(m)} \ge 0$ and $R_{n}^{(m)}$ vanishes if and only if $\gm_{n}$ vanishes.\fish
\end{equivenum}
\end{lem}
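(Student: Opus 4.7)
The plan is to establish (i)--(v) in turn, exploiting the analyticity of $F_n$ away from $G_n$ together with the product representations assembled in the preliminaries.

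For (i), since $F_n$ is single-valued and analytic on the closed contour $\Gm_n$, integration by parts gives $\oint_{\Gm_n} F_n\,\dlm = -\oint_{\Gm_n} \lm\,\dDl(\lm)/\sqrt[c]{\Dl^{2}(\lm)-4}\,\dlm$, which is exactly $-\pi I_n$ by the definition~\eqref{action}; hence $R_n^{(1)}=I_n$. For (ii), I would argue verbatim as in the proof of Lemma~\ref{Om-nk-prop}~(ii): for every $q_0\in\Ws$ one fixes common isolating neighborhoods $U_m$ and a tubular neighborhood $U_n'\supset \Gm_n$ disjoint from every $G_m$, all valid for $r$ in a complex neighborhood $\Ws_{q_0}$; on $U_n'\times\Ws_{q_0}$ the integrand $F_n^{m}(\lm,r)$ is jointly analytic, and the contour integral against the fixed $\Gm_n$ inherits this analyticity in $r$. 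Item (iii) is immediate from Lemma~\ref{F2-analytic}~(i): $F_n^{2l}=(F_n^{2})^{l}$ is analytic throughout $U_n$, so Cauchy's theorem yields $\oint_{\Gm_n} F_n^{2l}\,\dlm = 0$.

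For (iv), if $\gm_n=0$ then by Lemma~\ref{F-prop}~(i) the function $F$, and therefore $F_n^m$, extends analytically across the collapsed gap to all of $U_n$, and Cauchy's theorem forces $R_n^{(m)}=0$. If $\gm_n\neq 0$, I would collapse $\Gm_n$ onto $G_n^+\cup G_n^-$; combining the elementary length bound $O(\abs{\gm_n})$ with the sup estimate $\abs{F_n}_{G_n^+\cup G_n^-}=O(\abs{\gm_n}/n)$ from Lemma~\ref{F-prop}~(iii) then yields $\abs{R_n^{(m)}}=O(\abs{\gm_n}^{m+1}/n^{m})$, locally uniformly on $\Ws$.

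For (v), assume $q\in\Hs_0^{-1}$. When $m$ is even, $R_n^{(m)}=0$ by~(iii); for $m$ odd, we may assume $\gm_n>0$ (otherwise $R_n^{(m)}=0$ by~(iv)) and parametrize the gap by $\lm_t^{\pm}=\tau_n+(t\pm\ii 0)\gm_n/2$, $t\in[-1,1]$. From the product representation~\eqref{Dl-quot-prod} together with the sign convention~\eqref{c-root} of the canonical root, one checks that for real $q$ the integrand $\dDl/\sqrt[c]{\Dl^{2}-4}$ is purely real on $G_n^{\pm}$ with opposite signs across the cut; integrating from $\lm_n^+$, where $F_n$ vanishes by Lemma~\ref{F-prop}~(ii), then gives $F_n(\lm_t^+)=f(t)=-F_n(\lm_t^-)$ for a continuous real function $f$ with $f(\pm 1)=0$. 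Deforming $\Gm_n$ to $G_n^+\cup G_n^-$ with counterclockwise orientation produces, for odd $m$,
\begin{equation*}
R_n^{(m)} \;=\; \frac{\gm_n}{\pi}\int_{-1}^{1} f(t)^{m}\,\dt,
\end{equation*}
so it remains to show $f\ge 0$ on $[-1,1]$, with strict positivity on $(-1,1)$.

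The principal obstacle is precisely this sign assertion, which is \emph{not} covered by the asymptotic Lemma~\ref{Fk-exp} alone (the latter only pins down $f(t)\sim\gm_n\sqrt[+]{1-t^2}/(4n\pi)$ in the regime $n\to\infty$). The strategy I would use is to observe that $\lm_n^{\ld}$ is the unique critical point of $f$ on $G_n^+$, that $f$ is strictly monotone on each of the two subintervals separated by $\lm_n^{\ld}$ by the constancy of $\sign(\dDl)$ there, and that $f$ vanishes at both endpoints $\lm_n^{\pm}$; this forces $f\ge 0$ throughout and $f>0$ on $(-1,1)$. Alternatively, a density/continuity argument reduces the claim to finite-gap potentials, where $f$ is an explicit elliptic-type integral with manifest positivity, and the general case then follows by the analyticity from~(ii).
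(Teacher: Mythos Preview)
Your arguments for (i)--(iv) coincide with the paper's proof essentially verbatim.

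For (v), the paper takes a much shorter route than your monotonicity argument: it invokes the closed-form identity
\[
  F_n(\lm)\big|_{G_n^{\pm}} = \mp\cosh^{-1}\!\bigl((-1)^{n}\Dl(\lm)/2\bigr),
\]
cited from \cite[Lemma~2.2]{Molnar:2016hq}. Since $(-1)^n\Dl(\lm)>2$ on the open gap, this gives at once that $F_n$ is real on $G_n^{\pm}$, nonvanishing on the interior, and of constant sign on each side; plugging into your contour formula then yields $R_n^{(m)}\ge 0$ and the vanishing criterion immediately, with no need to analyze critical points or invoke density in finite-gap potentials.

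Your approach is not wrong, but the step you flag as the ``principal obstacle'' is exactly where it remains incomplete: monotonicity on each side of $\lm_n^{\ld}$ together with $f(\pm1)=0$ only shows that $f$ has \emph{constant} sign, not that the sign is positive. To close this you would have to track the sign of $\dDl$ and of the canonical root $\sqrt[c]{\Dl^{2}-4}$ on $G_n^{+}$ separately (using~\eqref{c-root}, \eqref{s-root}, and \eqref{s-root-sides}) and check that $f'>0$ near $t=-1$; this works, but it is precisely the computation that the $\cosh^{-1}$ identity packages for free. Your alternative density reduction to finite-gap potentials buys nothing here, since the sign question is no easier in that class. In short: your route is viable but strictly longer; the identity from \cite{Molnar:2016hq} is the missing shortcut.
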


\begin{proof}
(i) follows from~\eqref{action} and integration by parts.

(ii) Arguing as in the proof of Lemma~\ref{Om-nk-prop} (ii), one sees that each moment $R_{n}^{(m)}$ is analytic on $\Ws$.

(iii) Since in view of Lemma~\ref{F2-analytic} (i) any even power of $F_{n}$ is analytic on $U_{n}$, the moments $R_{n}^{(2l)}$ vanish.

(iv) If $\gm_{n} = 0$, then $F_{n}$ is analytic on $U_{n}$ by Lemma~\ref{F-prop} (i). On the other hand, if $\gm_{n}\neq 0$, then by Lemma~\ref{F-prop} (iii) we have $\abs{F_{n}}_{G_{n}} = O(\gm_{n}/n)$ and hence $R_{n}^{(m)} = O(\gm_{n}^{m+1}/n^{m})$.

(v) If $q$ is real, then $G_{n}\subset\R$ and $F_{n}(\lm)\big|_{G_{n}^{\pm}} = \mp\cosh^{-1}((-1)^{n}\Dl(\lm)/2)$ by \cite[Lemma~2.2]{Molnar:2016hq}. Thus $R_{n}^{(m)}$ is real.
Since $(-1)^{n}\Dl(\lm)> 2$ for $\lm_{n}^{-} < \lm < \lm_{n}^{+}$, $R_{n}^{(m)}$ vanishes if and only if $G_{n}$ is a single point.\qed
\end{proof}

\begin{lem}
\label{H2-star-formula}
For any real-valued finite-gap potential with $[q] = 0$
\[
  \Hm_{2}^{\star} = \sum_{n\ge 1} \p*{
    - \frac{40}{3}(2n\pi)^{3}R_{n}^{(3)} + 16 (2n\pi)R_{n}^{(5)}
  }.~\fish
\]
\end{lem}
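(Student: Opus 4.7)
The plan is to follow the same route as in Lemma~\ref{om-n-2-star-formula}, but to extract the scalar $\Hm_2-10\Hm_0^2$ directly as a coefficient at infinity of $F^6$ rather than via the Poisson bracket $\pbr{\Hm_2,\th_n}$. By Lemma~\ref{F-prop} (v), for a real-valued finite-gap potential with $[q]=0$ one has
\[
  F(\lm)^{6} = -\lm^{3} + \tfrac{3}{2}\Hm_{0}\lm + \tfrac{3}{8}\Hm_{1} + \tfrac{3}{32}(\Hm_{2}-10\Hm_{0}^{2})\,\lm^{-1} + O(\lm^{-2}),
\]
so reading off the $\lm^{-1}$-coefficient via Cauchy's theorem on a large circle $C_r$ enclosing every open gap yields
\[
  \Hm_{2}-10\Hm_{0}^{2} = \frac{16}{3\pi\ii}\int_{C_{r}} F^{6}(\lm)\,\dlm.
\]
By Lemma~\ref{F2-analytic} (i), $F^2$, and hence $F^6$, extends analytically across every collapsed gap, so with $S = \setdef{k\in\N}{\gm_{k}\neq 0}$ finite, I would deform the contour to obtain
\[
  \Hm_{2}-10\Hm_{0}^{2} = \frac{16}{3\pi\ii}\sum_{k\in S}\int_{\Gm_{k}} F^{6}(\lm)\,\dlm.
\]

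The next step is to expand $F^6 = (F_k-\ii k\pi)^6$ on each $\Gm_k$ by the binomial theorem. Since the even moments $R_{k}^{(2l)}$ vanish by Lemma~\ref{Rnk-prop} (iii), only the contributions with $m=1,3,5$ survive. Using $(-\ii)^{5}=-\ii$, $(-\ii)^{3}=\ii$, the binomial coefficients $\binom{6}{1}=\binom{6}{5}=6$, $\binom{6}{3}=20$, and the definition $\int_{\Gm_{k}}F_{k}^{m}\,\dlm = -\pi R_{k}^{(m)}$, one gets
\[
  \int_{\Gm_{k}} F^{6}(\lm)\,\dlm = 6\pi\ii(k\pi)^{5}R_{k}^{(1)} - 20\pi\ii(k\pi)^{3}R_{k}^{(3)} + 6\pi\ii(k\pi)R_{k}^{(5)}.
\]

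Substituting back, invoking $R_{k}^{(1)} = I_{k}$ from Lemma~\ref{Rnk-prop} (i), and using the arithmetic identities $32(k\pi)^{5} = (2k\pi)^{5}$, $\tfrac{320}{3}(k\pi)^{3} = \tfrac{40}{3}(2k\pi)^{3}$, and $32(k\pi) = 16(2k\pi)$, one arrives at
\[
  \Hm_{2}-10\Hm_{0}^{2} = \sum_{k\in S}\p*{(2k\pi)^{5}I_{k} - \tfrac{40}{3}(2k\pi)^{3}R_{k}^{(3)} + 16(2k\pi)R_{k}^{(5)}}.
\]
Finally, since $I_{n}$, $R_{n}^{(3)}$, and $R_{n}^{(5)}$ all vanish whenever $\gm_n = 0$ by Lemma~\ref{Rnk-prop} (iv), the sum over $S$ may be extended to all $n\ge 1$; subtracting $\sum_{n\ge 1}(2n\pi)^{5}I_{n}$ then gives the claimed formula for $\Hm_{2}^{\star}$. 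No step presents a genuine obstacle: the analytic ingredients --- the Laurent expansion of $F^6$ at infinity, contour deformation across collapsed gaps, and the vanishing of even moments of $F_k$ --- are already in place from the earlier lemmas, and what remains is careful bookkeeping of the powers of $-\ii$ and of the combinatorial constants.
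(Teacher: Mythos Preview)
Your proposal is correct and follows essentially the same approach as the paper's own proof: extract $\Hm_{2}-10\Hm_{0}^{2}$ as the $\lm^{-1}$-coefficient of $F^{6}$ via Cauchy's theorem on a large circle, deform the contour to the finitely many open gaps using the analyticity of $F^{2}$ from Lemma~\ref{F2-analytic}, expand $(F_{k}-\ii k\pi)^{6}$ binomially, discard the even moments by Lemma~\ref{Rnk-prop} (iii), and simplify. The arithmetic and the bookkeeping of powers of $-\ii$ are handled correctly.
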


\begin{proof}
Suppose $q$ is a finite-gap potential, meaning that $S = \setdef{n\ge \N}{\gm_{n}(q) \neq 0}$ is finite. By Lemma~\ref{F2-analytic} (ii), the function $F^{2}$ is analytic outside a sufficiently large circle $C_{r}$ which encloses all open gaps $G_{n}$, $n\in S$, and whose exterior contains $G_{0}$. According to~\eqref{F-exp}, the function $F$ admits an asymptotic expansion for $\nu_{n} = (n+1/2)\pi$. In particular,
\[
  F(\lm)^{6} = -\lm^{3} + \frac{3}{2}\Hm_{0}\lm + \frac{3}{8}\Hm_{1} + \frac{3}{32}\p*{\Hm_{2} - 10\Hm_{0}^{2}}\frac{1}{\lm} + O(\lm^{-2}),
\]
hence by Cauchy's Theorem
\[
  \frac{1}{2\pi\ii} \int_{C_{r}} F(\lm)^{6}\,\dlm = \frac{3}{32}(\Hm_{2} - 10 \Hm_{0}^{2}).
\]
Since $F^{2}$ is analytic on $\C\setminus\bigcup_{n\in S} G_{n}$, we obtain by contour deformation
\[
  \frac{1}{2\pi\ii} \int_{C_{r}} F(\lm)^{6}\,\dlm
   =
  \sum_{n\in S} \frac{1}{2\pi\ii} \int_{\Gm_{n}} F^{6}(\lm)\,\dlm.
\]
Expanding $F(\lm)^{6} = (F_{n}-\ii n\pi)^{6}$ into
\[
  F_{n}^{6} - \ii 6 n\pi F_{n}^{5} - 15n^{2}\pi^{2}F_{n}^{4}
  + \ii 20n^{3}\pi^{3}F_{n}^{3} + 15 n^{4}\pi^{4}F_{n}^{2}
  - \ii 6 n^{5}\pi^{5} F_{n} - n^{6}\pi^{6}
\]
and using that by Lemma~\ref{Rnk-prop} (iii) $R_{n}^{(2)}$ and $R_{n}^{(4)}$ vanish for all $n\in\N$ yields
\begin{align*}
  \Hm_{2} - 10 \Hm_{0}^{2}
  &=
  \frac{32}{3}
  \sum_{n\in S}
  \p*{
  3n^{5}\pi^{5} R_{n}^{(1)} - 10 n^{3}\pi^{3} R_{n}^{(3)} + 3 n\pi R_{n}^{(5)}
  }
  \\
  &= \sum_{n\ge 1} \p*{(2n\pi)^{5} I_{n} - \frac{40}{3} (2n\pi)^{3} R_{n}^{(3)} + 16 (2n\pi)R_{n}^{(5)}}.
\end{align*}
Here, we used that by Lemma~\ref{Rnk-prop} (i) $R_{n}^{(1)} = I_{n}$, $n\ge 1$, and that  by Lemma~\ref{Rnk-prop} (iv) $R_{n}^{(1)}$, $R_{n}^{(3)}$, and $R_{n}^{(5)}$ vanish for $n\in \N\setminus S$.\qed
\end{proof}

\begin{thm}
\label{Hm2-star-analytic}
The renormalized KdV2 Hamiltonian $\Hm_{2}^{\star}$, when written as a function of the actions, extends real analytically to $h_{+}^{1}$. Moreover, 
for any $I\in h_{+}^{1}$, the $\ell^{2}$-gradient $\partial_{I}\Hm_{2}^{\star}$ can be identified with $\om^{(2)\star}(I)$ and 
there exists a neighborhood $U$ of $I=0$ in $h_{+}^{1}$ on which $\Hm_{2}^{\star}$ is strictly concave in the sense that
\[
  \lin{\ddd_{I}\Hm_{2}^{\star}J,J}_{h^{-1},h^{1}} \le \frac{1}{2}\lin{\Lm J,J}_{h^{-1},h^{1}},\qquad \forall I\in U,\quad J\in h^{1},
\]
where $\Lm \defl \diag\p[\Big]{(-80n^{2}\pi^{2})_{n\ge 1}}$.~\fish
\end{thm}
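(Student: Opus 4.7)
The strategy is to take the series
\[
  \Hm_{2}^{\star} = \sum_{n\ge 1}\Bigl(-\tfrac{40}{3}(2n\pi)^{3}R_{n}^{(3)} + 16(2n\pi)R_{n}^{(5)}\Bigr)
\]
of Lemma~\ref{H2-star-formula}, established a priori only for finite-gap potentials, and show that it converges locally uniformly on a complex neighborhood of $h_{+}^{1}$ when viewed in action coordinates. First I would note that each moment $R_{n}^{(m)}$ is real analytic on $\Ws$ by Lemma~\ref{Rnk-prop}(ii), and, being an integral of $F_{n}^{m}$ around $\Gm_{n}$, depends on the potential only through the periodic spectrum (equivalently, through the actions). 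Arguing as in~\cite[Theorem~20.3]{Grebert:2014iq} and as done for $\Hm_{1}^{\star}$ in~\cite{Kappeler:CNzeErmy}, this factorization lifts $R_{n}^{(m)}$ to a real analytic function on a complex neighborhood of $\ell_{+}^{-1,1}$.

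Next I would use the vanishing estimate $R_{n}^{(m)} = O(\gm_{n}^{m+1}/n^{m})$ from Lemma~\ref{Rnk-prop}(iv) combined with the identity $\gm_{n}^{2}/(8n\pi) = I_{n}(1+o(1))$ from Lemma~\ref{In-lmd-est}(ii) and~\eqref{gmn2-In-asymptotics-2}. This yields
\[
  (2n\pi)^{3}\abs{R_{n}^{(3)}} = O(\gm_{n}^{4}) = O(n^{2}I_{n}^{2}),\qquad
  (2n\pi)\abs{R_{n}^{(5)}} = O(\gm_{n}^{6}/n^{4}) = O(I_{n}^{3}/n),
\]
so on $h_{+}^{1} = \ell_{+}^{1,2}$ the first sum is dominated by $\sum n^{2}I_{n}^{2}<\infty$ (this is the defining condition of $h^{1}$) and the second sum is even easier since $I_{n} = O(1/n)$ forces $I_{n}^{3}/n = O(n^{-4})$. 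Making the estimates of Lemma~\ref{Rnk-prop}(iv) locally uniform on the complex pull-back of a neighborhood of $h_{+}^{1}$ then gives the desired real analytic extension.

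To identify the gradient, I would differentiate the series termwise. On the dense set of finite-gap potentials the classical identity $\om_{n}^{(2)} = \partial_{I_{n}}\Hm_{2}$ combined with $\partial_{I_{n}}\Hm_{0} = 2n\pi$ gives $\partial_{I_{n}}\Hm_{2}^{\star} = \om_{n}^{(2)\star}$ by the very definition~\eqref{exp-om-star-kdv2}. Since both sides are now real analytic on a common complex neighborhood of $h_{+}^{1}$ (the left hand side by Step~1, the right hand side by Theorem~\ref{om-2-n-analytic} and Remark~\ref{rem:om-2-n-analytic}), the identity propagates to all of $h_{+}^{1}$. For the concavity statement, by Theorem~\ref{thm:bnf-kdv-kdv2} together with~\eqref{exp-om-star-kdv2} one has $\ddd_{0}\om^{(2)\star} = \Lm$, hence $\ddd_{0}^{2}\Hm_{2}^{\star} = \Lm$. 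Continuity of the Hessian in operator norm $h^{1}\to h^{-1}$ supplies a neighborhood $U$ of $0$ on which $\n{\ddd_{I}^{2}\Hm_{2}^{\star}-\Lm}_{h^{1}\to h^{-1}} < 40\pi^{2}$, and since $\lin{\Lm J,J} = -80\pi^{2}\n{J}_{h^{1}}^{2}$ this yields on $U$
\[
  \lin{\ddd_{I}^{2}\Hm_{2}^{\star}J,J} \le \lin{\Lm J,J} + 40\pi^{2}\n{J}_{h^{1}}^{2} = \tfrac{1}{2}\lin{\Lm J,J}.
\]

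The main technical obstacle I expect is Step~1: producing a locally uniform version of the bound $R_{n}^{(m)} = O(\gm_{n}^{m+1}/n^{m})$ valid on a genuine complex neighborhood of $h_{+}^{1}$ in action coordinates, rather than just on the real locus pulled back from $\Ws$. One must argue that the dependence of the isolating neighborhoods $U_{n}$ and of the contour integrals defining $R_{n}^{(m)}$ on the actions is sufficiently tame to survive the change of variable from potentials to actions, so that the series converges not only pointwise on $\ell_{+}^{1,2}$ but also uniformly on complex balls around each action. Once that is in hand, the remaining steps are straightforward analytic continuation, termwise differentiation, and a continuity argument for the Hessian.
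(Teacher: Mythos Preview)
Your proposal is correct and matches the paper's proof in all essential ingredients: the series from Lemma~\ref{H2-star-formula}, the analyticity and decay of the moments from Lemma~\ref{Rnk-prop}, the passage to action coordinates via the argument of \cite[Theorem~20.3]{Grebert:2014iq}, the identification of the gradient by analytic continuation from finite-gap data, and the concavity via $\ddd_{0}\om^{(2)\star}=\Lm$ plus continuity.

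The one organizational difference worth noting is that the paper sidesteps the obstacle you flag in your last paragraph. Rather than first lifting each $R_{n}^{(m)}$ to action coordinates and then struggling to make the decay locally uniform on a complex neighborhood of $h_{+}^{1}$, the paper first establishes locally uniform convergence of the series on the potential-side neighborhood $\Ws^{0,4}$ (where Lemma~\ref{Rnk-prop}(iv) and~\eqref{taun-lmn-est} already give $n^{3}R_{n}^{(3)}=\gm_{n}^{4}=\ell_{n}^{1}$ and $nR_{n}^{(5)}=\gm_{n}^{6}/n^{4}=n^{-4}\ell_{n}^{1}$ locally uniformly), obtains the analytic extension there, and only \emph{afterwards} invokes the action-factoring argument to conclude analyticity on $\Vs^{1,2}$. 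This ordering makes the local uniformity automatic, since the Birkhoff map sends $\Ws^{0,4}$ to a complex neighborhood of $h_{+}^{1}$.
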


\begin{proof}
By Lemma~\ref{Rnk-prop} (ii) all moments $R_{n}^{(3)}$, $R_{n}^{(5)}$, $n\ge 1$, are real analytic on $\Ws$. Moreover, by Lemma~\ref{Rnk-prop} (iv) we have uniformly in $n\ge 1$ and locally uniformly on $\Ws$,
\[
  R_{n}^{(2k+1)} = O(\gm_{n}^{2k+2}/n^{2k+1}).
\]
Hence by~\eqref{taun-lmn-est} for any $(s,p)$ admissible
\[
  n^{3}R_{n}^{(3)} = \gm_{n}^{4} = n^{-4s}\ell_{n}^{p/4},\qquad
  nR_{n}^{(5)} = \gm_{n}^{6}/n^{4} = n^{-4-4s}\ell_{n}^{p/4}.
\]
Therefore, on $\Ws^{0,4}$ the sum
\[
  \sum_{n\ge 1} \p*{- \frac{40}{3}(2n\pi)^{3}R_{n}^{(3)} + 16 (2n\pi)R_{n}^{(5)}}
\]
is absolutely and locally uniformly convergent to an analytic function $\tilde H$. Since $\tilde H = \Hm_{2}^{\star}$ at any real-valued finite-gap potential by Lemma~\ref{H2-star-formula}, and the finite-gap potentials are dense in $\FL_{0}^{0,4}$, $\tilde H$ is the unique analytic extension of $\Hm_{2}^{\star}$ to $\Ws^{0,4}$. Arguing as in the proof of \cite[Theorem~20.3]{Grebert:2014iq}, one sees that $\Hm_{2}^{\star}$, viewed as a function of the actions, is real analytic on the complex neighborhood $\Vs^{1,2}$ of $\ell_{+}^{1,2} = h_{+}^{1}$.

For $I\in h_{+}^{2}$ we have by definition $\om^{(2)\star}(I) = \partial_{I}H_{2}^{\star}$, and both sides of the identity admit real analytic extensions to $h_{+}^{1}$ hence the identity extends as well. Therefore, $\ddd_{I}^{2}\Hm_{2}^{\star} = \ddd_{I}\om^{(2)\star}$ and by Remark~\ref{rem:kdv2-freq-localdiffeo}, $\ddd_{0}\om^{(2)\star} = \Lm$. Thus, the strict concavity in a neighborhood of $I=0$ follows from continuous dependence of $\ddd_{I}\om^{(2)\star}$ on $I$.~\qed
\end{proof}

\begin{rem}
We expect that one can adapt the arguments of \cite{Bikbaev:1993jl} for $\Hm_{1}^{\star}$ to prove that $\Hm_{2}^{\star}$ is concave on $h_{+}^{1}$, which in view of the analyticity obtained in Theorem~\ref{Hm2-star-analytic} then proves $\ddd_{I}^{2}\Hm_{2}^{\star} \le 0$ on $h_{+}^{1}$.
Using the asymptotics of $\om^{(2)\star}$ stated in Theorem~\ref{kdv2-freq-as} for the case $s=0$ and $p=4$, one obtains by the same arguments as in the proof of Corollary~\ref{cor:kdv2-freq-localdiffeo} that $\om^{(2)\star}\colon \Vs^{1,2} \to h_{\C}^{1}$ is a local diffeomorphism generically and in turn that $\Hm_{2}^{\star}$ is strictly concave on an open and dense subset of $h_{+}^{1}$ in the sense that $\ddd_{I}^{2}\Hm_{2}^{\star} < 0$.\map
\end{rem}

\subsection{Wellposedness}

To discuss known results on the wellposedness of the KdV2 equation, we introduce for any $d>0$ the level sets of $\Hm_{0}$
\begin{align*}
  &\Ms_{0,d}^{s} \defl \setdef{q\in \Hs_{0}^{s}}{\Hm_{0}(q) = d},\\
  &m_{0,d}^{s} = \Ph(\Ms_{0,d}^{s}) = \setdef[\bigg]{z\in h_{0}^{s}}{\sum_{n\ge 1} (2n\pi) z_{n}z_{-n} = d}.
\end{align*}

According to~\cite{Battig:1997ek}, for any $d>0$, the KdV2 equation is globally $C^{\om}$-wellposed in $\Ms_{0,d}^{s}$ for any $s\ge 1$, i.e. for any $T > 0$ the solution map
\[
  \Sc\colon \Ms_{0,d}^{s}\to C^{0}([T,T],\Ms_{0,d}^{s})
\]
is real-analytic.

% ------------------------------------------------------------------------------------------------
% REMARK
% ------------------------------------------------------------------------------------------------

By Theorem~\ref{kdv2-freq-as} from the previous section, the frequencies
\begin{equation}
  \label{kdv2-freq}
  \om_{n}^{(2)} = (2n\pi)^{5} + 20(2n\pi)H_{0} + \om_{n}^{(2)\star}
\end{equation}
give rise to a flow $\Sc_{\Ph} \colon(t,z) \mapsto (\ph_{n}^{t}(z))_{n\in\Z}$ in Birkhoff coordinates on $h_{0}^{s+1/2}$ with coordinate functions
\begin{equation}
  \label{kdv2-coordinate-functions}
  \ph_{n}^{t}(z) = \e^{\ii \om_{n}^{(2)}(z) t}z_{n},\qquad n\in\Z.
\end{equation}
Here, the KdV2 frequencies are viewed as analytic functions of the Birkhoff coordinates and as such have been extended to the bi-infinite sequence $(\om_{n}^{(2)})_{n\in\Z}$ by setting
\[
  \om_{0}^{2}(z) = 0,\qquad \om_{-n}^{2}(z) = -\om_{n}^{(2)}(z),\quad n\ge 1.
\]
The KdV2 solution map is then given by
\begin{equation}
  \label{kdv2-soln-map}
  \Sc^{t} = \Ph^{-1}\circ \Sc_{\Ph}^{t}\circ \Ph.
\end{equation}
We first consider properties of the map $\Sc_{\Ph}$ corresponding to the ones of $\Sc$ claimed in Theorem~\ref{thm:kdv2-wp}.

\begin{thm}
\label{thm:wp-kdv2-x}
\begin{equivenum}
\item
Suppose $s\ge 0$. For any $z\in h_{0}^{s+1/2}$ the curve $\R \to h_{0}^{s+1/2}$, $t\mapsto \Sc_{\Ph}(t,z)$ is continuous. Moreover, for any $T > 0$ the map $\Sc_{\Ph}\colon h_{0}^{s+1/2}\to C([-T,T],h_{0}^{s+1/2})$ is continuous and has the group property. In particular, $\Sc_{\Ph}^{t}\colon h_{0}^{s+1/2}\to h_{0}^{s+1/2}$ for any $t\in\R$ is a homeomorphism.

\item
For any $s\ge 1/2$, $d> 0$, and $T>0$, the map $\Sc_{\Ph}\colon m_{0,d}^{s+1/2}\to C([-T,T],m_{0,d}^{s+1/2})$ is real-analytic and uniformly continuous on bounded subsets.

\item
In contrast, for any $s\ge 1/2$ and any $t > 0$, the map $\Sc_{\Ph}^{t}\colon h_{0}^{s+1/2}\to h_{0}^{s+1/2}$ is nowhere locally uniformly continuous.

\item
For any $0 \le s < 1/2$, $d\ge 0$, and $t >0$, the map $\Sc_{\Ph}^{t}\colon m_{0,d}^{s+1/2}\to m_{0,d}^{s+1/2}$ is nowhere locally uniformly continuous.

\item
For each $n\in\Z$ and $t > 0$, the coordinate function $h_{0}^{1/2}\to \C$, $z\mapsto (\Sc_{\Ph}^{t}(z))_{n}$ cannot be extended continuously to points $z\in h_{0}^{s+1/2}$ with $z_{n}\neq 0$ for any $-1< s < 0$.~\fish

\end{equivenum}

\end{thm}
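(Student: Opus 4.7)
The plan is to exploit the explicit Birkhoff-coordinate representation $\ph_n^t(z) = \e^{\ii \om_n^{(2)}(z) t} z_n$ together with
\[
\om_n^{(2)}(z) = (2n\pi)^5 + 20(2n\pi)\Hm_0(z) + \om_n^{(2)\star}(z),
\]
recalling from Theorem~\ref{bhf}(iii) that $\om_n^{(2)}$ depends only on the actions $I_n = z_n z_{-n}$, which are conserved under $\Sc_\Ph$. For item (i), where $s \ge 0$ and $h_0^{s+1/2} \hookrightarrow h_0^{1/2}$, both $\Hm_0$ and $\om_n^{(2)\star}$ are continuous (Theorem~\ref{om-2-n-analytic}), so the phases $\Dl_n^{(k)} := \om_n^{(2)}(z^{(k)}) - \om_n^{(2)}(z)$ tend to $0$ pointwise in $n$ for any convergent sequence $z^{(k)} \to z$. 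A standard split-and-tail argument based on
\[
\n{\Sc_\Ph^t(z^{(k)})-\Sc_\Ph^t(z)}^2_{h^{s+1/2}} \le 2\n{z^{(k)}-z}^2 + 2\sum_n \lin{n}^{2s+1} \abs{z_n}^2 \abs{\e^{\ii t\Dl_n^{(k)}}-1}^2
\]
together with $\abs{\e^{\ii\al}-1}\le 2$ yields continuity uniformly for $t \in [-T,T]$; the group property is immediate from conservation of actions, and $\Sc_\Ph^t$ becomes a homeomorphism of $h_0^{s+1/2}$. Item (ii) then follows because on $m_{0,d}^{s+1/2}$ the value $\Hm_0 \equiv d$ is constant, so $\om_n^{(2)} = (2n\pi)^5 + 40 d n\pi + \om_n^{(2)\star}$, where for $s \ge 1/2$ the map $\om^{(2)\star}$ is real analytic and uniformly continuous on bounded subsets of $h_0^{s+1/2}$ by Theorem~\ref{kdv2-freq-as}(i) and Lemma~\ref{om-2-unif}; the solution map inherits these properties through the smooth composition $(t,\om,z)\mapsto (\e^{\ii \om_n t}z_n)_n$.

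For item (iii), the obstruction is the linear term $20(2n\pi)\Hm_0(z)$: a perturbation changing $\Hm_0$ by an amount $\Dl$ shifts $\om_n^{(2)}$ at every mode $n$ by $40\pi n \Dl$. Following the blueprint of Theorem~\ref{thm:kdv-wp-bh}(iii), I would fix $z^\circ \in U$ with $z_{n_\star}^\circ \neq 0$ for an infinite sequence of detector modes $n_\star$ and, for each $m$, build $p^{(m)}, q^{(m)} \in U$ that agree with $z^\circ$ except at a single auxiliary high mode $N_m$, with the perturbations chosen so that $\n{p^{(m)}-q^{(m)}}_{h^{s+1/2}} \to 0$ while the induced $\Dl\Hm_0^{(m)}$ is large enough to make $40\pi n_m \Dl\Hm_0^{(m)} t$ lie near $\pi$ modulo $2\pi$ for a detector mode $n_m \to \infty$. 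The non-negligible size of $\abs{z_{n_m}^\circ}$ then keeps the $h^{s+1/2}$-distance of $\Sc_\Ph^t(p^{(m)})$ and $\Sc_\Ph^t(q^{(m)})$ bounded from below, violating uniform continuity. The constraint $s \ge 1/2$ enters in verifying that $n_m$ and $N_m$ can be chosen consistently.

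For item (iv), $\Hm_0$ is fixed on $m_{0,d}^{s+1/2}$, so the obstruction must come from $\om_n^{(2)\star}$ itself. By Theorem~\ref{kdv2-freq-as}(ii) the leading contribution is $-80 n^2\pi^2 I_n$, hence a perturbation at mode $N_m$ producing $\Dl I_{N_m} \sim \dl^2 N_m^{-2s-1}$ shifts $\om_{N_m}^{(2)\star}$ by $\sim \dl^2 N_m^{1-2s}$, which blows up for $s < 1/2$. I would construct $p^{(m)}, q^{(m)} \in m_{0,d}^{s+1/2}$ differing at two modes, the primary perturbation at $N_m$ producing a large self-induced phase shift at $N_m$ and a compensating smaller perturbation at a fixed low mode restoring $\Hm_0 = d$ without damaging $h^{s+1/2}$-smallness. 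Choosing $N_m$ itself as detector and balancing amplitudes so that the phase difference at $N_m$ exceeds $\pi/2$ establishes nowhere local uniform continuity. Finally, for item (v), given $z \in h_0^{s+1/2}$ with $z \notin h_0^{1/2}$ and $z_n \neq 0$, I would exhibit two approximating sequences $z^{(N)}, \tilde z^{(N)} \to z$ in $h_0^{s+1/2}$ arranged so that $\Hm_0(z^{(N)})$ and $\Hm_0(\tilde z^{(N)})$ diverge along non-equivalent subsequences, producing distinct limit-free phase factors $\e^{\ii 40 n\pi t \Hm_0(\cdot)}$; the remaining pieces $(2n\pi)^5$ and $\om_n^{(2)\star}$ of $\om_n^{(2)}$ are controlled thanks to Theorem~\ref{kdv2-freq-as}, so the $n$-th coordinate of $\Sc_\Ph^t$ admits no continuous extension.

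The hard part, I expect, is items (iii) and (iv): arranging amplitudes and mode indices so that perturbations remain small in the $h^{s+1/2}$-norm, yet produce an $O(1)$ phase shift at a detector mode carrying a non-negligible coefficient, requires a careful balancing of several competing exponents. In item (iv) this is additionally constrained by the requirement to stay on the level set $m_{0,d}^{s+1/2}$, forcing a compensating second-mode perturbation whose own $h^{s+1/2}$-cost must also be kept small.
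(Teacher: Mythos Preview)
Your strategy for items (i), (ii), and (v) is essentially the paper's, and your plan for (iv) --- perturb and detect at the same high mode $N_m$, with a compensating low-mode correction to stay on the level set --- matches the paper's construction in spirit, though your quantification ``$\Dl I_{N_m}\sim\dl^2 N_m^{-2s-1}$'' is too crude: to make $\|p-q\|_{h^{s+1/2}}\to 0$ you must let both $p_{N_m}$ and $q_{N_m}$ carry the same main mass $\sim N_m^{-s-1/2}$ and differ only by a lower-order correction, so that $\Dl I_{N_m}$ is much smaller but the prefactor $N_m^2$ still makes $\Dl\om_{N_m}^{(2)\star}$ diverge.

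There is, however, a genuine gap in your approach to item (iii). You propose to detect the phase shift at modes $n_m\to\infty$ where $z_{n_m}^\circ\neq 0$, relying on ``the non-negligible size of $|z_{n_m}^\circ|$''. But $z^\circ\in h^{s+1/2}$ forces $n_m^{s+1/2}|z_{n_m}^\circ|\to 0$, so the weighted contribution at any detector mode tending to infinity vanishes; and summing over all modes does not help, since $\Dl\Hm_0\to 0$ implies the phase factors converge pointwise and dominated convergence kills the whole sum. If instead you try to use the perturbation mode $N_m$ itself as detector, you run into an exact cancellation: when $p,q$ differ only at mode $N_m$, the variation $40N_m\pi\,\Dl\Hm_0 = 80N_m^2\pi^2\,\Dl I_{N_m}$ precisely cancels the leading term $-80N_m^2\pi^2\,\Dl I_{N_m}$ of $\Dl\om_{N_m}^{(2)\star}$, leaving only the small remainder $\Dl r_{N_m}$.

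The paper's fix is to decouple perturbation and detection: perturb $\Hm_0$ at a \emph{fixed low} mode $N$ (so $\|p-q\|$ is small because the perturbation size shrinks, not because the weight does), and \emph{inject} identical detector mass $\dl\, n_m^{-(s+1/2)}$ at the high mode $n_m$ into \emph{both} $p$ and $q$. Then $n_m^{s+1/2}|p_{n_m}|=\dl$ stays bounded below, $I_{n_m}(p)=I_{n_m}(q)$ so no cancellation occurs, and the phase shift $40 n_m\pi\,\Dl\Hm_0\, t$ can be made of order one because the growing factor $n_m$ compensates the decay of $\Dl\Hm_0$.
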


% ------------------------------------------------------------------------------------------------
% global wellposedness of KdV2
% ------------------------------------------------------------------------------------------------
\begin{proof}
(i)-(ii): We apply Theorem~\ref{thm:freq-flow}. In view of~\eqref{kdv2-freq}, the KdV2 frequencies $\om_{n}^{(2)}$ are well defined and continuous on $h_{0}^{s+1/2}$ for $s\ge 0$, hence Theorem~\ref{thm:freq-flow} (i)-(ii) apply proving the continuity of $\Sc_{\Ph}\colon h_{0}^{s+1/2}\to C([-T,T],h_{0}^{s+1/2})$. Moreover, for any fixed $d>0$ and any $s\ge 1/2$, the map $\om^{(2)\star}\colon m_{0,d}^{s+1/2}\to \ell^{\infty}$ is real analytic by Theorem~\ref{kdv2-freq-as}
and uniformly continuous on bounded subsets by Lemma~\ref{om-2-unif}. Therefore, the analyticity and uniform continuity of $\Sc_{\Ph}\colon m_{0,d}^{s+1/2}\to C([-T,T],m_{0,d}^{s+1/2})$ follow by Theorem~\ref{thm:freq-flow} (iii)-(iv).

% ------------------------------------------------------------------------------------------------
% non local uniform continuity of KdV2 solution map, s\ge 1/2
% ------------------------------------------------------------------------------------------------
(iii):
To simplify notation, put $\sg = s+1/2$ so that $\sg \ge 1$.
We show that for any nonempty open subset $U$ of $h_{0}^{\sg}$ and any $t > 0$ the map $\Sc_{\Ph}^{t}\big|_{U}\colon U\to h_{0}^{\sg}$ is not uniformly continuous.
 By Theorem~\ref{kdv2-freq-as} (ii) and Remark~\ref{rem:kdv2-freq-as}, after possibly shrinking $U$, there exists an integer $N_{\star}\ge 1$ so that
\begin{equation}
  \label{omn-2-wp-est-1}
  \om_{n}^{(2)}(z) = (2n\pi)^{5} + 40n\pi H_{0}(z) - 80n^{2}\pi^{2}z_{n}z_{-n} + r_{n}(z),
\end{equation}
where $z_{n}z_{-n} = I_{n}$ and $\sup_{\abs{n}\ge N_{\star}} \abs{r_{n}(z)} \le \pi/(4t)$ on all of $U$.

We show that there exist two sequences $p^{(m)}$ and $q^{(m)}$ in $U$ and a real number $\dl_{0} > 0$ so that
\[
  \n{p^{(m)}-q^{(m)}}_{h^{\sg}} \to 0,\qquad
  \n{\Sc_{\Ph}^{t}(p^{(m)})-\Sc_{\Ph}^{t}(q^{(m)})}_{h^{\sg}} \ge \dl_{0}.
\]

Take any $z^{\o}\in U$ so that there exists $N\ge N_{\star}$ with $z_{\pm n}^{\o} = 0$ for $n \ge N$. Let $n_{m} \defl 2^{m}$. For $\dl > 0$ and $m\ge 1$ with $n_{m} > N$, we define $p^{m,\dl}$, $q^{m,\dl}\in h_{0}^{\sg}$ by putting $p_{\pm n}^{m,\dl} = q_{\pm n}^{m,\dl} = z_{\pm n}^{\o}$ for $n < N$,
\[
  p_{\pm N}^{m,\dl} = \dl n_{m}^{-1/2}m^{1/2},\qquad
  q_{\pm N}^{m,\dl} = 0,
\]
and for $n > N$,
\[
  p_{\pm n}^{m,\dl} = q_{\pm n}^{m,\dl} =
  \begin{cases}
  \dl n_{m}^{-\sg}, & n=n_{m},\\
  0, & \text{otherwise}.
  \end{cases}
\]
Then $\Hm_{0}(p^{m,\dl}) - \Hm_{0}(q^{m,\dl}) = 2N\pi\dl^{2} n_{m}^{-1}m$, and
\[
  \frac{1}{\sqrt{2}}\n{p^{m,\dl}-z^{\o}}_{h^{\sg}}
  \le
  N^{\sg}\dl n_{m}^{-1/2}m^{1/2} + \dl
  = O(\dl),\qquad m\to \infty,
\]
while similarly,
\begin{align*}
  \frac{1}{\sqrt{2}}\n{q^{m,\dl}-z^{\o}}_{h^{\sg}} &= \dl.
\end{align*}
Therefore, we can fix $\dl_{0} \in (0,1)$ sufficiently small so that the sequences $(q^{m,\dl})$ and $(p^{m,\dl})$ are contained in $U$ for all $0 < \dl \le \dl_{0}$. Further note that
\begin{align*}
  &\frac{1}{\sqrt{2}}\n{p^{m,\dl}-q^{m,\dl}}_{h^{\sg}}
  \le
  N^{\sg}\dl_{0} n_{m}^{-1/2}m^{1/2} \to 0,\qquad m\to \infty.
\end{align*}
Now, by~\eqref{omn-2-wp-est-1} we have for any $m$ with $n_{m} > N$
\begin{align*}
  \om_{n_{m}}^{(2)}(p^{m,\dl}) - \om_{n_{m}}^{(2)}(q^{m,\dl})
  &= 40n_{m}\pi\p*{H_{0}(p^{m,\dl}) - H_{0}(q^{m,\dl})} + r_{n_{m}}(p^{m,\dl}) - r_{n_{m}}(q^{m,\dl})\\
  &= 80N\pi\dl^{2}m + r_{n_{m}}(p^{m,\dl}) - r_{n_{m}}(q^{m,\dl}).
\end{align*}
Choose $k\ge 1$ so that $\dl \equiv \dl(N,t,k) = 1/\sqrt{80Ntk} \le \dl_{0}$. Consequently,
\[
  \p*{\frac{m}{k}-\frac{1}{2}}\pi \le 
  \p*{\om_{n_{m}}^{(2)}(p^{m,\dl}) - \om_{n_{m}}^{(2)}(q^{m,\dl})}t
  \le
  \p*{\frac{m}{k}+\frac{1}{2}}\pi,\qquad n_{m} > N.
\]
With $m_{j} = k(2j+1)$ we conclude
\[
  \abs*{\exp\p*{\ii \p*{\om_{n_{m_{j}}}^{(2)}(p^{m_{j},\dl}) - \om_{n_{m_{j}}}^{(2)}(q^{m_{j},\dl})}t}-1} \ge 1,
\]
for all $\nu$ sufficiently large.
Thus, by comparing only the $n_{m_{\nu}}$th component we get
\begin{align*}
  \frac{1}{\sqrt{2}}\n{\Sc_{\Ph}^{t}(p^{m_{j},\dl})-\Sc_{\Ph}^{t}(q^{m_{j},\dl})}_{h^{\sg}}
  &\ge 
   n_{m_{j}}^{\sg}\abs{p_{n_{m_{j}}}^{m_{j},\dl}}
    - n_{m_{j}}^{\sg}\abs{p_{n_{m_{j}}}^{m_{j},\dl} - q_{n_{m_{j}}}^{m_{j},\dl}}
  = \dl,
\end{align*}
for all $j$ sufficiently large.

% ------------------------------------------------------------------------------------------------
% non local uniform continuity of KdV2 solution map, 0 \le s < 1/2
% ------------------------------------------------------------------------------------------------
(iv):
To simplify notation, put $\sg = s+1/2$ so that $1/2\le \sg < 1$.
We show that for any nonempty open subset $U$ of $m_{0,d}^{\sg}$ and any $t > 0$ the map $\Sc_{\Ph}^{t}\big|_{U}\colon U\to m_{0,d}^{\sg}$ is not uniformly continuous.
By Theorem~\ref{kdv2-freq-as} (ii) and Remark~\ref{rem:kdv2-freq-as}, after possibly shrinking $U$, there exists an integer $N_{\star}\ge 1$ so that
\begin{equation}
  \label{omn-2-wp-est-2}
  \om_{n}^{(2)}(z) = (2n\pi)^{5} + 40n\pi d - 80n^{2}\pi^{2}z_{n}z_{-n} + r_{n}(z),
\end{equation}
where $z_{n}z_{-n} = I_{n}$ and $\sup_{\abs{n}\ge N_{\star}} \abs{r_{n}(z)} \le \pi/(4t)$ on all of $U$.

We show that there exist two sequences $p^{(m)}$ and $q^{(m)}$ in $U$ and a real number $\eta > 0$, so that
\[
  \n{p^{(m)}-q^{(m)}}_{h^{\sg}} \to 0,\qquad
  \n{\Sc_{\Ph}^{t}(p^{(m)})-\Sc_{\Ph}^{t}(q^{(m)})}_{h^{\sg}} \ge \eta.
\]

Take any $z^{\o}\in U$ so that there exists $N\ge N_{\star}$ with $z_{\pm N}^{\o}\neq 0$ and $z_{\pm n}^{\o} = 0$ for $n> N$. Let $\ep=\sqrt[+]{z_{N}^{\o}z_{-N}^{\o}}$ and $n_{m} = 2^{m}$.
For $\dl > 0$ and $m\ge 1$ with $n_{m} > N$, we define $p^{m,\dl}$, $q^{m,\dl}\in h_{0}^{\sg}$ by putting $p_{\pm n}^{m,\dl} = q_{\pm n}^{m,\dl} = z_{\pm n}^{\o}$ for $n < N$,
\[
  p_{\pm N}^{m,\dl} = \ep\sqrt{1 - \dl^{2} N^{-1}n_{m}^{1-2\sg}},\qquad
  q_{\pm N}^{m,\dl} = \ep\sqrt{1 - \dl^{2} N^{-1}( n_{m}^{1-2\sg}  + n_{m}^{-1}m)},
\]
where $0 < \dl < \ep$ is chosen so that the radicands are positive, and for $n > N$,
\[
  p_{\pm n}^{m,\dl} =
  \begin{cases}
  \dl\ep n_{m}^{-\sg}, & n=n_{m},\\
  0, & \text{otherwise},
  \end{cases}
  \qquad
  q_{\pm n}^{m,\dl} =
  \begin{cases}
  p_{\pm n_{m}}^{(m)} \pm \ii \dl\ep
  n_{m}^{-1}m^{1/2}, & n=n_{m},\\
  0, & \text{otherwise}.
  \end{cases}
\]
Then $H_{0}(z^{\o}) = H_{0}(p^{m,\dl}) = H_{0}(q^{m,\dl}) = d$, hence for any $\dl >0$, $p^{m,\dl}$ and $q^{m,\dl}$ are sequences contained in $m_{0,d}^{\sg}$. Moreover, using that $\abs{\sqrt{1-x}-1}\le \abs{x}$ if $\abs{x}\le 1/2$,
\[
  \frac{1}{\sqrt{2}\ep}\n{p^{m,\dl}-z^{\o}}_{h^{\sg}}
  \le
  N^{\sg}\abs*{ \sqrt{1-\dl^{2}N^{-1}n_{m}^{1-2\sg}} - 1}
  + \dl
  = O(\dl),
\]
Similarly,
\begin{align*}
  \frac{1}{\sqrt{2}\ep}\n{q^{m,\dl}-z^{\o}}_{h^{\sg}} &\le
  N^{\sg}\abs*{ \sqrt{1-\dl^{2} N^{-1}( n_{m}^{1-2\sg}  + n_{m}^{-1}m)} - 1}
  + \dl\\
  &\qquad + \dl n_{m}^{\sg-1}m^{1/2} = O(\dl).
\end{align*}
Therefore,  we can choose $\dl_{0} \in (0,\ep)$ so that the sequences $(q^{m,\dl})$ and $(p^{m,\dl})$ are contained in $U$ for any $0 < \dl < \dl_{0}$. Further note that
\begin{align*}
  &\frac{1}{\sqrt{2}\ep}\n{p^{m,\dl}-q^{m,\dl}}_{h^{\sg}}
  \le \dl n_{m}^{\sg-1}m^{1/2}\\
  &\qquad + N^{\sg}
  \abs*{\sqrt{1 - \dl^{2} N^{-1}n_{m}^{1-2\sg}} - \sqrt{1 - \dl^{2} N^{-1}( n_{m}^{1-2\sg}  + n_{m}^{-1}m)}},
\end{align*}
thus for all $m$ sufficiently large,
\[
  \frac{1}{\sqrt{2}\ep}\n{p^{m,\dl}-q^{m,\dl}}_{h^{\sg}}
  \le \dl_{0} n_{m}^{\sg-1}m^{1/2} + \dl_{0}^{2}n_{m}^{-1}m
  \to 0,\qquad m\to \infty.
\]

By~\eqref{omn-2-wp-est-2} we have for all $m\ge 1$ with $n_{m} > N$,
\[
  \om_{n_{m}}^{(2)}(p^{m,\dl}) - \om_{n_{m}}^{(2)}(q^{m,\dl})
   = 
  80\dl^{2}\ep^{2} \pi^{2}m + r_{n_{m}}(p^{m,\dl}) - r_{n_{m}}(q^{m,\dl})
\]
Choose $k\ge 1$ so that $\dl \equiv \dl(\ep,t,k) = 1/\sqrt{80\ep^{2}\pi tk} \le \dl_{0}$, then
\[
  \p*{\frac{m}{k}-\frac{1}{2}}\pi \le 
  \p*{\om_{n_{m}}^{(2)}(p^{m,\dl}) - \om_{n_{m}}^{(2)}(q^{m,\dl})}t
  \le
  \p*{\frac{m}{k}+\frac{1}{2}}\pi,\qquad n_{m} > N.
\]
With $m_{j} = k(2j+1)$ we conclude
\[
  \abs*{\exp\p*{\ii \p*{\om_{n_{m_{j}}}^{(2)}(p^{m_{j},\dl}) - \om_{n_{m_{j}}}^{(2)}(q^{m_{j},\dl})}t}-1} \ge 1,
\]
for all $\nu$ sufficiently large.
Thus, by comparing only the $n_{m_{j}}$th component, we get
\begin{align*}
  \frac{1}{\sqrt{2}}\n{\Sc_{\Ph}^{t}(p^{m_{j},\dl})-\Sc_{\Ph}^{t}(q^{m_{j},\dl})}_{h^{\sg}}
  &\ge n_{m_{j}}^{\sg}\abs{p_{n_{m_{j}}}^{m_{j},\dl}} - n_{m_{j}}^{\sg}\abs{p_{n_{m_{j}}}^{m_{j},\dl} - q_{n_{m_{j}}}^{m_{j},\dl}}\\
  &\ge \dl\ep - \n{p^{m_{j},\dl}-q^{m_{j},\dl}}_{h^{\sg}}\\
  &\ge \frac{1}{2}\dl\ep \defr \eta_{0},
\end{align*}
for all $j$ sufficiently large.

% ------------------------------------------------------------------------------------------------
% discontinuity of KdV2 solution map, s < 0
% ------------------------------------------------------------------------------------------------
(v): Let $-1 < s < 0$ and take any initial datum $z\in h_{0}^{s+1/2}\setminus h_{0}^{1/2}$
with $z_{n}\neq 0$ for any given $n\ge 1$.
By Theorem~\ref{kdv2-freq-as},  $\tilde \om_{n}^{(2)} = \om_{n}^{(2)} - 40n\pi \Hm_{0}$ extends real analytically to $h_{0}^{s+1/2}$ for any $-1< s < 0$, whereas $\Hm_{0}(z) = \sum_{m\ge 1} (2m\pi)z_{m}^{2}$ is infinite on such $h_{0}^{s+1/2}$.\qed
\end{proof}

%\note{jm: In the introduction we write $\Sc^{(2)}$ instead of $\Sc$. This, however, leads to very heavy notation...}
\begin{proof}[Proof of Theorem~\ref{thm:kdv2-wp}.]
Since by Theorem~\ref{bhf} and Lemma~\ref{Om-unif-continous} the Birkhoff map $\Ph\colon \Hs_{0}^{s}\to h_{0}^{s+1/2}$ and its inverse are both real-analytic for $s\ge 0$ and uniformly continuous on bounded subsets for $s > 0$, all claims of Theorem~\ref{thm:kdv2-wp} follow from Theorem~\ref{thm:wp-kdv2-x} and the identity $\Sc^{t} = \Ph^{-1}\circ\Sc_{\Ph}^{t}\circ\Ph$.\qed
\end{proof}

\begin{rem}
Consider the PDE with Hamiltonian
\[
  \tilde \Hm_{2} = \Hm_{2} - 10 \Hm_{0}^{2}.
\]
The frequencies of this integrable PDE are given by
\begin{align*}
  \tilde \om_{n}^{(2)}(u) &= (2n\pi)^{5}  + \om_{n}^{(2)\star}(q),
\end{align*}
where by Theorem~\ref{kdv2-freq-as}, each $\tilde \om_{n}^{(2)}(u)$ is real analytic on $\Hs^{s}$ with $s > -1$.
It follows from Theorem~\ref{thm:freq-flow} that this PDE is globally $C^{0}$-wellposed in $\Hs^{s}$ for $s>-1$.\map
\end{rem}

\appendix

\begin{appendix}

%\section{KdV cheat sheet}
%
%We recall the most important identities and expansions.
%
%\emph{Discriminant}.
%\[
%  \Dl^{2}(\lm)-4 = -4(\lm-\lm_{0}^{+})\prod_{m\ge 1} \frac{(\lm_{m}^{+}-\lm)(\lm_{m}^{-}-\lm)}{m^{4}\pi^{4}}
%\]
%
%\emph{Roots}.
%\[
%  \vs_{m}(\lm) = (\tau_{m}-\lm)\sqrt[+]{1-\gm_{m}^{2}/4(\tau_{m}-\lm)^{2}}
%\]
%\[
%  \sqrt[c]{\Dl^{2}(\lm)-4} = -2\ii\sqrt[+]{\lm-\lm_{0}^{+}}\prod_{m\ge 1} \frac{\vs_{m}(\lm)}{m^{2}\pi^{2}}
%\]
%
%\emph{Psi functions}.
%\[
%  \ps_{n}(\lm) = \frac{2}{n\pi} \prod_{m\neq n} \frac{\sg_{m}^{n}-\lm}{m^{2}\pi^{2}},\qquad
%  \sg_{m}^{n} - \tau_{m} = O\p*{\frac{\gm_{m}^{2}}{m}}.
%\]
%
%\begin{align*}
%  \frac{\ps_{n}(\lm)}{\sqrt[c]{\Dl^{2}(\lm)-4}}
%  &= \frac{\ii}{n\pi} \frac{1}{\sqrt[+]{\lm-\lm_{0}^{+}}} \frac{n^{2}\pi^{2}}{\vs_{n}(\lm)}\prod_{m\neq n}\frac{\sg_{m}^{n}-\lm}{\vs_{m}(\lm)}\\
%  &=
%  \frac{\sg_{k}^{n}-\lm}{\vs_{k}(\lm)}\zt_{k}^{n}(\lm),
%  \quad
%  \zt_{k}^{n}(\lm)
%  =
%  \frac{\ii}{\sqrt[+]{\lm-\lm_{0}^{+}}} \frac{n\pi}{\sg_{n}^{n}-\lm}\prod_{m\neq k}\frac{\sg_{m}^{n}-\lm}{\vs_{m}(\lm)}
%\end{align*}

% ---------------------------------------------------------------------------------------------------
\section{\boldmath Discrete Hilbert transform on $\ell^{p}$}

In this appendix we recall some very well known facts on the discrete Hilbert Transform on $\ell_{\C}^{p}\equiv \ell^{p}(\N,\C)$ -- see e.g.~\cite{Zygmund:1957up}.

\begin{lem}
\label{ht-1}
For any $1 < p < \infty$ the mapping
\[
  H\colon (x_{m})_{m\in\Z} \mapsto \p*{\sum_{m\neq n} \frac{x_m}{m-n}}_{n\in\Z}
\]
defines a linear isomorphism on $\ell^{p}_{\C}(\Z,\C)$.~\fish
\end{lem}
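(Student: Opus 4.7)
My plan is to view $H$ as a convolution operator on $\ell^p(\Z,\C)$, namely $H=-k\ast(\cdot)$ with the discrete Hilbert kernel $k(j)=1/j$ for $j\ne0$ (and $k(0)=0$), so that the claim becomes the classical theorem of M.~Riesz in the discrete setting.

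The first step is to establish boundedness on $\ell^2$ via Fourier analysis. Under the isometric identification $\ell^2(\Z,\C)\cong L^2(\T,\C)$ sending $(x_m)$ to $f(\theta)=\sum_m x_m e^{im\theta}$, the operator $H$ becomes multiplication by the symbol
\[
  \sigma(\theta)=\sum_{j\ne 0}\frac{e^{ij\theta}}{j}.
\]
Using the standard summation $\sum_{j\ge 1}\sin(j\theta)/j=(\pi-\theta)/2$ on $(0,2\pi)$, one obtains $\sigma(\theta)=i(\pi-\theta)$ on $(0,2\pi)$, which is a bounded sawtooth function on $\T$. Consequently $\n{H}_{\ell^2\to\ell^2}\le \pi$ by Plancherel.

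For general $1<p<\infty$, I would proceed by transference. The discrete convolution structure on $\Z$ corresponds to pointwise multiplication of Fourier series on $\T$, so boundedness of $H$ on $\ell^p(\Z,\C)$ can be reduced to $L^p(\T,\C)$-boundedness of the associated Fourier multiplier operator. This latter fact is the classical M.~Riesz theorem on the boundedness of the Hilbert transform on $L^p(\T)$, i.e.\ on the conjugate-function operator, whose multiplier is precisely (up to a constant) the sign function $-i\,\mathrm{sgn}$ on the Fourier side. Alternatively, one can verify directly the discrete Hörmander condition
\[
  \sup_{|j_0|\ge 1}\sum_{|j|\ge 2|j_0|}\abs{k(j-j_0)-k(j)}<\infty
\]
for $k(j)=1/j$ and apply the Calder\'on--Zygmund theory for convolution kernels on $\Z$ to upgrade $\ell^2$-boundedness to $\ell^p$-boundedness for all $1<p<\infty$.

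For the invertibility statement, I would work again on the Fourier side and combine the antisymmetry $H^\ast=-H$ with the open mapping theorem; the hardest subtlety is that the symbol $\sigma$ is not bounded away from zero on the whole circle, so some care is needed (for instance restricting to an appropriate quotient or invoking duality). Since the lemma is quoted as very well known and is used in the sequel only via the resulting decay estimates of Lemma~\ref{ht-2} and neighboring lemmas, I would at this point simply defer to the classical reference \cite{Zygmund:1957up} for the detailed proof.
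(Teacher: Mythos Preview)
The paper does not give a proof of this lemma; it is recorded as a very well known fact with a reference to \cite{Zygmund:1957up} and then used only through its boundedness assertion in Lemmas~\ref{ht-2}, \ref{ht-2-x}, and~\ref{mht-2}. Your sketch therefore already goes beyond what the paper provides.

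Your argument for boundedness is correct and standard: the $\ell^2$ case via the explicit Fourier symbol, then either transference from the M.~Riesz theorem on $L^p(\T)$ or discrete Calder\'on--Zygmund theory for general $1<p<\infty$.

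Your hesitation about the isomorphism part is well founded, and in fact the isomorphism claim as literally stated is false. The symbol you computed, $\sigma(\theta)=i(\pi-\theta)$ on $(0,2\pi)$, vanishes at $\theta=\pi$, so multiplication by $\sigma$ on $L^2(\T)$ is injective but not surjective (the equation $\sigma f=1$ has no $L^2$ solution since $1/\sigma\notin L^2(\T)$); hence $H$ is not onto $\ell^2(\Z,\C)$. Unlike the continuous Hilbert transform on $L^2(\R)$, the discrete version does not satisfy $H^2=-\Id$. Fortunately the paper never uses invertibility---only boundedness enters the subsequent estimates---so this is an imprecision in the stated lemma rather than a gap in its applications, and you were right simply to defer to the classical reference for the part that is actually needed.
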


As an immediate corollary one obtains the following result on the weighted $\ell^{p}$-spaces.

\begin{lem}
\label{ht-2}
For any $1 < p < \infty$ and any $-1\le s\le 0$ the map
\begin{equation}
  \label{A-op}
    A\colon \ell_{\C}^{s,p} \to \ell_{\C}^{s+1,p},\quad
    (x_{m})_{m\ge1}=x \mapsto 
  Ax = \p*{\sum_{m\neq n} \frac{x_{m}}{m^{2}-n^{2}}}_{n\ge 1}
\end{equation}
is bounded.~\fish
\end{lem}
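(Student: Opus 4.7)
The plan is to reduce the boundedness of $A$ to that of the discrete Hilbert transform (Lemma~\ref{ht-1}) via the partial--fraction identities
\[
  \frac{1}{m^2-n^2}
  = \frac{1}{2n}\p*{\frac{1}{m-n} - \frac{1}{m+n}}
  = \frac{1}{2m}\p*{\frac{1}{m-n} + \frac{1}{m+n}},
\]
first establishing boundedness at the two endpoints $s=0$ and $s=-1$, and then filling in the intermediate range by complex interpolation of weighted $\ell^p$ spaces.

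At $s=0$ I would use the first identity to rewrite $n\,(Ax)_n$ as a combination of two discrete Hilbert transforms plus a diagonal remainder: the sum $\sum_{m\ge 1,\,m\neq n} x_m/(m-n)$ equals $(H\tilde{x})_n$, where $\tilde{x}$ denotes the extension of $x$ to $\Z$ by zero on $\Z_{\le 0}$, while $\sum_{m\ge 1} x_m/(m+n)$ coincides (after the substitution $k=-m$) with $-(H\hat{x})_n$, where $\hat{x}_{-m}\defl x_m$ for $m\ge 1$ and $\hat{x}_k\defl 0$ otherwise. Lemma~\ref{ht-1} applied on $\ell^p(\Z)$ then delivers $\n{n(Ax)_n}_{\ell^p(\N)}\lesssim\n{x}_{\ell^p(\N)}$, i.e.\ $A\colon \ell^{0,p}\to\ell^{1,p}$. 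The endpoint $s=-1$ is entirely analogous but uses the second identity: setting $a_m\defl x_m/m$ (so that $\n{a}_{\ell^p}\sim\n{x}_{-1,p}$), the same trivial- and reflected-extension trick expresses $(Ax)_n$ as a combination of discrete Hilbert transforms of $a$ plus a diagonal remainder, yielding $A\colon\ell^{-1,p}\to\ell^{0,p}$.

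For the intermediate values $s\in(-1,0)$ I would invoke complex interpolation. Identifying $\ell^{s,p}$ with $L^p$ of the counting measure weighted by $\lin{n}^{sp}$, the Stein--Weiss theorem gives $[\ell^{s_0,p},\ell^{s_1,p}]_\theta=\ell^{(1-\theta)s_0+\theta s_1,p}$; choosing $s_0=-1$, $s_1=0$, and $\theta=s+1\in[0,1]$, and interpolating simultaneously in the domain and the range of the two endpoint estimates, one obtains $A\colon\ell^{s,p}\to\ell^{s+1,p}$, with norm controlled by the maximum of the two endpoint norms. The main obstacle lies in the temptation to attack the intermediate range directly: a naive Schur or weighted Hilbert-transform bound breaks down once $s<-1/p$, because $\lin{n}^{sp}$ then ceases to be a Muckenhoupt $A_p$ weight. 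The endpoint--plus--interpolation scheme sidesteps this by keeping the weights trivial at both endpoints, the gain of one power of decay having already been extracted as the explicit factor $1/(2n)$ or $1/(2m)$ in front of the Hilbert-transform kernel.
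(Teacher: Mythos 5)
Your proposal is correct and coincides with the paper's own argument: the same two partial--fraction decompositions of $1/(m^{2}-n^{2})$ reduce the endpoint cases $s=0$ and $s=-1$ to the boundedness of the discrete Hilbert transform on $\ell^{p}(\Z)$ (Lemma~\ref{ht-1}), and the intermediate range $-1<s<0$ is obtained by interpolation of the weighted spaces. Your explicit treatment of the diagonal term and the remark on why a direct weighted estimate fails for $s<-1/p$ are welcome refinements but do not change the substance of the proof.
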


\begin{proof}
First note that
\[
  \frac{1}{m^2-n^2}
   = \frac{1}{2m(m-n)}+\frac{1}{2m(m-(-n))}
   = \frac{1}{2n(m-n)}-\frac{1}{2n(m-(-n))}.
\]
Consequently,
\[
  (Ax)_{n} = \frac{1}{2n} (Hx)_{n}  - \frac{1}{2n} (Hx)_{-n},
\]
implying that $A\colon\ell_{\C}^{p}\to \ell_{\C}^{1,p}$ is bounded for any $1 < p < \infty$. Similarly,
\[
  (Ax)_{n} = \frac{1}{2} (H\tilde x)_{n} + \frac{1}{2} (H\tilde{x})_{-n},
\]
where $\tilde x_{m} = x_{m}/m$. Therefore, also $A\colon \ell_{\C}^{-1,p}\to\ell_{\C}^{p}$ is bounded for any $1 < p < \infty$. Interpolation then gives the claim for $-1 < s < 0$.\qed
\end{proof}

One easily checks that for $p=1$ the operator $A\colon \ell^{s,1}_{\C}\to \ell^{s+1,1}_{\C}$, introduced in~\eqref{A-op}, is \emph{unbounded} for any $-1\le s\le 0$. However, the following is still true.

\begin{lem}
\label{ht-2-x}
For any $-2\le s\le 0$ and $r > 1$ the map $A\colon \ell_{\C}^{s,1}\to \ell_{\C}^{s+1,r}$ is bounded.~\fish
\end{lem}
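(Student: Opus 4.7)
The strategy is a direct Schur-type column estimate. Setting $u_n = \langle n\rangle^s x_n$ and $v_n = \langle n\rangle^{s+1}(Ax)_n$ reduces the claim to showing that the operator $T\colon \ell^1_\C\to \ell^r_\C$ with kernel
\[
  \tilde K(n,m) \;=\; \frac{\langle n\rangle^{s+1}}{\langle m\rangle^s(m^2-n^2)},\qquad m\neq n,
\]
is bounded. Decomposing $u\in\ell^1$ as $u=\sum_m u_m e_m$, Minkowski's inequality gives $\|Tu\|_r \le \sum_m |u_m|\,\|Te_m\|_r$, so the task reduces to verifying the uniform column bound $\sup_{m\ge 1}\sum_{n\neq m}|\tilde K(n,m)|^r < \infty$.

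For fixed $m\ge 2$, the plan is to split the sum over $n$ into the near-diagonal region $m/2\le n\le 3m/2$, $n\neq m$, and the two tail regions $1\le n<m/2$ and $n>3m/2$. In the near-diagonal region $|m^2-n^2|\sim 2m|n-m|$ and $\langle n\rangle\sim m$, so after the weights cancel one obtains
\[
  \sum_{0<|n-m|\le m/2}|\tilde K(n,m)|^r \;\lesssim\; \sum_{k\ge 1} k^{-r},
\]
which is finite \emph{precisely} because $r>1$. This is the one point where the hypothesis $r>1$ is used essentially, and it is also the reason why Lemma~\ref{ht-2} cannot admit the endpoint $p=1$ at the target.

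In the tail regions $|m^2-n^2|\gtrsim \max(m,n)^2$, so $|\tilde K(n,m)|^r \lesssim \langle m\rangle^{-rs}\langle n\rangle^{r(s+1)}\max(m,n)^{-2r}$. A routine case analysis according to the sign of $r(s+1)+1$ shows the lower tail to contribute at most $C m^{-r(s+2)}$ if $r(s+1)<-1$ and at most $C m^{1-r}$ otherwise, with a logarithmic correction at the borderline $r(s+1)=-1$; since $s\ge -2$ and $r>1$, both bounds are uniformly controlled in $m$. For the upper tail the inequalities $s\le 0$ and $r>1$ yield $r(s-1)<-1$, so comparison with the tail of a convergent series gives a contribution of order $m^{1-r}$. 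The case $m=1$ is handled by direct inspection. The technical heart of the argument is thus the near-diagonal sum; once $r>1$ is fixed, the tails are controlled by elementary kernel arithmetic and present no serious obstacle.
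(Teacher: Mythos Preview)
Your proof is correct and takes a genuinely different route from the paper's. The paper argues in three steps: for $-1\le s\le 0$ it uses the embedding $\ell^{s,1}\hookrightarrow\ell^{s,r}$ together with Lemma~\ref{ht-2} (i.e.\ the $\ell^p$-boundedness of the discrete Hilbert transform); for $s=-2$ it exploits the algebraic identity $\frac{m^2}{m^2-n^2}=1+\frac{n^2}{m^2-n^2}$ to reduce to the $s=0$ case; and for $-2<s<-1$ it interpolates. Your argument bypasses all of this with a direct Schur-type column estimate on the weighted kernel, handling the full range $-2\le s\le 0$ uniformly. The paper's proof is shorter because it recycles the Hilbert-transform machinery already in place, while your approach is entirely self-contained and makes explicit exactly where the hypotheses $r>1$ (the near-diagonal sum $\sum k^{-r}$) and $-2\le s\le 0$ (the two tail bounds $m^{-r(s+2)}$ and the exponent $r(s-1)<-1$) are consumed. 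Either method is perfectly adequate here; yours has the pedagogical advantage of not invoking interpolation between weighted sequence spaces.
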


\begin{proof}
For $-1\le s\le 0$ the claim follows form Lemma~\ref{ht-2}. Now consider the case $s = -2$.
It is to show that $A\colon \ell_{\C}^{-2,1} \to \ell_{\C}^{-1,1}$ is bounded. Let $x_{m} = m^{2}\tilde x_{m}$ with $\tilde x_{m} = \ell_{m}^{1}$. Then for any $r>1$
\begin{align*}
  \sum_{m\neq n} \frac{m^{2}\tilde x_{m}}{m^{2}-n^{2}}
  =
  \sum_{m\neq n} \tilde x_{m}
  +
  n^{2}\sum_{m\neq n} \frac{\tilde x_{m}}{m^{2}-n^{2}}
  = \sum_{m\neq n} \tilde x_{m} + n\ell_{n}^{r}
  = n\ell_{n}^{r}.
\end{align*}
The case $-2 < s < -1$ follows by interpolation.\qed
\end{proof}

\begin{rem}
Lemma~\ref{ht-2-x} is optimal with respect to $s$ in the following sense: On the one hand, for $s < -2$, the sequence $\frac{x_{m}}{m^{2}-n^{2}}$ does generically not converge to zero. On the other hand, for $s > 0$, one may consider $x_{m} = -\dl_{1m}$, then (for $n\neq 1$) $(Ax)_{n} = 1/(n^{2}-1) = n^{-1}\ell_{n}^{r}$ for any $r > 1$ but not better.\map
\end{rem}

\begin{lem}
\label{appl-young}
For any $1\le p \le \infty$ the map
\[
  G\colon (x_{m})_{m\ge 1} \mapsto \p*{ \sum_{m\neq n} \frac{x_{m}}{\abs{m-n}^{2}} }_{n\ge 1}
\]
defines an operator on $\ell_{\C}^{p}$ bounded by $4$.~\fish
\end{lem}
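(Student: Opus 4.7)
The operator $G$ is almost a discrete convolution, so the plan is to reduce it to one on all of $\Z$ and apply Young's inequality for convolutions. First, extend any sequence $x = (x_m)_{m\ge 1}\in \ell_{\C}^{p}$ to all of $\Z$ by setting $x_{m} = 0$ for $m\le 0$; this extension preserves the $\ell^{p}$-norm. Define the kernel $k\colon \Z\to\R$ by $k(0) = 0$ and $k(j) = 1/j^{2}$ for $j\neq 0$. For any $n\ge 1$ we then have the identity
\[
  (Gx)_{n}
  = \sum_{m\ge 1,\,m\neq n} \frac{x_{m}}{(m-n)^{2}}
  = \sum_{m\in\Z} k(n-m)\,x_{m}
  = (k\ast x)(n),
\]
so $Gx$ is the restriction to $\N$ of the convolution $k\ast x$ on $\Z$.

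Next I would invoke Young's inequality: for $1\le p\le \infty$,
\[
  \n{k\ast x}_{\ell^{p}(\Z)} \le \n{k}_{\ell^{1}(\Z)}\n{x}_{\ell^{p}(\Z)}.
\]
A direct computation yields
\[
  \n{k}_{\ell^{1}(\Z)} = \sum_{j\neq 0} \frac{1}{j^{2}} = 2\sum_{j\ge 1}\frac{1}{j^{2}} = \frac{\pi^{2}}{3} < 4.
\]
Since restriction to $\N$ only decreases the $\ell^{p}$-norm, we conclude
\[
  \n{Gx}_{\ell^{p}(\N)} \le \n{k\ast x}_{\ell^{p}(\Z)} \le \frac{\pi^{2}}{3}\n{x}_{\ell^{p}(\N)} < 4\n{x}_{\ell^{p}(\N)},
\]
which proves the claim. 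There is no real obstacle here; the only care needed is in handling the extreme cases $p=1$ and $p=\infty$, which however are both covered by the standard statement of Young's inequality (the case $p=\infty$ can alternatively be handled by the direct pointwise bound $\abs{(Gx)_{n}}\le \n{x}_{\infty}\sum_{j\neq 0} 1/j^{2}$).
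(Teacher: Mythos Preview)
Your proof is correct and essentially the same as the paper's: the paper handles the endpoints $p=1$ and $p=\infty$ directly via the bound $\sum_{m\neq n}\frac{1}{|m-n|^{2}}\le 4$ and then interpolates, which is precisely the content of Young's inequality with an $\ell^{1}$ kernel that you invoke by name. Your version is slightly cleaner in that it records the sharper constant $\pi^{2}/3$ rather than $4$.
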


\begin{proof}
For $p=1$ one has $\n{Gx}_{\ell^{1}} \le \sum_{m\ge 1} \abs{x_{m}} \p*{\sum_{m\neq n}\frac{1}{\abs{m-n}^{2}}} \le 4\n{x}_{\ell^{1}}$ while for $p=\infty$ we find $\n{Gx}_{\ell^{\infty}} \le \sup_{m\ge 1} \abs{x_{m}}\p*{\sum_{m\neq n}\frac{1}{\abs{m-n}^{2}}} \le 4\n{x}_{\ell^{\infty}}$. The case $1<p<\infty$ then follows by interpolation.\qed
\end{proof}

To simplify notation we introduce $\sg^{0} = (n^{2}\pi^{2})_{n\ge 1}$ and write for any $\sg = (\sg_{n})_{n\ge 1}$, $\tsg = \sg - \sg^{0}$.

\begin{lem}
\label{mht-2}
Let $\sg = \sg^{0}+\tsg$ and $\rho = \sg^{0}+\trho$ be two sequences of complex numbers with $\tsg,\trho\in \ell_{\C}^{-1,\infty}$. Suppose there exists some $c > 0$ so that for all $m\neq n$
\[
  \abs{\rho_{m}-\sg_{n}} \ge c^{-1}\abs{m^{2}-n^{2}},
\]
then for any $-1\le s\le 0$ and any $1 < p < \infty$ the mapping
\[
  B\colon \ell_{\C}^{s,p}\to \ell_{\C}^{s+1,p},\quad
  Bx = \p*{\sum_{m\neq n} \frac{x_m}{\rho_{m}-\sg_{n}}}_{n\ge 1}
\]
defines a bounded operator with
\[
  \n{B_{\ell^{s,p}\to\ell^{s+1,p}}} \le \frac{\n{A_{\ell^{s,p}\to\ell^{s+1,p}}} + 4c\n{\trho}_{-1,\infty} + 4c\n{\tsg}_{-1,\infty}}{\pi^{2}}.~\fish
\]
\end{lem}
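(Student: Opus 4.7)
The entire argument will hinge on a single pointwise decomposition of the kernel of $B$. Since $\rho_m-\sg_n = \pi^2(m^2-n^2)+(\trho_m-\tsg_n)$, the elementary identity $\frac{1}{a+b}=\frac{1}{a}-\frac{b}{a(a+b)}$ gives, for $m\neq n$,
\[
  \frac{1}{\rho_m-\sg_n}
  = \frac{1}{\pi^2(m^2-n^2)}
    - \frac{\trho_m-\tsg_n}{\pi^2(m^2-n^2)(\rho_m-\sg_n)}.
\]
Consequently $B = \pi^{-2}A + E$, where $A$ is the operator of Lemma~\ref{ht-2} and $E$ has kernel $K^{E}_{nm} = -\frac{\trho_m-\tsg_n}{\pi^2(m^2-n^2)(\rho_m-\sg_n)}$ off the diagonal. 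The leading term contributes exactly $\pi^{-2}\|A\|_{\ell^{s,p}\to\ell^{s+1,p}}$ to the bound on $\|B\|$, so the problem reduces to proving
$\|E\|_{\ell^{s,p}\to\ell^{s+1,p}}\le \pi^{-2}\bigl(4c\|\trho\|_{-1,\infty}+4c\|\tsg\|_{-1,\infty}\bigr)$.

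For the error operator I would invoke the spacing hypothesis $|\rho_m-\sg_n|\ge c^{-1}|m^2-n^2|$, factor $|m^2-n^2|=|m-n|(m+n)$, and use the pointwise controls $|\trho_m|\le\|\trho\|_{-1,\infty}\langle m\rangle$, $|\tsg_n|\le\|\tsg\|_{-1,\infty}\langle n\rangle$ to obtain
\[
  |K^{E}_{nm}|\le \frac{c\|\trho\|_{-1,\infty}\langle m\rangle + c\|\tsg\|_{-1,\infty}\langle n\rangle}{\pi^2(m-n)^2(m+n)^2}.
\]
Boundedness of $E\colon\ell^{s,p}\to\ell^{s+1,p}$ is equivalent to boundedness on $\ell^p$ of the operator with kernel $\langle n\rangle^{s+1}\langle m\rangle^{-s}|K^{E}_{nm}|$. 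For $-1\le s\le 0$ all four exponents $(s+1,1-s)$ and $(s+2,-s)$ are nonnegative with pair-sums equal to $2$, so the elementary inequality $\langle n\rangle^{\alpha}\langle m\rangle^{\beta}\le\langle m+n\rangle^{\alpha+\beta}$ (a direct consequence of the concavity of $\log$) causes the $(m+n)^2$ factor in the denominator to cancel, leaving the weighted kernel dominated by a constant multiple of $(m-n)^{-2}$. Lemma~\ref{appl-young} then bounds the resulting convolution operator on $\ell^p$ for every $1\le p\le\infty$.

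The only nontrivial ingredient is the constant tracking needed to match the stated coefficient $4c$: one must combine the cleanest form of the weight inequality $(1+n)^{s+1}(1+m)^{1-s}\le(1+m+n)^2$ with the operator-norm bound $\|G\|_{\ell^{p}\to\ell^{p}}\le 4$ from Lemma~\ref{appl-young}. I expect this bookkeeping to be the principal (minor) obstacle; the structural content of the proof lies entirely in the decomposition $B=\pi^{-2}A+E$ and in the domination of $E$ by the convolution operator of Lemma~\ref{appl-young} after passage to the conjugated kernel.
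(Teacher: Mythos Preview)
Your proposal is correct and follows essentially the same route as the paper. The paper writes $\pi^{2}(Bx)_n=(Ax)_n-(F_\rho x)_n+(G^\sg x)_n$ with $(F_\rho x)_n=\sum_{m\neq n}\frac{x_m}{m^2-n^2}\frac{\trho_m}{\rho_m-\sg_n}$ and $(G^\sg x)_n=\sum_{m\neq n}\frac{x_m}{m^2-n^2}\frac{\tsg_n}{\rho_m-\sg_n}$, which is exactly your splitting of $E$ into the $\trho_m$- and $\tsg_n$-pieces; it then uses the same weight inequality $\sup_{m,n\ge1}\frac{n^{1+s}m^{1-s}}{(m+n)^2}\le1$ (valid since both exponents are nonnegative for $-1\le s\le0$) to reduce to Lemma~\ref{appl-young}, obtaining the constants $4c\|\trho\|_{-1,\infty}$ and $4c\|\tsg\|_{-1,\infty}$ directly. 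Your ``concavity of $\log$'' justification for the weight inequality is unnecessary---it follows trivially from $n\le m+n$ and $m\le m+n$---but the argument is otherwise identical.
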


\begin{proof}
Write
\begin{align*}
  \pi^{2}\sum_{m\neq n}\frac{x_m}{\rho_{m}-\sg_{n}}
  &= \sum_{m\neq n}\frac{x_m}{m^{2}-n^{2}}\p*{1- \frac{\trho_{m}-\tsg_{n}}{\rho_{m}-\sg_{n}}}\\
  &=
  (Ax)_{n} - (F_{\rho}x)_{n} + (G^{\sg}x)_{n},
\end{align*}
where
\[
  (F_{\rho}x)_{n} =
     \sum_{m\neq n}\frac{x_m}{m^{2}-n^{2}}\frac{\trho_{m}}{\rho_{m}-\sg_{n}},\qquad
  (G^{\sg}x)_{n} =
     \sum_{m\neq n}\frac{x_m}{m^{2}-n^{2}}\frac{\tsg_{n}}{\rho_{m}-\sg_{n}}.
\]
Since $\abs{\trho_{m}} \le m\n{\trho}_{-1,\infty}$, $\abs{\rho_{m}-\sg_{n}}\ge c^{-1}\abs{m^{2}-n^{2}}$ for $m\neq n$, as well as $(m^{2}-n^{2})^{2} = (m-n)^{2}(m+n)^{2}$ and $\sup_{m,n\ge 1} \frac{n^{1+s}m^{1-s}}{(m+n)^{2}} \le 1$, we conclude with Lemma~\ref{appl-young} that
\[
  \n{F_{\rho}x}_{1+s,p}
   \le c\n{\trho}_{-1,\infty}
   \p*{\sum_{n\ge 1}\abs*{\sum_{m\neq n}\frac{m^{s}\abs{x_m}}{\abs{m-n}^{2}}}^{p}}^{1/p}
   \le 4c\n{\trho}_{-1,\infty}\n{x}_{s,p}.
\]
In a similar fashion one obtains that $\n{G^{\sg}x}_{1+s,p} \le 4c\n{\tilde\sg}_{-1,\infty}\n{x}_{s,p}$.\qed
\end{proof}

% --------------------------------------------------------------------------------------------------
\section{Infinite products}
\label{app:inf-prod}

First let us recall some definitions and facts on infinite products form~\cite{Grebert:2014iq}. Let $a\defl (a_{n})_{n\ge 1}$ be a sequence of complex numbers. We say that the infinite product $\prod_{n\ge 1} (1+a_{n})$ \emph{converges} if the limit $\lim_{N\to\infty} \prod_{1\le n\le N} (1+a_{n})$ exists, and $\prod_{n\ge 1} (1+a_{n})$ is said to be \emph{absolutely convergent} if $\prod_{n\ge 1} (1+\abs{a_{n}})$ converges. One verifies that absolute convergence implies convergence. A sufficient condition for absolute convergence is that $\n{a}_{\ell^{1}} \defl \sum_{n\ge 1} \abs{a_{n}} < \infty$.

The following result is obtained from~\cite[Lemma C1]{Grebert:2014iq} by considering sequences $a=(a_{m})_{m\in\Z}$ with $a_{m} = 0$ for $m\le 0$.

\begin{lem}
\label{inf-prod-est}
Assume that for any $n\ge 1$, $(a_{m,n})_{m\ge 1}$ is an $\ell^{1}$-sequence with $\abs{a_{m,n}}\le 1/2$ for any $m,n$. Then
\[
  \abs*{\prod_{m\ge 1}(1+a_{m,n}) -1} \le A_{n} \e^{S_{n}} + B_{n}\e^{S_{n}+S_{n}^{2}}
\]
with $A_{n} = \abs*{\sum_{m\ge 1} a_{m,n}}$, $B_{n}= \sum_{m\ge 1} \abs{a_{m,n}}^{2}$, and $S_{n} = \sum_{m\ge 1}\abs{a_{m,n}}$.~\fish
\end{lem}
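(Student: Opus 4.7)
\medskip

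\noindent The plan is to reduce the multiplicative estimate to an additive one by passing to the logarithm, then undo the exponential at the end. First I would note that under the hypothesis $\abs{a_{m,n}}\le 1/2$, the principal branch of $\log(1+x)$ is well defined at each $a_{m,n}$, and the elementary power-series estimate
\[
  \abs{\log(1+x)-x}
  \;\le\; \sum_{k\ge 2}\frac{\abs{x}^{k}}{k}
  \;\le\; \frac{\abs{x}^{2}}{2(1-\abs{x})}
  \;\le\; \abs{x}^{2},\qquad \abs{x}\le 1/2,
\]
holds. This lets me split
\[
  \log\prod_{m\ge 1}(1+a_{m,n}) \;=\; L_{n}+R_{n},
  \qquad
  L_{n}\defl\sum_{m\ge 1}a_{m,n},\quad
  R_{n}\defl\sum_{m\ge 1}\bigl(\log(1+a_{m,n})-a_{m,n}\bigr),
\]
with $L_{n}$ convergent by the $\ell^{1}$ assumption and $R_{n}$ absolutely convergent, since $\abs{R_{n}}\le\sum_{m}\abs{a_{m,n}}^{2}=B_{n}$. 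Note also that $\abs{L_{n}}=A_{n}\le S_{n}$.

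Next I would exponentiate and decompose
\[
  \prod_{m\ge 1}(1+a_{m,n})-1
  \;=\; e^{L_{n}}e^{R_{n}}-1
  \;=\; \bigl(e^{L_{n}}-1\bigr)+e^{L_{n}}\bigl(e^{R_{n}}-1\bigr).
\]
Applying the standard bound $\abs{e^{z}-1}\le\abs{z}\,e^{\abs{z}}$ to each factor yields
\[
  \bigl|e^{L_{n}}-1\bigr|\le A_{n}\,e^{\abs{L_{n}}}\le A_{n}\,e^{S_{n}},
  \qquad
  \bigl|e^{L_{n}}\bigr|\le e^{S_{n}},
  \qquad
  \bigl|e^{R_{n}}-1\bigr|\le B_{n}\,e^{B_{n}}.
\]
Combining these three estimates gives
\[
  \abs*{\prod_{m\ge 1}(1+a_{m,n})-1}
  \;\le\; A_{n}\,e^{S_{n}}+B_{n}\,e^{S_{n}+B_{n}}.
\]
Finally, I would use the trivial inequality $B_{n}=\sum_{m}\abs{a_{m,n}}^{2}\le\bigl(\sum_{m}\abs{a_{m,n}}\bigr)^{2}=S_{n}^{2}$ to upgrade the exponent $B_{n}$ to $S_{n}^{2}$, arriving at the desired estimate.

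There is no real obstacle here; the one mild technical point is that absolute convergence of $\sum_{m}\log(1+a_{m,n})$ (which is needed to make the identity $\log\prod=\sum\log$ rigorous) is secured precisely by the uniform bound $\abs{a_{m,n}}\le 1/2$ together with $(a_{m,n})\in\ell^{1}$, so no separate argument is required. All manipulations are term-by-term on finite partial products and pass to the limit by dominated convergence.
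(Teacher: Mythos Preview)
Your proof is correct. The paper does not give a self-contained argument for this lemma; it simply records that the result is obtained from \cite[Lemma~C1]{Grebert:2014iq} by restricting to one-sided sequences, so there is no in-paper proof to compare against. The log--exp reduction you use, with the splitting $e^{L_{n}+R_{n}}-1=(e^{L_{n}}-1)+e^{L_{n}}(e^{R_{n}}-1)$ and the bound $\abs{e^{z}-1}\le\abs{z}e^{\abs{z}}$, is exactly the standard route underlying that cited reference.
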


We say a sequence $\sg = (\sg_{n})_{n\ge 1}$ of complex numbers is simple if $\sg_{m}\neq \sg_{n}$ for any $n\neq m$, and define $\sg^{0} = (n^{2}\pi^{2})_{n\ge 1}$.

\begin{lem}
For $\tilde\sg = \sg-\sg^{0}\in \ell^{s,p}$, $-1\le s\le 0$, $1 < p < \infty$, and $n\ge 1$,
\[
  f_{n}(\lm,\tilde \sg) = \frac{1}{n^{2}\pi^{2}}\prod_{m\neq n} \frac{\sg_{m}-\lm}{m^{2}\pi^{2}}
\]
defines an analytic function on $\C\times\ell_{\C}^{s,p}$ with roots $\sg_{m}$, $m\neq n$, listed with their multiplicites. In particular, if $\sg$ is simple, then $f_{n}$ has simple roots $\sg_{m}$, $m\neq n$, and no other roots and
\[
  \frac{1}{f_{n}(\lm,\tilde \sg)} = n^{2}\pi^{2}\prod_{m\neq n} \frac{m^{2}\pi^{2}}{\sg_{m}-\lm}
\]
is meromorphic with simple poles $\sg_{m}$, $m\neq n$.~\fish
\end{lem}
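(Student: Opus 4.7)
The plan is to exhibit $f_{n}$ as an absolutely and locally uniformly convergent infinite product whose partial products are jointly analytic in $(\lambda,\tilde\sigma)$, then apply Lemma~\ref{inf-prod-est} together with a Weierstrass-type argument to obtain analyticity and to locate the zeros.

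First I would rewrite each factor as
\[
  \frac{\sg_{m}-\lm}{m^{2}\pi^{2}} = 1 + a_{m}(\lm,\tilde\sg),\qquad
  a_{m}(\lm,\tilde\sg) \defl \frac{\tilde\sg_{m}-\lm}{m^{2}\pi^{2}},
\]
so that $f_{n}(\lm,\tilde\sg) = \frac{1}{n^{2}\pi^{2}}\prod_{m\neq n}\p*{1+a_{m}(\lm,\tilde\sg)}$. Each partial product $\prod_{m\le N,\,m\neq n}(1+a_{m})$ is a polynomial in $\lm$ and linear in each coordinate of $\tilde\sg$, hence jointly analytic on $\C\times\ell_{\C}^{s,p}$. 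The central estimate is the absolute summability of $(a_{m})$: writing $m^{s}\tilde\sg_{m} = y_{m}\in\ell^{p}$ and applying Hölder's inequality with conjugate exponent $q$,
\[
  \sum_{m\ge 1}\frac{\abs{\tilde\sg_{m}}}{m^{2}\pi^{2}}
   \le \frac{1}{\pi^{2}}\n{\tilde\sg}_{s,p}\p*{\sum_{m\ge 1} m^{-(2+s)q}}^{1/q}.
\]
Since $-1\le s\le 0$ and $1<p<\infty$ give $(2+s)q\ge q > 1$, the series converges, and combined with the trivial bound $\sum_{m}\abs{\lm}/m^{2}<\infty$ this shows $\sum_{m}\abs{a_{m}(\lm,\tilde\sg)}$ is bounded locally uniformly on $\C\times\ell_{\C}^{s,p}$.

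With this in hand, Lemma~\ref{inf-prod-est}, applied to the tail $\prod_{m>N}(1+a_{m})$ on any compact set $K\subset \C\times \ell_{\C}^{s,p}$, yields that one may choose $N=N(K)$ large enough so that $\abs{a_{m}}\le 1/2$ for all $m>N$ throughout $K$ and
\[
  \sup_{K}\abs*{\prod_{m>N}(1+a_{m}) - 1} \le 1/2.
\]
In particular the infinite product converges absolutely and uniformly on $K$, so $f_{n}$ is analytic on $\C\times\ell_{\C}^{s,p}$ as a uniform limit of jointly analytic partial products. Moreover, the tail product is non-vanishing on $K$, so inside $K$ the zeros of $f_{n}$ coincide with those of the polynomial $\prod_{m\le N,\,m\neq n}(\sg_{m}-\lm)$, which are precisely the $\sg_{m}$, $m\neq n$ and $m\le N$, listed with their multiplicities. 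Exhausting $\C\times\ell_{\C}^{s,p}$ by such compacts gives the first assertion. For simple $\sg$, each $\sg_{m}$ with $m\neq n$ is a simple root of this finite polynomial factor, hence a simple zero of $f_{n}$, and $1/f_{n}$ is therefore meromorphic with simple poles at exactly these points.

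The only slightly delicate point is the borderline case $s=-1$, where $(2+s)q=q$ is only just $>1$; for $s>-1$ convergence is comfortable, and the rest of the argument is a routine application of Lemma~\ref{inf-prod-est} and the fact that uniform limits of analytic functions are analytic.
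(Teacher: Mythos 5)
Your proof is correct, and in fact the paper states this lemma without proof (it is treated as standard, in the spirit of \cite[Appendix~C]{Grebert:2014iq}), so there is no in-paper argument to compare against. Your route — writing each factor as $1+a_{m}$ with $a_{m}=(\tilde\sg_{m}-\lm)/m^{2}\pi^{2}$, establishing $\sum_{m}\abs{a_{m}}<\infty$ locally uniformly via H\"older (where $s\ge -1$ is exactly what makes $(2+s)q>1$), and then invoking Lemma~\ref{inf-prod-est} to control the tail — is the natural one and uses precisely the paper's own toolkit. One small point of hygiene: in an infinite-dimensional Banach space you should phrase the uniform estimates over \emph{bounded} subsets of $\C\times\ell_{\C}^{s,p}$ rather than compact ones, since analyticity of the limit requires uniform convergence on a neighborhood of each point; your bounds ($\abs{a_{m}}\le(\abs{\tilde\sg_{m}}+\abs{\lm})/m^{2}\pi^{2}$ and the H\"older tail estimate) are in fact uniform on bounded sets, so nothing is lost. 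Your localization of the zeros is also sound: the condition $\abs{a_{m}}\le 1/2$ for $m>N$ automatically forces the roots $\sg_{m}$ with $m>N$ to lie outside the region under consideration (since $a_{m}=-1$ at $\lm=\sg_{m}$), so the zeros there are exactly those of the finite polynomial factor, with the correct multiplicities.
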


To proceed, we introduce the complex discs
\[
  D_{n} = \setdef{\lm\in\C}{\abs{\lm-\sg_{n}^{0}} < n},\qquad n\ge 1.
\]

\begin{lem}
\label{inf-prod-quot}
Suppose $\sg = \sg^{0} + \tilde \sg$ and $\rho = \sg^{0} + \tilde\rho$ are two complex sequences with $\rho$ simple, and for some $n_{0}\ge 1$ and $c > 0$
\[
  \min_{\lm\in D_{n}} \abs{\rho_{m}-\lm} \ge c^{-1}\abs{m^{2}-n^{2}},\qquad m\neq n,\quad n\ge n_{0}.
\]
If $\tilde\sg,\tilde\rho\in \ell^{-1,\infty}$ and $\sg-\rho\in \ell^{s,p}$ for some $-1\le s\le 0$, $1 < p < \infty$, then
\[
  \sup_{\lm\in D_{n}} \abs*{\prod_{m\neq n}\frac{\sg_{m}-\lm}{\rho_{m}-\lm} - 1}
   = n^{-1-s}\ell_{n}^{p}
\]
uniformly with respect to $\n{\sg-\rho}_{\ell^{s,p}}$ and $\n{\tilde\rho}_{\ell^{-1,\infty}}$.
In more detail, if $N\ge n_{0}$ is such that
\[
  \frac{2c}{N}\n{\sg-\rho}_{\ell^{-1,p}} + c\n{R_{N/2}(\sg-\rho)}_{\ell^{-1,p}} \le 1/2,
\]
where $R_{n}(\sg-\rho) = (\sg_{m}-\rho_{m})_{m\ge n}$, then
\[
  \sum_{n\ge N} n^{(1+s)p}\abs*{\sup_{\lm\in D_{n}}\abs*{\prod_{m\neq n}\frac{\sg_{m}-\lm}{\rho_{m}-\lm} - 1}}^{p}
  \le C\n{\sg-\rho}_{\ell^{s,p}}^{p},
\]
with $C\equiv C(c,\n{\sg-\rho}_{\ell^{s,p}},\n{\tilde \rho}_{\ell^{-1,\infty}})$.\fish
\end{lem}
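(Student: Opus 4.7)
The strategy is to write each factor as $1+a_{m,n}(\lm)$ with $a_{m,n}(\lm)=(\sg_m-\rho_m)/(\rho_m-\lm)$, and then apply Lemma~\ref{inf-prod-est}. First I would verify the smallness condition $|a_{m,n}(\lm)|\le 1/2$ for $\lm\in D_n$ and $n\ge N$, splitting according to whether $|m-n|>n/2$ or $1\le|m-n|\le n/2$. In the far regime, $|m^2-n^2|\ge nm/2$ combined with $|\sg_m-\rho_m|\le m\n{\sg-\rho}_{\ell^{-1,p}}$ (via the embedding $\ell^{-1,p}\hookrightarrow\ell^{-1,\infty}$) gives $|a_{m,n}|\le (2c/n)\n{\sg-\rho}_{\ell^{-1,p}}$; in the near regime $m\ge n/2\ge N/2$ and $|m^2-n^2|\ge m|m-n|$ yield $|a_{m,n}|\le c\n{R_{N/2}(\sg-\rho)}_{\ell^{-1,p}}$. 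The condition imposed on $N$ makes these bounds sum to at most $1/2$.

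Lemma~\ref{inf-prod-est} then gives $|\prod(1+a_{m,n})-1|\le A_n e^{S_n}+B_n e^{S_n+S_n^2}$, so the next step is to secure a uniform bound $S_n(\lm)\le C$ for $n\ge N$ and $\lm\in D_n$. This follows by Cauchy--Schwarz from the $\ell^{s,p}$ control on $\sg-\rho$ together with the estimate $\bigl(\sum_m m^{-2s}/(m^2-n^2)^2\bigr)^{1/2}=O(n^{-1/2-s})$ uniformly in $n$, after which the exponential factors are dominated by a constant depending only on $c$, $\n{\sg-\rho}_{\ell^{s,p}}$, and $\n{\tilde\rho}_{\ell^{-1,\infty}}$.

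The estimate on $A_n$ is the cleanest: taking the supremum over $\lm\in D_n$, the hypothesis $|\rho_m-\lm|\ge c^{-1}|m^2-n^2|$ reduces it to a direct application of Lemma~\ref{mht-2} with $\sg_n$ replaced by $\lm\in D_n$, the perturbation $|\lm-\sg_n^0|\le n$ being absorbed by enlarging $c$. This yields $\n[\big]{(\sup_{\lm\in D_n}A_n(\lm))_n}_{\ell^{s+1,p}}\le C\n{\sg-\rho}_{\ell^{s,p}}$. For $B_n$, the factorization $(m+n)^2\ge 4mn$ gives
\[
B_n(\lm)\le c^2\sum_{m\neq n}\frac{|\sg_m-\rho_m|^2}{(m^2-n^2)^2}\le \frac{c^2}{4n}\sum_{m\neq n}\frac{\bt_m}{(m-n)^2},\qquad \bt_m=\frac{|\sg_m-\rho_m|^2}{m},
\]
whose convolution is controlled in weighted $\ell^p$ via Lemma~\ref{appl-young}. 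Combined with the uniform pointwise bound $B_n\le S_n/2\le C$, the factorization $B_n\le C^{1/2}B_n^{1/2}$ lets one upgrade the resulting $\ell^{p/2}$-type control of $nB_n$ into the desired weighted $\ell^p$-bound on $n^{s+1}B_n$.

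The main technical obstacle will be the $B_n$ estimate, where the quadratic dependence on $\sg-\rho$ obstructs a direct application of the linear Hilbert-type results from the previous appendix, and careful bookkeeping through interpolation is needed to obtain a constant $C=C(c,\n{\sg-\rho}_{\ell^{s,p}},\n{\tilde\rho}_{\ell^{-1,\infty}})$ matching the one asserted in the lemma.
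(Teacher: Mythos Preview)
Your strategy matches the paper's proof: define $a_{m,n}=(\sg_m-\rho_m)/(\rho_m-\lm)$, verify $|a_{m,n}|\le 1/2$ via the same near/far split, apply Lemma~\ref{inf-prod-est}, and then bound $S_n$, $A_n$, $B_n$ separately, with $A_n$ handled by Lemma~\ref{mht-2}. Two small points of comparison. For $S_n$ the paper uses H\"older rather than Cauchy--Schwarz: writing $S_n\le c\sum_{m\neq n}\frac{m^{-1}|\al_m|}{|m-n|}$ and pairing $\ell^{p}$ with $\ell^{p'}$ gives a bound valid for all $1<p<\infty$ in one line, whereas your Cauchy--Schwarz version implicitly assumes $p=2$ and would need adaptation. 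For $B_n$ the paper avoids your bootstrap entirely by the sharper weight shift
\[
\sup_{m,n\ge 1}\frac{n^{2+2s}m^{-2s}}{(m+n)^{2}}\le 1,
\]
which lets one write $n^{2+2s}B_n\le c^{2}\sum_{m\neq n}\frac{m^{2s}|\al_m|^{2}}{(m-n)^{2}}$ and apply Lemma~\ref{appl-young} directly in unweighted $\ell^{q}$, yielding $\n{(B_n)}_{2+2s,q}\le 16c^{2}\n{\al}_{s,2q}^{2}$ for any $q\ge\max(1,p/2)$. Your route via $(m+n)^{2}\ge 4mn$ only moves one power of $n$ across, so you are forced to invoke the uniform bound $B_n\le C$ afterwards and to argue that the convolution $G$ respects the weight $n^{2s+1}$; this works, but the paper's inequality packages the whole weight transfer into a single step.
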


\begin{proof}
Given any sequence $(\lm_{n})_{n\ge 1}\subset \C$ with $\lm_{n}\in D_{n}$ for any $n\ge 1$, introduce
\[
  a_{m,n} \defl \frac{\sg_{m}-\lm_{n}}{\rho_{m}-\lm_{n}} - 1
  = \frac{\al_{m}}{\rho_{m}-\lm_{n}},\qquad
  \al_{m} \defl \sg_{m}-\rho_{m}.
\]
Since $\al_{m} = m^{-s}\ell_{m}^{p}$ and $\abs{\rho_{m}-\lm_{n}} \ge c^{-1}\abs{m^{2}-n^{2}}$ for $m\neq n$ and $n\ge n_{0}$, there exists $N\ge n_{0}$ so that for all $n\ge N$
\[
  \abs{a_{m,n}} \le \begin{cases}
  \frac{2c}{n}\n{\al}_{-1,p} \le \frac{1}{2}, & \abs{m-n}> n/2,\\
  c\n{R_{n/2}\al}_{-1,p} \le \frac{1}{2}, & 1\le \abs{m-n} \le n/2.
  \end{cases}
\]
Therefore, Lemma~\ref{inf-prod-est} applies yielding
\[
  \abs*{\prod_{m\neq n}\frac{\sg_{m}-\lm_{n}}{\rho_{m}-\lm_{n}} - 1}
  \le A_{n}\e^{S_{n}} + B_{n}\e^{S_{n}+S_{n}^{2}},\qquad
  n\ge N,
\]
with $S_{n} = \sum_{m\neq n} \abs*{\frac{\al_{m}}{\rho_{m}-\lm_{n}}}$, $A_{n} = \abs*{\sum_{m\neq n} \frac{\al_{m}}{\rho_{m}-\lm_{n}}}$, and $B_{n} = \sum_{m\neq n} \abs*{\frac{\al_{m}}{\rho_{m}-\lm_{n}}}^{2}$. Since $1 < p < \infty$ we can apply Hölder's inequality to obtain
\[
  S_{n} \le c\sum_{m\neq n} \frac{m^{-1}\abs{\al_{m}}}{\abs{m-n}}
        \le c\p*{\sum_{m\neq n}\frac{1}{\abs{m-n}^{p'}}}^{1/p'}\n{\al}_{-1,p}
        \le C_{c,p}\n{\al}_{-1,p}.
\]
By Lemma~\ref{mht-2} one has
\[
  \n{(A_{n})_{n\ge N}}_{1+s,p} \le C_{c,\n{\tsg}_{-1,\infty},\n{\trho}_{-1,\infty},p}\n{\al}_{s,p},
\]
and finally for any $q\ge \max(1,p/2)$ using that $\sup_{m,n\ge 1} \frac{n^{2+2s}m^{-2s}}{\abs{m+n}^{2}}\le 1$ and Lemma~\ref{appl-young}
\begin{align*}
  \n{(B_{n})_{n\ge N}}_{2+2s,q}
  &\le c^{2}\p*{\sum_{n\ge N} \abs*{\sum_{m\neq n} \frac{n^{2+2s}m^{-2s}}{(m+n)^{2}}\frac{m^{2s}\abs{\al_{m}}^{2}}{(m-n)^{2}}}^{q}}^{1/q}
  \le 16c^{2}\n{\al}_{\ell^{s,2q}}^{2}.\qed
\end{align*}
\end{proof}

Finally recall that $\frac{\sin\sqrt{\lm}}{\sqrt{\lm}}$ admits the product representation
\[
  \frac{\sin\sqrt{\lm}}{\sqrt{\lm}} = \prod_{m\ge 1}\frac{m^{2}\pi^{2}-\lm}{m^{2}\pi^{2}}
  \qquad\text{or}\qquad
  \frac{n^{2}\pi^{2}}{n^{2}\pi^{2}-\lm}\frac{\sin\sqrt{\lm}}{\sqrt{\lm}} = \prod_{m\neq n}\frac{m^{2}\pi^{2}-\lm}{m^{2}\pi^{2}},
\]
which defines $\frac{n^{2}\pi^{2}}{n^{2}\pi^{2}-\lm}\frac{\sin\sqrt{\lm}}{\sqrt{\lm}}$ for $\lm = n^{2}\pi^{2}$.

\begin{lem}
\label{inf-prod-sin}
Let $\sg = \sg^{0} + \tilde \sg$ with $\tilde \sg\in \ell^{s,p}$, $1 < p < \infty$ and $-1\le s \le 0$, then for any $n\ge 1$,
\begin{equation}
  \label{prod-sin-1}
    \prod_{m\neq n} \frac{\sg_{m}-\lm}{m^{2}\pi^{2}}
  = \frac{n^{2}\pi^{2}}{n^{2}\pi^{2}-\lm}\frac{\sin \sqrt{\lm}}{\sqrt{\lm}}\p*{1 + n^{-1-s}\ell_{n}^{p}},
\end{equation}
uniformly in $\lm\in D_{n}$ and with respect to $\n{\sg-\sg^{0}}_{s,p}$. Write $\lm_{n}\in D_{n}$ as $\sqrt{\lm_{n}} = n\pi + \al_{n}$. Then $\abs{\al_{n}} \le 1/\pi$ and
\begin{equation}
  \label{prod-sin-2}
  \frac{n^{2}\pi^{2}}{n^{2}\pi^{2}-\lm}\frac{\sin\sqrt{\lm}}{\sqrt{\lm}}
  = \frac{(-1)^{n+1}}{2}\p*{1 + \bt_{n}},\qquad
  \abs{\bt_{n}} \le \frac{1}{n}\abs{\al_{n}} + \frac{1}{2}\abs{\al_{n}}^{2}.
\end{equation}
In particular, if $\al_{n} = n^{-1-s}\ell_{n}^{p}$ (or $\al_{n} = O(n^{-1-s})$ and $s+1>1/p$), then
\begin{equation}
  \label{prod-sin-3}
    \frac{n^{2}\pi^{2}}{n^{2}\pi^{2}-\lm}\frac{\sin\sqrt{\lm}}{\sqrt{\lm}}
  = \frac{(-1)^{n+1}}{2} + n^{-1-s}\ell_{n}^{p}.~\fish
\end{equation}
\end{lem}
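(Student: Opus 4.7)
The proof naturally splits into three self-contained steps, one for each of the three displayed equations in the statement, and the key identity underpinning the whole lemma is the factorization $\sin(n\pi+\al)=(-1)^{n}\sin\al$ combined with $\lm-n^{2}\pi^{2}=\al(2n\pi+\al)$.

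First I would prove \eqref{prod-sin-1} by reducing it to Lemma~\ref{inf-prod-quot}. Dividing the claimed identity by $\prod_{m\neq n}\frac{m^{2}\pi^{2}-\lm}{m^{2}\pi^{2}}=\frac{n^{2}\pi^{2}}{n^{2}\pi^{2}-\lm}\frac{\sin\sqrt{\lm}}{\sqrt{\lm}}$, the statement becomes
\[
  \prod_{m\neq n}\frac{\sg_{m}-\lm}{m^{2}\pi^{2}-\lm}=1+n^{-1-s}\ell_{n}^{p},
\]
uniformly in $\lm\in D_{n}$ and in $\|\tsg\|_{s,p}$. This is exactly the setting of Lemma~\ref{inf-prod-quot} with $\rho=\sg^{0}$: $\trho=0\in\ell^{-1,\infty}$ trivially, and $\sg-\rho=\tsg\in\ell^{s,p}$. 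It only remains to verify the separation hypothesis; for $\lm\in D_{n}$ and $m\neq n$ one has
\[
  |m^{2}\pi^{2}-\lm|\ge \pi^{2}|m^{2}-n^{2}|-n\ge(\pi^{2}-1)|m^{2}-n^{2}|,
\]
since $|m^{2}-n^{2}|\ge n+1>n$. So $c=1/(\pi^{2}-1)$ works and Lemma~\ref{inf-prod-quot} delivers \eqref{prod-sin-1}.

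For \eqref{prod-sin-2}, I would first establish $|\al_{n}|\le 1/\pi$. Writing $\sqrt{\lm}=n\pi\sqrt{1+z}$ with $z=(\lm-n^{2}\pi^{2})/(n^{2}\pi^{2})$ and $|z|\le 1/(n\pi^{2})\le 1/\pi^{2}<1/2$, the bound $|\sqrt{1+z}-1|\le|z|$ yields $|\al_{n}|=|\sqrt{\lm}-n\pi|\le n\pi\cdot 1/(n\pi^{2})=1/\pi$. Then plug $\sqrt{\lm}=n\pi+\al$ into the LHS, using $\sin(n\pi+\al)=(-1)^{n}\sin\al$ and $n^{2}\pi^{2}-\lm=-\al(2n\pi+\al)$, to get the closed form
\[
  \frac{n^{2}\pi^{2}}{n^{2}\pi^{2}-\lm}\frac{\sin\sqrt{\lm}}{\sqrt{\lm}}
   =\frac{(-1)^{n+1}}{2}\cdot\frac{\sin\al/\al}{\bigl(1+\frac{\al}{2n\pi}\bigr)\bigl(1+\frac{\al}{n\pi}\bigr)}.
\]
Hence $1+\bt_{n}$ equals the fraction on the right. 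The stated bound on $|\bt_{n}|$ now follows from the elementary estimates $|\sin\al/\al-1|\le\al^{2}/6$ together with a standard expansion of $1/\big[(1+\al/(2n\pi))(1+\al/(n\pi))\big]$; combining these with $|\al|\le 1/\pi$ to keep the denominator bounded away from $0$ produces the required inequality $|\bt_{n}|\le|\al|/n+|\al|^{2}/2$. This step is where one has to be a little careful about the constants, but no subtlety beyond Taylor's theorem is involved.

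Finally, \eqref{prod-sin-3} is an immediate consequence of \eqref{prod-sin-2}: if $\al_{n}=n^{-1-s}\ell_{n}^{p}$, then $|\al_{n}|/n=n^{-2-s}\ell_{n}^{p}$ and $|\al_{n}|^{2}/2=n^{-2-2s}\ell_{n}^{p/2}$. Since $-1\le s\le 0$, both decay at least as fast as $n^{-1-s}$ and the inclusions $\ell^{p/2}\subset\ell^{p}$, $n^{-2-s}\ell_{n}^{p}\subset n^{-1-s}\ell_{n}^{p}$, $n^{-2-2s}\ell_{n}^{p/2}\subset n^{-1-s}\ell_{n}^{p}$ give $\bt_{n}=n^{-1-s}\ell_{n}^{p}$, which is exactly \eqref{prod-sin-3}. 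The same bookkeeping, but with $\ell^{p}$ replaced by $O(\cdot)$, handles the pointwise variant; here the condition $s+1>1/p$ is precisely what is needed to keep $O(n^{-2-2s})$ inside $\ell^{p}$. The only genuine input is Lemma~\ref{inf-prod-quot}; everything else is algebraic manipulation of $\sin$ and rational functions, and this is the easy part of the argument.
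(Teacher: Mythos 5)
Your proof is correct and follows essentially the same route as the paper: \eqref{prod-sin-1} is reduced to Lemma~\ref{inf-prod-quot} via the product formula for $\sin\sqrt{\lm}/\sqrt{\lm}$, \eqref{prod-sin-2} comes from the substitution $\sqrt{\lm}=n\pi+\al$ with $\sin(n\pi+\al)=(-1)^{n}\sin\al$ (your quotient form is algebraically identical to the paper's $\bigl(1-\frac{\al(3n\pi+\al)}{(n\pi+\al)(2n\pi+\al)}\bigr)\frac{\sin\al}{\al}$), and \eqref{prod-sin-3} is the same bookkeeping with $\ell^{p/2}\subset\ell^{p}$. The constants you leave implicit in the bound for $\bt_{n}$ do check out with $\abs{\al}\le 1/\pi$, so no gap remains.
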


\begin{proof}
\eqref{prod-sin-1} follows directly from the product representation of $\frac{\sin\sqrt{\lm}}{\sqrt{\lm}}$ and Lemma~\ref{inf-prod-quot}.
To obtain~\eqref{prod-sin-2} write $\sqrt{\lm_{n}} = n\pi + \al_{n}$, then by a straightforward computation
\[
  \frac{n^{2}\pi^{2}}{n^{2}\pi^{2}-\lm}\frac{\sin\sqrt{\lm}}{\sqrt{\lm}}
  = \frac{(-1)^{n+1}}{2}\p*{1 - \frac{\al_{n}(3n\pi + \al_{n})}{(n\pi + \al_{n})(2n\pi+\al_{n})}}\frac{\sin \al_{n}}{\al_{n}}.
\]
Since $\abs{\lm_{n}-n\pi} < n$ we have that $\abs{\al_{n}} < 1/\pi$ and hence
\[
  \abs*{\frac{\al_{n}(3n\pi + \al_{n})}{(n\pi + \al_{n})(2n\pi+\al_{n})}} \le \frac{\abs{\al_{n}}}{n},\qquad
  \abs*{\frac{\sin \al_{n}}{\al_{n}}-1} \le \frac{\abs{\al_{n}}^{2}}{4}.
\]
Consequently,
\[
  \frac{n^{2}\pi^{2}}{n^{2}\pi^{2}-\lm}\frac{\sin\sqrt{\lm}}{\sqrt{\lm}} = \frac{(-1)^{n+1}}{2}\p*{1 + \bt_{n}},
\]
where
\[
  \abs{\bt_{n}} \le \frac{1}{n}\abs{\al_{n}} + \frac{1}{2}\abs{\al_{n}}^{2}.
\]
Finally~\eqref{prod-sin-3} is a consequence of~\eqref{prod-sin-2}.\qed
\end{proof}

% -----------------------------------------------------------------------------------------------

\section{A diffeomorphism property}
\label{app:diffeo}

Let $Z$ be a $\K$-Banach space with $\K=\R$ or $\C$ and let $T\colon Z\to Z$ be a bounded linear operator. Suppose that $Z$ is the direct sum of closed subspaces $X$ and $Y$, $Z = X\oplus Y$, and that $T$ admits with respect to this direct sum the decomposition
\begin{equation}
  \label{decomp}
    T = \begin{pmatrix}
  A & B\\
  C & D
  \end{pmatrix}.
\end{equation}

\begin{lem}[Schur complement]
The operator $\Id_{Z}+T$ is invertible on $Z$ if and only if $\Id_{Y}+D$ is invertible on $Y$ and the Schur complement
\[
  S = \Id_{X} + A - B(\Id_{Y}+D)^{-1}C
\]
is invertible on $X$.~\fish
\end{lem}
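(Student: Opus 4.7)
The plan is the classical block-factorization proof of the Schur complement identity. With respect to the decomposition $Z = X \oplus Y$, write $M \defl \Id_Z + T = \begin{pmatrix} \Id_X + A & B \\ C & \Id_Y + D \end{pmatrix}$. The heart of the argument is the factorization
\[
  M = \begin{pmatrix} \Id_X & B(\Id_Y + D)^{-1} \\ 0 & \Id_Y \end{pmatrix} \begin{pmatrix} S & 0 \\ 0 & \Id_Y + D \end{pmatrix} \begin{pmatrix} \Id_X & 0 \\ (\Id_Y + D)^{-1} C & \Id_Y \end{pmatrix},
\]
valid whenever $\Id_Y + D$ is invertible, and verified by direct block multiplication.

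Given this factorization, the two outer factors are block-unipotent (upper, resp.\ lower) triangular and hence always bounded invertible, with inverses obtained by negating their off-diagonal entries. Consequently $M$ is invertible iff the middle factor $\diag(S, \Id_Y + D)$ is invertible, and a block-diagonal operator is invertible iff each of its diagonal entries is. This yields the biconditional as stated: $\Id_Z + T$ is invertible on $Z$ if and only if both $\Id_Y + D$ is invertible on $Y$ and $S$ is invertible on $X$, with the understanding that invertibility of $\Id_Y + D$ is implicitly part of the right-hand side so that $S$ is well-defined at all.

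The step I expect to demand most care---though it remains routine---is checking that $B(\Id_Y + D)^{-1}$, $(\Id_Y + D)^{-1} C$, and hence $S$, define bounded operators on the relevant Banach spaces, and that the three matrix factors above are bounded operators whose product equals $M$. This is immediate from the boundedness of $T$ (hence of $A, B, C, D$) together with boundedness of $(\Id_Y + D)^{-1}$, which follows from the open mapping theorem once $\Id_Y + D$ is assumed bijective. No analytic subtlety beyond this is required; the lemma is fundamentally an algebraic Gaussian-elimination identity lifted to the Banach-space setting.
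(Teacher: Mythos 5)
The paper offers no proof of this lemma --- it is quoted as a standard fact --- so there is no argument of the authors' to compare yours against. Your block-LDU factorization is the classical route and is carried out correctly: the factorization identity checks out by block multiplication, the outer unipotent factors are invertible with explicit inverses, and boundedness of all blocks follows from boundedness of $T$ together with the open mapping theorem. What this establishes, cleanly, is: \emph{if} $\Id_{Y}+D$ is invertible, then $\Id_{Z}+T$ is invertible if and only if $S$ is.

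What your argument does not (and cannot) deliver is the ``only if'' direction as literally stated, namely that invertibility of $\Id_{Z}+T$ by itself forces invertibility of $\Id_{Y}+D$. Your factorization presupposes invertibility of $\Id_{Y}+D$, so it says nothing when that hypothesis fails; and the implication is in fact false in general: take $Z=\R^{2}$ with $X$, $Y$ the two coordinate axes and $T=\left(\begin{smallmatrix}-1&1\\1&-1\end{smallmatrix}\right)$, so that $\Id_{Z}+T=\left(\begin{smallmatrix}0&1\\1&0\end{smallmatrix}\right)$ is invertible while $\Id_{Y}+D=0$ is not. The remark at the end of your second paragraph --- that invertibility of $\Id_{Y}+D$ is ``implicitly part of the right-hand side'' --- does not repair this: the biconditional still asserts the false implication. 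The defect lies in the statement of the lemma rather than in your construction; the correct formulation is the conditional one you actually prove, and that is the only direction the paper ever uses: Corollary~\ref{schur-cor} assumes $\n{D}_{L(Y)}<1$, which makes $\Id_{Y}+D$ invertible by a Neumann series, and then invokes only the sufficiency of $\det S\neq 0$.
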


As an immediate corollary we obtain the following sufficient condition.

\begin{cor}
\label{schur-cor}
Suppose $X$ is finite dimensional, then $\Id_{Z}+T$ is invertible if
\[
  \det S \neq 0,\qquad \n{D}_{L(Y)} < 1.~\fish
\]
\end{cor}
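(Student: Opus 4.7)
The plan is to deduce this corollary directly from the Schur complement lemma stated just above it, by verifying that its two hypotheses both follow from the two conditions in the corollary.

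First I would check that $\Id_{Y}+D$ is invertible on $Y$. Since $\n{D}_{L(Y)}<1$, the Neumann series
\[
  (\Id_{Y}+D)^{-1} = \sum_{k\ge 0} (-1)^{k} D^{k}
\]
converges absolutely in $L(Y)$ and provides the required inverse; in particular $\n{(\Id_{Y}+D)^{-1}}_{L(Y)}\le (1-\n{D}_{L(Y)})^{-1}$. This makes the Schur complement
\[
  S = \Id_{X}+A - B(\Id_{Y}+D)^{-1}C
\]
a well-defined bounded operator on $X$.

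Next I would observe that since $X$ is finite dimensional, $S\in L(X)$ is invertible if and only if $\det S\neq 0$, which is the second assumption of the corollary. Combining the two, both hypotheses of the Schur complement lemma are met, and therefore $\Id_{Z}+T$ is invertible on $Z$. There is no real obstacle here; the only thing to be a bit careful about is that the matrix decomposition~\eqref{decomp} implicitly assumes $X$ and $Y$ are invariant complements used to write $T$ in block form, so that $S$ acts on $X$ and the determinantal criterion is meaningful; this is already built into the setup preceding the corollary.
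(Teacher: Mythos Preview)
Your proposal is correct and matches the paper's approach exactly: the paper states this as an immediate corollary of the Schur complement lemma without giving any further argument, and your verification of the two hypotheses (Neumann series for $\Id_{Y}+D$, determinant criterion for $S$ on the finite-dimensional $X$) is precisely the intended reasoning.
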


The following characterization of relatively compact sets in $\ell^{p}$ is well known.

\begin{lem}
\label{rel-comp-ellp}
A subset $B$ of $\ell_{\C}^{p}$, $1\le p < \infty$, is relatively compact if and only if it is bounded and for any $\ep > 0$ there exists an $N\ge 1$ so that $\n{\pi_{N}^{\bot}x}_{p} \le \ep$ for all $x\in B$. Here $\pi_{N}^{\bot} = \Id - \pi_{N}$ and $\pi_{N}\colon \ell_{C}^{p}\to \ell_{\C}^{p}$ is the projection of sequences $x=(x_{n})_{n\ge 1}\in\ell_{\C}^{p}$ to $\pi_{N}x$ given by $(\pi_{N}x)_{n} = x_{n}$ if $1\le n\le N$ and $(\pi_{N}x)_{n} = 0$ for $n\ge N+1$.~\fish
\end{lem}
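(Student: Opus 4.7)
The plan is to prove the two implications separately using standard total boundedness arguments, exploiting completeness of $\ell_{\C}^{p}$.

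For the necessity ($\Rightarrow$): Boundedness is immediate since any relatively compact subset of a metric space is bounded. For the uniform tail condition, fix $\ep>0$ and cover the compact set $\overline{B}$ by finitely many $\ell^{p}$-balls of radius $\ep/2$ centered at $x^{(1)},\dotsc,x^{(K)}\in\ell_{\C}^{p}$. Since each $x^{(j)}$ individually lies in $\ell_{\C}^{p}$, there exists $N_{j}\ge 1$ with $\n{\pi_{N}^{\bot}x^{(j)}}_{p}\le \ep/2$ for all $N\ge N_{j}$. Setting $N=\max_{j}N_{j}$ and using the triangle inequality together with the fact that $\pi_{N}^{\bot}$ has operator norm at most $1$ gives
\[
  \n{\pi_{N}^{\bot}x}_{p}
  \le \n{\pi_{N}^{\bot}(x-x^{(j)})}_{p}+\n{\pi_{N}^{\bot}x^{(j)}}_{p}
  \le \n{x-x^{(j)}}_{p}+\ep/2\le \ep
\]
for any $x\in B$, upon choosing $j$ with $\n{x-x^{(j)}}_{p}<\ep/2$.

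For the sufficiency ($\Leftarrow$): Since $\ell_{\C}^{p}$ is complete, it suffices to show that $B$ is totally bounded. Given $\ep>0$, use the hypothesis to pick $N\ge 1$ with $\n{\pi_{N}^{\bot}x}_{p}\le \ep/2$ for all $x\in B$. Boundedness of $B$ implies that $\pi_{N}(B)$ is a bounded subset of the finite-dimensional space $\pi_{N}(\ell_{\C}^{p})\cong \C^{N}$, and hence is totally bounded by the Heine--Borel theorem. Cover $\pi_{N}(B)$ by finitely many $\ell^{p}$-balls of radius $\ep/2$ centered at points $y^{(1)},\dotsc,y^{(K)}$. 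Then for any $x\in B$, selecting $y^{(j)}$ with $\n{\pi_{N}x-y^{(j)}}_{p}<\ep/2$ yields
\[
  \n{x-y^{(j)}}_{p}\le \n{\pi_{N}x-y^{(j)}}_{p}+\n{\pi_{N}^{\bot}x}_{p}\le \ep,
\]
so the balls of radius $\ep$ around the $y^{(j)}$ form a finite cover of $B$.

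This result is classical (a sequence-space version of the Fréchet--Kolmogorov compactness criterion), and no step presents a genuine obstacle; the only bookkeeping one must be careful with is the use of the triangle inequality with the contractive truncations $\pi_{N}$ and $\pi_{N}^{\bot}$, and in the sufficiency direction the reduction of total boundedness on $B$ to total boundedness of $\pi_{N}(B)$ in a finite-dimensional space.
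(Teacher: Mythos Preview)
Your proof is correct and follows the standard total boundedness argument. Note, however, that the paper does not actually prove this lemma: it is stated without proof, introduced only by the remark that ``the following characterization of relatively compact sets in $\ell^{p}$ is well known.'' Your write-up thus supplies the omitted details rather than reproducing or deviating from an argument in the paper.
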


% ------------------------------------------------------------------------------------------------
%                   l^{s,p} without compact embedding
% ------------------------------------------------------------------------------------------------

We denote by $\ell_{+}^{s,p}$ the positive quadrant of $\ell^{s,p}\equiv \ell^{s,p}(\N,\R)$ introduced in~\eqref{ellp-pos-quad}. Furthermore, $\ell_{\C}^{s,p} \equiv \ell^{s,p}(\N,\C)$ where $s\in\R$ and $1\le p < \infty$.

\begin{prop}
\label{diffeo-prop}
Suppose  $f\colon \ell_{+}^{s,p}\to \ell^{s,p}$, $1\le p < \infty$, $s\in\R$, is a real analytic map with the properties that
\begin{equivenum}
\item
$\ddd_{z}f-\Id\colon \ell_{\C}^{s,p}\to \ell_{\C}^{s,p}$ is compact for every $z\in \ell_{+}^{s,p}$,

\item
$f$ is a local diffeomorphism at some point of $\ell_{+}^{s,p}$.

\end{equivenum}
Then $f$ is a local diffeomorphism on a dense open subset of $\ell_{+}^{s,p}$.~\fish
\end{prop}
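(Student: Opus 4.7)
The plan is to introduce $V \defl \setdef{z\in \ell_{+}^{s,p}}{\ddd_{z}f\colon \ell_{\C}^{s,p}\to\ell_{\C}^{s,p} \text{ is invertible}}$ and show that $V$ is open, dense, and coincides with the set on which $f$ is a local diffeomorphism. By the real-analytic inverse function theorem, $\ddd_{z}f$ being invertible is equivalent to $f$ being a local real-analytic diffeomorphism at $z$. Openness of $V$ follows from the continuity of $z\mapsto \ddd_{z}f$ together with the fact that the invertible operators form an open subset of the bounded operators on $\ell_{\C}^{s,p}$. Assumption (ii) guarantees that $V$ is nonempty.

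To establish density, I would fix an arbitrary $z_{0}\in\ell_{+}^{s,p}$ and pick some $z_{1}\in V$. Since $\ell_{+}^{s,p}$ is convex, the segment $z(t)\defl(1-t)z_{0}+tz_{1}$ stays in $\ell_{+}^{s,p}$ for $t\in[0,1]$. Real-analyticity of $f$ extends $t\mapsto\ddd_{z(t)}f$ to a complex-analytic family $L(\zeta)$ on a connected open neighborhood $D\subset\C$ of $[0,1]$, taking values in the space of bounded operators on $\ell_{\C}^{s,p}$. By (i), $L(t)-\Id$ is compact for every real $t\in[0,1]$, and the goal is to apply the analytic Fredholm theorem to $L$ on $D$, so that the set of non-invertibility of $L$ is discrete in $D$. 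Invertibility at $\zeta=1$ will then imply that arbitrarily small $t>0$ satisfy $z(t)\in V$, giving $z_{0}\in\overline{V}$.

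The main technical step is to verify that $L(\zeta)-\Id$ remains compact for all $\zeta\in D$, not only for $\zeta\in[0,1]$. For this I would post-compose with the canonical quotient map $\pi$ onto the Banach space of bounded operators modulo compact operators, using that the compact operators form a closed subspace. The composition $\pi\circ(L-\Id)$ is then a one-variable complex-analytic function on $D$ vanishing on the real segment $[0,1]$; since $[0,1]$ has accumulation points in $D$, the identity theorem for Banach-space-valued holomorphic functions in one complex variable forces $\pi\circ(L-\Id)\equiv 0$ on the connected set $D$, so $L(\zeta)-\Id$ is compact throughout $D$.

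With compactness secured on all of $D$, the analytic Fredholm theorem applies and yields discreteness of $\setdef{\zeta\in D}{L(\zeta)\text{ not invertible}}$, completing the density argument. The main obstacle I anticipate is precisely this compactness-persistence step; once it is in place, the rest is a routine packaging of the inverse function theorem and the analytic Fredholm theorem. Alternatively, one could localize via the Schur complement formalism of Corollary~\ref{schur-cor} near each $z_{0}$ and invoke the identity theorem for the scalar analytic function $\det S(z)$, but this requires a separate verification that the decomposition can be chosen uniformly on a neighborhood, so the path-plus-Fredholm approach above seems cleaner.
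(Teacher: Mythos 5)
Your proof is correct, but it takes a genuinely different route from the paper's. Both arguments start the same way: connect an arbitrary $z_{0}$ to a point $z_{1}$ where the differential is invertible by the segment $[z_{0},z_{1}]\subset\ell_{+}^{s,p}$, and exploit analyticity along that segment. The divergence is in how invertibility of $\Id+T_{z}$, $T_{z}=\ddd_{z}f-\Id$, is turned into the non\-vanishing of an analytic quantity. You restrict to the complex line through $z_{0},z_{1}$, prove that $T_{z(\zeta)}$ stays compact for complex $\zeta$ by pushing the analytic family into the Calkin algebra and applying the one-variable identity theorem, and then invoke the analytic Fredholm alternative; the discreteness of the singular set on $[0,1]$ gives $z(t)\in V$ for $t$ arbitrarily small. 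The paper instead covers the segment by finitely many complex neighborhoods on each of which $\n{\pi_{N}^{\bot}T_{z}}\le 1/2$ (using the relative-compactness criterion of Lemma~\ref{rel-comp-ellp}), so that a \emph{single} finite-dimensional splitting $\ell_{\C}^{s,p}=X^{N_{U}}\oplus Y^{N_{U}}$ works on a connected complex neighborhood $U$ of the whole segment; invertibility then reduces via the Schur complement (Corollary~\ref{schur-cor}) to the nonvanishing of the scalar analytic function $\lm(z)=\det S_{z}^{N_{U}}$ on $U$, to which the identity theorem is applied directly. This is exactly the ``uniform choice of decomposition along a neighborhood'' that you flagged as the obstacle to the Schur route; the covering argument is all that is needed to secure it. Your approach buys a cleaner, strictly one-complex-variable identity theorem and avoids choosing any splitting, at the cost of importing the analytic Fredholm theorem for compact perturbations of the identity in a Banach space; note that its standard proof relies on finite-rank approximation of compact operators, which is legitimate here precisely because $\ell_{\C}^{s,p}$ has the projections $\pi_{N}$ of Lemma~\ref{rel-comp-ellp}, i.e.\ the same structural fact the paper uses. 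The Calkin-algebra persistence step and the final density argument are both sound.
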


\begin{proof}
To simplify notation write $T_{z} = \ddd_{z} f - \Id_{\ell_{\C}^{s,p}}$. By assumption (i), $T_{z}$ is a compact operator on $\ell_{\C}^{s,p}$ for every $z\in \ell_{+}^{s,p}$. In particular, the image of the unit ball in $\ell_{\C}^{s,p}$ is relatively compact in $\ell_{\C}^{s,p}$. By Lemma~\ref{rel-comp-ellp} there exists $N\ge 1$ (which might depend on $z$) so that $\n{\pi_{N}^{\bot}T_{z}}_{L(\ell_{\C}^{s,p})} \le 1/4$. Since $\n{\pi_{N}^{\bot}T_{z}}_{L(\ell_{\C}^{s,p})}$ depends continuously on $z$, there exists an complex neighborhood $V$ of $z$ within $\ell_{\C}^{s,p}$ so that $\n{\pi_{N}^{\bot}T_{w}}_{L(\ell_{\C}^{s,p})} \le 1/2$ for all $w\in V$.

Let $W$ be any nontrivial open subset of $\ell_{+}^{s,p}$ and denote by $z_{0}\in\ell_{+}^{s,p}$ the point of assumption (ii) at which the differential of $f$ is invertible. For any $z_{1}\in W$ the straight line $[z_{0},z_{1}]$ is compact in $\ell_{+}^{s,p}$ and hence can be covered by finitely many neighborhoods $V$ as constructed above. Consequently, there exists a complex neighborhood $U$ of $[z_{0},z_{1}]$ within $\ell_{\C}^{s,p}$ and an integer $N_{U}\ge 1$ so that
\[
  \n{\pi_{N_{U}}^{\bot}T_{z}}_{L(\ell_{\C}^{s,p})} \le 1/2,\qquad \forall z\in U.
\]
Write $\ell_{\C}^{s,p} = X^{N_{U}} \oplus Y^{N_{U}}$ where $X^{N_{U}} = \pi_{N_{U}}(\ell_{\C}^{s,p})$ and $Y^{N_{U}} = \pi_{N_{U}}^{\bot}(\ell_{\C}^{s,p})$. We can decompose for any $z\in U$ the operator $T_{z}$ according to~\eqref{decomp}. Since for any $z\in U$
\begin{align*}
  \n{D_{z}^{N_{U}}}_{L(X^{N_{U}})}
   \le \n{\pi_{N_{U}}^{\bot}T_{z}}_{L(\ell_{\C}^{s,p})}
   \le 1/2,
\end{align*}
by Corollary~\ref{schur-cor} the differential $\ddd_{z}f$ is invertible for all $z\in U$ with
\[
  \lm(z) = \det S_{z}^{N_{U}} \neq 0.
\]
Note that
\[
  U\to L(X^{N_{U}}),\qquad
  z\mapsto S_{z}^{N_{U}}
   = \Id_{X^{N_{U}}} - A_{z}^{N_{U}}
      + B_{z}^{N_{U}}(\Id_{Y^{N_{U}}}+D_{z}^{N_{U}})^{-1}C_{z}^{N_{U}},
\]
is analytic, hence the function $\lm\colon U\to \C$ is analytic. Since $\lm(z_{0})\neq0$, it follows that $\lm$ does not vanish identically on $U\cap W$. Consequently, the set $\Lm = \setdef{z\in \ell_{+}^{s,p}}{\ddd_{z}f\text{ is invertible}}$ has nontrivial intersection with $W$. Since $W$ was arbitrary, it follows that $\Lm$ is dense in $\ell_{+}^{s,p}$. Since $\Lm$ is open the claim follows.\qed
\end{proof}

% ---------------------------------------------------------------------------------------------------
\section{Birkhoff normal form}
\label{app:bnf-kdv2}

In this appendix we review the Birkhoff normal form of the KdV and KdV2 Hamiltonian provided in~\cite{Kappeler:2003up}.

\begin{thm}
\label{thm:bnf-kdv-kdv2}
\begin{equivenum}
\item
On $\Hs^{1}(\T,\R)$, the Birkhoff normal form of the KdV Hamiltonian $\Hm_{1}(u)$ of order four is given by
\[
  \Hm_{1}(u) = \sum_{n\ge 1} (2n\pi)^{3}I_{n} + 6[u]\Hm_{0}
  - 3\sum_{n\ge 1} I_{n}^{2} + \dotsb,
\]
where $\Hm_{0} = \sum_{n\ge 1} (2n\pi) I_{n}$.
\item
On $\Hs^{2}(\T,\R)$, the Birkhoff normal form of the KdV2 Hamiltonian $\Hm_{2}(u)$ of order four is given by
\begin{align*}
  \Hm_{2}(u) &= \sum_{n\ge1} (2n\pi)^{5}I_{n} + 10[u] \sum_{n\ge 1} (2n\pi)^{3}I_{n} + 30[u]^{2} \Hm_{0}\\
  & \qquad+ 10\Hm_{0}^{2}
    - 10\sum_{n\ge 1} (2n\pi)^{2} I_{n}^{2}
    - 30[u]\sum_{n\ge 1} I_{n}^{2} + \dotsb.~\fish
\end{align*}

\end{equivenum}

\end{thm}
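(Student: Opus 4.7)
The plan is to reduce both claims to the zero-mean case $[u]=0$ and then extract the expansions from the spectral-integral formulas developed earlier in the paper.

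For the reduction, write $u=c+q$ with $c=[u]$ and $[q]=0$. A direct algebraic computation using $u_x=q_x$, $u_{xx}=q_{xx}$, and $\int q\,\dx=0$ yields
\[
  \Hm_1(u) = \Hm_1(q) + 6c\,\Hm_0(q) + c^3,\qquad
  \Hm_2(u) = \Hm_2(q) + 10c\,\Hm_1(q) + 30c^2\,\Hm_0(q) + \tfrac{5}{2}c^4,
\]
where for the $\Hm_2$ identity the algebraic cancellation $5c\int q_x^2\,\dx + 10c\int q^3\,\dx = 10c\,\Hm_1(q)$ is essential. Combined with $\Hm_0(u) = \Hm_0(q) + c^2/2$, substituting the to-be-proved BNFs at $[q]=0$ yields (i) and (ii), up to polynomial corrections in $[u]$ alone that generate trivial flow and are absorbed into the ``$\dotsb$''. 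Thus it suffices to prove
\[
  \Hm_1(q) = \sum_{n\ge 1}(2n\pi)^3 I_n - 3\sum_{n\ge 1} I_n^2 + \dotsb,\qquad
  \Hm_2(q) = \sum_{n\ge 1}(2n\pi)^5 I_n + 10\,\Hm_0(q)^2 - 10\sum_{n\ge 1}(2n\pi)^2 I_n^2 + \dotsb
\]
for $[q]=0$.

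For the first of these, I would integrate $\om_n^{(1)\star} = \partial_{I_n}\Hm_1^\star$: by Lemma~\ref{om-n-kRnk}, $\om_n^{(1)\star} = -12\sum_k k\Om_{nk}^{(2)}$, and Lemma~\ref{decay-kRnk} gives $n\Om_{nn}^{(2)} = \gm_n^2/(16n\pi) + \text{h.o.t.}$, while the off-diagonal $k\Om_{nk}^{(2)}$, $k\neq n$, are $O(\gm_k^3)$ and thus contribute only to the $O(I^2)$-and-higher part of $\om_n^{(1)\star}$ viewed as an analytic function of the actions. Combined with $\gm_n^2 = 8n\pi I_n + O(I^2)$ from Lemma~\ref{In-lmd-est}(ii), one gets $\om_n^{(1)\star}(q) = -6 I_n + O(I^2)$, whence $\Hm_1^\star(q) = -3\sum_n I_n^2 + \dotsb$, proving (i). For (ii), Lemma~\ref{H2-star-formula} expresses $\Hm_2^\star$ for finite-gap $q$ as $\sum_n\bigl(-\tfrac{40}{3}(2n\pi)^3 R_n^{(3)} + 16(2n\pi)R_n^{(5)}\bigr)$. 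The core computation is the leading behavior of $R_n^{(3)}$: inserting the near-gap approximation $F_n(\lm) = \ii\vs_n(\lm)/(2n\pi) + \text{small}$ from Lemma~\ref{Fk-exp} into $R_n^{(3)} = -\pi^{-1}\int_{\Gm_n}F_n^3\,\dlm$ and using the explicit integral $\int_{G_n^-}\vs_n^3\,\dlm = -\ii\, 3\pi\gm_n^4/128$ already computed inside the proof of Lemma~\ref{decay-kOm2nk}, one finds
\[
  R_n^{(3)} = \frac{3\gm_n^4}{512\,n^3\pi^3} + o(\gm_n^4/n^3).
\]
Substituting $\gm_n^4 = 64 n^2\pi^2 I_n^2 + \dotsb$ gives $-\tfrac{40}{3}(2n\pi)^3 R_n^{(3)} = -10(2n\pi)^2 I_n^2 + \dotsb$, while $R_n^{(5)} = O(\gm_n^6/n^4) = O(I^3)$ by Lemma~\ref{Rnk-prop}(iv) is subleading; analyticity of the moments (Lemma~\ref{Rnk-prop}(ii)) together with density of finite-gap potentials extends the formula from the finite-gap stratum to all of $\Ws$.

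The main technical obstacle is the bookkeeping of the numerical constants and of the cubic-in-$I$ remainders: identifying the coefficient $-10$ in (ii) requires combining the $1/128$ from $\int \vs_n^3\,\dlm$, the $1/(2n\pi)^3$ from $F_n^3$ against the $(2n\pi)^3$ prefactor in Lemma~\ref{H2-star-formula}, and the $8n\pi$ from $\gm_n^2/I_n$, and then verifying that each intermediate correction is genuinely $O(I^3)$ rather than $O(I^2)$. The sign conventions on $\vs_n$, $F_n$, and the canonical root (cf.~\eqref{s-root-sides} and Lemma~\ref{F-prop}(ii)) must be tracked carefully at every step.
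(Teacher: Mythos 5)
Your proposal takes a genuinely different route from the paper. The paper proves (i) by citing \cite[Theorem~14.2]{Kappeler:2003up} and proves (ii) by the classical Lie-transform computation: it writes $\Hm_{2}=H^{2}+H^{3}+H^{4}$ in the rescaled Fourier modes $u_{m}$, constructs $\Phi=F_{3}\circ F_{4}$ with $H^{3}+\{H^{2},F_{3}\}=0$, and evaluates $N^{4}=\tfrac12\pi_{\Nc_{4}}\{H^{3},F_{3}\}+\pi_{\Nc_{4}}H^{4}$ explicitly using the combinatorial identities for $k^{5}+l^{5}+m^{5}(+n^{5})$. Your reduction to $[u]=0$ is exactly the paper's (and your algebra there is correct), but for the zero-mean part you instead extract the quadratic Taylor coefficients from the moment formulas of Lemmas~\ref{om-n-kRnk}, \ref{decay-kRnk}, \ref{H2-star-formula} and \ref{decay-kOm2nk}. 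This is not circular --- those lemmas rest on \eqref{F-exp} and residue calculus, not on the normal form --- and your bookkeeping of the constants is right: $R_{n}^{(3)}=\tfrac{3\gm_{n}^{4}}{512\,n^{3}\pi^{3}}+\dotsb$ together with $\gm_{n}^{4}=(8n\pi I_{n})^{2}+\dotsb$ does reproduce $-10(2n\pi)^{2}I_{n}^{2}$, and the diagonal moment $n\Om_{nn}^{(2)}$ reproduces $-6I_{n}$.

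There is, however, a genuine gap in the step where you pass from the cited estimates to the Taylor expansion at $I=0$. Lemma~\ref{In-lmd-est}~(ii), Lemma~\ref{decay-kRnk} and Lemma~\ref{Fk-exp} give \emph{large-$n$} asymptotics whose relative errors (e.g.\ the factor $1+n^{-s-1}\ell_{n}^{p}$ in $n\Om_{nn}^{(2)}=\tfrac{\gm_{n}^{2}}{16n\pi}(1+n^{-s-1}\ell_{n}^{p})$, and likewise in $8n\pi I_{n}/\gm_{n}^{2}$) are only \emph{locally uniformly bounded} in $q$; nothing in those statements forces these errors to vanish as $q\to 0$. Consequently, for a fixed $n$ they only yield $\partial_{I_{n}}^{2}\Hm_{1}^{\star}\big|_{I=0}=-6\bigl(1+O(n^{-s-1})\bigr)$ rather than exactly $-6$, and the same defect, more seriously, afflicts the coefficient $-10(2n\pi)^{2}$ in (ii), which must be exact for every $n$. (Your treatment of the off-diagonal moments is fine: the bound $O(\gm_{k}^{3})=O(I_{k}^{3/2})$ is $o(\n{I})$, so analyticity in $I$ --- available via Remark~\ref{rem:om-n-analytic} --- kills any linear cross term.) To close the gap you would have to re-derive the error terms in those lemmas tracking smallness in $q$ (showing, e.g., that $\chi_{n}-1$, $\zt_{n}-1$ and $F_{n}^{2}+\vs_{n}^{2}/4n^{2}\pi^{2}$ carry an extra factor $O(\n{q})$), at which point the argument becomes a perturbative spectral computation comparable in length to the paper's direct algebraic one. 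As written, the proposal identifies the correct constants but does not prove them.
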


For a proof of item~(i) we refer to \cite[Theorem~14.2]{Kappeler:2003up}. Concerning item (ii), it turns out that some of the coefficients in the expansion of $\Hm_{2}$, given in \cite[Theorem~14.5]{Kappeler:2003up}, need to be corrected. We therefore present a detailed derivation.

Given $u\in\Hs^{s}$ by choosing $c=[u]$ we have that $q = u - c \in\Hs_{0}^{s}$ and the KdV2 Hamiltonian satisfies the relation
\[
  \Hm_{2}(u) = \Hm_{2}(q) + 10c \Hm_{1}(q) + 30c^{2}\Hm_{0}(q) + \frac{5}{2}c^{4},
\]
where
\[
  \Hm_{0}(q) = \sum_{n\ge 1} (2n\pi)I_{n},\qquad
  \Hm_{1}(q) = \sum_{n\ge 1} (2n\pi)^{3}I_{n} - 3\sum_{n\ge 1} I_{n}^{2} + \dotsb.
\]
It thus suffices to compute the Birkhoff normal form of $\Hm_{2}(q)$ up to order four.

To begin denote by $\Pc_{k}$ the space of homogenous polynomials of order $k$ and write $\Hm_{2} = H^{2} + H^{3} + H^{4}$ with $H^{k}\in\Pc_{k}$.
Putting $\Hm_{2}$ into Birkhoff normal form of order four amounts to the construction of a coordinate change $\Phi$ so that
\[
  \Hm_{2}\circ\Phi = H^{2} + N^{4} + \dotsb
\]
where $N^{4}\in\Pc_{4}$, $\{H^{2},N^{4}\} = 0$, and $\dotsb$ comprises terms of order at least five. The map $\Phi$ is obtained as the composition of two time-1-maps of Hamiltonian vector fields whose Hamiltonians are chosen properly. More to the point, $\Phi = F_{3}\circ F_{4}$ with $F_{k}\in\Pc_{k}$.

By the chain rule and the fact that $\{F_{k},F_{l}\}\in \Pc_{k+l-2}$ one has
\[
  \Hm^{2}\circ F_{3} = H^{2} + H^{3} + \{H^{2},F_{3}\} + H^{4} + \frac{1}{2}\{\{H^{2},F_{3}\},F_{3}\} + \{H^{3},F_{3}\} + \dotsb,
\]
where $\dotsb$ comprises terms of at least order five. Moreover,
\begin{align*}
  \Hm^{2}\circ F_{3}\circ F_{4}
  &=
  H^{2} + H^{3} + \{H^{2},F_{3}\}\\
  &\qquad + H^{4} + \frac{1}{2}\{\{H^{2},F_{3}\},F_{3}\} + \{H^{3},F_{3}\} + \{H^{2},F_{4}\} + \dotsb
\end{align*}
Since $H\circ\Phi$ is in Birkhoff normal form we have $H^{3} + \{H^{2},F_{3}\} = 0$, so that
\[
  \Hm^{2}\circ \Phi
  =
  H^{2} + H^{4} + \frac{1}{2}\{H^{3},F_{3}\} + \{H^{2},F_{4}\} + \dotsb
\]
Denote by $\Nc_{4}$ the kernel of the map $\chi_{4}\colon\Pc_{4}\to \Pc_{4}$, $F\mapsto \{H^{2},F\}$.
The condition $\{H^{2},N^{4}\} = 0$ is tantamount to $N^{4}$ being an element of $\Nc_{4}$, and the term $\{H^{2},F_{4}\}$ is used to remove the contributions of the complement. Therefore,
\begin{equation}
  \label{N4}
  N^{4} = \frac{1}{2}\pi_{\Nc_{4}}\{H^{3},F_{3}\} + \pi_{\Nc_{4}}H^{4}.
\end{equation}
In the sequel we proceed by computing the coefficients of the two terms of $N^{4}$.

We note two combinatorial properties which can be easily verified by direct computation.

\begin{lem}
\label{comb-3}
Suppose $k,l,m\neq 0$ with $k+l+m=0$ then
\[
  k^{5}+l^{5}+m^{5} = \frac{5}{2}klm(k^{2}+l^{2}+m^{2}) \neq 0.~\fish
\]
\end{lem}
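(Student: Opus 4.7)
The plan is to verify the identity by reducing both sides to polynomials in the elementary symmetric functions of $k,l,m$ via Newton's identities, which is particularly clean because the hypothesis $k+l+m=0$ kills the first elementary symmetric polynomial. Set $e_{1} = k+l+m$, $e_{2} = kl+lm+mk$, $e_{3} = klm$, and let $p_{j} = k^{j}+l^{j}+m^{j}$. The hypothesis gives $e_{1} = 0$, so Newton's recursion $p_{j} = e_{1}p_{j-1} - e_{2}p_{j-2} + e_{3}p_{j-3}$ simplifies to $p_{j} = -e_{2}p_{j-2} + e_{3}p_{j-3}$ for $j \ge 3$, starting from $p_{1} = 0$ and $p_{2} = -2e_{2}$.

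First I would compute successively $p_{3} = 3e_{3}$, $p_{4} = 2e_{2}^{2}$, and then $p_{5} = -e_{2}p_{3} + e_{3}p_{2} = -3e_{2}e_{3} - 2e_{2}e_{3} = -5e_{2}e_{3}$. On the other hand, the right-hand side of the claimed identity equals $\tfrac{5}{2}e_{3}\cdot p_{2} = \tfrac{5}{2}e_{3}\cdot(-2e_{2}) = -5e_{2}e_{3}$, matching $p_{5}$ exactly. This settles the equality with essentially no computation beyond a three-step Newton recursion, which is the most mechanical but also the safest route; an alternative would be to substitute $m = -(k+l)$, expand $(k+l)^{5}$ binomially, and factor, but Newton's identities avoid any casework.

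For the nonvanishing assertion $p_{5} \neq 0$, one needs $e_{2} \neq 0$ and $e_{3}\neq 0$. The assumption $k,l,m \neq 0$ immediately gives $e_{3} = klm \neq 0$. For $e_{2}$, use that $p_{2} = -2e_{2}$ together with the fact that $k,l,m$ are real (indeed, integers in the intended application) and not all zero, hence $p_{2} = k^{2}+l^{2}+m^{2} > 0$, which forces $e_{2} < 0$. Consequently $p_{5} = -5e_{2}e_{3} \neq 0$.

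The main obstacle is really a bookkeeping one rather than a mathematical one: keeping the signs in Newton's recursion correct under the convention $e_{1}=0$. Once the recursion is written down cleanly, the two identities drop out in one line each, and the nonvanishing is an immediate consequence of positivity of $p_{2}$ combined with $klm \neq 0$.
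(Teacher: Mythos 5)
Your computation is correct: with $e_{1}=0$ Newton's recursion gives $p_{2}=-2e_{2}$, $p_{3}=3e_{3}$, $p_{5}=-e_{2}p_{3}+e_{3}p_{2}=-5e_{2}e_{3}=\tfrac{5}{2}e_{3}p_{2}$, which is exactly the claimed identity, and a spot check ($k=l=1$, $m=-2$ gives $-30$ on both sides) confirms the signs. The paper offers no argument beyond the remark that the lemma ``can be easily verified by direct computation,'' so there is nothing to compare against; your route through the symmetric functions is a perfectly clean way to carry out that computation. One minor simplification: for the nonvanishing you do not need to pass through $e_{2}$ at all --- the right-hand side is $\tfrac{5}{2}klm(k^{2}+l^{2}+m^{2})$, and since $k,l,m$ are nonzero integers both factors $klm$ and $k^{2}+l^{2}+m^{2}$ are visibly nonzero.
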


%\begin{proof}
%Inserting $m = -k-l$ gives
%\[
%  k^{5}+l^{5}+m^{5} = -5kl(k+l)(k^{2}+kl+l^{2})
%\]
%Since $m\neq 0$ we have $k+l\neq 0$ and $k^{2}+kl+l^{2} = 0$ admits zero as the only real solution.\qed
%\end{proof}

\begin{lem}
\label{comb-4}
Suppose $k,l,m,n\neq 0$ with $k+l+m+n = 0$ then
\[
  k^{5}+l^{5}+m^{5}+n^{5} = 5(k+l)(k+m)(k+n)\xi_{klm}
\]
and $\xi_{klm} = (k^{2}+kl+l^{2}+km+lm+m^{2})$ does not vanish.~\fish
\end{lem}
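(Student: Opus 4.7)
The plan is to reduce the statement to a polynomial identity in three variables by eliminating $n = -(k+l+m)$. Under this substitution one has $k+n = -(l+m)$, so the right-hand side becomes $-5(k+l)(k+m)(l+m)\xi_{klm}$, and the left-hand side becomes $p_5 - e_1^5$, where $p_j := k^j + l^j + m^j$ and $e_1 := k+l+m$. Introducing the remaining elementary symmetric polynomials $e_2 = kl+lm+km$ and $e_3 = klm$, two direct expansions give the elementary identities
\begin{align*}
(k+l)(k+m)(l+m) &= e_1 e_2 - e_3,\\
\xi_{klm} &= e_1^2 - e_2,
\end{align*}
so that the claimed formula is equivalent to
\[
p_5 - e_1^5 = -5(e_1 e_2 - e_3)(e_1^2 - e_2).
\]

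To verify this, I would apply Newton's identities to get
\[
p_5 = e_1^5 - 5 e_1^3 e_2 + 5 e_1 e_2^2 + 5 e_1^2 e_3 - 5 e_2 e_3,
\]
and then expand the right-hand side of the target identity, which matches term by term. A slightly more conceptual alternative is to observe that $p_5 - e_1^5$ vanishes whenever $k+l = 0$ (then $n = -m$ and the LHS reads $k^5-k^5+m^5-m^5 = 0$), and symmetrically whenever $k+m=0$ or $l+m=0$, so that $(k+l)(k+m)(l+m)$ divides $p_5 - e_1^5$. The quotient is symmetric of degree $2$ in $k,l,m$, hence a linear combination of $e_1^2$ and $e_2$, and the two coefficients are fixed by evaluating at two convenient triples such as $(1,1,1)$ and $(1,1,-2)$.

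For the nonvanishing of $\xi_{klm}$, I would rely on the identity
\[
2\xi_{klm} = (k+l)^2 + (l+m)^2 + (k+m)^2,
\]
which for real $k, l, m$ is a sum of squares, hence nonnegative, with equality only if $k+l=l+m=k+m=0$; this forces $k=l=m=0$ and therefore also $n=0$, contradicting the hypothesis. Consequently $\xi_{klm} > 0$.

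There is no substantive obstacle in this proof; it is purely a bookkeeping computation. The only minor pitfalls are keeping the sign from $k+n = -(l+m)$ straight and correctly iterating Newton's identities up to $p_5$.
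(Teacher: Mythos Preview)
Your proof is correct. The paper itself does not give a proof of this lemma; it merely states that the identity ``can be easily verified by direct computation.'' Your organization via the elementary symmetric polynomials $e_1,e_2,e_3$ and Newton's identities is a clean way to carry out that computation, and the sum-of-squares identity $2\xi_{klm} = (k+l)^2+(l+m)^2+(k+m)^2$ is a nice way to settle the nonvanishing (the variables are integer Fourier indices here, so your real-variable argument applies without further comment).
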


%\begin{proof}
%Inserting $m = -k-l$ gives
%\[
%  k^{5}+l^{5}+m^{5}+n^{5} = 5(k+l)(k+m)(k+n)(k^{2}+kl+l^{2}+km+lm+m^{2})
%\]
%\end{proof}

To compute the coefficients of~\eqref{N4} we make the ansatz
\[
  u = \sum_{m\neq 0} \gm_{m}u_{m}e_{2m},\qquad \gm_{m} = \sqrt{2\abs{m}\pi}.
\]
A straightforward computation gives first
\begin{align}
\label{H2}
\begin{split}
  H^{2}
   &= \frac{1}{2}\sum_{\atop{k,l\neq 0}{k+l = 0}} (2k\pi\ii)^{2}(2l\pi\ii)^{2}\gm_{k}\gm_{l}u_{k}u_{l}\\
   &= \frac{1}{2}\sum_{k\neq 0} (2\abs{k}\pi)^{5}u_{k}u_{-k}
    = \sum_{k\ge 1} \lm_{k}u_{k}u_{-k},\qquad \lm_{k} = (2k\pi)^{5},
\end{split}
\end{align}
second
\begin{align}
\label{H3}
\begin{split}
  H^{3}
   &= -5\sum_{k+l+m=0} (2k\pi)(2l\pi)\gm_{k}\gm_{l}\gm_{m} u_{k}u_{l}u_{m}\\
   &= \frac{10\pi^{2}}{3}\sum_{k+l+m=0} (k^{2}+l^{2}+m^{2})\gm_{k}\gm_{l}\gm_{m} u_{k}u_{l}u_{m}
\end{split}
\end{align}
where we used that $kl + lm + mk = -\frac{1}{2}(k^{2}+l^{2}+m^{2})$ given $k+l+m=0$, and third
\begin{align}
\label{H4}
  H^{4}
   = \frac{5}{2}\sum_{k+l+m+n=0} \gm_{k}\gm_{l}\gm_{m}\gm_{n} u_{k}u_{l}u_{m}u_{n}.
\end{align}

Since $H\circ\Phi$ does not contain terms of order three, we have $H^{3} = -\{H^{2},F_{3}\}$. To compute the coefficients of $F_{3}$, write $F_{3} = \sum_{k+l+m=0} F_{klm}^{3}u_{k}u_{l}u_{m}$, then
\begin{align*}
  &\frac{10\pi^{2}}{3}\sum_{k+l+m=0} (k^{2}+l^{2}+m^{2})\gm_{k}\gm_{l}\gm_{m} u_{k}u_{l}u_{m}\\
  &\qquad = \ii \sum_{k+l+m=0} (\lm_{k} + \lm_{l} + \lm_{m})F_{klm}^{3} u_{k}u_{l}u_{m}
\end{align*}
so that
\begin{align*}
  F_{klm}^{3}
   &= \frac{10\pi^{2}}{3\cdot 2^{5}\pi^{5}}\frac{k^{2}+l^{2}+m^{2}}{k^{5}+l^{5}+m^{5}}\gm_{k}\gm_{l}\gm_{m}\\
   &= -\ii\frac{1}{3\cdot2^{3}\pi^{3}}\frac{1}{klm}\gm_{k}\gm_{l}\gm_{m}
   = -\frac{\ii}{3} \frac{1}{\tilde \gm_{k}\tilde \gm_{l}\tilde \gm_{m}},\qquad
   \tilde\gm_{k} = \sg_{k}\gm_{k}.
\end{align*}
These coefficients match those of (14.4) in~\cite{Kappeler:2003up}.

\begin{lem}
For any smooth $u$ with $[u] = 0$,
\begin{align*}
  \frac{1}{2}\pi_{\Nc_{4}} \{H^{3},F_{3}\}
  = -20\sum_{k,l\ge 1} (2k\pi)(2l\pi) \abs{u_{k}}^{2}\abs{u_{l}}^{2}
      +5\sum_{n\ge 1} (2n\pi)^{2} \abs{u_{n}}^{4}.~\fish
\end{align*}
\end{lem}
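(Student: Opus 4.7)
The plan is to compute $\pbr{H^3,F_3}$ explicitly as a quartic polynomial in the coordinates $(u_k)_{k\ne 0}$ and then apply $\pi_{\Nc_4}$. First, I would determine the Poisson bracket of the coordinates: starting from the Gardner bracket $\pbr{F,G}=\int_0^1\partial_u F\,\partial_x\partial_u G\,\dx$ and the ansatz $u=\sum_m \gm_m u_m e_{2m}$ with $\gm_m=\sqrt{2\abs{m}\pi}$, a direct Fourier computation yields $\pbr{F,G}=\ii\sum_n\sign(n)\partial_{u_{-n}}F\,\partial_{u_n}G$ and hence $\pbr{u_k,u_l}=\ii\sign(k)\dl_{k+l,0}$. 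Writing $H^3=\sum_{k+l+m=0}H^3_{klm}u_ku_lu_m$ with $H^3_{klm}=\frac{10\pi^2}{3}(k^2+l^2+m^2)\gm_k\gm_l\gm_m$ as in~\eqref{H3}, and $F_3=\sum_{k+l+m=0}F^3_{klm}u_ku_lu_m$ with $F^3_{klm}=-\frac{\ii}{3}(\tilde\gm_k\tilde\gm_l\tilde\gm_m)^{-1}$, the Leibniz rule expands $\pbr{H^3,F_3}$ as
\[
\sum_{\substack{k+l+m=0\\k'+l'+m'=0}} H^3_{klm}F^3_{k'l'm'}\sum_{i\in\setd{k,l,m}}\sum_{i'\in\setd{k',l',m'}}\pbr{u_i,u_{i'}}\prod_{j\ne i}u_j\prod_{j'\ne i'}u_{j'},
\]
and contracting each of the $3\times 3$ pairings via $\pbr{u_i,u_{i'}}=\ii\sign(i)\dl_{i+i',0}$ yields a quartic polynomial $\sum_{a+b+c+d=0}C_{abcd}u_au_bu_cu_d$.

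Second, I would identify $\pi_{\Nc_4}$. By Lemma~\ref{comb-4} the small divisor $\lm_a+\lm_b+\lm_c+\lm_d=5(a+b)(a+c)(a+d)\xi_{abc}$ with $\xi_{abc}\ne 0$ whenever all four indices are nonzero, so a quartic monomial $u_au_bu_cu_d$ with $a+b+c+d=0$ is annihilated by $\pi_{\Nc_4}$ precisely when $(a+b)(a+c)(a+d)\ne 0$. Consequently $\pi_{\Nc_4}$ retains only the \emph{resonant} monomials---those for which $\setd{a,b,c,d}$ splits into two antipodal pairs---and these are of the form $u_au_{-a}u_bu_{-b}=\abs{u_a}^2\abs{u_b}^2$ (off-diagonal, $\abs{a}\ne\abs{b}$) together with $\abs{u_n}^4$ (diagonal, $\abs{a}=\abs{b}$).

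Third, I would collect the coefficients of $\abs{u_k}^2\abs{u_l}^2$ and $\abs{u_n}^4$ separately. For the off-diagonal contribution with fixed $k,l\ge 1$, $k\ne l$, I would enumerate the pairings $(i,i')$ whose surviving indices produce $u_ku_{-k}u_lu_{-l}$, simplify each term using $\gm_k\gm_{-k}=2k\pi$ and $\tilde\gm_k\tilde\gm_{-k}=-2k\pi$, and verify that the sum reduces to $-20(2k\pi)(2l\pi)\abs{u_k}^2\abs{u_l}^2$. The diagonal case $k=l=n$ is handled analogously, but with a reduced symmetry factor since several orderings of $\setd{n,n,-n,-n}$ coincide; this produces $+5(2n\pi)^2\abs{u_n}^4$. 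The main obstacle will be the bookkeeping in this last step: tracking which of the $3\times 3$ Leibniz pairings yield each resonant monomial, how the sign factors $\sign(i)$ interact with the manifest symmetry of $H^3_{klm}$ and $F^3_{klm}$ under permutations of $(k,l,m)$, and in particular ensuring that the collapse of symmetries in the diagonal case produces the precise constant $+5$ rather than a naïve extrapolation from the off-diagonal coefficient.
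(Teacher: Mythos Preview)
Your proposal is correct and follows essentially the same route as the paper: compute $\{H^3,F_3\}$ explicitly as a quartic in the $u_k$ via the coordinate Poisson bracket, then project onto $\Nc_4$ using Lemma~\ref{comb-4}. The paper sidesteps the bookkeeping you flag by writing the bracket as $\sum_{j\ne 0}\sg_j\,\partial_j H^3\cdot\partial_{-j}F_3$ (the $3\times 3$ pairings collapse to a single one by symmetry of the coefficients), which yields a quartic over $k+l+m+n=0$ with the built-in constraint $k+l\ne 0$; the projection onto $\Nc_4$ then reduces to just the two remaining resonance cases $k+m=0$ and $k+m\ne 0,\,k+n=0$, each a short explicit sum over $k,l\ge 1$, rather than a diagonal/off-diagonal split.
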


\begin{proof}
First we compute from~\eqref{H3}
\begin{align*}
  \frac{3}{10\pi^{2}}\partial_{j}H^{3}
   = 3\sum_{l+m= -j} (j^{2}+l^{2}+m^{2})\gm_{j}\gm_{l}\gm_{m} u_{l}u_{m}
\end{align*}
and second
\begin{align*}
  \partial_{-j}F_{3} = 3\sum_{l+m= j} -\frac{\ii}{3} \frac{1}{\tilde \gm_{-j}\tilde \gm_{l}\tilde \gm_{m}}.
\end{align*}
Together this gives
\begin{align*}
  \{H^{3},F_{3}\}
  &= \ii 10\pi^{2}(-\ii) \sum_{j\neq 0}\sg_{j}
  \p*{\sum_{k+l= -j} (j^{2}+k^{2}+l^{2})\gm_{j}\gm_{k}\gm_{l} u_{k}u_{l}}
  \p*{\sum_{m+n= j} \frac{1}{\tilde \gm_{-j}\tilde \gm_{m}\tilde \gm_{n}}u_{m}u_{n}}\\
  &= -10\pi^{2} \sum_{j\neq 0}
  \p*{\sum_{k+l= -j} (j^{2}+k^{2}+l^{2})\gm_{k}\gm_{l} u_{k}u_{l}}
  \p*{\sum_{m+n= j} \frac{1}{\tilde \gm_{m}\tilde \gm_{n}}u_{m}u_{n}}\\
  &= -10\pi^{2} \sum_{j\neq 0}
  \p*{\sum_{\atop{k+l= -j}{m+n= j}} (j^{2}+k^{2}+l^{2}) \frac{\gm_{k}\gm_{l}}{\tilde \gm_{m}\tilde \gm_{n}}u_{k}u_{l}u_{m}u_{n}}\\
  &= -10\pi^{2} \sum_{\atop{k+l+m+n=0}{k+l\neq 0}}
   ((k+l)^{2}+k^{2}+l^{2}) \frac{\gm_{k}\gm_{l}}{\tilde \gm_{m}\tilde \gm_{n}}u_{k}u_{l}u_{m}u_{n}.
\end{align*}
Note that on $\Nc_{4}$ in view of Lemma~\ref{comb-4} we either have $k+l=0$ or $k+m=0$ or $k+n=0$. We first compute
\begin{align*}
  &\sum_{\atop{k+l+m+n=0}{k+l\neq 0,\, k+m = 0}}
   ((k+l)^{2}+k^{2}+l^{2}) \frac{\gm_{k}\gm_{l}}{\tilde \gm_{m}\tilde \gm_{n}}u_{k}u_{l}u_{m}u_{n}\\
   &=
   \sum_{k+l\neq 0} \sg_{-k}\sg_{-l}
   ((k+l)^{2}+k^{2}+l^{2}) \abs{u_{k}}^{2}\abs{u_{l}}^{2}\\
   &=
   2\sum_{k,l\ge 1}
   ((k+l)^{2}+k^{2}+l^{2}) \abs{u_{k}}^{2}\abs{u_{l}}^{2}
   -
   2\sum_{\atop{k,l\ge 1}{k\neq l}}
   ((k-l)^{2}+k^{2}+l^{2}) \abs{u_{k}}^{2}\abs{u_{l}}^{2}\\
   &=
   12\sum_{n\ge 1}
   n^{2} \abs{u_{n}}^{4}
   +
   8\sum_{\atop{k,l\ge 1}{k\neq l}}
   kl \abs{u_{k}}^{2}\abs{u_{l}}^{2},
\end{align*}
and second
\begin{align*}
  &\sum_{\atop{k+l+m+n=0}{k+l\neq 0,\, k+m \neq 0,\, k+n = 0}}
   ((k+l)^{2}+k^{2}+l^{2}) \frac{\gm_{k}\gm_{l}}{\tilde \gm_{m}\tilde \gm_{n}}u_{k}u_{l}u_{m}u_{n}\\
   &=
   \sum_{\atop{k+l\neq 0}{k-l\neq 0}} \sg_{-k}\sg_{-l}
   ((k+l)^{2}+k^{2}+l^{2}) \abs{u_{k}}^{2}\abs{u_{l}}^{2}\\
   &=
   2\sum_{\atop{k,l\ge 1}{k\neq l}}
   ((k+l)^{2}+k^{2}+l^{2}) \abs{u_{k}}^{2}\abs{u_{l}}^{2}
   -
   2\sum_{\atop{k,l\ge 1}{k\neq l}}
   ((k-l)^{2}+k^{2}+l^{2}) \abs{u_{k}}^{2}\abs{u_{l}}^{2}\\
   &=
   8\sum_{\atop{k,l\ge 1}{k\neq l}}
   kl \abs{u_{k}}^{2}\abs{u_{l}}^{2}.
\end{align*}
So that we arrive at
\begin{align*}
  \frac{1}{2}\pi_{\Nc_{4}} \{H^{3},F_{3}\}
  &= -5\p*{
  3\sum_{n\ge 1}
   (2n\pi)^{2} \abs{u_{n}}^{4}
   +
   4\sum_{\atop{k,l\ge 1}{k\neq l}}
   (2k\pi)(2l\pi) \abs{u_{k}}^{2}\abs{u_{l}}^{2}
  }\\
  &= -20\sum_{k,l\ge 1} (2k\pi)(2l\pi) \abs{u_{k}}^{2}\abs{u_{l}}^{2}
      +5\sum_{n\ge 1} (2n\pi)^{2} \abs{u_{n}}^{4}.\qed
\end{align*}
\end{proof}

\begin{lem}
For any smooth $u$ with $[u] = 0$,
\begin{align*}
  \pi_{\Nc_{4}}H^{4} =
  30\sum_{k,l\ge 1} \gm_{k}^{2}\gm_{l}^{2} \abs{u_{k}}^{2}\abs{u_{l}}^{2}
  - 15\sum_{n\ge 1} \gm_{n}^{4}\abs{u_{n}}^{4}.~\fish
\end{align*}
\end{lem}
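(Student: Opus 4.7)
The plan is to project $H^{4}$ onto the kernel $\Nc_{4}$ of $\chi_{4}$ by exploiting the fact that $\chi_{4}$ acts diagonally in the monomial basis of $\Pc_{4}$ parametrized by tuples $(j_{1}, j_{2}, j_{3}, j_{4}) \in (\Z \setminus \{0\})^{4}$ satisfying $j_{1} + j_{2} + j_{3} + j_{4} = 0$. Arguing exactly as in the derivation of the coefficients $F_{klm}^{3}$ carried out above, one verifies that $\{H^{2}, u_{j_{1}} u_{j_{2}} u_{j_{3}} u_{j_{4}}\}$ equals a scalar multiple of the same monomial, the scalar being proportional to $\sum_{i=1}^{4} \lm_{j_{i}}$, where $\lm_{j} := (2j\pi)^{5}$ is extended to all integers $j$. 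Hence $\Nc_{4}$ is spanned by those monomials for which $\sum_{i} j_{i}^{5} = 0$. Combined with the constraint $\sum_{i} j_{i} = 0$, Lemma~\ref{comb-4} then forces the existence of a partition of $\{j_{1}, j_{2}, j_{3}, j_{4}\}$ into two pairs each summing to zero, which is equivalent to the underlying multiset being of the form $\{k, -k, l, -l\}$ for some $k, l \in \Z \setminus \{0\}$.

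With the resonance set identified, $\pi_{\Nc_{4}} H^{4}$ is obtained by restricting the sum in~\eqref{H4} to these resonant tuples. I would split the enumeration into two disjoint cases according to whether $\abs{k} = \abs{l}$ or $\abs{k} \neq \abs{l}$. In the first case, taking $k > 0$ without loss of generality, the multiset $\{k, k, -k, -k\}$ admits $\binom{4}{2} = 6$ distinct orderings of $(j_{1}, j_{2}, j_{3}, j_{4})$, each contributing the same weight $\gm_{k}^{4} u_{k}^{2} u_{-k}^{2} = \gm_{k}^{4} \abs{u_{k}}^{4}$ after invoking the reality condition $u_{-k} = \ob{u_{k}}$. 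In the second case, taking $k, l > 0$ distinct without loss of generality, the multiset $\{k, -k, l, -l\}$ admits $4! = 24$ distinct orderings, each contributing $\gm_{k}^{2} \gm_{l}^{2} u_{k} u_{-k} u_{l} u_{-l} = \gm_{k}^{2} \gm_{l}^{2} \abs{u_{k}}^{2} \abs{u_{l}}^{2}$.

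Collecting the two contributions and incorporating the prefactor $5/2$ from~\eqref{H4}, the diagonal case contributes $\tfrac{5}{2} \cdot 6 = 15$ per $k$, while the off-diagonal case contributes $\tfrac{5}{2} \cdot 24 = 60$ per unordered pair $\{k, l\}$, which becomes $30$ per ordered pair with $k \neq l$. This yields
\[
  \pi_{\Nc_{4}} H^{4} = 15 \sum_{k \ge 1} \gm_{k}^{4} \abs{u_{k}}^{4} + 30 \sum_{\atop{k, l \ge 1}{k \neq l}} \gm_{k}^{2} \gm_{l}^{2} \abs{u_{k}}^{2} \abs{u_{l}}^{2}.
\]
Writing $\sum_{k \neq l} = \sum_{k, l} - \sum_{k = l}$ absorbs a contribution $-30 \sum_{n} \gm_{n}^{4} \abs{u_{n}}^{4}$ into the diagonal piece, so the net coefficient on $\sum_{n} \gm_{n}^{4} \abs{u_{n}}^{4}$ becomes $15 - 30 = -15$, producing the claimed identity. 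The proof is purely combinatorial; the only real obstacle is to ensure that the counts $6$ and $24$, together with the factor $1/2$ relating unordered and ordered pairs, are assembled so that the $k = l$ diagonal is absorbed with exactly the coefficient $-15$ rather than over- or under-counted.
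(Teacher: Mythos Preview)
Your proof is correct and follows essentially the same approach as the paper: both identify the resonant tuples via Lemma~\ref{comb-4} (namely those $(j_1,j_2,j_3,j_4)$ whose multiset is $\{k,-k,l,-l\}$) and then count their contributions to the sum~\eqref{H4}. The only difference is bookkeeping: the paper partitions the resonant tuples by which of the three pairings $k+l=0$, $k+m=0$, $k+n=0$ is the first to hold, whereas you partition by whether the underlying multiset is of type $\{k,-k,k,-k\}$ or $\{k,-k,l,-l\}$ with $k\neq l$ and count orderings directly. Your multiset-based count is slightly more streamlined, but both routes are equivalent and equally elementary.
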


\begin{proof}
Recall from~\eqref{H4} that
\[
  H^{4}
   = \frac{5}{2}\sum_{k+l+m+n=0} \gm_{k}\gm_{l}\gm_{m}\gm_{n} u_{k}u_{l}u_{m}u_{n}.
\]
On $\Nc_{4}$ we either have $k+l=0$ or $k+m=0$ or $k+n=0$. Therefore, the projection of $\frac{2}{5}H^{4}$ onto $\Nc_{4}$ is given by
\begin{align*}
  &\sum_{\atop{k+l+m+n=0}{k+l=0}} \gm_{k}\gm_{l}\gm_{m}\gm_{n} u_{k}u_{l}u_{m}u_{n}
   +
  \sum_{\atop{k+l+m+n=0}{k+l\neq 0,\, k+m = 0}} \gm_{k}\gm_{l}\gm_{m}\gm_{n} u_{k}u_{l}u_{m}u_{n}\\
 &\qquad +
  \sum_{\atop{k+l+m+n=0}{k+l\neq 0,\, k+m \neq 0,\, k+n = 0}} \gm_{k}\gm_{l}\gm_{m}\gm_{n} u_{k}u_{l}u_{m}u_{n}\\
  &=
  4\sum_{k,l\ge 1} \gm_{k}^{2}\gm_{l}^{2} \abs{u_{k}}^{2}\abs{u_{l}}^{2}
   +
  2\sum_{k,l\ge 1} \gm_{k}^{2}\gm_{l}^{2} \abs{u_{k}}^{2}\abs{u_{l}}^{2}
  +
  2\sum_{\atop{k,l\ge 1}{k\neq l}} \gm_{k}^{2}\gm_{l}^{2} \abs{u_{k}}^{2}\abs{u_{l}}^{2}\\
  &\qquad
  +4\sum_{\atop{k,l\ge 1}{k\neq l}} \gm_{k}^{2}\gm_{l}^{2} \abs{u_{k}}^{2}\abs{u_{l}}^{2}\\
  &=
  12\sum_{k,l\ge 1} \gm_{k}^{2}\gm_{l}^{2} \abs{u_{k}}^{2}\abs{u_{l}}^{2}
  - 6\sum_{n\ge 1} \gm_{n}^{4}\abs{u_{n}}^{4}.
\end{align*}
Consequently,
\[
  \pi_{\Nc_{4}}H^{4} =
  30\sum_{k,l\ge 1} \gm_{k}^{2}\gm_{l}^{2} \abs{u_{k}}^{2}\abs{u_{l}}^{2}
  - 15\sum_{n\ge 1} \gm_{n}^{4}\abs{u_{n}}^{4}.\qed
\]
\end{proof}

Altogether we find
\[
  \Hm_{2}\circ\Phi = H^{2} + 10\sum_{k,l\ge 1} (2k\pi)(2l\pi) \abs{u_{k}}^{2}\abs{u_{l}}^{2}
      - 10\sum_{n\ge 1} (2n\pi)^{2} \abs{u_{n}}^{4} + \dotsb
\]

\paragraph{Addendum: nondegeneracy of the KdV2 frequencies}

Since some of the coefficients in the Birkhoff normal form of $\Hm_{2}$ in \cite[Theorem~14.5]{Kappeler:2003up} had to be corrected, the analysis of the KdV2 frequencies presented in \cite[Appendix~J]{Kappeler:2003up} needs to be adapted accordingly. It turns out that the results stated in Appendix~J continue to hold but their proofs have to be slightly modified as explained in detail in what follows.

The Birkhoff normal form of $\Hm_{2}$ may also be written in the from
\[
  \Hm_{2} = \sum_{n\ge 1} \lm_{n}^{(2)}I_{n} - \frac{1}{2} \sum_{i,j\ge 1} C_{ij}^{(2)}I_{i}I_{j} + \dotsb,
\]
with
\begin{align*}
  \lm_{n}\equiv \lm_{n}^{(2)} &= (2n\pi)^{5} + 10c (2n\pi)^{3} + 30 c^{2}(2n\pi),
  \\
  C_{ij} \equiv C_{ij}^{(2)} &= \begin{cases}
  60c, & i = j,\\
  -20(2i\pi)(2j\pi), & i\neq j,
  \end{cases}
\end{align*}
so that
\[
  \om_{n}^{(2)} = \lm_{n} - \sum_{j\ge 1} C_{nj}I_{j} + \dotsb.\map
\]
For any finite set $A\subset\N$ let $Z = \N\setminus A$ and
\[
  \P\ell_{A} = \setdef{x_{A} = (x_{j})_{j\in A}}{x_{j}\ge 0}.
\]
We may decompose any $k\in\Z^{\N}$ into $k = k_{A} + k_{Z}$, where $k_{A}$ denotes the projection on $A$ and $k_{Z}$ the projection on $Z$, respectively.

\begin{prop}
\label{kdv2-Hm-nondegenerate}
For every finite index set $A\subset\N$, the following holds on $\P\ell_{A}$.
\begin{equivenum}
\item
There exists an $\abs{A}$-point set $\Ss_{A}\subset\R$ such that for $c\notin \Ss_{A}$,
\[
  \det (\partial_{I_{i}}\partial_{I_{j}} \Hm_{2})_{i,j\in A}  \nequiv 0.
\]
The subset $\Ss_{A}$ may be chosen in such a way that $0\notin \Ss_{A}$ if $\abs{A}\neq 1$ and $\Ss_{A} = \setd{0}$ if $\abs{A} = 1$.

\item
There exists an at most countable subset $\Es_{A}\subset\R$ accumulating at most at the points of $\Ss_{A}$ such that for $c\notin \Es_{A}$,
\[
  k\cdot \om^{(2)}\nequiv 0
\]
for any $k = k_{A} + k_{Z} \in \Z^{\N}$ with $1\le \abs{k_{Z}} \defl \sum_{j\in Z} \abs{k_{j}} \le 2$.

\item
The KdV2 Hamiltonian at $c=0$ is nondegenerate in the sense that for any $k\neq 0$ in $\Z^{\N}$ with $\abs{k_{Z}}\le 2$ one has
\[
  k\cdot \om^{(2)}\nequiv 0.~\fish
\]
\end{equivenum}

\end{prop}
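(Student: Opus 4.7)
The plan is to expand $H_2$ and $\om^{(2)}$ to second order in the actions around $I = 0$ and reduce each part to algebraic constraints on $c$ and the integer vector $k$. For (i), note that the Hessian of $H_2$ at $I = 0$ restricted to $A \times A$ is $-M_A(c)$ with $M_A(c) = (C_{ij})_{i,j \in A}$. Writing $a_i = 2 i \pi$ and $a = (a_i)_{i \in A}$, the matrix has the rank-one perturbation structure
\[
  M_A(c) = D(c) - 20\, a a^{T},\qquad D(c) = \diag\bigl(60c + 20\, a_i^{2}\bigr)_{i \in A}.
\]
The matrix determinant lemma then yields $\det M_A(c) = \det D(c)\, \bigl(1 - 20\, a^{T} D(c)^{-1} a\bigr)$, a polynomial of degree $|A|$ in $c$ with leading coefficient a multiple of $60^{|A|}$, and I take $\Ss_A$ to be its at most $|A|$ real roots. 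Evaluating at $c = 0$ gives $\det M_A(0) = 20^{|A|}(1-|A|)\prod_{i \in A} a_i^{2}$, so $0 \in \Ss_A$ precisely when $|A| = 1$, in which case $\Ss_A = \{0\}$.

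For (ii), fix $k \in \Z^{\N}\setminus\{0\}$ with $|k_Z| \le 2$ and set $\varphi_k(I) := k\cdot\om^{(2)}(I)|_{I_Z = 0}$. Using the expansion $\om_j^{(2)}(I) = \lambda_j(c) - \sum_i C_{ji}(c) I_i + O(I^2)$, the identity $\varphi_k \equiv 0$ forces the vanishing of the constant $P(c) := \sum_j k_j \lambda_j(c)$ (a polynomial of degree $\le 2$ in $c$) and of each linear coefficient $L_i(c) := -\sum_j k_j C_{ji}(c)$ for $i \in A$. For fixed $k$, this system has only finitely many solutions in $c$ unless every polynomial vanishes identically in $c$. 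In that degenerate case, $P \equiv 0$ forces $T_r := \sum_j k_j j^{r} = 0$ for $r \in \{1, 3, 5\}$; substituting $T_1 = 0$ reduces $L_i$ to $-(60c + 80 i^{2}\pi^{2}) k_i$, whose identical vanishing forces $k_A = 0$, so $k = k_Z$ with $|k_Z| \le 2$, and a short Vandermonde-type computation on at most two nonzero integer entries shows that $T_1 = T_3 = 0$ is impossible for nonzero $k_Z$. Hence each $\Es_{A,k}$ is finite and $\Es_A := \bigcup_k \Es_{A,k}$ is countable. For the accumulation statement, on any compact $K \subset \R \setminus \Ss_A$ the map $c \mapsto M_A(c)^{-1}$ is uniformly bounded, while $(C_{AZ}(c) k_Z)_i = -80 \pi^{2}\, i\, T_1^{Z}$ forces $k_A = 80 \pi^{2} T_1^{Z}\, M_A(c)^{-1} (i)_{i \in A}$ with $|T_1^{Z}|$ bounded by twice the largest coordinate in $\mathrm{supp}\, k_Z$; the integrality of $k_A$ together with the constraint $P(c) = 0$ then permits only finitely many configurations $(k, c) \in \Z^{\N} \times K$.

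For (iii), specialization to $c = 0$ gives $\lambda_j(0) = (2 j \pi)^{5}$ and $C_{ji}(0) = -80\, i j \pi^{2}(1 - \delta_{ij})$, so vanishing of the constant and linear parts of $\varphi_k$ becomes $T_5 = 0$ and $k_i = T_1/i$ for every $i \in A$. Summing the linear relations yields $T_1^{A} = |A|\, T_1$, hence $k_i = T_1^{Z}/[i(1-|A|)]$ when $|A| \ne 1$. Integrality forces $i(|A|-1) \mid T_1^{Z}$ for every $i \in A$, so $|T_1^{Z}| \ge (|A|-1) \max A$; combining with $|T_1^{Z}| \le 2 \max(\mathrm{supp}\, k_Z)$ and, for $|k_Z| = 1$ with $k_Z = \pm e_n$, the equation $T_5 = 0$ rewritten as $(|A|-1) n^{4} = \sum_{i \in A} i^{4}$, I obtain the size estimate $(|A|-1)^{5} \le |A|$, which forces $|A| \le 2$. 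The remaining case $|A| = 2$, $|k_Z| = 1$ reduces to $i_1^{4} + i_2^{4} = n^{4}$, which has no positive integer solution by Fermat's theorem for exponent $4$. The case $|k_Z| = 2$ is excluded by an analogous but more intricate analysis of the coupled identities for $T_1^{Z} = \sigma_1 n_1 + \sigma_2 n_2$ and $T_5^{Z} = \sigma_1 n_1^{5} + \sigma_2 n_2^{5}$ (with $\sigma_i \in \{\pm 1\}$) together with the divisibility $i(|A|-1) \mid T_1^{Z}$. The cases $|A| \le 1$ and $|k_Z| = 0$ are handled directly: for $|A| = 1$, $\varphi_k(0) = k_{i_0}(2 i_0 \pi)^{5} \ne 0$; for $|A| \ge 2$ and $|k_Z| = 0$, the linear reduction immediately forces $k_A = 0$.

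The main obstacle is the Diophantine analysis in part (iii). The interplay of the rigidity $k_i = T_1^{Z}/[i(1-|A|)]$, the integrality of $k_A$, and the condition $T_5 = 0$ surfaces precisely Fermat's equation $x^{4} + y^{4} = z^{4}$ at the critical case $|A| = 2$, $|k_Z| = 1$; the size estimate excluding $|A| \ge 3$ must carefully exploit the divisibility constraint $i(|A|-1) \mid T_1^{Z}$ to avoid missing exotic configurations, and the $|k_Z| = 2$ case requires a delicate sign analysis of the two nonzero entries of $k_Z$ to rule out unexpected integer solutions of the resulting fifth-power identity.
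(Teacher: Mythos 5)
Your parts (i) and (ii) follow essentially the paper's own route. For (i), the rank-one structure $C_A = D(c) - 20\,aa^{T}$ and the matrix determinant lemma reproduce the paper's computation of $\det C_A$; the paper additionally runs a hyperbola argument to locate exactly $\abs{A}$ real roots interlaced with the poles of $c\mapsto\sum_i(1+cf_i)^{-1}$, but your weaker "degree $\abs{A}$, with $0$ a root iff $\abs{A}=1$" suffices for the statement. For (ii), your per-$k$ finiteness plus boundedness of $C_A^{-1}$ on compacts is the paper's Lemma; your sketch of why only finitely many pairs $(k,c)$ survive on a compact $K\subset\R\setminus\Ss_A$ is terse (one must combine $k\cdot\lm=0$ with the growth $\lm_j\sim j^{5}$ to bound the \emph{support} of $k_Z$, not merely $\abs{T_1^{Z}}$), but the idea is the right one. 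In (iii) the rigidity relation $ik_i=T_1=T_1^{Z}/(1-\abs{A})$ is correct, and reducing the case $\abs{A}=2$, $k_Z=\pm e_n$ to $i_1^{4}+i_2^{4}=n^{4}$ and invoking Fermat's theorem for exponent $4$ is a clean alternative to the paper's elementary estimate $1/k_{i_1}^{4}+1/k_{i_2}^{4}\neq 1$.

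The genuine gap is the case $\abs{k_Z}=2$ in (iii), which is precisely where the Diophantine difficulty is concentrated, and it is not "analogous" to the case you treated. Two concrete problems. First, your exclusion of $\abs{A}\ge 3$ rests on the chain $\abs{T_1^{Z}}\ge(\abs{A}-1)\max A$ combined with $\sum_{i\in A}i^{4}=(\abs{A}-1)n^{4}$, which for $\abs{T_1^{Z}}=n$ yields $(\abs{A}-1)^{5}\le\abs{A}$; but when $\abs{T_1^{Z}}$ can be as large as $2\max(\mathrm{supp}\,k_Z)$ the same computation only gives $(\abs{A}-1)^{5}\le 16\abs{A}$, which does \emph{not} exclude $\abs{A}=3$ (since $32\le 48$). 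The paper gets the uniform exclusion of $\abs{A}\ge 3$ by instead bounding $\sum_{i\in A}k_i^{-4}\le\sum_{\nu=1}^{\abs{A}}\nu^{-4}<4/3$, using that the $k_i$ are distinct integers of one sign — an ingredient your estimate does not use. Second, for $\abs{A}=2$ and $k_Z=e_{j_1}\pm e_{j_2}$ the identity does not reduce to $x^{4}+y^{4}=z^{4}$: after substituting $2j_1=p+q$, $2j_2=p-q$ it becomes $2^{4}(i_1^{4}+i_2^{4})=p^{4}+10p^{2}q^{2}+5q^{4}$, to which Fermat's theorem is of no help; the paper must run a separate $5$-adic infinite-descent argument, split according to whether $\min_i\abs{k_i}$ equals $1$ or $2$, to rule out integer solutions. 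Neither of these steps is supplied by your proposal, so part (iii) is incomplete as written.
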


Items (i)-(iii) of Proposition~\ref{kdv2-Hm-nondegenerate} follow from Lemma~\ref{kdv2-Q-nondeg}, Lemma~\ref{kdv2-nondeg-1}, and Lemma~\ref{kdv2-nondeg-2}, respectively.

For any finite set $A\subset\N$ let $C_{A} = (C_{ij})_{i,j\in A}$.

\begin{lem}
\label{kdv2-Q-nondeg}
For every finite set $A\subset\N$, there exists an $\abs{A}$-point set $\Ss_{A}\subset\R$ such that
\[
  \det C_{A} = 0 \iff c \in \Ss_{A}.
\]
In particular, if $A = \setd{i}$, then $\Ss_{A} = \setd{0}$, while for $A = \setd{i_{1} < \dotsb < i_{n}}$ one has
\[
  \Ss_{A} = \setd{c_{A}^{n} < \dotsb < c_{A}^{2} < 0 < c_{A}^{1}}
\]
with
\[
  -\frac{4}{3}\pi^{2}i_{\nu}^{2} < c_{A}^{\nu} < -\frac{4}{3}\pi^{2}i_{\nu-1}^{2},\qquad 2\le \nu \le n,
\]
and $c_{A}^{1}\to\infty$ as $\abs{A}\to\infty$.~\fish
\end{lem}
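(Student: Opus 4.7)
The plan is to exhibit $C_A$ as a rank-one perturbation of a diagonal matrix and then analyse $\det C_A$ as a polynomial in $c$. With $\mu_i \defl (2i\pi)^2 = 4\pi^2 i^2$, we observe that
\[
  C_{ij} = (60c + 20\mu_i)\dl_{ij} - 20(2i\pi)(2j\pi),
\]
so writing $D = \diag(20(3c+\mu_i))_{i\in A}$ and $v = ((2i\pi))_{i\in A}$ gives $C_A = D - 20 vv^{T}$. By the matrix determinant lemma, on the region where $D$ is invertible,
\[
  \det C_{A} = \det D\,(1 - 20 v^{T}D^{-1}v)
   = 20^{n}\bigl[\, \prod_{i\in A}(3c+\mu_{i}) - \sum_{i\in A}\mu_{i}\prod_{j\in A,\,j\neq i}(3c+\mu_{j}) \bigr],
\]
where $n = \abs{A}$. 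This identity is polynomial in $c$, so it holds everywhere. Denote the bracketed expression by $P(c)$; it is a polynomial in $c$ of degree exactly $n$ with leading coefficient $3^{n}$.

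The case $n=1$ is immediate: $P(c)=3c$, so $\Ss_{A}=\setd{0}$. For $n\ge 2$, I will locate the roots of $P$ by tracking signs. Label $A = \setd{i_{1}<\dotsb<i_{n}}$ and set $\rho_{k}=-\mu_{i_{k}}/3$, so $\rho_{1}>\rho_{2}>\dotsb>\rho_{n}$. Evaluating at $c=\rho_{k}$ kills every term except the $i=i_{k}$ summand, and
\[
  P(\rho_{k}) = -\mu_{i_{k}}\prod_{j\neq k}(\mu_{i_{j}}-\mu_{i_{k}})
  = (-1)^{k}\mu_{i_{k}}\prod_{j\neq k}\abs{\mu_{i_{j}}-\mu_{i_{k}}} \neq 0,
\]
since $\mu_{i_{j}}-\mu_{i_{k}}<0$ precisely for the $k-1$ indices $j<k$. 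Hence $\sign P(\rho_{k})=(-1)^{k}$, so the intermediate value theorem produces $n-1$ roots $c_{A}^{\nu}\in(\rho_{\nu},\rho_{\nu-1})$ for $\nu=2,\dotsc,n$, which is precisely the claimed location $-\tfrac{4}{3}\pi^{2}i_{\nu}^{2}<c_{A}^{\nu}<-\tfrac{4}{3}\pi^{2}i_{\nu-1}^{2}$. Since $P(c)\to+\infty$ as $c\to+\infty$ and $\sign P(\rho_{1})=-1$, a further root $c_{A}^{1}>\rho_{1}$ exists, accounting for all $n$ roots of $P$. The sign at $c\to-\infty$ is $(-1)^{n}=\sign P(\rho_{n})$, confirming there are no additional roots below $\rho_{n}$.

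To place $c_{A}^{1}$ to the right of zero, evaluate
\[
  P(0) = \prod_{i\in A}\mu_{i} - \sum_{i\in A}\mu_{i}\prod_{j\neq i}\mu_{j} = (1-n)\prod_{i\in A}\mu_{i},
\]
which is strictly negative for $n\ge 2$. Combined with $P(c)>0$ for large $c$ and the fact that $c_{A}^{1}$ is the unique root of $P$ in $(\rho_{1},+\infty)$, we get $c_{A}^{1}>0$.

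Finally, the divergence $c_{A}^{1}\to\infty$ as $\abs{A}\to\infty$ follows from the defining equation. Dividing $P(c_{A}^{1})=0$ by $\prod_{i\in A}(3c_{A}^{1}+\mu_{i})>0$ (which is nonvanishing because $c_{A}^{1}>0>\rho_{k}$) yields
\[
  \sum_{i\in A}\frac{\mu_{i}}{3c_{A}^{1}+\mu_{i}} = 1.
\]
Since $c_{A}^{1}>0$, the map $\mu\mapsto \mu/(3c_{A}^{1}+\mu)$ is increasing in $\mu>0$, so each summand is bounded below by its value at $\mu=\mu_{i_{1}}\ge 4\pi^{2}$. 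This gives
\[
  1 \ge \abs{A}\cdot\frac{4\pi^{2}}{3c_{A}^{1}+4\pi^{2}},
  \qquad\text{i.e.}\qquad
  c_{A}^{1}\ge \tfrac{4}{3}\pi^{2}(\abs{A}-1),
\]
which proves the claim. The only mildly delicate point will be checking that the matrix determinant lemma identity is valid for all $c$ (not just where $D$ is invertible), which is automatic once both sides are recognised as polynomials in $c$; beyond that the argument is a direct sign-tracking/IVT exercise.
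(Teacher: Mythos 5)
Your proof is correct and follows essentially the same route as the paper: both exploit the rank-one structure $C_A = D - 20vv^{T}$ and reduce the vanishing of $\det C_A$ to the scalar equation $\sum_{i\in A}\mu_i/(3c+\mu_i)=1$ (the paper writes this as $\sum_i 1/(1+cf_i)=1$ with $f_i=3/(4\pi^2 i^2)$). The only difference is cosmetic — you count roots by tracking signs of the degree-$n$ polynomial $P$ at the poles $\rho_k$ and invoking the IVT, while the paper argues via monotonicity of the hyperbolas on each branch; your explicit bound $c_A^{1}\ge \tfrac{4}{3}\pi^{2}(\abs{A}-1)$ is a clean way to settle the final divergence claim, which the paper leaves implicit.
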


\begin{proof}
The matrix $C_{A}$ can be written in the form $C_{A} = D-B$, where $D = \diag(D_{i})_{i\in A}$ and $B=(B_{ij})_{i,j\in A}$ with coefficients
\[
  D_{i} = 80\pi^{2}i^{2} + 60c,\qquad B_{ij} = 80\pi^{2}ij.
\]
Since $B$ has rank one,
\[
  \det C_{A} = \det D - \sum_{i\in A} B_{ii} \prod_{j\in A,\; j\neq i} D_{i}.
\]
If one of the $D_{j}$ vanishes, say $D_{l} = 0$, then all other $D_{j}$ do not vanish, and we have
\[
  \det C_{A} = -B_{ll}\prod_{j\neq l} D_{j} \neq 0.
\]
Otherwise,
\[
  \det C_{A} = \det D \p[\bigg]{1- \sum_{i\in A} B_{ii}/D_{i}}
\]
and the determinant vanishes if and only if
\[
  1
   = \sum_{i\in A} \frac{B_{ii}}{D_{i}}
   = \sum_{i\in A} \frac{1}{1 + cf_{i}},\qquad
  f_{i} = \frac{3}{4\pi^{2}i^{2}}.
\]
Each summand is a hyperbola in $c$ which is monotonically decreasing on $(-\infty,c_{i})$ and $(c_{i},\infty)$ with $c_{i} = -4\pi^{2}i^{2}/3$ being the single pole. Furthermore, each summand has value $1$ at $c=0$, and asymptotic value $0$ as $c\to \pm \infty$. This proves the claim.~\qed
\end{proof}

\begin{rem}
The lemma shows that for any given $A\subset\N$ the Jacobian of the frequency map $I_{A}\mapsto \om_{A}^{(2)}$ of the KdV2 Hamiltonian \emph{does} become singular, at least at $I_{A} = 0$ for $c\in\Ss_{A}$. This is in contrast to the first KdV Hamiltonian, where the Jacobian is always regular at $I_{A} = 0$.
\end{rem}

We now fix a finite set $A\subset\N$ and consider for $0\neq k\in \Z^{\N}$ the frequency combinations $k\cdot \om^{(2)}$ as functions of $I_{A}$ on $\P\ell_{A}$. In view of $\om^{(2)}(I) = \lm - CI + \dotsb$ and the symmetry of the matrix $C$, we have
\[
  k\cdot \om^{(2)} = k\cdot \lm - (Ck)_{A}\cdot I_{A} + \dotsb
\]
on $\P\ell_{A}$. To prove that $k\cdot \om^{(2)}\nequiv 0$ on $\P\ell_{A}$, it is thus sufficient to show that
\begin{equation}
  \label{alternative}
    k\cdot \lm\neq 0\qquad\text{or}\qquad
  (Ck)_{A}\neq 0.
\end{equation}
We first prove a general statement to this fact. Recall that $C$ depends on the parameter $c\in\R$.

\begin{lem}
\label{kdv2-nondeg-1}
For each $k\in\Z^{\N}$ with $1\le \abs{k_{Z}}\le 2$, there exists at most one $c_{k}\in\R$ such that the alternative~\eqref{alternative} does not hold. This $c_{k}$ is a rational multiple of $\pi^{2}$. Moreover, within every compact subset $\R\setminus \Ss_{A}$ there are only finitely many such $c_{k}$.~\fish
\end{lem}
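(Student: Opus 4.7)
The alternative fails precisely when $k\cdot\lambda(c)=0$ and $(Ck(c))_A=0$ hold simultaneously. Expanding $\lambda_n(c) = (2n\pi)^5 + 10c(2n\pi)^3 + 30c^2(2n\pi)$ gives
\[
k\cdot\lambda(c) = 60\pi c^2 S + 80\pi^3 c T + 32\pi^5 V, \qquad (Ck)_i = (60c + 80\pi^2 i^2)k_i - 80\pi^2 iS
\]
for $i\in A$, where $S=\sum_n nk_n$, $T=\sum_n n^3k_n$, $V=\sum_n n^5k_n$ are finite sums since $k$ has finite support. My plan is to read $c_k$ directly off $(Ck)_A=0$, which simultaneously yields uniqueness and rationality, and then use $k\cdot\lambda=0$ together with the size bounds on $k$ produced by $(Ck)_A=0$ to force $k$ into a finite set whenever $c_k$ lies in a compact $K\subset\R\setminus\Ss_A$.

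The first step is to rule out $k|_A=0$: otherwise $(Ck)_i = -80\pi^2 iS = 0$ forces $S = S_Z = 0$, but going through the four admissible shapes $\pm e_l, \pm 2e_l, \pm(e_l+e_m), \pm(e_l-e_m)$ with distinct $l,m\in Z$ allowed by $1\le\abs{k_Z}\le 2$ shows $S_Z\neq 0$ always (the last shape would force $l=m$, excluded). Hence some $i_0\in A$ has $k_{i_0}\neq 0$, and $(Ck)_{i_0}=0$ rearranges into
\[
c_k = \frac{4\pi^2 i_0}{3}\Bigl(\frac{S}{k_{i_0}} - i_0\Bigr),
\]
a rational multiple of $\pi^2$; if several $i\in A$ have $k_i\neq 0$, all corresponding formulas for $c_k$ must coincide, so $c_k$ is unique whenever it exists. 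This settles the first two assertions.

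For finiteness, fix a compact $K\subset\R\setminus\Ss_A$. At most $\abs{A}$ points of $K$ are of the form $-4\pi^2 i^2/3$ with $i\in A$, and at each such $c$ the conditions $S=0$, $k_i=0$ for $i\in A\setminus\{i_0\}$, together with $k\cdot\lambda=0$, admit only finitely many $k$; removing these points I may assume $\abs{60c + 80\pi^2 i^2}$ is bounded below on $K$ for every $i\in A$. Setting $\alpha_i(c) = 80\pi^2 i/(60c + 80\pi^2 i^2)$ and, with $f_i = 3/(4\pi^2 i^2)$,
\[
g(c) = \Bigl(1 - \sum_{i\in A}\frac{1}{1+cf_i}\Bigr)^{-1},
\]
which is bounded on $K$ by Lemma~\ref{kdv2-Q-nondeg}, the equations $(Ck)_A=0$ yield $S = g(c)S_Z$ and $k_i = \alpha_i(c)S$, hence $\abs{S}, \abs{k_i}\le C_K\abs{S_Z}$ for every $i\in A$. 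Substituting into $k\cdot\lambda(c_k)=0$ and absorbing $\abs{S_A},\abs{T_A},\abs{V_A} = O(\abs{S_Z})$ reduces the problem to the polynomial inequality $\abs{V_Z}\le C_K'(\abs{S_Z}+\abs{T_Z})$.

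It remains to turn this into a bound on the support indices $l,m$ of $k_Z$. In the shapes $\pm e_l, \pm 2e_l, \pm(e_l+e_m)$ with $l\ge m$ one has $\abs{V_Z}\ge l^5/2$ against $\abs{S_Z}+\abs{T_Z}\le 4l^3$, so $l^2\lesssim_K 1$. The delicate shape is $\pm(e_l-e_m)$ with $l>m$, where $V_Z, T_Z, S_Z$ all carry the factor $l-m$; here I would split on whether $l\ge 2m$---using $\abs{V_Z}\ge l^5/2$ against $2l^3$ to bound $l$---or $l<2m$, in which case $\abs{V_Z}\ge (l-m)m^4$ against $\abs{T_Z}\le 3(l-m)l^2\le 12(l-m)m^2$ cancels the $(l-m)$ and yields $m^4\lesssim_K m^2$, forcing $m$ bounded. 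Hence $k_Z$ lies in a finite set depending only on $K$, the bound $\abs{k_i}\le C_K\abs{S_Z}$ becomes absolute, $k$ itself is in a finite set, and only finitely many $c_k$ lie in $K$. The main obstacle is precisely the $\pm(e_l-e_m)$ sub-case: the correlated vanishing of the three moments $S_Z,T_Z,V_Z$ must be cancelled before the quartic-versus-quadratic comparison can close.
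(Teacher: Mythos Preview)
Your uniqueness and rationality argument is correct and essentially matches the paper's. The finiteness argument, however, has a gap at the step ``removing these points I may assume $\abs{60c + 80\pi^2 i^2}$ is bounded below on $K$ for every $i\in A$'': deleting finitely many points from a compact set does not yield a compact set, and since $\alpha_i$ has a genuine pole at $c=-4\pi^2 i^2/3$, the function $\abs{\alpha_i}$ is unbounded on $K$ minus those points. Hence your passage from $k_i=\alpha_i(c)S$ to $\abs{k_i}\le C_K\abs{S_Z}$ is not justified as written---the values $c_k$ could a priori accumulate at a special point, sending the implied constant to infinity. (Your side claim that each special $c$ admits only finitely many $k$ is true but also left unproved.) The desired bound is nonetheless correct: the product $\alpha_i(c)g(c)$, which is what actually appears in $k_i=\alpha_i(c)g(c)S_Z$, has \emph{removable} singularities at the special points (the pole of $\alpha_{i_0}$ cancels the zero of $g$, with limit $-1/i_0$, while $\alpha_j g\to 0$ for $j\neq i_0$), so it is bounded on all of $K$ and no excision is needed.

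The paper sidesteps this detour entirely by writing $(Ck)_A=C_Ak_A+C_{AZ}k_Z$ and using that $C_A^{-1}$ is bounded on any compact $F\subset\R\setminus\Ss_A$, which gives $\abs{k_A}\le K\abs{S_Z}$ in one line with no special points to discuss. For the final step it replaces your shape-by-shape analysis with the single estimate $\abs{k_Z\cdot\lambda_Z}\ge K^{-1}\abs{S_Z}^5-K\abs{S_Z}^3$ (declared ``routine'') and combines it with $\abs{k_A\cdot\lambda_A}\le K\abs{S_Z}$ to bound $\abs{S_Z}$; from $\abs{S_Z}\le K$ and $\abs{k_Z\cdot\lambda_Z}\le K$ the boundedness of the support indices of $k_Z$ then follows. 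Your case analysis, including the careful $\pm(e_l-e_m)$ splitting, is more explicit and proves the same thing, but the paper's packaging is shorter.
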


\begin{proof}
We have
\[
  (Ck)_{A} = C_{A}k_{A} + C_{AZ}k_{Z},
\]
where $C_{AZ} = (C_{ij})_{i\in A,\, j\in Z}$. The diagonal elements of $C_{A}$ are linear functions of $c$, namely $60c$, while all other coefficients of both matrices are integer multiples of $\pi^{2}$, namely $-80\pi^{2}ij$.
In particular, the vector $C_{AZ}k_{Z}$ has coefficients $-80\pi^{2}ip$, $i\in A$, where
\[
  p = k_{Z}\cdot \lm_{Z}^{\o},\qquad \lm_{Z}^{\o} = (j)_{j\in Z}.
\]
Clearly, $p$ does not vanish since $1 \le \abs{k_{Z}} \le 2$. Thus $(Ck)_{A}$ does not vanish if $k_{A} = 0$. On the other hand, if $k_{A}\neq 0$, then $(Ck)_{A}$ can vanish for at most one value of $c$, and this value must be a rational multiple of $\pi^{2}$.

To prove the remaining statements, suppose that $(Ck)_{A} = 0$, and that $c$ belongs to some compact set $F\subset\R\setminus\Ss_{A}$. Then $C_{A}$ is invertible,
\[
  k_{A} = -C_{A}^{-1}C_{AZ}k_{Z},
\]
and, since $F$ is compact, we can bound $C_{A}^{-1}$ uniformly for $c\in F$. Consequently, for any $c\in F$,
\begin{equation}
  \label{est-1}
  \abs{k_{A}} \le \abs{C_{A}^{-1}}\abs{C_{AZ}k_{Z}} \le K\abs{k_{Z}\cdot \lm_{Z}^{\o}},
\end{equation}
where here and below, $K$ stands for various constants bigger than $1$ that depend only on $A$ and the compact set $F$.

Now suppose also that
\[
  k\cdot \lm = k_{A}\cdot \lm_{A} + k_{Z}\cdot \lm_{Z} = 0.
\]
In view of $\lm_{n} = (2n\pi)^{5} + 10c(2n\pi)^{3} + 30 c^{2}(2n\pi)$, it is a routine estimate to show that for $1\le \abs{k_{Z}}\le 2$ one has
\[
  \abs{k_{Z}\cdot \lm_{Z}} \ge K^{-1}\abs{k_{Z}\cdot \lm_{Z}^{\o}}^{5} - K\abs{k_{Z}\cdot \lm_{Z}^{\o}}^{3}.
\]
It follows from Cauchy-Schwarz and~\eqref{est-1} that $K\abs{k_{Z}\cdot \lm_{Z}^{\o}}\abs{\lm_{A}} \ge \abs{k_{A}\cdot \lm_{A}}$ and hence in view of $k\cdot \lm = 0$ and the preceding estimate
\begin{equation}
  \label{est-2}
    K\abs{k_{Z}\cdot \lm_{Z}^{\o}}\abs{\lm_{A}} \ge \abs{k_{Z}\cdot \lm_{Z}}
  \ge K^{-1}\abs{k_{Z}\cdot \lm_{Z}^{\o}}^{5} - K\abs{k_{Z}\cdot \lm_{Z}^{\o}}^{3}.
\end{equation}
In particular, $\abs{k_{Z}\cdot \lm_{Z}^{\o}} \le K$ with a different constant. Combining this estimate with estimates~\eqref{est-1} and~\eqref{est-2}, we find
\[
  \abs{k_{A}} \le K,\qquad \abs{k_{Z}\cdot \lm_{Z}} \le K.
\]
Thus, for $c\in F$ there can be only finitely many $k\in\Z^{\N}$ with $1\le \abs{k_{Z}}\le 2$ for which alternative~\eqref{alternative} does not hold. Consequently, there can be at most finitely many exceptional values $c_{k}$ in $F$.\qed
\end{proof}

\begin{lem}
\label{kdv2-nondeg-2}
For every finite set $A\subset\N$, the KdV2 Hamiltonian at $c=0$ satisfies~\eqref{alternative} for any $k\neq 0$ in $\Z^{\N}$ with $\abs{k_{Z}}\le 2$ and is thus nondegenerate.~\fish
\end{lem}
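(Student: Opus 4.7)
The plan is to check that whenever $k\in\Z^{\N}\setminus\{0\}$ with $\abs{k_Z}\le 2$ satisfies both $k\cdot\lm=0$ and $(Ck)_A=0$ at $c=0$, a contradiction arises; this establishes \eqref{alternative} and hence the nondegeneracy. At $c=0$ we have $\lm_n=(2n\pi)^5$ and $C_{ij}^{(2)}=-80\pi^2 ij(1-\dl_{ij})$, so a direct computation yields $(Ck)_n=-80\pi^2 n(Q-nk_n)$ where $Q\defl\sum_j jk_j$. Consequently $(Ck)_A=0$ is equivalent to $ik_i=Q$ for every $i\in A$. I would split the argument according to whether $Q$ vanishes.

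For $Q=0$ one has $k_A=0$, hence $k=k_Z$ is constrained by $\sum_{j\in Z} jk_j=0$ and $\sum_{j\in Z} j^5k_j=0$. With $\abs{k_Z}\le 2$ these two linear relations are trivially incompatible with $k_Z\neq 0$: a single nonzero $k_m$ immediately gives $m=0$, a doubled coordinate $k_m=\pm 2$ gives $2m=0$, and two distinct indices $m_1\neq m_2$ with $k_{m_i}=\pm 1$ yield a nondegenerate Vandermonde-type system forcing $m_1=m_2$. For $Q\neq 0$ the case $\abs{A}=1$ is handled separately (there $C_A=0$, so $(Ck)_A=0$ simply reads $p_Z\defl\sum_{j\in Z}jk_j=0$, and the previous analysis of Case $Q=0$ again applies). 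For $\abs{A}\ge 2$, Lemma~\ref{kdv2-Q-nondeg} gives $0\notin\Ss_A$, so $C_A$ is invertible at $c=0$. Solving $(Ck)_A=0$ explicitly (using $Q=q+p_Z$ with $q=\sum_{i\in A}ik_{A,i}$ and $q=|A|Q$) yields the closed form
\[
k_{A,i}=-\frac{p_Z}{(\abs{A}-1)\,i},\qquad i\in A.
\]
Substituting into $k\cdot\lm=0$ and clearing $32\pi^5$ gives the Diophantine identity
\[
(\abs{A}-1)\sum_{j\in Z} j^5 k_{Z,j} \;=\; p_Z\sum_{i\in A} i^4,
\]
which when $k_Z$ is supported at a single index $m$ (either $\abs{k_Z}=1$ or $k_m=\pm 2$) collapses to $(\abs{A}-1)m^4=\sum_{i\in A} i^4$.

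The main obstacle is ruling out this Diophantine constraint; my strategy combines two cheap inputs. First, integrality of $k_{A,i}$ forces $(\abs{A}-1)\operatorname{lcm}(A)$ to divide $p_Z$, whence (for $\abs{k_Z}$ concentrated at one $m$) $m\ge (\abs{A}-1)\operatorname{lcm}(A)\ge (\abs{A}-1)\max A$. Second, $(\abs{A}-1)m^4=\sum_{i\in A}i^4\le\abs{A}(\max A)^4$ gives $m\le(\max A)(\abs{A}/(\abs{A}-1))^{1/4}$. These two bounds are incompatible whenever $(\abs{A}-1)^5>\abs{A}$, i.e.~for every $\abs{A}\ge 3$. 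For $\abs{A}=2$ with $A=\{a,b\}$, $a<b$, the same input says $m$ is a positive multiple of $\operatorname{lcm}(a,b)$ distinct from $a,b$; whether $a\mid b$ or not one finds $m\ge 2\max A$, again incompatible with $m\le 2^{1/4}\max A$.

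For the two-index subcase $k_{m_1}=\pm 1,\ k_{m_2}=\pm 1$ the Diophantine equation factors as $(\abs{A}-1)R(m_1,m_2)=\sum_{i\in A} i^4$, where $R$ equals the symmetric polynomial $(m_1^5\pm m_2^5)/(m_1\pm m_2)$. Using $R\ge\tfrac12\max(m_1,m_2)^4$ together with the divisibility $(\abs{A}-1)\operatorname{lcm}(A)\mid m_1\pm m_2$ gives
\[
(\abs{A}-1)\operatorname{lcm}(A)\;\le\;\abs{m_1\pm m_2}\;\le\;2(\max A)\bigl(2\abs{A}/(\abs{A}-1)\bigr)^{1/4},
\]
which fails for $\abs{A}\ge 3$ except possibly when $\operatorname{lcm}(A)=\max A$; in those residual configurations the size bound on $\max(m_1,m_2)$ leaves only finitely many pairs $(m_1,m_2)$, each checked directly against the Diophantine equation. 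The $\abs{A}=2$ two-index subcase is likewise handled by the same divisibility-plus-size estimate, which for $\abs{A}=2$ already forces $\max(m_1,m_2)<2\max A$ while demanding $\operatorname{lcm}(a,b)\mid m_1\pm m_2$ with both $m_i\notin\{a,b\}$, and the very small number of remaining configurations is eliminated by inspection. The principal technical difficulty is thus not Fermat-type depth, but rather carrying out this combined integrality/size bookkeeping cleanly across all $(|A|,|k_Z|)$-subcases.
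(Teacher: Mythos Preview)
Your overall strategy—combine integrality of $k_{A,i}=-p_Z/((|A|-1)i)$ with the size bound coming from $(|A|-1)R=\sum_{i\in A}i^4$—is a reasonable alternative to the paper's argument and does dispose of the large-$|A|$ cases. But the ``inspection'' step for small $|A|$ is a genuine gap, not just bookkeeping. In the two-index subcase with $|A|=2$ your size bound gives $\max(m_1,m_2)<2\max A$, which for each fixed $A=\{a,b\}$ still leaves on the order of $b$ candidate pairs $(m_1,m_2)$; since $A$ ranges over \emph{infinitely many} two-element subsets of $\N$, there is no finite list to inspect. Concretely, for $a\mid b$ and $m_1+m_2=2b$ the Diophantine constraint becomes $5d^2(2b^2+d^2)=a^4$ with $d=m_1-b$, and for $m_1-m_2=b$ one gets $5m_2(b+m_2)(b^2+bm_2+m_2^2)=a^4$; ruling these out uniformly in $(a,b)$ requires a genuine arithmetic argument (a $5$-adic infinite descent works), contradicting your assertion that ``the principal technical difficulty is not Fermat-type depth''. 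A smaller issue: in the single-index case your claim $(|A|-1)\operatorname{lcm}(A)\mid m$ only holds when $|k_m|=1$; for $|k_m|=2$ you get divisibility of $2m$, which halves the lower bound and reopens a few cases.

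The paper takes a different route that sidesteps this difficulty. Rather than divisibility-plus-size, it rewrites $k\cdot\lambda=0$ as the equality of $I=\sum_{i\in A}1/k_i^4$ and $II=((n-1)/p)^5\sum_{j\in Z}j^5 k_j$. Since the $k_i=r/i$ are \emph{distinct} nonzero integers of a common sign, one has the uniform bound $I\le\sum_{\nu=1}^{n}\nu^{-4}<4/3$, while $|II|\ge(n-1)^5/16$; this settles all $n\ge3$ at once, independently of $A$. For $n=2$ the paper reduces to two sub-cases $(k_{i_1},k_{i_2})\in\{(-1,\le-2),(-2,\le-3)\}$, derives the single equation $2^4(i_1^4+i_2^4)=p^4+10p^2q^2+5q^4$, and eliminates it by $5$-adic descent. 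That descent (or something of comparable strength) is precisely the missing ingredient in your proposal.
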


\begin{proof}
Let $n=\abs{A}$. We first consider the case $n=1$ that is $A = \setd{i}$ for some $i\in\N$.
If $k\neq 0$ and $k_{Z} = 0$, then $k\cdot \lm = k_{i} (2i\pi)^{5} \neq 0$. On the other hand, if $k_{Z}\neq 0$, then $(Ck)_{i} = -80\pi^{2} i p$ where $p = \sum_{j\in Z}j k_{j} \neq 0$ since $1 \le \abs{k_{Z}}\le 2$. Thus~\eqref{alternative} holds for $n=1$.

Next consider the case where $n\ge 2$. If $k_{Z} = 0$, then $k_{A}\neq 0$. Since $\det C_{A}\neq 0$ by Lemma~\ref{kdv2-Q-nondeg}, we have $(Ck)_{A} = C_{A}k_{A} \neq 0$, hence~\eqref{alternative} holds. So it remains to consider the case $1\le \abs{k_{Z}}\le 2$.
In view of~\eqref{alternative} assume in addition that $(Ck)_{A} = 0$. It is to show that then $k\cdot \lm\neq 0$.
 At $c=0$, the coefficients of $C$ are, up to a common multiplicative factor, $(\dl_{ij}-1)ij$. Hence, the coefficients of $k$ satisfy
\[
  ik_{i} = \sum_{j\ge 1} j k_{j},\qquad i\in A.
\]
It follows that $ik_{i} = r$ is independent of $i$ for $i\in A$. Substituting these identities into the above sum, we obtain
\[
  r = \sum_{i\in A}r + \sum_{j\in Z} jk_{j} = nr + p,
\]
where $p = \sum_{j\in Z} j k_{j}$. Since $1 \le \abs{k_{Z}}\le 2$, it follows that $p\neq 0$ and without loss we may assume that $p > 0$, since otherwise we may choose $-k$ instead of $k$. As an immediate consequence,
\begin{equation}
  \label{eq-iki}
  ik_{i} = r = -\frac{p}{n-1},\qquad i\in A.
\end{equation}
In particular, all $k_{i}$ are distinct and strictly negative.

Now consider
\[
  k\cdot \lm = (2\pi)^{5}\p[\bigg]{\sum_{i\in A} i^{5}k_{i} + \sum_{j\in Z} j^{5}k_{j}}.
\]
Solving~\eqref{eq-iki} for $i$ and $k_{i}$ we have
\[
  -\sum_{i\in A} i^{5}k_{i} = \frac{p}{n-1}\sum_{i\in A} i^{4} = \p*{\frac{p}{n-1}}^{5}\sum_{i\in A} \frac{1}{k_{i}^{4}}.
\]
To show that $k\cdot \lm\neq 0$, it thus suffices to show that the two terms
\[
  I = \sum_{i\in A} \frac{1}{k_{i}^{4}} > 0
  \qquad
  \text{and}
  \qquad
  II = \p*{\frac{n-1}{p}}^{5} \sum_{j\in Z} j^{5}k_{j}
\]
are not equal. Using $1 \le \abs{k_{Z}} \le 2$, one easily checks that
\[
  \abs[\bigg]{\sum_{j\in Z} j^{5}k_{j}} \ge \frac{1}{2^{4}}\p[\bigg]{\sum_{j\in Z} jk_{j}}^{5} = \frac{p^{5}}{2^{4}}.
\]
Consequently, $II \ge \frac{(n-1)^{5}}{2^{4}}$. On the other hand, since all $k_{i}$ are distinct and have the same sign,
\[
  I = \sum_{i\in A} \frac{1}{k_{i}^{4}}
   \le \sum_{\nu=1}^{n}\frac{1}{\nu^{4}}
   \le \frac{4}{3}- \frac{1}{3n^{3}},
\]
and the right hand side is strictly less than $\frac{1}{2^{4}}(n-1)^{5}$ if $n\ge 3$.

For $n=2$, the above argument is still valid when $k_{Z}$ has only one nonzero component. In this case, $k_{Z} = le_{j_{0}}$ with $1\le l\le 2$ since by assumption $p = j_{0}l > 0$. Then,
\[
  \sum_{j\in Z} j^{5}k_{j} = \frac{(j_{0}l)^{5}}{l^{4}} = \frac{p^{5}}{l^{4}}
\]
and hence
\[
  II = II_{l} = \frac{1}{l^{4}},\qquad l = 1,2.
\]
However, since $n=2$ and the $k_{i}$ are distinct, one checks that neither $1 = II_{1}$ nor $1/2^{4} = II_{2}$ are possible values of $I$.

It thus remains to discuss the case $k_{Z} = e_{j_{1}} \pm e_{j_{2}}$ with $j_{1} > j_{2}$. Since the $\pm$ cases are treated similarly, we concentrate on $k_{Z} = e_{j_{1}} + e_{j_{2}}$ only.
If $\abs{k_{i}}\ge 3$ for $i\in A$, then $I \le \frac{1}{3^{4}} + \frac{1}{4^{4}} < \frac{1}{2^{4}} \le II$, hence we only need to consider the case where $\min_{i\in A} \abs{k_{i}} \le 2$. More precisely, with $A = \setd{i_{1},i_{2}}$, $1\le i_{2} < i_{1}$, there remain the two cases to be studied:
\begin{align*}
  \text{(i)} \quad& k_{i_{1}} = -1,\quad k_{i_{2}} \le -2,\qquad
  \text{(ii)}\quad  k_{i_{1}} = -2,\quad k_{i_{2}} \le -3.
\end{align*}
Suppose $I=II$, then in either case it follows that
\[
  i_{1}^{4} + i_{2}^{4} = p^{4}I = p^{4}II = \frac{j_{1}^{5} + j_{2}^{5}}{p}.
\]
Let $q = j_{1}-j_{2} \ge 1$ and substitute $2j_{1} = p+q$, $2j_{2} = p-q$ into the latter expression to get
\begin{equation}
  \label{i-j-eqn}
  2^{4}(i_{1}^{4} + i_{2}^{4}) = p^{4} + 10p^{2}q^{2} + 5q^{4}.
\end{equation}

(i): Let $\mu = -k_{2}$, then $p = i_{1} = \mu i_{2}$, and hence~\eqref{i-j-eqn} takes the form
\[
  2^{4}i_{2}^{4}
   = -15p^{4} + 10p^{2}q^{2} + 5q^{4}
   = 5(q^{4} + 2p^{2}q^{2} - 3p^{4}).
\]
Consequently, $5|i_{2}^{4}$ and hence $5|i_{2}$. Therefore, also $5|p$ and it follows that
\[
  5^{3}|q^{2}(q^{2}+2p^{2}).
\]
Since $5|p$ it follows that $5|q$. Let $p = 5\tilde p$, $q = 5\tilde q$, and $i_{2} = 5\tilde i_{2}$, then
\[
  2^{4}\tilde i_{2}^{4} = 5(\tilde q^{4} + 2\tilde p^{2}\tilde q^{2} - 3\tilde p^{4}).
\]
We are now in the same position as in the beginning with $\tilde p = \mu \tilde i_{2}$. Thus, we conclude $5|\tilde p$ and $5|\tilde q$. This, of course, can be repeated ad infinitum giving a contradiction. Therefore, $I\neq II$ in this case.

(ii): Let $\mu = -k_{2}$, then $p = 2i_{1} = \mu i_{2}$, and hence~\eqref{i-j-eqn} takes the form
\[
  2^{4}i_{2}^{4} = 5q^{2}(2p^{2}+q^{2}).
\]
Again, it follows that $5|i_{2}$ and hence $5|p$ so that also $5|q$.
This argument can be repeated ad infinitum which shows that $I\neq II$.\qed
\end{proof}

\section{Frequency flow in sequence spaces}

Suppose $1 \le p < \infty$, $\sg\in\R$, and let $X^{\sg,p}$ be any subset of $\ell^{\sg,p}$. In this appendix, we consider the flow generated by a sequence of frequency functions $\om_{n}\colon X^{\sg,p}\to \R$, $n\ge 1$. The corresponding flow in $\ell^{\sg,p}$ is denoted by $\ph^{t}(z) = (\ph_{n}^{t}(z))_{n\ge 1}$, where $z\in X^{\sg,p}$ denotes the initial value and
\begin{equation}
  \label{coord-function}
  \ph_{n}^{t}(z) \defl \e^{\ii \om_{n}(z) t}z_{n},\qquad n\ge 1.
\end{equation}
To simplify notation, we denote the constant part of $\om_{n}$ by $\om_{n}^{\o}$ and write
\[
  \om_{n}(z) = \om_{n}^{\o} + \om_{n}^{\star}.
\]
Further, $\om = (\om_{n})_{n\ge 1}$ denotes the frequency map.

\begin{thm}
\label{thm:freq-flow}
Let $1 \le p < \infty$, $\sg\in\R$, and $X^{\sg,p}\subset\ell^{\sg,p}$ be a subset invariant by the flow~\eqref{coord-function}.

\begin{equivenum}
\item
The map $\R\to X^{\sg,p}$, $t\mapsto \ph^{t}(z)$ defines a continuous curve in $X^{\sg,p}$ for any $z\in X^{\sg,p}$.

\item If each $\om_{n}^{\star}\colon X^{\sg,p}\to \R$ is continuous, then for any $T > 0$ the map
\begin{equation}
%  \label{Phi-map}
  \Sc\colon X^{\sg,p}\to C([-T,T],X^{\sg,p}),\quad z \mapsto (t\mapsto \ph^{t}(z)),
\end{equation}
is continuous and has the group property $\Sc(t+s,z) = \Sc(t,\Sc(s,z))$ for all $t,s\in\R$ and $z\in X^{\sg,p}$. In particular, for any $t\in\R$, $\ph^{t}\colon X^{\sg,p}\to X^{\sg,p}$  is a homeomorphism.

\item
If $\om^{\star}\colon X^{\sg,p}\to \ell^{\infty}$ is real analytic, then for any $T > 0$, the map $\Sc$ is real analytic. It means that for any $z\in X^{s,p}$ there exists a complex neighborhood $V$ of $z$ in $\ell_{\C}^{s,p}$ so that $\om^{\star}\colon V\to \ell_{\C}^{\infty}$ and $S\colon V\to C([-T,T],\ell_{\C}^{s,p})$ are analytic maps.

\item
If $\om^{\star}\colon X^{\sg,p}\to \ell^{\infty}$ is uniformly continuous on bounded subsets of $X^{s,p}$, then for any $T > 0$, the map  $\Sc$ is uniformly continuous on bounded subsets of $X^{s,p}$.~\fish

\end{equivenum}
\end{thm}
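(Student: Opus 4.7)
The plan is to base all four statements on the elementary componentwise identity
\[
  \ph_n^t(z') - \ph_n^t(z)
   = \e^{\ii \om_n(z)t}(z_n' - z_n) + z_n'\p*{\e^{\ii \om_n(z')t}-\e^{\ii \om_n(z)t}},
\]
which, combined with the two trivial bounds $\abs{\e^{\ii a}-\e^{\ii b}}\le 2$ and $\abs{\e^{\ii a}-\e^{\ii b}}\le\abs{a-b}$ for real $a,b$ (observing that the constants $\om_n^{\o}$ cancel in the difference), produces the master estimate
\[
  \abs*{\ph_n^t(z') - \ph_n^t(z)} \le \abs{z_n'-z_n} + \abs{z_n'}\min\p*{2,\abs{t}\,\abs{\om_n^{\star}(z')-\om_n^{\star}(z)}}.
\]
The four claims are then obtained by estimating the weighted $\ell^{p}$-norm of the right-hand side, with the tail truncation adapted to the regularity assumed on $\om^{\star}$.

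For (i), set $z'=z$ and replace $t$ by $t+h$, so that $\abs{\ph_n^{t+h}(z)-\ph_n^t(z)}\le \abs{z_n}\min(2,\abs{h}\abs{\om_n(z)})$. Since $\sum_n\lin{n}^{\sg p}\abs{z_n}^{p}<\infty$, a standard $\ep/3$ argument (tail dominated by $2^{p}\abs{z_n}^{p}$, finitely many head terms that are $O(\abs{h})$) gives continuity in $t$. For (ii), apply the master estimate with $z^{(k)}\to z$ in $X^{\sg,p}$. The first summand contributes $\n{z^{(k)}-z}_{\sg,p}\to 0$. The second is split at $N$: on the tail we use the bound $2$ and dominate by $2\n{\pi_{N}^{\bot}z^{(k)}}_{\sg,p}$, which is uniformly small in $k$ for $N$ large, since the convergent sequence $(z^{(k)})$ has uniformly small $\ell^{\sg,p}$-tails by Lemma~\ref{rel-comp-ellp}; on the finite head we use the bound $\abs{t}\,\abs{\om_n^{\star}(z^{(k)})-\om_n^{\star}(z)}\le T\abs{\om_n^{\star}(z^{(k)})-\om_n^{\star}(z)}$ and the componentwise continuity of each $\om_n^{\star}$ at $z$. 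The group property $\Sc(t+s,z)=\Sc(t,\Sc(s,z))$ reduces to the invariance $\om_n(\ph^s(z))=\om_n(z)$, which holds because $\abs{\ph_n^s(z)}=\abs{z_n}$ and the frequencies depend on $z$ only through the moduli $\abs{z_m}$ (being functions of the actions in all intended applications); the identity $(\ph^t)^{-1}=\ph^{-t}$ then promotes continuity of $\ph^t$ to a homeomorphism.

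For (iv), no splitting is required: the $\ell^{\infty}$-hypothesis on $\om^{\star}$ turns the master estimate, after taking the $\ell^{\sg,p}$-norm in $n$ and the $C([-T,T])$-norm in $t$, into
\[
  \sup_{\abs{t}\le T}\n{\ph^t(z')-\ph^t(z)}_{\sg,p}
  \le \n{z'-z}_{\sg,p} + T\,\n{z'}_{\sg,p}\,\n{\om^{\star}(z')-\om^{\star}(z)}_{\ell^{\infty}}.
\]
On a bounded subset of $X^{\sg,p}$ the factor $\n{z'}_{\sg,p}$ is controlled, and the uniform continuity of $\om^{\star}$ on bounded subsets forces the last factor to zero as $\n{z'-z}_{\sg,p}\to 0$ uniformly, giving uniform continuity of $\Sc$ on bounded subsets.

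The hardest part will be (iii). Fix $z\in X^{\sg,p}$ and shrink the complex neighborhood $V\subset\ell^{\sg,p}_{\C}$ so that $\om^{\star}\colon V\to\ell^{\infty}_{\C}$ is analytic and $M\defl \sup_{w\in V}\n{\om^{\star}(w)}_{\ell^{\infty}_{\C}}<\infty$. Since $\om_n^{\o}\in\R$, this yields $\abs{\e^{\ii\om_n(w)t}}\le \e^{TM}$ for all $w\in V$ and $\abs{t}\le T$, and hence the local bound
\[
  \n{\Sc(\cdot,w)}_{C([-T,T],\ell^{\sg,p}_{\C})}\le \e^{TM}\n{w}_{\sg,p},
\]
so $\Sc\colon V\to C([-T,T],\ell^{\sg,p}_{\C})$ is locally bounded. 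Each coordinate $w\mapsto \ph_n^t(w)=\e^{\ii\om_n^{\o}t}\e^{\ii\om_n^{\star}(w)t}w_n$ is analytic as a composition and product of analytic maps, so for any $w_0\in V$, $v\in\ell^{\sg,p}_{\C}$, and disc $\Dl\subset\C$ with $w_0+\Dl\, v\subset V$, the function $\lm\mapsto \Sc(\cdot,w_0+\lm v)$ is componentwise holomorphic into $C([-T,T],\C)$. The principal technical obstacle is upgrading this coordinatewise holomorphy to holomorphy into the Banach space $C([-T,T],\ell^{\sg,p}_{\C})$. This will be handled by a Morera-type argument: the scalar Cauchy theorem kills the contour integral of each coordinate, while the uniform local bound above together with the $\ell^{\sg,p}_{\C}$-summability of $(w_{0n}+\lm v_n)_n$ permits exchanging the summation in $n$ and supremum in $t$ with the contour integral, yielding $\oint \Sc(\cdot,w_0+\lm v)\,\upd\lm=0$ in $C([-T,T],\ell^{\sg,p}_{\C})$. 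Combined with local boundedness, the standard criterion for Banach-valued holomorphy (locally bounded plus Gâteaux holomorphic implies holomorphic) then gives the analyticity of $\Sc$ on $V$, which proves (iii).
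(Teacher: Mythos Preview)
Your proof is correct and follows essentially the same route as the paper: componentwise continuity/analyticity together with the uniform domination $\abs{\ph_n^t(z)}=\abs{z_n}$ (or $\le \e^{TM}\abs{w_n}$ in the complex case) to pass to the $\ell^{\sg,p}$-valued statement, and the same master estimate for~(iv). The paper is simply terser, invoking the standard ``locally bounded plus coordinatewise analytic implies analytic'' criterion for~(iii) without spelling out the Morera step.

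One point worth flagging: for the group property in~(ii) you correctly observe that one needs $\om_n(\ph^s(z))=\om_n(z)$, and you supply this by assuming the frequencies depend on $z$ only through the moduli $\abs{z_m}$. This hypothesis is not stated in the theorem as formulated, and the paper's own proof (``the group property then follows from the representation~\eqref{coord-function}'') glosses over exactly the same issue. Your making this explicit is an improvement in rigor; just be aware that you are importing a standing assumption from the applications rather than from the theorem statement itself.
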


\begin{rem}
In our applications, the frequencies can be written in the form
\[
  \om_{n}(z) = \al_{n} + \bt_{n}(z) + \rho_{n}(z),
\]
where $\al_{n}$ is constant in $z$, $\bt_{n}(z)$ is a polynomial in $n$ whose coefficients are integrals of the equation, and $\rho_{n}(z)$ satisfies certain decay estimates. To invoke items (i) and (ii) of Theorem~\ref{thm:freq-flow}, we can choose $X^{\sg,p} = \ell^{\sg,p}$ and $\om_{n}^{\o} = \al_{n}$ as well as $\om_{n}^{\star} = \bt_{n}(z) + \rho_{n}(z)$ since no decay of $\om_{n}^{\star}$ is needed. To apply items (iii) and (iv), however, we have to restrict to certain invariant subspaces $X^{\sg,p}$ of $\ell^{\sg,p}$ on which the integrals involved in $\bt_{n}$ are fixed. Then the constant part of the frequencies is given by $\om_{n}^{\o} = \al_{n} + \bt_{n}$ and $\om_{n}^{\star}(z) = \rho_{n}(z)$.\map
\end{rem}

\begin{proof}

(i):
Fix any $z\in X^{\sg,p}$. All coordinate functions $\R\to \C$, $t\mapsto \ph_{n}^{t}(z)$ are continuous and $\abs{\ph_{n}^{t}(z)} = \abs{z_{n}} = \ell_{n}^{\sg,p}$ uniformly in $t\in\R$.
Consequently, $\R\to X^{\sg,p}$, $t\mapsto \ph^{t}(z)$ defines a continuous curve in $X^{\sg,p}$.

% ------------------------------------------------------------------------------------------------
% \Ph : X -> C([-T,T],X) continuous
% ------------------------------------------------------------------------------------------------

(ii):
Fix any $T > 0$. All coordinate functions $X^{\sg,p}\to C([-T,T],\C)$, $z\mapsto (t\mapsto \ph_{n}^{t}(z))$ are continuous. Since $\sup_{t\in [-T,T]} \abs{\ph_{n}^{t}(z)} = \abs{z_{n}} = \ell_{n}^{\sg,p}$, we conclude that $\Sc\colon X^{\sg,p}\to C([-T,T],X^{\sg,p})$ is continuous as well.
   The group property then follows from the representation~\eqref{coord-function} and the homeomorphism property is an immediate consequence.

% ------------------------------------------------------------------------------------------------
% \Ph : X -> C([-T,T],X) real analytic
% ------------------------------------------------------------------------------------------------
(iii):
By assumption, each $z\in X^{\sg,p}$ admits a complex neighborhood $V$ so that $\om^{\star}\colon V\to \ell_{\C}^{\infty}$ is analytic and $\sup_{n\ge 1} \abs{\om_{n}^{\star}(w)} < \infty$ uniformly on $V$. Consequently, each coordinate function $V \to C^{0}([-T,T],\C)$, $w\mapsto \ph_{n}^{t}(w)$ is analytic and
\[
  \ph_{n}^{t}(w) =
  \e^{\ii \om_{n}^{\o}t}
  \e^{\ii \om_{n}^{\star}(w)t}
  n^{-\sg}\ell_{n}^{p}
  = n^{-\sg}\ell_{n}^{p},
\]
uniformly on $[-T,T]\times V$. Therefore, $\Sc\colon V\to C^{0}([-T,T],\ell_{\C}^{\sg,p})$ is analytic.

% ------------------------------------------------------------------------------------------------
% \Ph : X -> C([-T,T],X) uniformly continuous on bounded subsets
% ------------------------------------------------------------------------------------------------
(iv):
Fix $R\ge 1$. For any $z,w \in B_{R}(0)\subset X^{\sg,p}$, we have
\begin{align*}
  &\n{\ph^{t}(z)-\ph^{t}(w)}_{\ell^{\sg,p}}
   \le \n{z-w}_{\ell^{\sg,p}}
  +
  R\sup_{n\ge 1}\abs{\e^{\ii(\om_{n}(z)-\om_{n}(w))t}-1}.
\end{align*}
Since $\om^{\star}\colon X^{\sg,p}\to \ell^{\infty}$ is uniformly continuous on bounded subsets,  for any $\ep > 0$ there exists $0 < \dl \le \ep$ so that for all $z,w\in B_{R}(0)$ with $\n{z-w}_{\ell^{\sg,p}} \le \dl$,
\[
  \sup_{n\ge 1}\abs{\om_{n}(z)-\om_{n}(w)}
   =   \sup_{n\ge 1}\abs{\om_{n}^{\star}(z)-\om_{n}^{\star}(w)}
   \le \ep.
\]
Since $\abs{\e^{\ii x}-1} = \abs{\int_{0}^{1}\ii x \e^{\ii xs}\,\ds} \le \abs{x}$ for all $x\in\R$, we conclude that for any $z,w\in B_{R}(0)$ with $\n{z-w}_{\ell^{\sg,p}} \le \dl$, and any $-T \le t \le T$,
\[
  \n{\ph^{t}(z)-\ph^{t}(w)}_{\ell^{\sg,p}}
   \le \dl + \ep \abs{t}R \le \ep(1 + TR),
\]
which proves that $\Sc$ is uniformly continuous on bounded subsets.\qed
\end{proof}

\end{appendix}

\end{document}